\title{Iwasawa cohomology of analytic $(\varphi_L,\Gamma_L)$-modules}
\author{Rustam Steingart}
\address{Ruprecht-Karls-Universität Heidelberg,
	Mathematisches Institut,Im Neuenheimer Feld 205, D-69120 Heidelberg}
\email{rsteingart@mathi.uni-heidelberg.de}
\subjclass[2020]{Primary 11F80, 11R23}
\keywords{$p$-adic Galois representations, $(\varphi,\Gamma)$-Modules, Analytic cohomology, Iwasawa cohomology}
\theoremstyle{plain}
\newtheorem{thm}{Theorem}[section]
\newtheorem{lem}[thm]{Lemma}
\newtheorem{rem}[thm]{Remark}
\newtheorem{prop}[thm]{Proposition}
\newtheorem{cor}[thm]{Corollary}
\newtheorem{con}[thm]{Conjecture}
\newtheorem*{cor*}{Corollary}
\newtheorem*{con*}{Conjecture}
\newtheorem{introtheorem}{Theorem}
\theoremstyle{definition}
\newtheorem{defn}[thm]{Definition}
\newcommand{\NN}{\mathbb{N}}
\newcommand{\N}{\mathbb{N}}
\newcommand{\Z}{\mathbb{Z}}
\newcommand{\QQ}{\mathbb{Q}}
\newcommand{\Q}{\mathbb{Q}}
\newcommand{\CC}{\mathbb{C}}
\newcommand{\C}{\mathbb{C}}
\newcommand{\cR}{\mathcal{R}}
\newcommand{\Dfm}{\mathbf{Dfm}}
\newcommand{\id}{\operatorname{id}}
\newcommand{\res}{\operatorname{res}}
\DeclarePairedDelimiter\abs{\lvert}{\rvert}%
\DeclarePairedDelimiter\norm{\lVert}{\rVert}%
\begin{document}
	\maketitle
\begin{abstract}
We show that the coadmissibility of the Iwasawa cohomology of an $L$-analytic Lubin--Tate $(\varphi_L,\Gamma_L)$-module $M$ is necessary and sufficient for the existence of a comparison isomorphism between the former and the analytic cohomology of its Lubin--Tate deformation, which, roughly speaking, is given by the base change of $M$ to the algebra of $L$-analytic distributions. 
We verify that coadmissibility is satisfied in the trianguline case and show that it can be ``propagated'' to a reasonably large class of modules, provided it can be proven in the étale case.
\end{abstract}
	\section*{Introduction}
	Iwasawa theory is classicaly concerned with the behaviour of $G_{\Q_p}$-modules along $\Z_p^{\times}$-towers such as the cyclotomic extension $\Q_{p}(\zeta_{p^{\infty}})$ with Galois group $\Gamma= \operatorname{Gal}(\Q_{p}(\zeta_{p^{\infty}})/\QQ_p) \cong \Z_p^{\times}.$ More generally one can consider, for $L/\QQ_p$ finite, a Lubin--Tate extension $L_\infty/L$ with Galois group $\Gamma_L$ and Frobenius power series $\varphi_L$ (for some fixed uniformiser $\pi_L$ of $L$). Denote by $q$ the cardinality of $o_L/\pi_Lo_L.$ Let $V$ be an $o_L$-linear representation of $G_L,$ i.e., a finite generated $o_L$-module with a continuous linear action of the absolute Galois group $G_L$ of $L.$ It is shown in \cite{SV15}, that the generalised Iwasawa cohomology $$H^i_{\mathrm{Iw}}(L_{\infty}/L,V):= \varprojlim_{L\subset F \subset L_{\infty}} H^i(F,V),$$ where $F$ runs through the finite extensions of $L$ and the transition maps are given by the corestriction, can be computed in terms of the Lubin--Tate $(\varphi_L,\Gamma_L)$-module $\mathbb{D}(V)$\footnote{See Theorem \ref{thm:fontaineequiv} for a precise definition of $\mathbb{D}(V).$} attached to $V$ by the complex $$\mathbb{D}(V(\tau))\xrightarrow{\psi_{\mathrm{LT}}-1}\mathbb{D}(V(\tau))$$ with a certain character $\tau \colon \Gamma_L \to o_L^{\times}$ using the ``integral'' $\psi_{\mathrm{LT}}$-operator (satisfying $\psi_{\mathrm{LT}}\circ \varphi_L= \frac{q}{\pi_L}$). By a variant of Shapiro's Lemma for the Iwasawa algebra $\Lambda = o_L \llbracket \Gamma_L\rrbracket$ one has $H^i_{\mathrm{Iw}}(L_\infty/L,V) \cong H^i(G_L, \Lambda\otimes_{o_L} V)$ relating the Iwasawa cohomology to the Galois cohomology of a $\Lambda$-linear representation, which one should think of as a family of Galois representations parametrised by $\operatorname{Spec}(\Lambda).$ In the following let $K/L$ be a complete field extension containing a period $\Omega_L$ of the Lubin--Tate group in the sense of \cite{schneider2001p}. Over such a field $K,$ the algebra of $L$-analytic distributions $D(o_L,K)$ of $o_L$ with values in $K$ is isomorphic to the ring of power series converging on the open unit disc by Fourier theory and the Lubin--Tate isomorphism of Schneider and Teitelbaum. We consider $(\varphi_L,\Gamma_L)$-modules over the Robba ring $\cR_K$ with coefficients in $K,$ whose $\Gamma_L$-action is $L$-analytic in the sense that the action of $\operatorname{Lie}(\Gamma_L)$ is $L$-bilinear (cf. Definition \ref{def:phigammamodule}). Such an action extends by continuity to an action of $D(\Gamma_L,K)$ and if we fix an open subgroup $U \subset\Gamma_L$ isomorphic to $o_L$ we obtain an element $Z\in D(\Gamma_L,K)$ such that any element of $D(U,K)$ is a convergent power series in $Z.$ Using the left-inverse $\Psi$ of $\varphi_L$ one can define a natural analogue of the Iwasawa cohomology for an $L$-analytic $(\varphi_L,\Gamma_L)$-module $M$ over the Robba ring in the form of the complex $$C_{\Psi}(M) = M \xrightarrow{\Psi-1}M$$ 
	
	concentrated in degree $1,2.$ We define the Iwasawa cohomology $H^i_{\mathrm{Iw}}(M)$ of $M$ as the cohomology of the above complex.  
	At this point a word of caution is in order. The operator $\Psi$ differs from $\psi_{\mathrm{LT}}$ used in \cite{SV15} by the constant $c=\frac{\pi_L}{q}.$ For our purposes it is convenient to work with the left-inverse $\Psi.$ It turns out, that $C_{\Psi}(M)$ is quasi-isomorphic to $C_{\psi_{\mathrm{LT}}}(M(\rho))$ for a suitable twist of $M$ (see Lemma \ref{lem:constantirrelevantpsi}). 
	Even though we are working over a field, the study of Iwasawa cohomology leads us to studying a non-trivial family of $(\varphi_L,\Gamma_L)$-modules, which we call the Lubin--Tate deformation of $M$ (see Definition \ref{def:dfm} for more details). The algebra $D(\Gamma_L,K)$ is isomorphic to the algebra of global sections of a rigid analytic space $\mathfrak{X}_{\Gamma_L},$  whose $K'$ points for $K'/K$ finite  are in bijection with $L$-analytic $K'$-valued characters of $\Gamma_L$ (cf. \cite{schneider2001p}).
 Roughly speaking one would like to take the completed tensor product $D(\Gamma_L,K)\hat{\otimes}_K M$ and as such define a family of $(\varphi_L,\Gamma_L)$-modules parametrised by the rigid analytic space $\mathfrak{X}_{\Gamma_L}$ which is a natural analogue of  $ \Lambda \otimes_{o_L}V$ in our situation. From the explicit description of the points of $\mathfrak{X}_{\Gamma_L}$ we get that the Lubin--Tate deformation of $M$ parametrises all twists of $M$ by $L$-analytic characters. 
	 Some care is required because $D(\Gamma_L,K)$ is no longer affinoid. In order to work within the framework of families as above we have to work over an affinoid cover leading us to a derived limit of the corresponding Herr complexes. 
In more precise terms, we can write $D(U,K)$ as a projective limit of affinoid algebras $D_n$ and the deformation (on the level of $U$) is defined as a ``sheaf'' $(\mathbf{Dfm}_n(M))_{n \in \N},$ where each term is given as $\mathbf{Dfm}_n(M) = D_n \hat{\otimes}_KM$ and $\gamma \in U$ acts as $\delta_{\gamma^{-1}} \otimes\gamma.$  Analytic cohomology serves as a replacement for Galois cohomology in the case considered by us. This cohomology theory was studied in \cite{RustamFiniteness} by means of an explicit generalised Herr complex $C_{\Psi,Z}(\mathbf{Dfm}_n(M))$ given as the total complex of the double complex $$\left(\begin{tikzcd} \mathbf{Dfm}_n(M) \arrow{d}{Z} \arrow{r}{\Psi-1}& \mathbf{Dfm}_n(M)\arrow{d}{-Z} \\	 \mathbf{Dfm}_n(M) \arrow{r}{\Psi-1} &\mathbf{Dfm}_n(M)\end{tikzcd}\right).$$
	The analytic cohomology of the deformation can be related to the Iwasawa cohomology of $M$ via the following theorem.
	\begin{introtheorem}[cf.\ Theorem \ref{thm:iwasawadfm}] Suppose $C_{\Psi}(M)$ has coadmissible cohomology groups (in the sense of \cite{schneider2003algebras}). Then there is a canonical\footnote{See Theorem \ref{thm:iwasawadfm} for a precise description of the maps. } isomorphism in the derived category $\mathbf{D}(D(U,K))$ of $D(U,K)$-modules $$\mathbf{Rlim}C_{\Psi,Z}(\Dfm_n(M)) \simeq C_{\Psi}(M)$$
		and, 	in particular, we have isomorphisms $$\varprojlim_{n} H_{\Psi,Z}^i(\Dfm_n(M))\cong H^i_{\mathrm{Iw}}(M).$$
		
	\end{introtheorem}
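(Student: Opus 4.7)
My strategy is to unwind the double complex defining $C_{\Psi,Z}$ into an iterated mapping cone, commute $\mathbf{Rlim}$ with these cones, and identify the cone of $Z$ on the limit as a Chevalley--Eilenberg-type collapse back onto $C_\Psi(M)$.

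Since $\Psi$ commutes with the $\Gamma_L$-action, and hence with the twisted action on $\Dfm_n(M)$, the complex $C_\Psi(\Dfm_n(M))$ identifies with $D_n\hat{\otimes}_K C_\Psi(M)$, with $\Psi-1$ acting only on the second factor. The total complex $C_{\Psi,Z}(N)$ is, up to shift, the mapping cone of $Z\colon C_\Psi(N)\to C_\Psi(N)$; on $\Dfm_n(M)=D_n\hat{\otimes}_K M$ the operator $Z$ decomposes, by differentiating $\gamma\mapsto\delta_{\gamma^{-1}}\otimes\gamma$, as $-Z\otimes 1+1\otimes Z_M$, where $Z_M$ denotes the infinitesimal action of $Z$ on $M$. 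Hence $C_{\Psi,Z}(\Dfm_n(M))$ is exactly the analytic Koszul complex of the one-dimensional $L$-Lie algebra $L\cdot Z$ acting on $D_n\hat{\otimes}_K C_\Psi(M)$, truncated to level $n$.

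Applying $\mathbf{Rlim}_n$ and invoking coadmissibility via the Fréchet--Stein formalism of \cite{schneider2003algebras}, the relevant $\mathbf{R}^1\varprojlim_n$ terms vanish, so $\mathbf{Rlim}_n$ commutes both with the mapping cone of $\Psi-1$ and with the mapping cone of $Z$. In the full limit over $n$ the Koszul complex becomes the standard resolution of $C_\Psi(M)$ associated to the augmentation $D(U,K)\twoheadrightarrow K$, $Z\mapsto 0$, and collapses to $\mathbf{Rlim}_n C_{\Psi,Z}(\Dfm_n(M))\simeq C_\Psi(M)$ in $\fD(D(U,K))$. Taking cohomology and using once more the vanishing of $\mathbf{R}^1\varprojlim$ yields the stated isomorphism $\varprojlim_n H^i_{\Psi,Z}(\Dfm_n(M))\cong H^i_{\mathrm{Iw}}(M)$.

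The main obstacle is the compatibility of the three operations --- derived limit, mapping cone of $\Psi-1$, and mapping cone of $Z$ --- up to canonical quasi-isomorphisms; this is precisely where coadmissibility is indispensable, because only then do all the requisite $\mathbf{R}^1\varprojlim$ terms vanish simultaneously. Constructing the chain-level comparison map so that the derived-category isomorphism is canonical in the sense of Theorem~\ref{thm:iwasawadfm}, in particular compatible with the completed tensor products, requires a further careful check.
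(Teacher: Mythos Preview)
Your overall architecture --- view $C_{\Psi,Z}(\Dfm_n(M))$ as the cone of $Z$ acting on $C_\Psi(\Dfm_n(M))$, show this cone is quasi-isomorphic to (a version of) $D_n\otimes_D C_\Psi(M)$, then assemble over $n$ via the Fr\'echet--Stein formalism --- is exactly what the paper does. The gap is in the mechanism you propose for the collapse of the cone.

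Your claimed decomposition $Z_{\mathrm{diag}} = -Z\otimes 1 + 1\otimes Z_M$ is false. The element $Z\in D(U,K)$ is not a Lie-algebra element: it is the preimage of the coordinate $T$ under the Lubin--Tate Fourier isomorphism, i.e.\ a genuine distribution, not a primitive element of the Hopf algebra. Differentiating $\gamma\mapsto\delta_{\gamma^{-1}}\otimes\gamma$ yields the additive formula for $\nabla$, not for $Z$; the paper warns about exactly this point (the remark following Definition~\ref{def:dfm}). Already in the cyclotomic case, where $Z=\gamma-1$, one has $(\gamma-1)_{\mathrm{diag}}(a\otimes m)=\delta_{\gamma^{-1}}a\otimes\gamma m - a\otimes m$, which is not $-(\gamma-1)a\otimes m + a\otimes(\gamma-1)m$. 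So your ``Koszul complex'' picture does not apply.

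The paper replaces this step with Lemma~\ref{lem:nakamuraseq}: the action map $\mathfrak{I}\colon D\hat\otimes_K M^r\to M^r$, $\lambda\otimes m\mapsto \lambda m$, has kernel \emph{equal to} the image of $Z_{\mathrm{diag}}$, and $Z_{\mathrm{diag}}$ is injective (Lemma~\ref{lem:ZinjectiveDfm}, via Sen theory). This is the substantive content you are missing, and its proof is a rather delicate strictness and norm-estimate argument (writing each $\lambda\otimes m - 1\otimes\lambda m$ as $Zx$ with controlled Fr\'echet norms). Once this exact sequence is in hand, the cone of $Z$ on $\Dfm_n(M)$ is $D_n\tilde\otimes_D M$, and a further step (Lemma~\ref{lem:DDnquasiiso}) identifies $C_\Psi(D_n\tilde\otimes_D M)$ with $D_n\otimes_D C_\Psi(M)$; this last step is where coadmissibility of $C_\Psi(M)$ is actually used, together with strictness of $\Psi-1$ to commute completion with kernels and cokernels. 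The vanishing of $\mathbf{R}^1\varprojlim$ on the side of $(H^i_{\Psi,Z}(\Dfm_n(M)))_n$ comes instead from the base-change property of the Herr complex (Theorem~\ref{thm:perfect}), which makes the system consistent --- this is independent of the coadmissibility hypothesis on $C_\Psi(M)$.
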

	
	This makes it clear that the coadmissibility of the Iwasawa cohomology of $M$ is a desirable property and we show that a sufficient condition is that $M^{\Psi=1}$ is finitely generated as a $D(U,K)$-module. 
	We proceed to prove that this condition is satisfied by modules of rank one, which are of the form $\cR_K(\delta)$ for a locally $L$-analytic character $\delta\colon L^{\times}\to K^{\times}.$ By a dévissage argument this carries over to trianguline modules, i.e., successive extensions of such $\cR_K(\delta).$ This leads us to the following theorem.
	\begin{introtheorem}(Theorem \ref{thm:perfectTrianguline})
		Let $M$ be a trianguline $L$-analytic $(\varphi_L,\Gamma_L)$-module over $\cR_K.$ Then $C_{c\Psi}(M)$ is a perfect complex of $D(\Gamma_L,K)$-modules for any constant $c \in K^{\times}.$
	\end{introtheorem}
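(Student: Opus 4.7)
The argument proceeds by reduction to the rank-one case followed by an analysis of $C_\Psi(\cR_K(\delta))$, using the Fréchet--Stein structure of $D(U,K)$.

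\emph{Step 1: Reduction to $c=1$ and to $D(U,K)$.} By (the natural variant of) Lemma \ref{lem:constantirrelevantpsi}, $C_{c\Psi}(M)$ is quasi-isomorphic to $C_\Psi(M(\rho))$ for a suitable locally $L$-analytic character $\rho \colon \Gamma_L \to K^\times$. Twisting a trianguline module by a character produces another trianguline module (with graded pieces $\cR_K(\delta_i\rho)$), so we may assume $c=1$. Fix an open subgroup $U \subset \Gamma_L$ isomorphic to $o_L$ as in the introduction. Since $D(\Gamma_L,K)$ is finite free over $D(U,K)$ with basis indexed by coset representatives of $U$ in $\Gamma_L$, perfectness of a complex over $D(\Gamma_L,K)$ is equivalent to perfectness over $D(U,K)$, and it suffices to work over the latter.

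\emph{Step 2: Dévissage.} Given a trianguline filtration $0 = M_0 \subset M_1 \subset \cdots \subset M_n = M$ with $M_i / M_{i-1} \cong \cR_K(\delta_i)$, the operator $\Psi$ respects the filtration, producing short exact sequences of complexes
\[
0 \to C_\Psi(M_{i-1}) \to C_\Psi(M_i) \to C_\Psi(\cR_K(\delta_i)) \to 0,
\]
and hence distinguished triangles in $\mathbf{D}(D(U,K))$. Because the class of perfect complexes is closed under cones, induction on $n$ reduces the theorem to the rank-one case $M = \cR_K(\delta)$.

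\emph{Step 3: Rank-one case.} Building on the finite generation of $\cR_K(\delta)^{\Psi=1}$ over $D(U,K)$ established earlier in the paper, one shows by an analogous direct computation (using the explicit action of $\Psi$ on the Robba ring and the Schneider--Teitelbaum/Mellin description of $D(U,K)$ as functions on the open unit disc) that the cokernel $\cR_K(\delta)/(\Psi-1)\cR_K(\delta)$ is also finitely generated as a $D(U,K)$-module. Hence $C_\Psi(\cR_K(\delta))$ has finitely generated cohomology in both nonzero degrees. The affinoid slices $D_n$ of the Fréchet--Stein structure on $D(U,K)$ are one-dimensional regular Tate algebras (of global dimension at most one), so every finitely generated $D_n$-module admits a projective resolution of length $\leq 1$. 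Combined with the already two-term shape of $C_\Psi$, this yields a uniform Tor-amplitude bound on each slice, and the slice-wise perfect representatives can be assembled, via the coadmissibility formalism, into a bounded complex of finitely generated projective $D(U,K)$-modules quasi-isomorphic to $C_\Psi(\cR_K(\delta))$.

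\emph{Main obstacle.} The technically subtle point is Step 3: lifting the (already known) finite generation of $H^1$ and $H^2$ to perfectness of the whole complex. Coadmissibility of the cohomology is automatic, but producing an honest strictly perfect representative requires both a uniform Tor-amplitude bound across the Fréchet--Stein tower $(D_n)_n$ and compatibility of the slice-wise resolutions under the transition maps $D_{n+1} \to D_n$. The low dimension and regularity of the $D_n$, together with the genuinely two-term shape of $C_\Psi$, should make this manageable by mirroring the deformation-theoretic perfectness results of \cite{RustamFiniteness}, but the bookkeeping of the compatibility across the tower is where the real work lies.
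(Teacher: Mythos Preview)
Your Steps 1 and 2 are correct and match the paper's proof. The reduction from $D(\Gamma_L,K)$ to $D(U,K)$ is handled in the paper by Lemma \ref{lem:FlatAlgebraProjectiveResolution} applied to the cohomology groups, the reduction to $c=1$ by Lemma \ref{lem:constantirrelevantpsi}, and the d\'evissage to rank one is implicit in Proposition \ref{prop:rank1perf} together with the two-out-of-three property of perfect complexes.

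Step 3 diverges from the paper and contains a genuine gap. Your slice-by-slice strategy --- proving perfectness over each $D_n$ and then gluing via the coadmissibility formalism --- runs into exactly the obstacle you identify, and the paper's framework does not resolve it: Proposition \ref{prop:equivalenceRlim} only gives an equivalence between $\mathbf{D}^-_{\mathcal{C}}(D)$ and consistent objects, and coadmissible cohomology is strictly weaker than perfect. Nothing in the paper lets you assemble slice-wise strictly perfect representatives into a global one.

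The paper avoids this entirely by working directly over $D(U,K)$ and exploiting two facts you do not use. First, $D(U,K)$ is a Pr\"ufer domain (Remark \ref{rem:Prüfer}), so any finitely generated torsion-free module over it is projective. Second, Lemma \ref{lem:finiteperfect} shows that any finite-$K$-dimensional $D(U,K)$-module admits a two-term free resolution. The paper then argues via Proposition \ref{prop:psiperfchar}: the heart $\mathcal{C}(M)=(\varphi-1)M^{\Psi=1}$ sits inside $M^{\Psi=0}$, hence is torsion-free, and once $M^{\Psi=1}$ is known to be finitely generated (Proposition \ref{prop:rank1perf}) so is $\mathcal{C}(M)$, which is therefore projective. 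The remaining pieces $M^{\varphi=1}$ and $M/(\Psi-1)$ are finite-dimensional over $K$ (Remark \ref{rem:finiteperfectapplied} and Theorem \ref{thm:finitepsi}), hence perfect by Lemma \ref{lem:finiteperfect}. Thus every cohomology group of $C_\Psi(M)$ is individually perfect, and the complex is perfect by \cite[\href{https://stacks.math.columbia.edu/tag/066U}{Tag 066U}]{stacks-project}. No Fr\'echet--Stein gluing is needed.
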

	
	From the work of Schneider and Venjakob one obtains that the Iwasawa cohomology $H^i_{\mathrm{Iw}}(\Gamma_L,V)$  of an $o_L$-linear representation $V$ of $G_L$ is computed by the complex $$\mathbb{D}(V(\tau)) \xrightarrow{\psi_{\mathrm{LT}}-1} \mathbb{D}(V(\tau)),$$ with some character $\tau.$ Applying this to $V(\tau^{-1})$  instead of $V$ we obtain, in particular, that  $\mathbb{D}(V)^{\psi_{\mathrm{LT}}=1}$ is a finite $\Lambda= o_L\llbracket \Gamma_L \rrbracket $-module. If one assumes further that $V$ is $L$-analytic, one can show that $\mathbb{D}(V)^{\psi_{\mathrm{LT}}=1}$ is contained in the overconvergent submodule $\mathbb{D}^{\dagger}(V)$ providing us with a natural map $$D(\Gamma_L,K) \otimes_{\Lambda}\mathbb{D}^{\dagger}(V)^{\psi_{\mathrm{LT}}=1}  \to M^{\psi_{\mathrm{LT}}=1},$$ where $M$ denotes the completed base change to $K$ of the $(\varphi_L,\Gamma_L)$-module over $\cR_L$ attached to $V.$
	We conjecture that this map is surjective.
	\begin{con*} (Conjecture \ref{con:comparison})
		The natural map $$D(\Gamma_L,K) \otimes_{\Lambda}\mathbb{D}^{\dagger}(V)^{\psi_{\mathrm{LT}}=1}  \to M^{\psi_{\mathrm{LT}}=1}$$ is surjective.
	\end{con*}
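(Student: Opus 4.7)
The strategy is to establish the coadmissibility of both sides as $D(\Gamma_L,K)$-modules and to reduce surjectivity to a verification at each affinoid level of the Fréchet-Stein structure $D(U,K) = \varprojlim_n D_n$. For the source, applying \cite{SV15} to $V(\tau^{-1})$ shows that $\mathbb{D}(V)^{\psi_{\mathrm{LT}}=1}$ is a finitely generated $\Lambda$-module, and under the $L$-analyticity hypothesis on $V$ this module lies inside $\mathbb{D}^{\dagger}(V)$; hence $D(\Gamma_L,K) \otimes_\Lambda \mathbb{D}^{\dagger}(V)^{\psi_{\mathrm{LT}}=1}$ is finitely presented and therefore coadmissible.

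Granting the coadmissibility of $M^{\psi_{\mathrm{LT}}=1}$---which is available in the trianguline case by Theorem \ref{thm:perfectTrianguline} and is precisely the kind of statement the paper is set up to promote from the étale situation---the cokernel $C$ of the natural map is coadmissible, so it vanishes iff $D_n \otimes_{D(\Gamma_L,K)} C = 0$ for every $n$. Via the deformation construction and a level-$n$ version of Theorem \ref{thm:iwasawadfm}, one identifies $D_n \otimes_{D(\Gamma_L,K)} M^{\psi_{\mathrm{LT}}=1}$ with (a natural subquotient of) $\Dfm_n(M)^{\psi_{\mathrm{LT}}=1}$, reducing the problem to a surjectivity statement between finitely generated $D_n$-modules.

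To handle this level-$n$ statement, I would use specialization at $L$-analytic characters. The affinoid $\mathrm{Sp}(D_n)$ contains a Zariski-dense set of $K'$-valued points corresponding to $L$-analytic characters $\chi \colon \Gamma_L \to K'^{\times}$; specialization at each such $\chi$ recovers the $\psi_{\mathrm{LT}}=1$-part of the $(\varphi_L,\Gamma_L)$-module attached to the twist $V(\chi^{-1})$, which by another application of \cite{SV15} coincides, after extending scalars, with its overconvergent part and is therefore hit by the elements of $\mathbb{D}^{\dagger}(V(\chi^{-1}))^{\psi_{\mathrm{LT}}=1}$ pulled back along the comparison map. A Nakayama-type argument applied to the finitely generated $D_n$-module $D_n \otimes_{D(\Gamma_L,K)} C$ then forces it to vanish.

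The principal obstacle is the coadmissibility of $M^{\psi_{\mathrm{LT}}=1}$ beyond the trianguline case, which is the central open question of this paper and underlies the entire approach. A secondary difficulty is to make the specialization argument rigorous across the whole family $\{V(\chi^{-1})\}_\chi$: one needs that the overconvergent lift from \cite{SV15} is compatible with $\Dfm_n(M)$ uniformly in the twisting character, and that the Zariski-dense set of accessible $\chi$ is large enough to detect the cokernel at each finite level. Together these amount to a propagation of \cite{SV15} along $\mathfrak{X}_{\Gamma_L}$, which is where the genuinely new input would be required.
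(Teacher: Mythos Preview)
The statement you are attempting to prove is a \emph{conjecture} in the paper, not a theorem; the paper offers no proof and indeed uses it only as a hypothesis (see Theorem \ref{thm:psiperfect} and Corollary \ref{cor:constanttheorem}, both of which explicitly assume Conjecture \ref{con:comparison}). So there is nothing to compare your proposal to on the paper's side.

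Your strategy also contains a genuine circularity. The coadmissibility of $M^{\psi_{\mathrm{LT}}=1}$ that you ``grant'' is, by Proposition \ref{prop:psiperfchar}, equivalent to its finite generation over $D(U,K)$, and in the étale case the paper's only route to finite generation is precisely Conjecture \ref{con:comparison} itself: if the comparison map is surjective then $M^{\psi_{\mathrm{LT}}=1}$ is a quotient of the finitely generated module $D(\Gamma_L,K)\otimes_\Lambda \mathbb{D}^\dagger(V)^{\psi_{\mathrm{LT}}=1}$. Invoking Theorem \ref{thm:perfectTrianguline} does not help here, because the conjecture concerns étale modules attached to representations $V$, which need not be trianguline. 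Without an independent proof of coadmissibility in the étale case, your reduction to finitely generated $D_n$-modules and the subsequent Nakayama argument never get off the ground.

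The specialization step has a further gap. At a point $\chi$ you want to say that $\mathbb{D}^\dagger(V(\chi^{-1}))^{\psi_{\mathrm{LT}}=1}$ surjects onto the fibre of $M^{\psi_{\mathrm{LT}}=1}$; but what \cite{SV15} gives you is only that the $\Lambda$-module $\mathbb{D}(V(\chi^{-1}))^{\psi_{\mathrm{LT}}=1}$ is finitely generated and lands in the overconvergent part. That does not by itself produce surjectivity after base change to $\cR_K$---this is exactly the content of the conjecture for the single representation $V(\chi^{-1})$, so your argument is assuming the conjecture pointwise to prove it in family. You correctly flag in your final paragraph that the ``genuinely new input'' is missing; in fact it is the entire content of the statement.
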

We explain in section \ref{sec:etalecomp} that this map is not bijective unless $L= \QQ_p$.
Finally we show how the étale results of Schneider and Venjakob can be implemented to show general perfectness statements for those modules that arise as a base change of some $M_0$ over $\cR_L$ under the assumption of this conjecture. 

Note that Conjecture \ref{con:comparison} asserts a priori that $M^{\psi_{\mathrm{LT}}=1}$ is finitely generated over $D(\Gamma_L,K)$ as a quotient of the base change of $\mathbb{D}(V)^{\psi_{\mathrm{LT}}=1},$ which is $\Lambda$-finitely generated (cf.\cite{SV15}).  As mentioned before, we can compare $M^{\psi_{\mathrm{LT}}=1}$ to $H^1_{\mathrm{Iw}}(M)$ by twisting with a suitable character. But this character is not étale unless $L=\QQ_p.$

	The restriction to modules coming from $\cR_L$ is rooted in the methodology of the proof, which is adapted from \cite{KPX}. The idea is that (assuming Conjecture \ref{con:comparison}) the finite generation of $M^{{\psi_{\mathrm{LT}}}=1}$ and hence the perfectness of $C_{\psi_{\mathrm{LT}}}(M)$ (cf. Proposition \ref{prop:psiperfchar}) holds for a certain class of $(\varphi_L,\Gamma_L)$-modules, namely the étale modules studied by Schneider and Venjakob in \cite{SV15}. An induction over Harder--Narasimhan slopes of $(\varphi_L,\Gamma_L)$-modules ``propagates'' the result to all $L$-analytic $(\varphi_L,\Gamma_L)$-modules over $\cR_L.$ By introducing the large extension $K/L$ we leave the setting of Kedlayas slope filtrations due to no longer working with Bézout rings and our proofs make heavy use of the structure of $D(U,K)$ as a power series ring. For that reason we are uncertain whether the results can be descended to $M_0$ (as a $D(\Gamma_L,L)$-module) and whether they hold for any $M$ (not necessarily arising as a base change from $\cR_L$ or say $\cR_F$ for some finite extension $F/L$). 
	
Note that once we establish perfectness of $C_{\psi_{\mathrm{LT}}}(M)$ for (not nescessarily étale) modules of the above form, we can perform a twist by a suitable character to conclude perfectness of $C_{c\Psi}(M)$ for every constant $c \in L^\times.$
	\begin{introtheorem}[Theorem \ref{thm:psiperfect}, Corollary \ref{cor:constanttheorem}]
		Assume Conjecture \ref{con:comparison}.
		Let $M_0$ be an $L$-analytic $(\varphi_L,\Gamma_L)$-module over $\cR_L$ and let $M:= K\hat{\otimes}_{L}M_0$. Then the complex $C_{c\Psi}(M)$ of $D(\Gamma_L,K)$-modules is perfect for every $c \in L^\times.$
	\end{introtheorem}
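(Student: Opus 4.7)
The plan is first to reduce to the case $c = \pi_L/q$, i.e.\ to proving perfectness of $C_{\psi_{\mathrm{LT}}}(M)$, and then to establish the latter by induction on the Harder--Narasimhan filtration of $M_0$. For the reduction, since $c\Psi - 1$ and $\psi_{\mathrm{LT}} - 1$ differ only by a multiplicative constant in $L^\times$, Lemma \ref{lem:constantirrelevantpsi} yields a quasi-isomorphism $C_{c\Psi}(M) \simeq C_{\psi_{\mathrm{LT}}}(M(\rho))$ for a suitable locally $L$-analytic character $\rho\colon\Gamma_L\to o_L^\times$, and $M(\rho) = K\hat{\otimes}_L(M_0(\rho))$ again satisfies the hypotheses of the theorem. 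By Proposition \ref{prop:psiperfchar}, perfectness of $C_{\psi_{\mathrm{LT}}}(M)$ is equivalent to finite generation of $M^{\psi_{\mathrm{LT}}=1}$ over $D(\Gamma_L,K)$.

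The base case is that $M_0$ is étale. Then $M_0$ is the Robba-ring counterpart of $\mathbb{D}^{\dagger}(V)$ for some $L$-analytic $o_L$-representation $V$ of $G_L$. By Schneider--Venjakob, $\mathbb{D}(V)^{\psi_{\mathrm{LT}}=1}$ is a finitely generated $\Lambda$-module sitting inside $\mathbb{D}^{\dagger}(V)^{\psi_{\mathrm{LT}}=1}$, and Conjecture \ref{con:comparison} supplies a surjection
\[
D(\Gamma_L,K) \otimes_{\Lambda}\mathbb{D}^{\dagger}(V)^{\psi_{\mathrm{LT}}=1} \twoheadrightarrow M^{\psi_{\mathrm{LT}}=1}
\]
whose source is finitely generated over $D(\Gamma_L,K)$, giving the desired finiteness.

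For general $M_0$ we induct on the length $n$ of its Kedlaya slope filtration, which exists since $\cR_L$ is a Bézout domain. If $n = 1$, then $M_0$ is pure of some slope $s \in \mathbb{Q}$; choose a rank-one $L$-analytic module $\cR_L(\delta)$ of slope $-s$, so that $M_0 \otimes_{\cR_L} \cR_L(\delta)$ is étale. The base case applies to the twist, and the conclusion is transported back to $M_0$ by the formalism of rank-one tensor twists as in \cite{KPX}, using that the class of modules of the form $K\hat{\otimes}_L M_0$ is stable under such twists. For $n > 1$, the short exact sequence $0 \to M_0' \to M_0 \to M_0'' \to 0$, in which $M_0''$ is the top isoclinic graded piece and $M_0'$ has strictly shorter HN filtration, yields under $K\hat{\otimes}_L-$ a distinguished triangle of $\psi_{\mathrm{LT}}$-complexes whose outer terms are perfect by the inductive hypothesis and the isoclinic case. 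Closure of perfect complexes under cones delivers perfectness of $C_{\psi_{\mathrm{LT}}}(M)$.

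The main obstacle is exactly the étale base case, which is what Conjecture \ref{con:comparison} is engineered to resolve: without it there is no direct route to finite generation of $M^{\psi_{\mathrm{LT}}=1}$, since the passage from the integral module $\mathbb{D}(V)$ to the completed Robba base change destroys the Schneider--Venjakob finiteness argument. A secondary delicate point is the isoclinic-to-étale twist, where one must produce rank-one $L$-analytic $(\varphi_L,\Gamma_L)$-modules over $\cR_L$ of any prescribed rational slope and track their effect on the $\psi_{\mathrm{LT}}$-complex; this relies on the classification of rank-one $L$-analytic modules by locally $L$-analytic characters of $L^\times$.
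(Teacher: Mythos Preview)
Your overall architecture---reduce to $C_{\psi_{\mathrm{LT}}}$, handle the étale case via the conjecture, then climb the Harder--Narasimhan filtration---matches the paper, and both the étale step and the final dévissage along the HN filtration are correct. The gap is in your treatment of the isoclinic case.

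You write: ``choose a rank-one $L$-analytic module $\cR_L(\delta)$ of slope $-s$, so that $M_0 \otimes_{\cR_L} \cR_L(\delta)$ is étale.'' But a rank-one module over $\cR_L$ has \emph{integer} slope: by definition $\mu(\cR_L(\delta))=\operatorname{val}_{\pi_L}(\delta(\pi_L))\in\Z$. So if $M_0$ is isoclinic of slope $s\notin\Z$ there is no rank-one twist landing in slope zero, and your reduction breaks down exactly where it matters. (The ``classification of rank-one $L$-analytic modules by locally $L$-analytic characters of $L^\times$'' that you invoke does not help: the slope is still the $\pi_L$-valuation of $\delta(\pi_L)$.)

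The paper does not twist to étale in one step. For integral slopes it uses the exact sequence $0\to M t_{\mathrm{LT}}\to M\to M/t_{\mathrm{LT}}\to 0$ together with the unconditional perfectness of $C_{\psi}(M/t_{\mathrm{LT}})$ (Corollary~\ref{kor:modtperfect}) to shift the slope by $1$ and iterate. For non-integral slopes it constructs (Lemma~\ref{lem:liuanalytic}) an $L$-analytic module $E_d$ of rank $d$, isoclinic of slope $1/d$, built as a successive extension of $\cR_L$ and $\cR_L(x)$; tensoring with $E_d$ shifts the slope by $1/d$. One then arranges, via Lemma~\ref{lem:psisurjektiv}, that the relevant $(\psi-1)$-cokernels vanish and shows that $(E_d\otimes M_0)^{\psi=1}$ surjects onto $M_0^{\psi=1}$, so finite generation propagates down from the integral-slope case. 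This higher-rank auxiliary construction is the missing ingredient in your argument.

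A related point: even when $s\in\Z$, your ``transport back by rank-one twists'' is not automatic. If $\delta_{|o_L^\times}\neq 1$ then $C_{\psi_{\mathrm{LT}}}(M)$ and $C_{\psi_{\mathrm{LT}}}(M(\delta))$ are not related by Lemma~\ref{lem:constantirrelevantpsi}; if $\delta_{|o_L^\times}=1$ that lemma only gives $C_{\psi_{\mathrm{LT}}}(M(\delta))\simeq C_{c\psi_{\mathrm{LT}}}(M)$ for some $c$, which is a different complex from the one you want. This is why the paper argues through the $t_{\mathrm{LT}}$-sequence and $M/t_{\mathrm{LT}}M$ rather than by comparing twisted complexes directly.
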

	
	We go on to discuss the Euler--Poincaré characteristic formula for $C_{\Psi,Z}(M)$ and prove that the expected Euler--Poincaré characteristic formula $$\sum_{i \in \N_0} (-1)^i\operatorname{dim}_KH^i_{\Psi,Z}(M) = -[\Gamma_L:U]\operatorname{rank}_{\cR_L}(M)$$ holds in the trianguline case (see Remark \ref{rem:epctriangulin}). We also show that the formula in the general case would follow from $\mathcal{C}(M)= (1-\varphi_L)M^{\Psi=1}$ being projective over $D(U,K)$ of rank $[\Gamma_L:U]\operatorname{rank}_{\cR_K}(M).$
	We formulated our Conjecture \ref{con:comparison} in order to get a streamlined treatment of the perfectness of $C_{\Psi}(M).$ It would be of independent interest to describe the kernel of the comparison map and study the comparison map for the co-kernels of $\psi_{\mathrm{LT}}-1$ in the future.

\section*{Acknowledgements}
This article is based on parts of the author's Ph.D.\ thesis (cf. \cite{rusti}) and we thank Otmar Venjakob for his guidance and many helpful discussions. We also thank Jan Kohlhaase, David Loeffler, and Muhammad Manji for their comments on \cite{rusti}.   This research is funded by the Deutsche Forschungsgemeinschaft (DFG, German Research Foundation) TRR 326 \textit{Geometry and Arithmetic of Uniformized Structures}, project number 444845124.

	\section{Preliminaries}
\subsection{Robba rings and their $(\varphi_L,\Gamma_L)$-actions.}
Let $\varphi_L(T)$ be a Frobenius power series for some uniformiser $\pi_L$ of $L.$ We denote by $L_n$ the extension of $L$ that arises by adjoining the $\pi_L^n$-torsion points of the Lubin--Tate formal group attached to $\varphi_L$ to $L.$ Let $\Gamma_L= \operatorname{Gal}(L_\infty/L)\cong o_L^\times,$ where $L_\infty = \bigcup_{n\geq 0} L_n.$ Since $\varphi_L$ and $\Gamma_L$ induce endomorphisms of the LT group we obtain corresponding actions on $o_L\llbracket T\rrbracket.$

\begin{defn}
	Let $K \subset \CC_p$ be a complete field.
	We denote by $\mathcal{R}_K^{[r,s]}$ the ring of Laurent series (resp. power series if $r=0$) with coefficients in $K$ that converge on the annulus $r\leq \lvert x\rvert \leq s$ for $r,s \in p^{\QQ}$ and $x \in \CC_p.$
	It is a Banach algebra with respect to the norm $\lvert \cdot  \vert_{[r,s]} := \max (\lvert \cdot \rvert_r, \lvert \cdot \rvert_s).$ We further define the Fréchet algebra $\mathcal{R}_K^r:=\mathcal{R}_K^{[r,1)} := \varprojlim_{r<s<1} \mathcal{R}_K^{[r,s]}$ and finally the \textbf{Robba ring} $\mathcal{R}_K:= \varinjlim_{0\leq r<1} \mathcal{R}_K^{[r,1)}$ endowed with the locally convex inductive limit topology, i.e., the finest topology making $\cR_K$ locally convex such that $\cR_K^{[r,1)} \to \cR_K$ is continuous for every $r \in [0,1).$ We set $\cR_K^+:= \cR_K^{[0,1)}.$
	For an affinoid Algebra $A$ over $K,$ i.e., a quotient of a Tate-algebra in finitely many variables over $K,$ and $? \in \{+,[r,s],[r,1)\}$ we define $\cR_A^? := A\hat{\otimes}_K\cR_K^{?},$ where $A\hat{\otimes}_K\cR_K^{?}$ denotes the completion with respect to the projective tensor product topology. Lastly, we define the \textbf{relative Robba ring} $\cR_A := \varinjlim_r  A\hat{\otimes}_K\cR_K^{[r,1)}.$
\end{defn}
We obtain similar continuous actions of $\Gamma_L$ on $\cR_L^I$ for any interval $I=[r,s]\subset [0,1].$ For details concerning the $\varphi_L$-action we refer to \cite[Section 2.2]{Berger}. To ensure that the action of $\varphi_L$ on $\cR_L^I$ is well-defined
one has to assume that the lower boundary $r$ of $I$ is either $0$ or $r > \abs{u}^q=:r_L$ for a non-trivial $\pi_L$-torsion point $u$ of the Lubin--Tate group \footnote{In the second case the assumptions guarantee $\varphi(T)\in (\cR_L^{I^{1/q}})^{\times}.$}.
When $\varphi_L$ acts on $\cR_L^I$ it changes the radius of convergence and we obtain a morphism 
$$\varphi_L: \cR_L^{I}\to \cR_L^{I^{1/q}} .$$ We implicitly assume that $r,s$ lie in $\abs{\overline{\QQ_p}},$ because in this case the algebra $\cR_L^I$ is affinoid (cf. \cite[Example 1.3.2]{lutkebohmert2016rigid}) and this assumption is no restriction when considering the Robba ring due to cofinality considerations.
We henceforth endow the rings 
$\cR_L^{[r,1)}$ (for $r=0$ or $r>r_L$) and $\cR_L$ with the $(\varphi_L,\Gamma_L)$-actions induced by the actions on $\cR_L^I.$ 
We extend the actions to $\cR_K$ or more generally $\cR_A$ by letting $\Gamma_L$ and $\varphi_L$ act trivially on the coefficients. 

\begin{defn} \label{def:phigammamodule} Let $r_L<r_0.$ A (projective) \textbf{$\varphi_L$-module} over $\mathcal{R}_A^{r_0}=\mathcal{R}_A^{[r_0,1)}$ is a finite (projective)  $\mathcal{R}^{r_0}_A$-module $M^{r_0}$ equipped with an isomorphism $\varphi_L^*M^{r_0} \cong M^{r_0^{1/q}}:= M^{r_0} \otimes_{\mathcal{R}_A^{r_0}}\mathcal{R}_A^{r_0^{1/q}}.$ A $\varphi_L$-module $M$ over $\mathcal{R}_A$ is defined to be the base change of a  $\varphi_L$-module $M^{r_0}$ over some $\mathcal{R}_A^{r_0}.$ We call $M^{r_0}$ a \textbf{model} of $M$ over $[r_0,1).$\\
	A \textbf{$(\varphi_L,\Gamma_L)$-module} over the above rings is a $\varphi_L$-module whose model is projective and endowed with a semilinear continuous action of $\Gamma_L$ that commutes with $\varphi_L.$ Here continuous means that for every $m \in M^{[r,s]} := M^{r_0} \otimes_{\mathcal{R}_A^{r_0}}\mathcal{R}_A^{[r,s]}$ the orbit map $\Gamma_L \to M^{[r,s]}$ is continuous for the profinite topology on the left side and the Banach topology on the right-hand side.\\
	A $(\varphi_L,\Gamma_L)$-module over $\cR_A$ is called \textbf{$L$-analytic} if the induced action of $\operatorname{Lie}(\Gamma_L)$ is $L$-bilinear. 
\end{defn}
\begin{rem} $\cR_A^{r_0}$ is a Fréchet-Stein Algebra in the sense of Schneider--Teitelbaum. 
A finite projective $\cR_A^{r_0}$-module $M^{r_0}$ is automatically coadmissible, i.e., the natural map $$M^{r_0} \to \varprojlim_s (\cR_A^{[r,s]} \otimes_{\cR_A^{r_0}} M^{r_0})$$ is an isomorphism.
\end{rem}
\begin{rem} The $\Gamma_L$ action on an $L$-analytic module $M$ over $\cR_K$ (resp. $\cR_A$) extends by continuity to an action of the algebra of $L$-analytic distributions $D(\Gamma_L,K)$ (resp. $A \hat{\otimes} D(\Gamma_L,K)$).
\end{rem}
A result of Schneider and Teitelbaum (cf. \cite{schneider2001p}) asserts that there exists an isomorphisms of topological algebras $D(o_L,K) \cong \cR_K^{[0,1)}$ provided that $K$ contains a certain period of the Lubin--Tate group. For the entirety of this article we assume that $K$ is a complete subfield of $\C_p$ containing such a period. For an open subgroup $U \subset \Gamma_L$ isomorphic to $o_L$ we can take a preimage $Z$ of the variable $T \in \cR_K^{[0,1)}$ under the isomorphism from \cite{schneider2001p}. As a result any distribution in $D(U,K)$ can be expressed as a convergent power series in $Z.$ This choice depends, of course, on the chart used to identify $U$ and $o_L.$ 
Let $\Gamma_0 =\Gamma_L$ and for $n \geq 1$ let  $\Gamma_n := \chi_{\mathrm{LT}}^{-1}(1+\pi_L^no_L).$ Let $n_0$ be minimal among $n \in \NN$ with the property that the series $\log$ and $\exp$ induce isomorphisms $1+\pi_L^n \cong \pi_L^n.$
For the remainder of the article we fix the following compatible choice of variables.
\begin{defn} \label{def:variable}
	Consider for $n\ge n_0$ the system of commutative diagrams 
	$$	\begin{tikzcd}
		\Gamma_n \arrow[r, "\chi_{\mathrm{LT}}"]                 & 1+\pi_L^n \arrow[r, "\log_p"]                 & \pi_L^no_L                 \\
		\Gamma_m \arrow[u, hook] \arrow[r, "\chi_{\mathrm{LT}}"] & 1+\pi_L^m \arrow[u, hook] \arrow[r, "\log_p"] & \pi_L^mo_L. \arrow[u, hook]
	\end{tikzcd}
	$$
	We fix the variable $Z_{m} \in D(\Gamma_m,K)$ for every $m\geq n_0$ to be the preimage of $T \in \cR_K^{[0,1)}$ under the isomorphism provided by \cite{schneider2001p}. Since $\pi_L^{n}o_L = \pi_L^{m-n} (\pi_L^no_L)$ we obtain the relationship
	\begin{equation}\label{eq:Znbeziehung} Z_m = \varphi_L^{m-n}(Z_n)
	\end{equation}
	 for every $m\geq n$ using \cite[Remark 1.2.7]{RustamFiniteness}.
\end{defn}

We recall some technical results concerning the action (cf. \cite[Lemma 1.5.3]{RustamFiniteness}).
\begin{lem}\label{lem:operatornormestimates}
	Let $M$ be a finitely generated module over $\cR_A^{r}$ with an $L$-analytic semi-linear $\Gamma_L$-action. Fix any closed interval $I=[r,s]\subset[r,1)$ and any Banach norm on $M^I.$   
	\begin{enumerate}[(i)]
		\item We have $\lvert\lvert \gamma-1\rvert \rvert_{M^I}\xrightarrow{\gamma \to 1} 0.$
		\item Furthermore $\lvert\lvert Z_n\rvert \rvert_{M^I}\xrightarrow{n \to \infty} 0.$
	\end{enumerate}
\end{lem}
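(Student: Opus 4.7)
The plan is to handle (i) via the semilinear Leibniz rule combined with pointwise continuity on a finite generating set, and then to deduce (ii) from (i) by relating the variable $Z_n$ to a group element of $\Gamma_n$ near the identity.

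For part (i), I would first fix a finite generating set $m_1,\dots,m_k$ of a model $M^{r_0}$ over $\cR_A^{r_0}$; after base change these still generate $M^I$, and up to equivalence the Banach norm on $M^I$ may be taken to be the quotient norm from the surjection $(\cR_A^I)^k \twoheadrightarrow M^I$. For $m = \sum_i f_i m_i$, the semilinearity identity
\[
(\gamma - 1)(m) = \sum_i (\gamma f_i - f_i)\,\gamma(m_i) + \sum_i f_i\,(\gamma m_i - m_i)
\]
yields an operator-norm estimate of the form
\[
\|\gamma - 1\|_{M^I} \leq C \cdot \bigl( \|\gamma - 1\|_{\cR_A^I} + \max_i \|\gamma m_i - m_i\|_{M^I} \bigr),
\]
where $C$ absorbs uniform bounds on $\|\gamma m_i\|_{M^I}$ near $\gamma = 1$. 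The second summand converges to $0$ by applying the continuity hypothesis to each of the finitely many $m_i$, and the first follows from operator-norm continuity of the $\Gamma_L$-action on $\cR_A^I$ at the identity, which can be read off the explicit formula $\gamma(T) = [\chi_{\mathrm{LT}}(\gamma)](T)$ together with the divisibility $[a](T) - T \in (a - 1)\,o_L\llbracket T\rrbracket$.

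For part (ii), I would use part (i) applied to the element $\gamma_n^* \in \Gamma_n$ corresponding to $1 \in o_L$ under the chart of Definition \ref{def:variable}: since $\gamma_n^* \to 1$ in $\Gamma_L$ as $n \to \infty$, (i) gives $\|\gamma_n^* - 1\|_{M^I} \to 0$. Under the Schneider--Teitelbaum identification $D(\Gamma_n, K) \cong \cR_K^{[0,1)}$, both $Z_n$ and $\delta_{\gamma_n^*} - 1$ lie in the principal augmentation ideal (generated by $T$), so there exists $u_n \in D(\Gamma_n, K)$ with $Z_n = u_n \cdot (\delta_{\gamma_n^*} - 1)$. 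A direct power-series computation in $\cR_K^+$ shows that the $u_n$ can be taken with operator norm on $M^I$ uniformly bounded in $n$, possibly after a preliminary bootstrap using (i) to make $\|Z_n\|_{M^I}$ small enough for the relevant inverse series to converge. The factorisation then gives $\|Z_n\|_{M^I} \leq \|u_n\|_{M^I} \cdot \|\gamma_n^* - 1\|_{M^I} \to 0$.

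The main obstacle I anticipate is part (i): upgrading the given strong (pointwise) continuity of the action to operator-norm continuity at the identity, which forces one to invoke the explicit Lubin--Tate description of the action on $\cR_A^I$ rather than relying only on abstract continuity. Once (i) is in place, (ii) is a comparatively formal consequence of the multiplicative structure on $D(\Gamma_n, K) \cong \cR_K^+$.
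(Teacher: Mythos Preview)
The paper does not actually prove this lemma; it merely recalls it with a reference to \cite[Lemma 1.5.3]{RustamFiniteness}. So there is no in-paper argument to compare against, and I will evaluate your proposal on its own terms.

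Your outline for (i) is the right strategy, but the key technical claim you rely on is false. The divisibility $[a](T)-T\in (a-1)\,o_L\llbracket T\rrbracket$ does \emph{not} hold for general Lubin--Tate groups. Take $\varphi_L(T)=\pi_L T+T^q$ and $a=1+\pi_L$: then $[a](T)-T=F(T,\varphi_L(T))-T$ has $T^q$-coefficient equal to $1$, which is not divisible by $a-1=\pi_L$. What is true (and what you actually need) is the norm estimate $\lvert[a](T)-T\rvert_\rho\to 0$ as $a\to 1$, uniformly for $\rho\in[r,s]$; this comes from writing $[a](T)-T=[a-1](T)\cdot(\text{unit in }o_L\llbracket T\rrbracket)$ via the formal group law, and then bounding $\lvert[a-1](T)\rvert_\rho\leq\lvert\varphi_L^m(T)\rvert_\rho$ when $v_{\pi_L}(a-1)\geq m$. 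Feeding this into a substitution estimate of the type $f([a](T))-f(T)=([a](T)-T)\cdot h(T)$ with $\lvert h\rvert_{I'}$ controlled on a slightly larger interval $I'\supset I$ then gives the operator-norm convergence on $\cR_A^I$. Your semilinear Leibniz reduction to generators is fine once this base-ring estimate is in place.

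For (ii), your factorisation $Z_n=u_n\cdot(\delta_{\gamma_n^*}-1)$ works, but the ``bootstrap'' you allude to is doing real work and you have not said what it is. A cleaner route avoids it entirely: by \eqref{eq:Znbeziehung} one has $Z_n=\varphi_L^{\,n-n_0}(Z_{n_0})$ inside $D(\Gamma_{n_0},K)\cong\cR_K^+$, and the continuous $D(\Gamma_{n_0},K)$-action on the Banach space $M^I$ necessarily factors through some Banach quotient $D_\rho(\Gamma_{n_0},K)$. Since $\lvert\varphi_L^k(T)\rvert_\rho\to 0$ for any fixed $\rho<1$, the operator norm of $Z_n$ on $M^I$ tends to zero. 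This uses $L$-analyticity directly (to get the $D(\Gamma_{n_0},K)$-action) rather than routing through (i).
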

Fix some $n \geq n_0$ and let $Z:= Z_n.$
In \cite{RustamFiniteness} we showed that the analytic cohomology of the monoid $\varphi_L\times U$ with coefficients in $M$ can be computed by the generalised Herr complex 

$$C_{\varphi_L,Z}(M):=\mathrm{Tot}\left(\begin{tikzcd} M \arrow{d}{Z} \arrow{r}{\varphi_L-1}& M\arrow{d}{-Z} \\	 M \arrow{r}{\varphi_L-1} &M\end{tikzcd}\right).$$

In this article we will only be working with $C_{\varphi,Z}(-)$ or the analogously defined $\Psi$-version $C_{\Psi,Z}(-).$ As in \cite{RustamFiniteness} we denote by $\Psi$ the canonical left-inverse of $\varphi_L$ and $\psi_{\mathrm{LT}}:= \frac{q}{\pi_L}\Psi$. Recall that $C_{\Psi,Z}(M) \simeq C_{\varphi,Z}(M)$ by \cite[Remark 3.2.2]{RustamFiniteness}.
\begin{thm}
	\label{thm:perfect}
	Let $A,B$ be $K$-affinoid and let $M$ be an $L$-analytic $(\varphi_L,\Gamma_L)$-module over $\cR_A.$ Let $f\colon A  \to B$ be a morphism of $K$-affinoid algebras. Then:
	\begin{enumerate}[(1)]
		\item $C_{\varphi_L,Z}(M) \in \mathbf{D}^{[0,2]}_{\text{perf}}(A).$
		\item The natural morphism $C_{\varphi_L,Z}(M) \otimes_{A}^\mathbb{L} B \to C_{\varphi_L,Z}(M \hat{\otimes}_A B)$ is a quasi-isomorphism.
	\end{enumerate}
\end{thm}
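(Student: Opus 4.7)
The plan is to first reformulate the complex as a Koszul-type construction. Using the quasi-isomorphism $C_{\varphi_L,Z}(M)\simeq C_{\Psi,Z}(M)$ recalled from \cite[Remark 3.2.2]{RustamFiniteness}, the right-hand complex identifies with the mapping cone of $Z$ acting on $C_{\Psi}(M):=(M\xrightarrow{\Psi-1}M)$, shifted appropriately. Consequently both assertions reduce to a compatible finiteness statement for $C_{\Psi}(M)$ equipped with its $Z$-action, and the sign conventions in the double complex become irrelevant for perfectness.

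For (1), I would choose a model $M^{r_0}$ with $r_0>r_L$ and exploit the Fréchet--Stein presentation $\cR_A^{r_0}=\varprojlim_n\cR_A^{[r_0,s_n]}$ for a cofinal sequence $s_n\nearrow 1$, setting $M^{[r_0,s_n]}:=M^{r_0}\otimes_{\cR_A^{r_0}}\cR_A^{[r_0,s_n]}$. The crucial input is the behaviour of $\varphi_L$ with respect to radii: the map $\cR_A^{[r_0,s_n]}\hookrightarrow\cR_A^{[r_0^{1/q},s_n^{1/q}]}$ is a completely continuous embedding of Banach algebras, which together with the left-inverse $\Psi$ renders $\Psi-1$ essentially Fredholm on the Banach modules $M^{[r_0,s_n]}$. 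Adjoining $Z$ as the second Koszul coordinate and using Lemma \ref{lem:operatornormestimates} to control its norm on each Banach piece, one obtains that each Banach-level complex $C_{\Psi,Z}(M^{[r_0,s_n]})$ is perfect over $A$ and concentrated in degrees $[0,2]$.

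The main obstacle is gluing these Banach-level statements into a single perfect $A$-complex. One must verify that the transition maps between the Koszul complexes have dense-image differentials so that $\mathbf{R}\varprojlim_n$ collapses to the ordinary limit and that this limit remains perfect over $A$; equivalently, that the associated $\varprojlim^1$-terms vanish. This is exactly the substance of the finiteness theorem for analytic monoid cohomology proved in \cite{RustamFiniteness}, and (1) is obtained by specialising that result to the monoid $\varphi_L^{\NN}\times U$ acting on $M$.

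For (2), the compatibility of models with completed tensor products provides canonical identifications $\cR_A^{?}\hat{\otimes}_A B\cong\cR_B^{?}$ and $M^{?}\hat{\otimes}_A B\cong (M\hat{\otimes}_A B)^{?}$ for $?\in\{r_0,[r_0,s_n]\}$, the latter using finite projectivity of the model. Since each Banach-level complex is perfect over $A$ by (1), standard base-change properties of perfect complexes yield the desired quasi-isomorphism at each finite stage. Commuting $-\otimes_A^{\mathbb{L}}B$ with $\mathbf{R}\varprojlim_n$ is the final step, and this is legitimate precisely because the derived limit has already been identified with a perfect $A$-complex, on which derived tensor products behave well.
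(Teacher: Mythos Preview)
The paper itself does not prove this theorem; its entire proof is the one-line citation ``See \cite[Theorem 3.3.12]{RustamFiniteness}.'' Your sketch is a faithful outline of the strategy underlying that reference---reduction to the $\Psi$-version, Banach-level finiteness via a compactness/Fredholm argument for $\varphi_L$ (or equivalently $\Psi$) on closed annuli, and gluing along the Fr\'echet--Stein presentation---so at the level of comparison there is nothing to add: your proposal is already more detailed than what the paper records.
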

\begin{proof}
	See \cite[Theorem 3.3.12]{RustamFiniteness}.
\end{proof}
\subsection{Functional analysis}
\begin{defn}
	A homomorphism of topological groups
	$f: G \to H$ is called \textbf{strict}, if the induced map $G/\ker f \to \operatorname{Im} f$ is a topological isomorphism with respect to the quotient topology on the left and the subspace topology on the right. 
\end{defn} 
\begin{lem}\label{lem:strictcompletion} 
	Let $G,H$ be metrizable topological groups with completions $\hat{G}$ (resp. $\hat{H}$) and
	let $f: G \to H$ be strict. Then the unique extension $$\hat{f}:\hat{G} \to \hat{H} $$ of $f$ is strict with kernel $\ker(\hat{f})=\overline{\ker(f)}$ and image $\operatorname{Im}(\hat{f}) = \overline{(\operatorname{Im}f)}.$  
\end{lem}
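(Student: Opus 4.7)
The plan is to first observe that $f$ is automatically uniformly continuous (with respect to the canonical uniform structure of a topological group, and using that homomorphisms send neighbourhoods of $e$ to neighbourhoods of $e$ up to the defining bound), so it admits a unique continuous extension $\hat{f}\colon \hat{G}\to\hat{H}$; this is a homomorphism by density and continuity of multiplication. The remaining three claims (kernel, image, strictness) I would derive from the following reformulation of strictness: a sequence $(g_n)$ in $G$ has $(f(g_n))$ Cauchy in $H$ if and only if $(\overline{g_n})$ is Cauchy in $G/\ker f$. The ``only if'' direction is strictness combined with the fact that Cauchy in $H$ is the same as Cauchy in the subspace $\operatorname{Im} f$ (since differences lie in $\operatorname{Im} f$ and the subspace topology on $\operatorname{Im}f$ agrees with the topology of $H$ restricted to $\operatorname{Im}f$); the ``if'' direction is continuity of $f$.

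For the kernel, one inclusion is immediate: $\overline{\ker f}^{\hat G}\subseteq \ker\hat{f}$ because $\ker\hat{f}$ is closed and contains $\ker f$. For the reverse, given $x\in\ker\hat{f}$ I would pick a sequence $(g_n)$ in $G$ with $g_n\to x$ in $\hat G$. Then $f(g_n)\to 0$ in $\hat H$, hence in $H$, so by the reformulation of strictness $\overline{g_n}\to 0$ in $G/\ker f$. Using metrizability (choose a countable neighbourhood basis $(U_k)$ of $e$ in $G$ and diagonalise), I can produce $k_n\in \ker f$ with $g_n - k_n \to 0$ in $G$. Then $(k_n) = (g_n-(g_n-k_n))$ is Cauchy in $G$ and $x = \lim k_n \in \overline{\ker f}^{\hat G}$.

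For the image, $\operatorname{Im}\hat f \subseteq \overline{\operatorname{Im} f}^{\hat H}$ is clear from writing $\hat f(x) = \lim f(g_n)$. Conversely, given $y\in\overline{\operatorname{Im} f}^{\hat H}$ write $y=\lim f(g_n)$ with $g_n\in G$; the sequence $(f(g_n))$ is Cauchy in $H$, so by strictness $(\overline{g_n})$ is Cauchy in $G/\ker f$. Here I would pass to a subsequence $(g_{n_k})$ with $\bar d(\overline{g_{n_k}},\overline{g_{n_{k+1}}})<2^{-k}$ in a left-invariant quotient metric and inductively adjust representatives $g_{n_k}' \in g_{n_k}\cdot\ker f$ so that $d(g_{n_k}',g_{n_{k+1}}')<2^{-k+1}$. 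Then $(g_{n_k}')$ is Cauchy in $G$, $f(g_{n_k}')=f(g_{n_k})$, and for $x := \lim g_{n_k}'\in\hat G$ we get $\hat f(x) = y$.

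Finally, to upgrade to strictness of $\hat f$, I would use the general fact (for metrizable groups) that taking completion commutes with quotients by closed normal subgroups, which yields $\hat G/\overline{\ker f}^{\hat G}\cong\widehat{G/\ker f}$, and similarly $\widehat{\operatorname{Im}f}\cong\overline{\operatorname{Im}f}^{\hat H}$ since the restricted topology on $\operatorname{Im}f$ agrees with the topology of $H$. Combining these with the topological isomorphism $G/\ker f\xrightarrow{\sim}\operatorname{Im}f$ furnished by strictness of $f$ and checking compatibility with $\hat f$ gives a topological isomorphism $\hat G/\ker\hat f \xrightarrow{\sim}\operatorname{Im}\hat f$. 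The main technical obstacle is exactly the Cauchy lifting across the strict quotient used in the kernel and image computations; everything else is formal manipulation of uniformities, and metrizability is used essentially only to enable this diagonal/subsequence lifting.
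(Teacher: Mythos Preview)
The paper does not actually prove this lemma: it simply cites \cite[Chapter IX \S 3.1 Corollary 2 p.~164]{BTG}. Your sketch is a correct direct argument and is essentially the content of Bourbaki's proof (factor through the induced isomorphism $G/\ker f \to \operatorname{Im} f$, complete each piece, and lift Cauchy sequences across the strict quotient using metrizability). One small remark: you use additive notation throughout, implicitly treating the groups as abelian; for arbitrary metrizable groups one must be a bit more careful with left versus right uniformities and with the fact that the completion is again a group, but since every application in the paper is to topological vector spaces this is harmless here.
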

\begin{proof}
	See \cite[Chapter IX §3.1 Corollary 2 p.164]{BTG}.
\end{proof}
\begin{lem}\label{lem:strictdensesubset}
	Let $G,H$ be Hausdorff abelian topological groups and $f: G \to H$ continuous such that $f$ is strict on a dense subgroup $D \subset G.$ Then $f$ is strict.
\end{lem}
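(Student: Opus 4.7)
The plan is to show that the continuous bijection $\bar f\colon G/\ker f \to f(G)$ (with $f(G)$ carrying the subspace topology from $H$) is open, which, after translating in $H$, reduces to the following local statement: for each open neighbourhood $U$ of $0$ in $G$, there exists an open neighbourhood $V$ of $0$ in $H$ with $V \cap f(G) \subset f(U)$.

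I would produce such a $V$ by threading together nested neighbourhoods on both sides. First pick a symmetric open $V_0 \ni 0$ in $G$ with $V_0 + V_0 \subset U$ (a standard consequence of the topological group structure). Strictness of $f|_D$ applied to the $D$-open set $V_0 \cap D$ yields an open $W_1 \ni 0$ in $H$ with $W_1 \cap f(D) \subset f(V_0 \cap D)$. Shrink $W_1$ to an open $W_2 \ni 0$ with $W_2 + W_2 \subset W_1$, and use continuity of $f$ at $0$ to pick a symmetric open $V_1 \subset V_0$ in $G$ with $f(V_1) \subset W_2$. I claim that $V := W_2$ works.

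To verify $W_2 \cap f(G) \subset f(U)$, take $y = f(g)$ in the intersection and use density of $D$ in $G$ to choose $d \in D \cap (g + V_1)$. Then $f(d) = y + f(d-g) \in W_2 + W_2 \subset W_1$, and since $f(d) \in f(D)$ we have $f(d) \in W_1 \cap f(D) \subset f(V_0 \cap D)$, so $f(d) = f(v')$ for some $v' \in V_0 \cap D$; the element $k := d - v'$ then lies in $\ker f$. The identity $g = (v' - (d-g)) + k$ displays $g$ as an element of $(V_0 + V_0) + \ker f \subset U + \ker f$ (the first summand uses the symmetry of $V_0$), whence $y \in f(U)$.

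The one point requiring care is the calibration of the two layers of halving so that the approximation errors on source and target absorb each other: the source error $d - g \in V_1$ produces $f(d-g) \in W_2$ which, added to $y \in W_2$, must still land in $W_1$ for strictness on $D$ to be usable. An alternative approach — complete both sides, apply Lemma \ref{lem:strictcompletion} to make $\hat f$ strict, then descend along $G \hookrightarrow \hat G$ — ends up requiring one to verify that $G/\ker f$ inherits the subspace topology from $\hat G / \ker \hat f$, which is essentially the same neighbourhood juggling in disguise, so the direct argument above seems more economical.
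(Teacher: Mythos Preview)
Your proof is correct and takes a genuinely different route from the paper's. The paper first reduces to the case where $f$ is a continuous bijection (replacing $G$ by $G/\ker f$ and $H$ by $\operatorname{Im} f$), and then argues via completions: since $f|_D\colon D \to f(D)$ is then a homeomorphism and $D$, $f(D)$ are dense in $G$, $H$ respectively, the induced map $\widehat{G}\to\widehat{H}$ between Hausdorff completions is a homeomorphism, and restricting its inverse to $H$ exhibits $f^{-1}$ as continuous. Your direct neighbourhood argument avoids both the reduction and the completion machinery, handling the kernel in place via the decomposition $g=(v'-(d-g))+k$. The paper's approach is shorter once completions are available; yours is more elementary and makes the mechanism transparent. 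One small correction to your closing remark: the paper's completion route does not run into the issue you describe, because after the reduction to a bijection the kernel is trivial, so there is no comparison of $G/\ker f$ with $\widehat{G}/\ker\widehat{f}$ to make---the only fact needed is that $G\hookrightarrow\widehat{G}$ is a topological embedding.
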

\begin{proof} Since $H$ is assumed to be Hausdorff $V/\ker f$ and $\operatorname{Im}f$ are Hausdorff and we can without loss of generality assume that $f$ is a continuous bijection that is strict when restricted to a dense subgroup and are left to show that its inverse is continuous. The abelian assumption asserts that $G,H$ admit (Hausdorff) completions $\widehat{G},$ $\widehat{H}$ (cf. \cite[III §3.5 Theorem 2]{BTG}) and since $f: D \to f(D)$ is a topological isomorphism, the induced map $\widehat{f_{\mid D}} \colon\widehat{D} \to \widehat{f(D)}$ is a topological isomorphism. Since $D$ (resp. $f(D)$) is dense in $G$ (resp. $H$) we conclude that the corresponding completions agree with $\widehat{G}$ (resp. $\widehat{H}$). The restriction of the inverse $\widehat{f_{\mid D}}^{-1}$ to $H$ agrees with $f^{-1}$ by construction and is continuous.
\end{proof}
The following criterion will play a crucial role for checking strictness and the Hausdorff property of certain cohomology groups of $(\varphi_L,\Gamma_L)$-modules. The second part of Lemma \ref{lem:strictcheck} is essentially an adaptation of \cite[Lemma 22.2]{SchneiderNFA} (which only treats Fredholm operators on Fréchet spaces) (cf. also \cite[Proposition 4.1.39]{oli}).
\begin{lem}\label{lem:strictcheck}
	
	\begin{enumerate}
		\item Any continuous linear surjection $V \to W$ between LF-spaces over $K$ is open and in particular strict.
		\item Any continuous linear map $f\colon V \to W$ between Hausdorff LF-spaces with finite dimensional cokernel is strict. In addition the cokernel is Hausdorff.
		
	\end{enumerate}
\end{lem}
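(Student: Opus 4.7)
For part (1), the plan is to invoke the non-archimedean open mapping theorem. An LF-space is the locally convex inductive limit of a countable system of Fréchet spaces, and such spaces are ultrabornological (equivalently webbed and barrelled in the non-archimedean sense). A continuous linear surjection between such spaces is therefore open; for instance one can cite the open mapping theorem in the form given in Schneider's \emph{Nonarchimedean Functional Analysis} (generalising \cite[Proposition 8.6]{SchneiderNFA} from the Fréchet case), or reduce to the Fréchet case by combining the Baire category theorem with the structure of the inductive system. Openness of a surjection is equivalent to strictness.

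For part (2), the strategy is to promote $f$ to a continuous surjection and then to apply (1). Set $n := \dim_K\operatorname{coker}(f)$ and pick $w_1,\dots, w_n \in W$ whose classes form a basis of $W/\operatorname{Im}(f)$. Consider
\[
g\colon V\oplus K^n \longrightarrow W, \qquad (v,\lambda_1,\dots,\lambda_n)\longmapsto f(v) + \sum_{i=1}^n \lambda_i w_i.
\]
Since $V\oplus K^n$ is again an LF-space (inductive limit of $V_m \oplus K^n$ if $V = \varinjlim V_m$) and $g$ is continuous and surjective by construction, part (1) shows that $g$ is open, so $W$ is topologically isomorphic to $(V\oplus K^n)/\ker(g)$.

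The crux is to identify $\ker(g)$. If $(v,\lambda)\in\ker(g)$ then $\sum_i \lambda_i w_i = -f(v)\in\operatorname{Im}(f)$, and since the $w_i$ descend to a basis of the cokernel this forces $\lambda=0$. Hence $\ker(g)=\ker(f)\oplus 0$, and we obtain a topological isomorphism
\[
W \;\cong\; (V\oplus K^n)/(\ker(f)\oplus 0) \;\cong\; \bigl(V/\ker(f)\bigr)\oplus K^n.
\]
Under this isomorphism $\operatorname{Im}(f)$ corresponds to the closed subspace $(V/\ker(f))\oplus 0$, so $\operatorname{Im}(f)$ is closed in $W$ and the induced map $V/\ker(f)\to\operatorname{Im}(f)$ is a topological isomorphism, i.e.\ $f$ is strict. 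The cokernel is identified with the finite-dimensional Hausdorff space $K^n$, giving the last assertion.

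The main obstacle is really part (1): the $p$-adic open mapping theorem for LF-spaces is not explicitly stated in the standard textbook reference (which treats Fréchet spaces), so some care is needed either to cite a suitable version or to reduce to the Fréchet case via the countable inductive system. Once (1) is in hand, part (2) is a clean linear-algebra argument as above.
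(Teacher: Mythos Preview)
Your proof is correct and follows essentially the same strategy as the paper. For part (1) the paper simply cites \cite[Proposition 8.8]{SchneiderNFA}, which is exactly the open mapping theorem for LF-spaces you want, so no reduction to the Fr\'echet case is needed. For part (2) both arguments produce a topological isomorphism $W \cong (\text{something}) \oplus K^n$ via the open mapping theorem; the only difference is that the paper first passes to $V/\ker f$ (using that $\ker f = f^{-1}(0)$ is closed since $W$ is Hausdorff, and then that $V/\ker f$ is again an LF-space by \cite[Theorem 1.1.17]{emerton2017locally}) and applies part (1) to the bijection $V/\ker f \oplus X \to W$, whereas you augment the domain to $V\oplus K^n$ and apply part (1) to the surjection $g$. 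Your route has the mild advantage of not needing the Emerton reference on quotients of LF-spaces.
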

\begin{proof}
	1.) See \cite[Proposition 8.8]{SchneiderNFA}. 2.) 
	Let $X \subset W$ be a finite dimensional subspace such that (algebraically) $W = \operatorname{im}(f)\oplus X.$ Since $W$ is Hausdorff, the subspace $X$ carries its natural norm-topology. By assumption $V/\operatorname{ker}f$ is Hausdorff and thus by \cite[Theorem 1.1.17]{emerton2017locally} $V/\operatorname{ker}f$ is itself an LF-space and we have a continuous bijection
	$$h:V/\operatorname{ker}f \oplus X \to W$$ of LF-spaces, which by the open mapping theorem is a homeomorphism.
	By construction $f\colon V/\ker f \to W$ factors via $V/\ker f \to V/\operatorname{ker}f \oplus X  \to W$ and is thus a homeomorphism onto its image.
	Furthermore $\operatorname{im}(f) = \operatorname{ker}(p_2 \circ h^{-1}:W \to X)$ is the kernel of a continuous map into a Hausdorff space and thus closed, which implies that the cokernel is Hausdorff.

\end{proof}
The following Lemma is implicitly contained in the proof of \cite[Proposition 2.1.23]{emerton2017locally}. 
\begin{lem}
	\label{lem:onablebasechange}
	Let $V',V,V''$ be $K$-Fréchet spaces that fit into a strict exact sequence $$0 \to V' \to V \to V'' \to 0.$$
	Let $F$ be  a $K$-Banach space of countable type over $K,$ then the induced sequence 
	$$0 \to F \hat{\otimes}_KV' \to F \hat{\otimes}_KV \to F \hat{\otimes}_KV'' \to 0$$ is strict exact.
\end{lem}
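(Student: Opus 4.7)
The plan is to use the structure theorem for non-archimedean Banach spaces of countable type (Gruson / van der Put) to write $F \cong c_0(I, K)$ for some at most countable index set $I$, so that for any $K$-Fréchet space $W$ the completed tensor product $F \hat{\otimes}_K W$ identifies naturally with the Fréchet space $c_0(I, W)$ of $I$-indexed families in $W$ that are null with respect to every defining seminorm. The task then reduces to showing that the sequence
\begin{equation*}
0 \to c_0(I, V') \to c_0(I, V) \to c_0(I, V'') \to 0
\end{equation*}
is strict exact.

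First I would treat the Banach case. Given a strict exact sequence of $K$-Banach spaces $0 \to W' \to W \to W'' \to 0$, left exactness of $0 \to c_0(I, W') \to c_0(I, W) \to c_0(I, W'')$ is clear, and surjectivity onto $c_0(I, W'')$ follows from bounded lifting: by the Banach open mapping theorem there is a constant $C>0$ such that every $w'' \in W''$ lifts to some $w \in W$ with $\norm{w} \le C\norm{w''}$, and applying this coordinatewise turns a null family in $W''$ into a null lift in $W$. Strictness at each term is then a direct consequence of Lemma \ref{lem:strictcheck}(1). For the passage to Fréchet spaces, write $V = \varprojlim_n V_n$ as an inverse limit of Banach completions at a defining family of seminorms $(\norm{\cdot}_n)$ on $V$. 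Strictness of $V' \hookrightarrow V$ ensures that the restricted seminorms define the Fréchet topology on $V'$, and strictness of $V \twoheadrightarrow V''$ ensures the quotient seminorms define the Fréchet topology on $V''$; combining this with Lemma \ref{lem:strictcompletion} applied to the embedding and the projection furnishes, for each $n$, a strict exact sequence of Banach spaces $0 \to V'_n \to V_n \to V''_n \to 0$ whose inverse limit recovers the original sequence. Applying the Banach case level by level yields strict exact sequences $0 \to c_0(I, V'_n) \to c_0(I, V_n) \to c_0(I, V''_n) \to 0$, compatible with the transition maps.

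To finish, I would take $\varprojlim_n$. Applying the functor $c_0(I,-)$ to the surjective transition maps $V''_n \to V''_{n-1}$ produces surjective transition maps on the $c_0(I, V''_n)$, so by Mittag--Leffler $\mathrm{R}^1\varprojlim$ vanishes and exactness is preserved; strictness of the resulting arrows between the Fréchet spaces $c_0(I,V'), c_0(I,V), c_0(I,V'')$ then follows from Lemma \ref{lem:strictcheck}(1), since a continuous linear surjection between Fréchet (or LF-) spaces is automatically open. The main subtlety is the reduction step: one must verify that the restricted and quotient seminorms really give the original Fréchet topologies on $V'$ and $V''$, and that the completed Banach-level sequences remain exact with kernel $V'_n$ matching closure of the image; both points are exactly what the strictness hypothesis together with Lemma \ref{lem:strictcompletion} deliver, which is why this hypothesis cannot be dropped.
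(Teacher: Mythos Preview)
Your approach is essentially the paper's: reduce $F$ to $c_0(K)$ via the structure theorem, identify $F \hat{\otimes}_K W$ with $c_0(W)$, and then verify that $0 \to c_0(V') \to c_0(V) \to c_0(V'') \to 0$ is strict exact. The paper carries out the first two steps exactly as you do (including the reduction to Banach $V_n$ to justify the identification $c_0(K)\hat{\otimes}_K V \cong c_0(V)$).

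Where you diverge is in the exactness check. You reduce to Banach quotients $0 \to V'_n \to V_n \to V''_n \to 0$, establish exactness there, and then pass to the limit via Mittag--Leffler. This step has a gap: the transition maps $V''_n \to V''_{n-1}$ you call surjective are, in general, only maps with \emph{dense} image (completing at an increasing sequence of seminorms gives dense, not surjective, transitions), so the algebraic Mittag--Leffler criterion does not apply as stated. Moreover, the system whose $R^1\varprojlim$ actually controls surjectivity of $c_0(I,V) \to c_0(I,V'')$ in the limit is $(c_0(I,V'_n))_n$, not $(c_0(I,V''_n))_n$.

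The paper sidesteps this entirely: it simply checks that $0 \to c_0(V') \to c_0(V) \to c_0(V'') \to 0$ is algebraically exact at the Fr\'echet level and invokes the open mapping theorem for strictness. The only nontrivial point is surjectivity, and this follows directly: since $V \to V''$ is open, a decreasing neighbourhood basis $(U_n)$ at $0$ in $V$ maps onto a neighbourhood basis $(U''_n)$ at $0$ in $V''$, so any null sequence in $V''$ can be lifted term by term into the $U_n$ to produce a null sequence in $V$. Replacing your inverse-limit step with this direct lift repairs the argument.
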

\begin{proof} By \cite[Corollary 2.3.9]{PGS} any infinite dimensional Banach space of countable type is isomorphic to the space of zero sequences $c_0(K)$ and observe that $c_0(K)$ can be identified with the completion of $\oplus_{n \in \N} K$ with respect to the $\sup$-norm. Without loss of generality assume $F$ is infinite dimensional and take the isomorphism $F \cong c_o(K)$ as an identification. Similarly $c_0(K) \hat{\otimes} V$ is functorially isomorphic to the space of zero-sequences in $V,$ which we denote by $c_0(V).$ Indeed write $V = \varprojlim V_n$ with Banach spaces $V_n$ and dense transition maps. Then by \cite[Lemma 2.1.4]{Berger} we have $$V \hat{\otimes} c_o(K) \cong \varprojlim V_n \hat{\otimes}_Kc_o(K).$$ Because a zero-sequence in $V$ is precisely a compatible family of zero sequences in each $V_n,$ it suffices to prove the statement for the Banach spaces $V_n,$ which is clear. 
	It remains to see that the sequence $$0 \to c_0(V') \to c_0(V) \to c_0(V'') \to 0$$ is strict exact. 
	One checks that it is algebraically exact and each map is continuous. Because the spaces in question are Fréchet spaces, we can conclude from the open-mapping theorem, that the induced sequence is strict exact. 

\end{proof}

\subsection{Duality}

 Let $f= \sum_{i}a_iT^i \in \cR_A$ we define the $\operatorname{res}(f):= a_{-1}\in A.$ We obtain an $A$-linear map $\cR_A \xrightarrow{\operatorname{res}} A.$
The following result is well-covered in the literature when $A$ is a (discretely valued) field (see for instance \cite[Chapter 5]{crew1998finiteness}). In the case of an affinoid $A$ over a not necessarily discretely valued extension $K/L$ some topological subtleties arise, that we treat by viewing $A$ as a (no longer affinoid) Banach Algebra over $L.$
\begin{prop}
	\label{prop:pairingoverA}
	Consider the bilinear map $\cR_A \times \cR_A \to A,$ mapping $(f,g)$ to $\operatorname{res}(fg).$ This induces topological isomorphisms (for the strong topologies\footnote{By the strong topology we mean the subspace topology of the space of continuous $K$-linear operators $\mathcal{L}_{K,b}(\cR_A,A)$ with the strong topology.}) $$\operatorname{Hom}_{A,cts}(\cR_A,A)\cong \cR_A,$$ $$\operatorname{Hom}_{A,cts}(\cR_A^+,A)\cong \cR_A/\cR_A^+$$ and $$\operatorname{Hom}_{A,cts}(\cR_A/\cR_A^+,A)\cong \cR_A^+.$$
\end{prop}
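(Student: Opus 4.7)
The plan is to produce the three candidate maps from the residue pairing, check injectivity directly, and then obtain surjectivity together with the topological statement by reducing to the classical duality over a discretely valued field. Continuity of $\res\colon\cR_A\to A$ is immediate because on each Banach piece $\cR_A^{[r,s]}$ the functional $\sum a_iT^i\mapsto a_{-1}$ is bounded by the Gauss norm, and this transfers through the LF-topology of $\cR_A$. Combined with the separate continuity of multiplication, this produces a continuous $A$-linear map $g\mapsto\res(\cdot\,g)$ from $\cR_A$ to $\operatorname{Hom}_{A,\mathrm{cts}}(\cR_A,A)$. Inspecting $\res(fg)=\sum_{i+j=-1}a_ib_j$ shows that restricting the first argument to $\cR_A^+$ makes the functional depend only on $g$ modulo $\cR_A^+$, producing the second map, while taking $g\in\cR_A^+$ makes the functional vanish on $\cR_A^+$, producing the third map. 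Injectivity of each of the three maps follows by pairing against the monomials $T^n$ with $n$ in the appropriate range.

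For surjectivity and for the topological statement, my plan is to reduce to the case where the base is a field. Viewing $A$ as a Banach algebra over $L$, one obtains $\cR_A^?\cong A\,\widehat{\otimes}_L\,\cR_L^?$ for each of the three decorations, and the strict short exact sequence $0\to\cR_L^+\to\cR_L\to\cR_L/\cR_L^+\to 0$ remains strict after applying $A\,\widehat{\otimes}_L(-)$ by Lemma \ref{lem:onablebasechange} (using that $\cR_L^+$ is Fréchet of countable type and the Banach analogues on each annulus). I then invoke the classical residue duality over the discretely valued field $L$ (as in \cite[Chapter 5]{crew1998finiteness}) and transport it through the functor $A\,\widehat{\otimes}_L(-)$. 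The commutation of the completed tensor product with $\operatorname{Hom}_{-,\mathrm{cts}}(-,A)$, which is standard when one factor is Banach and in the Fréchet/LF countable-type case reduces to a projective/inductive limit computation, then identifies the duals on the $A$-side with the claimed $\cR_A$-objects.

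The main obstacle will be the careful handling of the strong topology in the last step, particularly for the first isomorphism $\operatorname{Hom}_{A,\mathrm{cts}}(\cR_A,A)\cong\cR_A$, where $\cR_A$ is an LF-space rather than Fréchet. Here one has to describe the strong dual via the exhausting system $\cR_A^{[r,1)}$ and pass carefully to the inductive limit; Lemma \ref{lem:strictcheck} will be used to upgrade algebraic bijections to topological isomorphisms through the open mapping theorem for LF-spaces. A secondary subtlety is that $K$ need not be discretely valued, so Crew-type nuclearity arguments do not apply directly over $K$; this is exactly where the reduction to the $L$-Banach structure pays off, since over $L$ the spaces $\cR_L^?$ are tame enough for all the tensor-product manipulations to go through.
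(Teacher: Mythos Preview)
Your approach is essentially the same as the paper's: both define the maps via the residue pairing, check injectivity on monomials, and reduce the topological statement to the classical duality over the discretely valued field $L$ (Crew) by writing $\cR_A^?\cong A\,\widehat\otimes_L\,\cR_L^?$. The paper makes the ``commutation of completed tensor with $\operatorname{Hom}$'' step more concrete than you do: it first proves the restriction isomorphism $\operatorname{Hom}_{A,\mathrm{cts}}(A\widehat\otimes_L E,A)\cong\operatorname{Hom}_{L,\mathrm{cts}}(E,A)=\mathcal{L}_b(E,A)$ by sending $f\mapsto f|_{1\otimes E}$ and extending back by $A$-linearity plus density, then invokes \cite[Corollary~18.8]{SchneiderNFA} for $E\widehat\otimes_L A\cong\mathcal{L}_b(E',A)$; combined with Crew's identification of $E'$ this gives the topological isomorphism outright, and continuity in both directions for the strong topology is verified directly on bounded sets using the unit ball $A_0\subset A$.

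Your intended use of Lemma~\ref{lem:strictcheck} (open mapping for LF-spaces) to upgrade the bijection to a topological isomorphism is the one point where you diverge, and it carries a small risk: you would need to know a priori that $\operatorname{Hom}_{A,\mathrm{cts}}(\cR_A,A)$ with the strong topology (defined in the footnote as a subspace of $\mathcal{L}_{K,b}(\cR_A,A)$) is itself an LF-space, which is not evident. The paper's direct continuity check on bounded sets sidesteps this issue entirely, so if you follow your outline you should replace the open-mapping step by that explicit verification.
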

\begin{proof} Let $\mu \in \operatorname{Hom}_{A,cts}(\cR_A,A).$ We define a Laurent series given by  $f_{\mu}:= \sum_{n\in \mathbb{Z}} \mu(T^{-1-n})T^n.$ On the other hand to $h \in \cR_A$ we assign the functional $\operatorname{can}(h): g \mapsto \operatorname{res}(hg).$ 
One can check that these assignments are well-defined.
	Futhermore $f_\mu$ belongs to $\cR^+_A$ if and only if the coefficients of the principal part (i.e. $\mu(T^{n})$ for $n \geq 0$) vanish. This is the case if and only if $\mu(\cR^+_A) = 0.$
	Regarding the topologies we sketch how to deduce the general case from the case $A=L$ (treated by \cite{crew1998finiteness}) by showing that for $E \in \{\cR_L,\cR_L/\cR_L^+,\cR_L^+\}$ we have a canonical isomorphism $\operatorname{Hom}_{A,cts}(A\hat{\otimes}_{L}E,A) \cong \operatorname{Hom}_{L,cts}(E,A)=\mathcal{L}_b(E,A),$ which allows us  to deduce the general statement from the known case using $E\hat{\otimes}_{L}A \cong \mathcal{L}_b(E',A)$ (cf.\cite[Corollary 18.8]{SchneiderNFA}).\footnote{Alternatively one can redo the proof with $A$ replacing $L.$}
	We define the maps as follows:
	Let $f: A\hat{\otimes}_{L}E \to A$ be a continuous $A$-linear homomorphism. Set $\tilde{f}: E  \to A\otimes_{L}E \to A\hat{\otimes}_{L}E\to  A.$ Given by mapping $e$ to $1 \otimes e$ and post-composing with the natural map. By \cite[Theorem 10.3.9]{PGS} since $\norm{1}_A=1$ the first map is a homeomorphic embedding (in particular continuous). On the other hand let $h: E \to A$ be continuous $L$-linear then $h$ extends uniquely to an $A$-linear map $A \otimes_L E \to A$ and it remains to check that it is continuous. By \cite[Remark 1.1.13]{RustamFiniteness} it suffices to check separate continuity, which is clear. Due to $A$ being complete $h$ extends uniquely to a continuous $A$-linear map $h_A: A\hat{\otimes}_{L}E \to A.$
	Clearly $f \to \tilde{f}$ and $h \to h_A$ are inverse to one another. 
	It remains to check continuity with respect to the corresponding strong topologies. For that purpose we denote by $A_0$ the unit ball inside $A.$ Let $B \subset A \hat{\otimes} E$ be a bounded set and suppose $f(B) \subset A_0$, then (again using \cite[Theorem 10.3.9]{PGS}) the preimage $\tilde{B}$ of $B \cap 1 \otimes E$ in $E$ is bounded and by construction $\tilde{f}(\tilde{B}) \subset A_0.$ On the other hand let $B' \subset E$ be bounded and suppose $h(B') \subset A_0.$ Then the closure $B_A$ of $\operatorname{span}_{o_L}\{x \otimes y \mid x \in A_0, y \in B\}$ is bounded in $A \hat{\otimes}_{L}E$ and by construction $h_A(B_A) \subset A_0.$
\end{proof}
\begin{prop}
	\label{prop:pairing}
	Let $M$ be a $(\varphi_L,\Gamma_L)$-module over $\cR_A$ and consider the dual $\check{M}:= \operatorname{Hom}_{\cR_L}(M,\cR_A(\chi_{\mathrm{LT}})).$ The canonical pairing
	$$\check{M} \times M \to \cR_A(\chi_{\mathrm{LT}})$$
	is perfect and by post-composing with the residue map gives a bilinear pairing
	$$\check{M} \times M \xrightarrow{\langle \cdot ,\cdot \rangle} A$$
	identifying $\check{M}$ (resp. $M$) with $\operatorname{Hom}_{A,\text{cts}}(\check{M},A)$ ($\operatorname{Hom}_{A,\text{cts}}(M,A)$) with respect to the strong topology,
	satisfying 
	\begin{enumerate}
		\item $\langle \varphi_L(\check{m}),\varphi_L(m)\rangle = \frac{q}{\pi_L}\langle \check{m},m\rangle$
		\item $\langle\sigma \check{m},\sigma m\rangle =\langle \check{m}, m\rangle $ 
		\item $\langle \psi_{\mathrm{LT}}(\check{m}),m\rangle = \langle \check{m},\varphi_L(m)\rangle$
	\end{enumerate} 
	For all $\sigma \in \Gamma_L,$ $\check{m} \in \check{M}$ and $m \in M.$
\end{prop}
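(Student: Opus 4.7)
The plan is to split the proposition into three parts: perfectness of the $\cR_A(\chi_{\mathrm{LT}})$-valued pairing, the topological identification over $A$ via the residue, and the three compatibility formulas.

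For perfectness I would exploit that a model $M^{r_0}$ is finite projective over $\cR_A^{r_0}$, hence locally free. On a trivialising open cover one may write $M^{r_0}$ as a direct summand of a free module of finite rank, reducing perfectness of the evaluation pairing between $M^{r_0}$ and $\check{M}^{r_0}$ to the obvious free case; the statement then descends to summands and base-changes along $\cR_A^{r_0} \to \cR_A$. In particular $\check{M}$ is itself projective, thus a $(\varphi_L,\Gamma_L)$-module over $\cR_A$, and the biduality map $M \to \check{\check{M}}$ is an isomorphism.

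To obtain the topological duality over $A$, I would combine the previous step with Proposition \ref{prop:pairingoverA}. Post-composition of the $\cR_A$-valued pairing with $\operatorname{res}$ yields a continuous $A$-linear map $\check{M} \to \operatorname{Hom}_{A,\mathrm{cts}}(M,A)$; the symmetric construction gives the reverse direction. For bijectivity and the topological statement one can reduce on a local trivialisation $M^{[r,s]}$ to the scalar case already handled: the twist by $\chi_{\mathrm{LT}}$ affects only the $\Gamma_L$-action and not the underlying topological $A$-module structure, so the strong-topology duality for $\cR_A^{[r,s]}$ transports via a chosen basis to $M^{[r,s]}$, and one passes to the Fréchet limit over $s$ exactly as in the proof of Proposition \ref{prop:pairingoverA}.

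The three compatibility formulas are then formal consequences of the definitions together with two standard residue identities: the change-of-variables identity $\operatorname{res}(\varphi_L(h)) = \frac{q}{\pi_L}\operatorname{res}(h)$ for $h \in \cR_A$, which follows from $\varphi_L(T) \equiv T^q \pmod{\pi_L}$ together with $\varphi_L'(0)=\pi_L$ and yields (1); and the $\Gamma_L$-invariance of the twisted residue on $\cR_A(\chi_{\mathrm{LT}})$, which produces (2) because the Jacobian of $\gamma \in \Gamma_L$ acting on $T$ is cancelled exactly by the character twist $\chi_{\mathrm{LT}}$. Identity (3) then follows from (1) by the defining property of $\Psi$ as canonical left-inverse of $\varphi_L$ together with the relation $\psi_{\mathrm{LT}} = \frac{q}{\pi_L}\Psi$, after writing any $\check{m}$ as a sum of an element in $\operatorname{im}(\varphi_L)$ and one killed by $\Psi$.

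The principal obstacle is the topological content of the second step. Because $K/L$ is not required to be discretely valued, one cannot tensor naively and must track bounded subsets through every completed tensor identification; this bookkeeping has to be carried out after choosing a local basis for $M^{[r,s]}$, and one must additionally verify that transition functions between different trivialisations preserve boundedness, so that the final $A$-linear duality is independent of choices and strictly compatible with the strong topologies on both sides.
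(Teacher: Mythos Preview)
Your proposal is correct and follows essentially the same route as the paper: perfectness from finite projectivity, the topological $A$-duality by reduction to the free case via Proposition~\ref{prop:pairingoverA}, and the three identities by the standard residue computations (the paper simply cites \cite[Section~3]{SV15} for the latter). One simplification: since $M$ is globally finite projective over $\cR_A$, you can write it once and for all as a direct summand of a free module and run the duality argument there, which makes the local-trivialisation and transition-function bookkeeping you flag in your final paragraph unnecessary.
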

\begin{proof} The perfectness of the first pairing is well-known since $M$ is finitely generated and projective. The properties of the pairing are proved in \cite[Section 3]{SV15} for the ring $\widehat{o_L ((T))}^{p-\text{adic}}$ and their proofs carry over to this case.
	By writing $M$ as a direct summand of a finitely generated  free module  the identification of duals follows from the free case by induction over the rank from Proposition \ref{prop:pairingoverA}. 
\end{proof}
\subsection{Sen theory} Let $t_{\mathrm{LT}}:=\log_{\mathrm{LT}}(T) \in \cR_L^+$ be the logarithm of the Lubin--Tate formal group attached to $\varphi_L$ and let $L_n$ be the field obtained by adjoining the $\pi_L^n$-torsion points of the formal group to $L.$  We do not give a conceptual treatment of Sen theory and instead present some ad-hoc results that allow us to view a $(\varphi_L,\Gamma_L)$-module $M^r$ over $\cR_K^r$ as a $\Gamma_L$-submodule of a finite projective $(L_n\otimes K)\llbracket t_{\mathrm{LT}} \rrbracket$-module $D_{\text{dif},n}^+(M)$ for a suitable $n \in \N,$ which is technically useful because the latter module is a projective limit of finite-$K$-dimensional $\Gamma_L$-representations namely $D_{\text{dif},n}^+(M)/(t_{\mathrm{LT}}^k).$ The name ``Sen theory'' stems from the fact that these modules are the analogues of their counterparts in classical Sen theory (more precisely its extension to $B_{\mathrm{dR}}$-representations) for $(\varphi_L,\Gamma_L)$-modules which arise from Fontaine's equivalence of categories. For $n \in \N$ let $K_n := K \otimes_L L_n$ and fix a non-trivial compatible system $u_n$ of $\pi_L^n$-torsion points of the Lubin--Tate group. We endow $K_n$ with its canonical $K$-Banach space topology and endow $K_n \llbracket t_{\mathrm{LT}} \rrbracket = \varprojlim_k K_n \llbracket t_{\mathrm{LT}} \rrbracket /(t_{\mathrm{LT}}^k)$ with the projective limit topology of the canonical topologies on each term. Since $t_{\mathrm{LT}}=\log_{\mathrm{LT}}(T)$ has no constant term and non-vanishing derivative in $0$   the induced maps $$K_n\llbracket t_{\mathrm{LT}} \rrbracket/(t_{\mathrm{LT}}^k) \to K_n\llbracket T \rrbracket/(T^k)$$ are isomorphisms of finite dimensional $K$-vector spaces and hence topological for the respective canonical topologies. We obtain that the natural map $$K_n\llbracket t_{\mathrm{LT}} \rrbracket \to K_n \llbracket T \rrbracket$$ is an isomorphism if we endow the right-hand side with its weak topology. The definition for $\iota_n$ below is taken from \cite[Section 1.4.2]{colmez2016representations}.
\begin{lem}\label{lem:iotainjective} 
	Let $n \in \N$ and let $r^{(n)}= \abs{u_n}$ then
	
	\begin{align*}\iota_n: \cR_K^{[r^{(n)},1)} &\to K_n\llbracket t_{\mathrm{LT}}\rrbracket\\
		T &\mapsto \iota_n(T), 
	\end{align*}
	
	given by $\iota_n(T):= u_n +_{\mathrm{LT}} \exp_{\mathrm{LT}}(\pi_L^{-n}\log_{\mathrm{LT}}(T)),$
	is well-defined, injective and $\Gamma_L$-equivariant, where $\Gamma_L$ acts on the right-hand side via the trivial action on $K,$ the Galois action on $L_n$ and the usual action (via $\chi_{\mathrm{LT}}$) on $t_{\mathrm{LT}}.$
\end{lem}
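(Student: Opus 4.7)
The plan is to first read the formula for $\iota_n(T)$ as a literal element of the target $K_n\llbracket t_{\mathrm{LT}}\rrbracket$ (with $t_{\mathrm{LT}}$ now playing the role of the formal variable of the target), then extend $\iota_n$ to $\cR_K^{[r^{(n)},1)}$ by substitution $f\mapsto f(\iota_n(T))$, and finally verify the three required properties in turn. Since $\exp_{\mathrm{LT}}(\pi_L^{-n}t_{\mathrm{LT}})$ has no constant term, $\iota_n(T)$ has constant term $u_n\in K_n$, is a unit in $K_n\llbracket t_{\mathrm{LT}}\rrbracket$, and $\iota_n(T)-u_n$ is a uniformiser because its linear coefficient equals $\pi_L^{-n}\partial_YF(u_n,0)$, a unit times $\pi_L^{-n}$, where $F$ denotes the formal group law.

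The first technical step is well-definedness, i.e., convergence of $\sum_i a_i\iota_n(T)^i$ in each finite-dimensional $K$-Banach quotient $K_n[t_{\mathrm{LT}}]/(t_{\mathrm{LT}}^k)$. Writing $\iota_n(T)=u_n(1+u_n^{-1}y)$ with $y\in (t_{\mathrm{LT}})$ and expanding $(1+u_n^{-1}y)^{\pm i}$ as a binomial series modulo $t_{\mathrm{LT}}^k$, I expect estimates of the shape $\norm{\iota_n(T)^i \bmod t_{\mathrm{LT}}^k}\leq C_k\abs{u_n}^i$ for a constant $C_k$ independent of $i\in \ZZ$. Since $r^{(n)}=\abs{u_n}<1$, membership of $f$ in $\cR_K^{[r^{(n)},1)}$ implies in particular $\abs{a_i}\abs{u_n}^i\to 0$ as $\abs{i}\to\infty$, which is precisely what is needed to make the substituted series converge in each quotient. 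The results assemble into a well-defined element $\iota_n(f)\in K_n\llbracket t_{\mathrm{LT}}\rrbracket=\varprojlim_k K_n[t_{\mathrm{LT}}]/(t_{\mathrm{LT}}^k)$.

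For injectivity I would exploit that $\iota_n(T)-u_n$ is a uniformiser. The ultrametric inequality forces any $T$ with $\abs{T-u_n}<\abs{u_n}$ to satisfy $\abs{T}=\abs{u_n}=r^{(n)}$, so the open disc of radius $\abs{u_n}$ around $u_n$ is contained in the convergence annulus of $f$. On it $f$ admits a Taylor expansion $\sum_k \frac{f^{(k)}(u_n)}{k!}(T-u_n)^k$, and substituting $T=\iota_n(T)$ yields $\iota_n(f)=\sum_k \frac{f^{(k)}(u_n)}{k!}(\iota_n(T)-u_n)^k$. Vanishing of $\iota_n(f)$ then forces every $f^{(k)}(u_n)$ to vanish, so $f$ vanishes on the disc and hence, by the identity principle for rigid analytic functions on the connected annulus, everywhere.

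Finally, $\Gamma_L$-equivariance is tested on the generator $T$ and then follows by continuity. Viewing $\iota_n$ as a ring homomorphism, one has $\iota_n(\gamma T)=\iota_n([\chi_{\mathrm{LT}}(\gamma)]_{\mathrm{LT}}(T))=[\chi_{\mathrm{LT}}(\gamma)]_{\mathrm{LT}}(\iota_n(T))$; since $[\chi_{\mathrm{LT}}(\gamma)]_{\mathrm{LT}}$ is a formal group endomorphism conjugated by $\exp_{\mathrm{LT}}$ to multiplication by $\chi_{\mathrm{LT}}(\gamma)$, this equals
\[
[\chi_{\mathrm{LT}}(\gamma)]_{\mathrm{LT}}(u_n)+_{\mathrm{LT}}\exp_{\mathrm{LT}}(\pi_L^{-n}\chi_{\mathrm{LT}}(\gamma)t_{\mathrm{LT}}),
\]
which matches $\gamma(\iota_n(T))$ by the prescribed target action $\gamma(u_n)=[\chi_{\mathrm{LT}}(\gamma)]_{\mathrm{LT}}(u_n)$ and $\gamma(t_{\mathrm{LT}})=\chi_{\mathrm{LT}}(\gamma)t_{\mathrm{LT}}$. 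The main obstacle will be the convergence estimate in the second step: one needs a bound on the powers of $\iota_n(T)$ in the quotients that is uniform enough in $i$ to couple with the defining decay of $a_i$, and moreover symmetric in the two signs of $i$.
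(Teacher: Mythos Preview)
Your proposal is correct and gives a self-contained argument, but it takes a genuinely different route from the paper. The paper does not redo any of the analysis over $K$: it observes that the cyclotomic case over $\Q_p$ is in Berger's \emph{Repr\'esentations $p$-adiques et \'equations diff\'erentielles} (Prop.~2.25 and the remark before 2.35), asserts that the same argument gives the map $\cR_L^{[r^{(n)},1)}\to L_n\llbracket t_{\mathrm{LT}}\rrbracket$ over the discretely valued field $L$, and then obtains the $K$-version by applying $K\hat\otimes_{L,\pi}(-)$ to both sides. The only delicate point in this base change is injectivity, and for that the paper invokes \cite[1.1.26]{emerton2017locally}, which guarantees that completed tensoring a continuous injection of $L$-Fr\'echet spaces with a bornological $L$-Banach space (here $K$) remains injective. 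Equivariance is inherited because $\Gamma_L$ acts trivially on $K$.

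Your direct approach over $K$ works as well. The convergence estimate via the binomial expansion modulo $t_{\mathrm{LT}}^k$ is exactly right; the ultrametric bound $\lvert\binom{i}{j}\rvert\le 1$ gives the uniformity in $i\in\Z$ that you flag as the main obstacle. For injectivity, bear in mind that $u_n$ need not lie in $K$, so your Taylor coefficients $f^{(k)}(u_n)$ live in $K_n=K\otimes_L L_n$; the cleanest way to justify the identity-principle step is to pass to a complete field $K'\supset K$ containing $u_n$ (e.g.\ a factor of $K_n$, or $\C_p$) and apply Krull's intersection theorem in the Noetherian domain $\cR_{K'}^{[r^{(n)},s]}$ to the proper ideal $(T-u_n)$. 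Your equivariance check on the generator $T$ is the standard one in either approach.

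What each buys: the paper's proof is essentially two lines once the external references are granted, and it makes transparent that the $K$-version is literally the $L$-version tensored up---a fact used again later (e.g.\ in Remark~\ref{rem:sentheoryembedding} and Lemma~\ref{lem:ZinjectiveDfm}). Your proof avoids those references entirely and exposes the actual analytic mechanism, at the cost of more bookkeeping.
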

\begin{proof}
	The convergence, injectivity and $\Gamma_L$-equivariance of $\iota_n$ in the cyclotomic case with $K=L=\Q_p$ is known (see \cite[Proposition 2.25 and the remark before 2.35]{berger2002DGL}) and can be analogously proved for the Robba ring $\cR_L$ over $L.$ The map over $\cR_K^{[r^{(n)},1)}$ arises by applying $K\hat{\otimes}_{L,\pi}$ to the version over the spherically complete $L.$
	We apply \cite[1.1.26]{emerton2017locally} to conclude that the induced map remains injective using that $K$ as an $L$-Banach space is automatically bornological. The compatibility with the actions is preserved, since we let $\Gamma_L$ act trivially on $K.$
\end{proof}
\begin{defn}
	Let $M$ be a $(\varphi_L,\Gamma_L)$-module over $\cR_K$ with model $M^{r^{(n)}}$ over $[r^{(n)},1)$ with $r^{(n)}$ as in Lemma \ref{lem:iotainjective}. 
	We define $$\mathbb{D}^+_{\mathrm{dif},n}(M):= K_n\llbracket t_{\mathrm{LT}}\rrbracket \otimes_{\iota_n,\cR_K^{[r{(n)},1)}} M^{r^{(n)}}.$$
\end{defn}
\begin{rem}
	\label{rem:sentheoryembedding}
	Let $M$ be a $(\varphi_L,\Gamma_L)$-module over $\cR_K$ with model $M^{r^{(n)}}$ over $[r^{(n)},1)$ with $r^{(n)}$ as in Lemma \ref{lem:iotainjective}.  Then the natural map 
	$$M^{r^{(n)}} \to\mathbb{D}^+_{\mathrm{dif},n}(M)$$ is injective and $\Gamma_L$-equivariant.	
\end{rem}
\begin{proof}
	Using that $M^{r^{(n)}}$ is projective the statement follows by tensoring the injective and $\Gamma_L$-equivariant map $\iota_n$ with $M^{r^{(n)}}.$ 
\end{proof}
\subsection{Étale $(\varphi_L,\Gamma_L)$-modules}
In this section we give a brief overview on étale modules over the $p$-adic completion  $\mathbf{A}_L$ of $o_L((T)).$ We denote by $\operatorname{Rep}_{o_L}(G_L)$ the category of finitely generated $o_L$-modules with $o_L$-linear continuous (with respect to the $p$-adic topology) $G_L$-action. Similary let $\operatorname{Rep}_L(G_L)$ be the category of finite dimensional $L$-vector space with continuous $L$-linear $G_L$-action. We denote by $\C_p^{\flat}$ the tilt of $\C_p$ and for an $o_L$-algebra $R$ we write $W(R)_L$ for the ring of ramified Witt vectors.
\begin{defn}
	A $(\varphi_L,\Gamma_L)$-module $D$ over $\mathbf{A}_L$ is a finitely generated $\mathbf{A}_L$-module with a $\varphi_L$-semi-linear map $\varphi_D$ and a semi-linear action of $\Gamma_L$ commuting with $\varphi_D,$ such that $\Gamma_L$ acts continuously with respect to the weak topology. $D$ is called \textbf{étale} if the linearised map $\mathbf{A}_L \otimes_{\mathbf{A}_L,\varphi_L} D \to D,$ mapping $a \otimes d$ to $a \varphi_D(d)$ is an isomorphism.
	Analogously we define $(\varphi_L,\Gamma_L)$-modules over $\mathbf{B}_L:= \mathbf{A}_L[1/p].$ Such a module is called \textbf{étale} if it arises as a base change of an étale $(\varphi_L,\Gamma_L)$-module over $\mathbf{A}_L.$
\end{defn}
\begin{rem}
	\label{rem:omegaembedding}
	Fixing a choice of generator of $\varprojlim_n LT[\pi_L^n]$ leads to a canonical continuous $\Gamma_L$-equivariant embedding $$\iota\colon \mathbf{A}_L \to (W(\C_p^{\flat})_L)^{H_L}$$ such that $\iota\varphi_L(f) = \operatorname{Frob}_q(\iota(f)),$ whose image is independent of the choice. This embedding extends to an embedding of the $p$-adic completion of the maximal unramified extension  $\mathbf{A}$ of $\mathbf{A}_L$ into $W(\C_p^{\flat})_L.$
\end{rem}
\begin{proof} See \cite[Proposition 2.1.16, Remark 2.1.17 and Remark 3.1.4]{Schneider2017}.
\end{proof}
By abuse of notation we write $\varphi_L$ for the Frobenius operator on $W(\C_p^{\flat})_L,$ which is justified due to Remark \ref{rem:omegaembedding}.

\begin{thm} \label{thm:fontaineequiv}
	The functors $$V \mapsto \mathbb{D}(V):=(\mathbf{A}\otimes_{o_L} V)^{H_L}$$
	and $$D \mapsto \mathbb{V}(D):=(\mathbf{A}\otimes_{\mathbf{A}_L} D)^{\varphi_L=1}$$
	are quasi-inverse and give an equivalence of categories between $\operatorname{Rep}_{o_L}(G_L)$ and the category of étale $(\varphi_L,\Gamma_L)$-modules over $\mathbf{A}_L.$
	This equivalence is exact and respects duals, torsion-sub-objects and tensor products and inverting $p$ induces an equivalence of categories between $\operatorname{Rep}_L(G_L)$ and the category of étale $(\varphi_L,\Gamma_L)$-modules over $\mathbf{B}_L.$
\end{thm}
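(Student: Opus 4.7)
The plan is to adapt Fontaine's classical proof of the cyclotomic equivalence to the Lubin--Tate setting, following the blueprint of Kisin--Ren. The key structural input is the embedding $\iota\colon \mathbf{A}_L \hookrightarrow W(\C_p^\flat)_L$ of Remark \ref{rem:omegaembedding}, together with the ring $\mathbf{A}$ (the $p$-adic completion of the maximal unramified extension of $\mathbf{A}_L$ inside $W(\C_p^\flat)_L$) carrying compatible actions of $\varphi_L$ and $G_L.$ The two essential identities I would establish first are $\mathbf{A}^{H_L}=\mathbf{A}_L$ (a consequence of the Fontaine--Wintenberger theorem in its Lubin--Tate incarnation, identifying $\mathbf{A}_L/\pi_L$ with the field of norms of $L_\infty/L$) and $\mathbf{A}^{\varphi_L=1}=o_L$ (which reduces mod $\pi_L$ to the statement that Artin--Schreier-type fixed points recover the residue field).

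With these in hand, I would verify that both $\mathbb{D}$ and $\mathbb{V}$ land in the correct categories. For $\mathbb{D}(V),$ I would use a standard descent/continuous cohomology argument to show that $(\mathbf{A}\otimes_{o_L}V)^{H_L}$ is a free $\mathbf{A}_L$-module of rank $\operatorname{rank}_{o_L}V,$ étale under the linearized Frobenius; the commuting $\Gamma_L$-action is inherited from $G_L/H_L.$ For $\mathbb{V}(D),$ the core technical step is to prove that the natural map
\[
\mathbf{A}\otimes_{\mathbf{A}_L}D \;\longrightarrow\; \mathbf{A}\otimes_{\mathbf{A}_L}D
\]
has $\varphi_L$-fixed points forming a free $o_L$-module of the correct rank, which reduces modulo $\pi_L$ to a Lang-style vanishing ($H^1_{\mathrm{\acute{e}t}}$ of a finite étale $\mathbf{E}_L$-algebra trivializes over the separable closure), and then lifts by successive approximation using $\pi_L$-adic completeness.

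To obtain the equivalence itself, I would show that the two comparison maps $\mathbf{A}\otimes_{o_L}\mathbb{V}(D)\to\mathbf{A}\otimes_{\mathbf{A}_L}D$ and $\mathbf{A}\otimes_{\mathbf{A}_L}\mathbb{D}(V)\to\mathbf{A}\otimes_{o_L}V$ are isomorphisms: by construction both are injective and $(\varphi_L,G_L)$-equivariant, and the rank count (via the two fixed-point identities above) forces surjectivity since the source and target are free of the same rank over $\mathbf{A}.$ Taking $H_L$-invariants (resp.\ $\varphi_L=1$) then yields the natural isomorphisms $\mathbb{D}\circ\mathbb{V}\cong\mathrm{id}$ and $\mathbb{V}\circ\mathbb{D}\cong\mathrm{id}.$ Exactness of both functors follows from faithful flatness of $\mathbf{A}$ over $\mathbf{A}_L$ together with the fact that after base change the $H_L$-action (resp.\ $\varphi_L$-action) becomes trivial on a cofinal system, so taking invariants is exact on the essential image. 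Compatibility with duals and tensor products is then a formal consequence of the comparison isomorphism, and torsion sub-objects are respected since both functors commute with the action of $o_L.$

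The main obstacle is the technical heart described in the second paragraph: proving $\mathbf{A}^{H_L}=\mathbf{A}_L$ in the Lubin--Tate context, which rests on the field-of-norms equivalence for $L_\infty/L$ and the almost-étale descent package of Fontaine--Wintenberger adapted to ramified Witt vectors. Once this is in place, passage from $o_L$-coefficients to $L$-coefficients is formal: the functor $D\mapsto D[1/p]$ is an equivalence between étale $(\varphi_L,\Gamma_L)$-modules over $\mathbf{A}_L$ up to isogeny and those over $\mathbf{B}_L=\mathbf{A}_L[1/p],$ and it matches $V\mapsto V[1/p]$ on the Galois side, so the stated equivalence between $\operatorname{Rep}_L(G_L)$ and étale modules over $\mathbf{B}_L$ is inherited directly from the integral one.
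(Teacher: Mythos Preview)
Your sketch is correct and follows the standard Fontaine--Kisin--Ren approach. The paper, however, does not give an argument at all: its entire proof is a reference to \cite[Theorem 3.3.10]{Schneider2017}. What you have written is essentially an outline of the proof that appears in that reference (field-of-norms identification of $\mathbf{A}_L/\pi_L$, the fixed-point identities $\mathbf{A}^{H_L}=\mathbf{A}_L$ and $\mathbf{A}^{\varphi_L=1}=o_L$, the comparison isomorphism $\mathbf{A}\otimes_{o_L}V\cong\mathbf{A}\otimes_{\mathbf{A}_L}\mathbb{D}(V)$, and dévissage mod $\pi_L$ via Lang's theorem), so there is no substantive difference in strategy---you have simply unpacked the citation. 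If you intend this as a self-contained proof rather than a plan, the step that would need the most care is the freeness of $\mathbb{D}(V)$ over $\mathbf{A}_L$ in the presence of $o_L$-torsion in $V$; this is handled in Schneider's treatment by reducing to the $\pi_L$-torsion case and using that $\mathbf{A}/\pi_L$ is a separable closure of $\mathbf{A}_L/\pi_L$, which your sketch alludes to but does not make explicit.
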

\begin{proof} See \cite[Theorem 3.3.10]{Schneider2017}.
\end{proof}
In more generality one has for $F/L$ finite a similar equivalence for $\operatorname{Rep}_{o_L}(G_F)$ but throughout this article we only concern ourselves with the case $F=L.$  
The rings $\mathbf{B}_L$ and $\cR_L$ are not directly comparable because the series in $\mathbf{A}_L$ do not necessarily converge on some annulus and on the other hand the coefficients of a series in $\cR_L$ are not necessarily bounded. 
However both rings contain the rings $$\mathbf{B}^{\dagger,r}_L:= \left\{ f = \sum_{k \in \Z} a_k T^k \mid  \lim_{k \to -\infty} \abs{a_k} r^k=0 \text{ and } \sup_{k} \abs{a_k} < \infty \right\}$$ and their union $\mathbf{B}_L^{\dagger}.$ 
\begin{defn}\label{def:dagger}
	A $(\varphi_L,\Gamma_L)$-module $D$ over $\mathbf{B}_L$ is called \textbf{overconvergent} if it admits a basis such that the matrices of $\varphi_L$ and all $\gamma \in \Gamma_L$ have entries in $\mathbf{B}^{\dagger}_L.$ The $\mathbf{B}^{\dagger}_L$-span of this basis is denoted by $D^{\dagger}.$ A representation  $V \in \operatorname{Rep}_L(G_L)$ is called \textbf{overconvergent} if $\mathbb{D}(V)$ is overconvergent. $V$ is called \textbf{$L$-analytic} if $\C_p \otimes_{L,\sigma}V$ is isomorphic to the trivial $\C_p$-semilinear representation $\C_p^{\operatorname{dim}_L(V)}$ for every embedding $\sigma:L \to \C_p$ with $\sigma \neq \id.$ If $V$ is overconvergent, we set $\mathbb{D}^{\dagger}_{\text{rig}}(V):= \cR_L \otimes_{\mathbf{B}^{\dagger}_L}\mathbb{D}^{\dagger}(V).$ 
\end{defn}
By \cite[Proposition 1.6]{FX12} étale $(\varphi_L,\Gamma_L)$-modules over $\cR_L$ always descend to $\mathbf{B}_L^{\dagger}.$  
But contrary to the classical cyclotomic situation there exist representations that are not overconvergent (cf. \cite[Theorem 0.6]{FX12}) and hence the category of $(\varphi_L,\Gamma_L)$-modules over $\cR_L$ is in some sense insufficient to study representations. Remarkably, restricting to $L$-analytic objects on both sides alleviates this problem. 
\begin{defn}
	Let $\rho: L^\times \to L^\times$ be a continuous character. We define $\cR_L(\rho)= \cR_Le_{\rho}$ as the free rank $1$  $(\varphi_L,\Gamma_L)$-module with basis $e_{\rho}$ and action given as $\varphi_L(e_{\rho}) = \rho(\pi_L)e_{\rho}$ and $\gamma(e_{\rho}) = \rho(\chi_{\mathrm{LT}}(\gamma))e_{\rho}.$ For $i \in \Z$ we write $\cR_L(x^i)$ as shorthand for the $(\varphi_L,\Gamma_L)$-module associated to the character $x \mapsto x^i.$
\end{defn}
\begin{rem}
	Let $M$ be a $(\varphi_L,\Gamma_L)$-module over $\cR_L$ of rank $1.$ Then there exists a character $\rho: L^{\times}\to L^{\times}$ such that $M$ is isomorphic to $\cR_L(\rho).$ The module is $L$-analytic if and only if $\rho_{\mid{o_L^{\times}}}$ is locally $L$-analytic.
\end{rem}
\begin{proof}
	See \cite[Proposition 1.9]{FX12}.
\end{proof}
\begin{defn}
	Let $M$ be an $L$-analytic $(\varphi_L,\Gamma_L)$-module over $\cR_L.$ If $M\cong \cR_L(\rho)$ has rank $1$ we define $$\deg(M):=\operatorname{val}_{\pi_L}(\rho(\pi_L)).$$ In general we define $\deg(M):= \deg(\Lambda^{\operatorname{rank}(M)}M)$ and finally the slope $\mu(M):= \deg(M)/\operatorname{rank}(M).$\footnote{With our conventions $\cR_L(x^i)$ has slope $i.$}
	$M$ is called \textbf{isoclinic} if $\mu(N)\geq \mu(M)$ for every subobject $N$ of $M.$
	$M$ is called \textbf{étale} if it is isomorphic to the $(\varphi_L,\Gamma_L)$-module  $\mathbb{D}^{\dagger}_{rig}(V)$ attached to some $L$-analytic $G_L$-representation $V.$
\end{defn}

\begin{thm}[Kedlaya/ Berger]
	Every non-zero ($L$-analytic) $(\varphi_L,\Gamma_L)$-module $M$ possesses a unique functorial filtration $$0 = M_0 \subsetneq M_1 \dots \subsetneq M_d = M$$ such that the succesive quotients $M_i/M_{i-1}$ are isoclinic ($L$-analytic) $(\varphi_L,\Gamma_L)$-modules and $\mu(M_1/M_{0})<\dots < \mu(M_d/M_{d-1}).$
	Furthermore the functor $V \mapsto D^{\dagger}_{rig}(V)$ defines an equivalence of categories between the category of $L$-analytic $L$-linear $G_L$-representations and the full subcategory of $L$-analytic $(\varphi_L,\Gamma_L)$-modules that are isoclinic of slope $0.$
\end{thm}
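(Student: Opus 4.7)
The plan is to reduce the statement to the slope filtration theorem of Kedlaya and the overconvergence theorem for $L$-analytic representations (due to Berger, with Lubin--Tate refinements); neither deep input is reproduced here, so the proof consists in assembling the relevant references and checking compatibilities with the additional structures.

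For the existence and uniqueness of the filtration, first observe that $\deg$, defined via the determinant of $\varphi_L$, is additive in short exact sequences, so the Harder--Narasimhan formalism applies formally provided $\varphi_L$-modules over $\cR_L$ admit a sufficiently rich subobject theory. This is exactly the content of Kedlaya's slope filtration theorem for $\varphi_L$-modules over the Robba ring, proved originally for $\cR_{\QQ_p}$ and extended to the Lubin--Tate setting. Existence and uniqueness of the filtration $0 = M_0 \subsetneq \dots \subsetneq M_d = M$ by isoclinic subquotients with strictly increasing slopes are then formal consequences. Functoriality with respect to any automorphism commuting with $\varphi_L$ follows from uniqueness; applying this to $\gamma \otimes \id$ for $\gamma \in \Gamma_L$ shows that each $M_i$ is a $(\varphi_L,\Gamma_L)$-submodule. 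In the $L$-analytic case, the $L$-bilinearity of the $\operatorname{Lie}(\Gamma_L)$-action is inherited by sub- and quotient objects, giving the $L$-analytic refinement.

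For the equivalence of categories, the plan is to factor the candidate inverse as
\[
\bigl\{\text{slope-$0$, $L$-analytic $(\varphi_L,\Gamma_L)$-modules over }\cR_L\bigr\}
\xrightarrow{\text{descent}}
\bigl\{\text{étale $(\varphi_L,\Gamma_L)$-modules over }\mathbf{B}_L^\dagger\bigr\}
\xrightarrow{\mathbb{V}\,\circ\,(\mathbf{B}_L \otimes -)}
\bigl\{\text{$L$-analytic }G_L\text{-reps}\bigr\}.
\]
The first arrow is provided by \cite[Proposition 1.6]{FX12}, which descends étale $(\varphi_L,\Gamma_L)$-modules over $\cR_L$ to $\mathbf{B}_L^\dagger$. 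The second is Fontaine's equivalence (Theorem \ref{thm:fontaineequiv}) after inverting $p$. That the composition is quasi-inverse to $V \mapsto \mathbb{D}^\dagger_{\mathrm{rig}}(V)$ and, crucially, that it sends slope-$0$ \emph{$L$-analytic} objects to \emph{$L$-analytic} representations (and conversely), rests on the key input that every $L$-analytic Galois representation is overconvergent, together with the matching of the two notions of $L$-analyticity. This is Berger's overconvergence theorem in the Lubin--Tate situation, which is precisely what rescues the equivalence from the failure of arbitrary representations to be overconvergent noted in \cite[Theorem 0.6]{FX12}.

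The main obstacle is external to this paper: both Kedlaya's slope filtration theorem over the Lubin--Tate Robba ring and Berger's overconvergence theorem for $L$-analytic representations are substantial results whose proofs we take as black boxes. Granted these, the verification of $\Gamma_L$-stability, of compatibility of the $L$-analyticity conditions, and of the fact that the two functors are mutually quasi-inverse is routine and amounts to the functoriality of all constructions involved.
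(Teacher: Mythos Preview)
Your proposal is correct and follows essentially the same route as the paper: Kedlaya's slope filtration theorem (the paper cites \cite[Theorem 6.4.1]{kedlaya2005slope}) gives the filtration by $\varphi_L$-modules, uniqueness forces $\Gamma_L$-stability and $L$-analyticity of the graded pieces, and the equivalence in slope zero is Berger's result (the paper cites \cite[Theorem 10.4]{Berger2016} directly, whereas you unpack its proof via descent through \cite[Proposition 1.6]{FX12} and Fontaine's equivalence). The one point the paper makes more explicit than you do is why the $M_i$ are genuine $(\varphi_L,\Gamma_L)$-modules in the sense of Definition~\ref{def:phigammamodule}: Kedlaya's filtration is by \emph{saturated} submodules, which over the B\'ezout ring $\cR_L$ is equivalent to the successive quotients being finite free, hence projective.
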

\begin{proof}
	From \cite[Theorem 6.4.1]{kedlaya2005slope} we obtain a unique filtration by (so-called) saturated $\varphi_L$-modules $M_i$. By the uniqueness of the filtration the additional $\Gamma_L$-structure is inherited by the modules in the filtration. Note that if $M$ is assumed to be $L$-analytic, then so are the $M_i$ in its Harder--Narasimhan filtration. The requirement in \textit{loc.\ cit.\ }for the sub-modules $M_i \subseteq M$ to be saturated is equivalent to the requirement that $M_i \subset M_{i+1}$ is a $\cR_L$-direct summand (c.f.\cite[§8.1]{pottharst2020harder}), which is equivalent to requiring that $M_i/M_{i+1}$ is again projective\footnote{Note that $\cR_L$ is a Bézout domain hence being finitely generated projective is equivalent to being finite free.}, i.e.,\ a $(\varphi_L,\Gamma_L)$-module. 
	The statement regarding étale modules is \cite[Theorem 10.4]{Berger2016}. Berger defines the notion of étale differently. The fact that being étale is equivalent to being isoclinic of slope $0$ is implicit in the proof of $10.1.$ \textit{ibidem.}
\end{proof}

\section{Cohomology of $M/t_{\mathrm{LT}}M$}

In order to conduct an induction over the slopes of a $(\varphi_L,\Gamma_L)$-module $M$ a detailed study of the cohomology of $M/t_{\mathrm{LT}}M$ is required. The following section is based on Sections 3.2 and 3.3 in \cite{KPX}. We adapt their methods to our situation. The main results are Lemmas \ref{lem:phimodt} and \ref{lem:psimodt}. The proofs are adaptations of their counterparts Propositions 3.2.4 and 3.2.5 in \cite{KPX}. They turn out to be more complicated due to the implicit nature of ``the'' variable $Z$ and the fact that by extending scalars to $K$ some care is required when studying the quotients $\cR_K^+/\varphi_L^n(T)$ since $K$ could contain non-trivial $\pi_L^n$-torsion points while having the trivial $\Gamma_L$-action. We will elaborate on crucial parts of the argument whose details are left out in \textit{loc.\@ cit.}, since we require them in greater generality.

\subsection{Some technical preparation}
Recall the product formula $$t_{\mathrm{LT}}= \log_{\mathrm{LT}}(X) = X\prod_{n\geq 1}\frac{Q_n(X)}{\pi_L},$$ where $Q_1(X)=\varphi(X)/X$ and $Q_n:=\varphi(Q_{n-1}).$
The zeroes of $Q_n$ are precisely the $\pi_L^n$-torsion points of the LT-group that are not already $\pi_L^{n-1}$ torsion points. Contrary to the cyclotomic situation these $Q_n$ are not necessarily polynomials. We denote by $G_n$ the polynomial $$G_n:= \prod_{a \in LT[\pi_L^n] \backslash LT[\pi_L^{n-1}]}(X-a).$$ By construction $G_n \in o_L[[X]]$ and $G_n \mid Q_n$ in $o_L[[X]]$ and therefore in every $\cR_L^{[r,s]}.$ We also observe that $Q_n/G_n$ is a unit in $\cR_L^{[r,s]}$ since it does not have any zeroes in $\C_p.$ 
\begin{rem}\label{cor:gammaspur} 
	Let $[r,s]$ be an interval with $r_L<r<s<1$ such that the corresponding annulus contains $LT[\pi_L^n].$ Fix a Basis $z = (z_n)_n$ of $\varprojlim LT[\pi_L^n].$
	By mapping $X$ to $z_n \in LT[\pi_L^n]$ we obtain a $\Gamma_L$-equivariant isomorphism $$\cR^{[r,s]}_L/Q_n \cong L_n$$ and the following diagrams commute:
	$$\begin{tikzcd}
		\cR^{[r,s]}_L/Q_n \arrow[r] \arrow[d, "\varphi_L"] & L_n \arrow[d, hook] & \cR^{[r^q,s^q]}_L/Q_{n+1} \arrow[d, "\psi_{\mathrm{LT}}"] \arrow[r] & L_{n+1} \arrow[d, "\pi_L^{-1}\operatorname{tr}_{L_{n+1}/L_n}"] \\
		\cR^{[r^q,s^q]}_L/Q_{n+1} \arrow[r]                    & L_{n+1}             & \cR^{[r,s]}_L/Q_n \arrow[r]                         & L_n                                                           
	\end{tikzcd}$$
\end{rem}
\begin{proof} By assumption the map in question is well-defined. The commutativity of the left-hand diagram is clear, the commutativity of the right-hand diagram follows from the definition of $\psi_{\mathrm{LT}}$ by first considering the map $(L \otimes_{o_L}o_L\llbracket T\rrbracket)/Q_n \to L_n,$ which factors over $\cR_L^{[r,s]}$ since $L_n =L(z_n)$ is a field and $z_n$ lies in the annulus $\mathbb{B}_{[r,s]}(\C_p).$ Note that a priori $\cR_L^{[r,s]}/Q_n \to L_n$ is merely surjective. By writing $Q_n = U_nG_n$ with a unit $U_n$ and a polynomial $G_n$ we can see, that the left hand-side is at most $[L_n:L]$-dimensional over $L,$ which implies the injectivity.
\end{proof}

\begin{lem}
	\label{lem:commutetensorlimit}
	
	\footnote{This is an adaption of an answer to \cite{181004}. Commonly this result is stated requiring $N_n$ to be of finite length, which does not suffice to treat our desired application to $R=A$ affinoid over $K$ unless $A$ is Artinian.}
	Let $M$ be a finitely presented $R$-module and $(N_n)_{n \in \N}$ either a countable projective Mittag-Leffler system of flat $R$-modules or a countable projective system of Artinian modules. Then the natural map
	$$M \otimes_R \varprojlim_n N_n \to \varprojlim_n M \otimes_R N_n$$ is an isomorphism.
\end{lem}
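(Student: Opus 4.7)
My plan is to use a finite presentation of $M$ to reduce the statement to the trivially true case of finite free modules, and then to control the resulting obstruction via vanishing of $\varprojlim^1$ of certain auxiliary systems. Fix a presentation $R^a \xrightarrow{\phi} R^b \to M \to 0$ and set $\phi_n := \phi \otimes_R \operatorname{id}_{N_n}$, $I_n := \operatorname{im}(\phi_n)$, and $J_n := \ker(\phi_n)$. Since finite direct sums commute with $\varprojlim$, the natural comparison of the right-exact sequence obtained by tensoring the presentation with $\varprojlim_n N_n$ and the (a priori only) left-exact sequence obtained by first tensoring with each $N_n$ and then taking $\varprojlim_n$ assembles into a commutative diagram whose first two vertical arrows are isomorphisms. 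A five-lemma-style diagram chase then reduces the claim to the right-exactness of the second sequence together with the coincidence, inside $\varprojlim_n N_n^b$, of the image of $\varprojlim_n N_n^a$ with the kernel of the map to $\varprojlim_n(M \otimes_R N_n)$.

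Both of these properties follow from the vanishing of $\varprojlim^1 I_n$ and $\varprojlim^1 J_n$: applying $\varprojlim_n$ to the short exact sequences $0 \to I_n \to N_n^b \to M \otimes_R N_n \to 0$ and $0 \to J_n \to N_n^a \to I_n \to 0$ produces long exact sequences in which the vanishing of these $\varprojlim^1$-terms gives precisely the exactness required above. By the classical vanishing result for Mittag-Leffler systems (valid here because our index set is countable), it therefore suffices to verify the Mittag-Leffler condition for $(I_n)$ and $(J_n)$ in each of the two situations.

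In the Artinian case this is immediate: $I_n$ and $J_n$ are subquotients of the Artinian modules $N_n^b$ and $N_n^a$, hence themselves Artinian, and any descending chain of images of transition maps inside such a term must stabilize. In the flat Mittag-Leffler case, flatness of $N_n$ identifies $I_n \cong L \otimes_R N_n$ and $J_n \cong K \otimes_R N_n$ with $L := \operatorname{im}(\phi)$ and $K := \ker(\phi)$; the Mittag-Leffler property of $(N_n)$ then transfers to these systems because, for any $R$-module $P$, right-exactness of $P \otimes_R -$ forces the image of the induced map $P \otimes_R N_m \to P \otimes_R N_k$ to depend only on the image of $N_m \to N_k$, which stabilizes by assumption. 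The main obstacle is precisely this last point: since $R$ is not assumed Noetherian, $K$ need not be finitely generated, so one cannot fall back on the Artinian argument and must instead exploit right-exactness as above. Once this is established, both $\varprojlim^1$-terms vanish and the diagram chase concludes the proof.
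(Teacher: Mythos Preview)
Your proof is correct and follows essentially the same strategy as the paper: take a finite presentation, reduce to comparing $\operatorname{cok}(N^a\to N^b)$ with $\varprojlim_n\operatorname{cok}(N_n^a\to N_n^b)$, and control this via the vanishing of $\varprojlim^1$ of the image and kernel systems $I_n,J_n$. Your justification that $(P\otimes_R N_n)_n$ inherits the Mittag--Leffler property from $(N_n)_n$ via right-exactness of $P\otimes_R -$ is exactly the content of the paper's ``one checks that $C\otimes N_n$ is again Mittag--Leffler'', made explicit; the only cosmetic difference is that the paper obtains $\varprojlim^1 I_n=0$ from the surjection $\varprojlim^1 N_n^s\twoheadrightarrow \varprojlim^1 I_n$ (using $\varprojlim^2=0$) rather than directly from $I_n\cong L\otimes_R N_n$ being Mittag--Leffler, but both routes are equivalent.
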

\begin{proof}We first treat the flat case.	
	Take a finite presentation $R^s \to R^r \to M \to 0$ of $M.$ Tensoring with $N:=\varprojlim_n N_n$ (resp. $N_n$)) allows us to express $M \otimes_R N$ (resp. $M \otimes_R N_n)$ as the cokernel of the induced map $N^s \to N^r$ (resp. $N_n^s \to N_n^r$). But then the statement follows if we can show $\operatorname{cok}(N^s \to N^r) = \varprojlim_{n} \operatorname{cok}(N_n^s \to N_n^r).$ 
	Consider the extended exact sequence 
	$$0 \to C \to R^s \to R^r \to M \to 0,$$ where $C:= \ker(R^s \to R^r).$ Since $N_n$ is assumed to be flat we have an exact sequence $$0 \to C \otimes_R N_n \to N_n^s\to N_n^r \to M \otimes N_n \to 0$$ and one checks that $C \otimes N_n$ is again Mittag-Leffler. Splitting the above sequence into two short sequences shows the vanishing of $\lim^1 N_n^s/(C \otimes N_n)$ since $\lim^1N_n^s=0$ surjects onto it, which via the isomorphism between coimage and image implies the vanishing of $\lim^1(\operatorname{im}(N_n^s\to N_n^r))$ hence the desired $\operatorname{cok}(N^s \to N^r) = \varprojlim_{n} \operatorname{cok}(N_n^s \to N_n^r).$
	In the Artinian case it is well-known that $(N_n)_n$ is Mittag-Leffler and since submodules of Artinian modules are Artinian, we see that $(\ker(N_n^s\to N_n^r))_n$ is also Mittag-Leffler. Hence one may proceed with the same arguments as in the first case.  
\end{proof}
\begin{lem} 
	\label{lem:ZlowerBound}
	Let $ n\geq  n_0$ and let $\rho: \Gamma_n \to K^{\times}$ be a non-trivial character of finite order. We denote by $K(\rho)$ the corresponding one-dimensional $K$-linear representation. Then 
	$$\norm{Z}_{K(\rho)}\geq\abs{p}^{\frac{1}{p-1}}.$$
\end{lem}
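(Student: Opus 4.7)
The plan is to compute the action of $Z$ on $K(\rho)$ explicitly via the Schneider--Teitelbaum parameterization of the character variety, and then bound the resulting scalar by a Newton polygon argument for the Lubin--Tate formal group.

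First, I would use the chart $\Gamma_n \cong o_L$ of Definition~\ref{def:variable} together with Schneider--Teitelbaum to identify $D(\Gamma_n,K)$ with $\mathcal{R}_K^+$ in such a way that $Z = Z_n$ corresponds to the variable $T$. The continuous $K$-linear action of $D(\Gamma_n,K)$ on the one-dimensional module $K(\rho)$ is a continuous $K$-algebra homomorphism $\mathcal{R}_K^+ \to K$, and any such map factors as evaluation at some point $z \in \mathbb{B}(K)$ of the open unit disk (since the maximal ideals of the Tate algebras $K\langle T/r\rangle$ exhaust $\mathbb{B}$ as $r\to 1^-$). The trivial character corresponds to evaluation at $0$, so $\rho$ non-trivial forces $z \neq 0$, and in any case $\|Z\|_{K(\rho)} = |T(z)| = |z|$.

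Next, $\Gamma_n \cong o_L$ is pro-$p$, so $\rho$ has order $p^k$ for some $k \geq 1$. Writing $e = e(L/\QQ_p)$, we have $p\cdot o_L = \pi_L^e\cdot o_L$, hence under the chart $\Gamma_n^{p^k}$ corresponds to $p^k o_L = \pi_L^{ke}o_L$, which is exactly $\Gamma_{n+ke}$. Triviality of $\rho|_{\Gamma_{n+ke}}$ means that $D(\Gamma_{n+ke},K)\subset D(\Gamma_n,K)$ acts on $K(\rho)$ through its augmentation, so $Z_{n+ke}$ acts as $0$. Applying the relation \eqref{eq:Znbeziehung}, $Z_{n+ke} = \varphi_L^{ke}(Z_n) = [\pi_L^{ke}]_{\mathrm{LT}}(Z_n)$ in $\mathcal{R}_K^+$, and using that the action is an algebra homomorphism, we conclude $[\pi_L^{ke}]_{\mathrm{LT}}(z) = 0$, i.e., $z \in LT[\pi_L^{ke}]\setminus\{0\}$.

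To finish, the Newton polygon of $[\pi_L]_{\mathrm{LT}}(X) \equiv \pi_L X + X^q$ modulo higher-order terms in $\pi_L o_L[\![X]\!]$ shows that nonzero elements of $LT[\pi_L]$ have absolute value $|\pi_L|^{1/(q-1)}$, and an induction (looking at $[\pi_L]_{\mathrm{LT}}(X) - u$ for $u\in LT[\pi_L^{j-1}]\setminus\{0\}$) gives that any $z' \in LT[\pi_L^{j}]\setminus LT[\pi_L^{j-1}]$ satisfies $|z'| = |\pi_L|^{1/((q-1)q^{j-1})}$, which is minimized at $j = 1$. Thus $|z| \geq |\pi_L|^{1/(q-1)} = |p|^{1/(e(q-1))}$, and since $q = p^f \geq p$ and $e \geq 1$ we have $e(q-1) \geq p-1$, yielding $\|Z\|_{K(\rho)} = |z| \geq |p|^{1/(p-1)}$.

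The main obstacle is the first step: rigorously identifying the action of $Z$ on $K(\rho)$ with point-evaluation at the character-variety point attached to $\rho$, and in particular matching the trivial character with $0\in\mathbb{B}$. Once this identification is in place and one knows that finite-order characters correspond to $LT$-torsion points, the bound reduces to the standard Newton polygon computation above; note incidentally that equality $|z| = |p|^{1/(p-1)}$ can only occur when $L = \QQ_p$.
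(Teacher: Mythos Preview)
Your argument is correct and gives a genuinely different proof from the one in the paper. The paper does not pass through the character-variety description at all: instead it picks $\gamma\in\Gamma_n$ with $\rho(\gamma)\neq 1$, observes that $\rho(\gamma)$ is a nontrivial $p$-power root of unity so that $\lvert\rho(\gamma)-1\rvert\ge\lvert p\rvert^{1/(p-1)}$, and then uses the integrality statement $\delta_\gamma-1\in Z\,o_K\llbracket Z\rrbracket$ together with $\lVert Z\rVert_{K(\rho)}<1$ to conclude $\lVert Z\rVert_{K(\rho)}\ge\lVert\delta_\gamma-1\rVert_{K(\rho)}=\lvert\rho(\gamma)-1\rvert$. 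This is shorter and needs only the fact that Dirac distributions land in the integral subring $o_K\llbracket Z\rrbracket$, not the full identification of finite-order characters with torsion points.

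Your route, by contrast, identifies the scalar by which $Z$ acts as an actual Lubin--Tate torsion point and reads off its valuation from the Newton polygon. This is more work to set up (you correctly flag the identification of the action with point-evaluation, and of the augmentation with $T=0$, as the place needing care), but it yields the sharper bound $\lVert Z\rVert_{K(\rho)}\ge\lvert\pi_L\rvert^{1/(q-1)}=\lvert p\rvert^{1/(e(q-1))}$, with equality precisely when $\rho$ has order $p$ on the $o_L$-chart; the paper's bound $\lvert p\rvert^{1/(p-1)}$ is recovered from $e(q-1)\ge p-1$.
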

\begin{proof}
	First of all we remark  that any such character is automatically continuous by \cite[Théorème 0.1]{nikolov2003finite} because $\Gamma_n$ is topologically of finite type. $K(\rho)$ is a locally $L$-analytic representation since the orbit maps are even locally constant.
	Since $\Gamma_n \cong o_L$ by assumption, we have $\rho(\Gamma_n^{p^m})=1$ for some $m\gg0.$ By assumption there exists some $\gamma \in \Gamma_n$ such that $\rho(\gamma)\neq 1.$ In this case $\rho(\gamma)$ is some non-trivial $p$-power root of unity. In particular $\abs{\rho(\gamma)-1} \geq \abs{p}^{\frac{1}{p-1}}.$ Since $K(\rho)$ is one-dimensional the operator norm is multiplicative and because $\sum_{k\geq0} Z^k \in D(\Gamma_n,K)$ converges to a well-defined operator on $K(\rho)$ we necessarily have $\norm{Z}_{K(\rho)}<1.$ By expressing $\delta_{\gamma}-1$ as a power series $\delta_{\gamma}-1 = ZF(Z),$ we conclude $\norm{Z}_{K(\rho)} \geq \norm{ZF(Z)}_{K(\rho)}= \norm{\gamma-1}_{K(\rho)} \geq \abs{p}^{1/(p-1)},$ where for the first estimate we use $ZF(Z) \in Zo_K\llbracket Z\rrbracket.$
\end{proof}

\begin{lem}
	\label{lem:twistinvertible}
	
	Let $r \in (\abs{p}^{\frac{1}{p-1}},1)$ and fix a lift $X_n$ of $Z_n$ to $D_{\Q_p}(\Gamma_n,K)$ of norm $\abs{X}_{r}=C(r).$
	Let $W$ be a finite $A$-module with an $L$-analytic $\Gamma_n$-action such that $$\norm{\gamma-1}_{W}< \varepsilon:=C(r)^{-1}\abs{p}^{\frac{1}{p-1}}$$ for any $\gamma \in \Gamma_n.$ Then the action of $Z_n \in D(\Gamma_n,K)$ is invertible on $W(\rho) = W\otimes_K K(\rho)$ for any non-trivial character $\rho:\Gamma_n\to K^{\times}$ of finite order.
\end{lem}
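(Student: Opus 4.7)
The plan is to factor the operator $Z_n$ acting on $W(\rho) = W \otimes_K K(\rho)$ as a scalar term (the action of $Z_n$ on $K(\rho)$, bounded below by Lemma~\ref{lem:ZlowerBound}) plus a small correction, then conclude by a geometric-series argument.

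Let $z_\rho \in K$ be the scalar by which $Z_n$ acts on $K(\rho)$; by Lemma~\ref{lem:ZlowerBound} we have $\abs{z_\rho} \geq \abs{p}^{1/(p-1)}$, and in particular $\abs{z_\rho}<1$. First I would use the Hopf-algebra structure of $D(\Gamma_n,K)$: under the Schneider--Teitelbaum identification $D(\Gamma_n,K) \cong \cR_K^+$ with $Z_n \leftrightarrow T$, the coproduct dual to the addition on $\Gamma_n \cong o_L$ corresponds to the Lubin--Tate formal group law $F_{LT}$, so $\Delta(Z_n) = F_{LT}(Z_n\otimes 1, 1\otimes Z_n)$. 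Acting on $W \otimes K(\rho)$, where $Z_n$ on the second factor is the scalar $z_\rho$, this becomes
\[
Z_n|_{W(\rho)} \;=\; F_{LT}(Z_n|_W,\, z_\rho) \;=\; z_\rho \cdot \mathrm{id}_W \,+\, (Z_n|_W) \cdot u,
\]
where $u = 1 + \sum_{j\geq 1} a_{1j} z_\rho^j + \sum_{i\geq 2,\,j\geq 1} a_{ij} (Z_n|_W)^{i-1} z_\rho^j$ involves the integral coefficients $a_{ij}$ of $F_{LT}$ and is a unit in $\operatorname{End}_A(W)$ provided $\norm{Z_n}_W < 1$, by an immediate geometric-series argument using $\abs{z_\rho} < 1$.

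The second step is to bound $\norm{Z_n}_W$. Since $W$ is $L$-analytic, the quotient map $D_{\Q_p}(\Gamma_n,K) \twoheadrightarrow D(\Gamma_n,K)$ makes the actions of $X_n$ and of $Z_n$ on $W$ coincide, so $\norm{Z_n}_W = \norm{X_n}_W$. Expanding $X_n$ as a power series without constant term in $\gamma_0 - 1$ for a topological generator $\gamma_0$ of $\Gamma_n$, the normalization $\abs{X}_r = C(r)$ controls the coefficients, and the hypothesis $\norm{\gamma-1}_W < \varepsilon = C(r)^{-1}\abs{p}^{1/(p-1)}$ is precisely what is designed to yield
\[
\norm{Z_n}_W \;<\; \abs{p}^{1/(p-1)} \;\leq\; \abs{z_\rho}.
\]

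Finally, one writes $Z_n|_{W(\rho)} = z_\rho\bigl(1 + z_\rho^{-1}(Z_n|_W)u\bigr)$; the displayed estimate together with $\norm{u}_W \leq 1$ (since $u-1$ has operator norm $<1$) forces $\norm{z_\rho^{-1}(Z_n|_W)u}_W < 1$, so the bracketed factor is invertible by the geometric series, and $Z_n$ acts invertibly on $W(\rho)$. The main obstacle is the bookkeeping in the second step: the constant $\varepsilon$ is tight, since for characters $\rho$ of order exactly $p$ one has $\abs{z_\rho} = \abs{p}^{1/(p-1)}$ with equality, so the inequality $\norm{Z_n}_W < \abs{p}^{1/(p-1)}$ must be strict and must crucially use that $X_n$ has no constant term, rather than merely the boundedness of $\abs{X}_r$.
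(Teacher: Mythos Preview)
Your overall strategy matches the paper's: write the diagonal action of $Z_n$ on $W(\rho)$ as the scalar $z_\rho$ plus a perturbation, then invoke a geometric series. Your use of the Hopf-algebra coproduct $\Delta(Z_n)=F_{\mathrm{LT}}(Z_n\otimes 1,1\otimes Z_n)$ to obtain $Z_n|_{W(\rho)}=z_\rho+(Z_n|_W)\,u$ is correct and more conceptual than the paper's direct expansion. Two issues, however.

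First, a minor one: $\Gamma_n\cong o_L$ is not topologically cyclic unless $L=\Q_p$; it has a $\Z_p$-basis $\gamma_1,\dots,\gamma_d$ with $d=[L:\Q_p]$, so the expansion of $X_n$ must be in the multi-variables $\mathbf{b}=(\gamma_j-1)_j$.

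Second, and this is the real gap: the inequality $\norm{Z_n}_W<\abs{p}^{1/(p-1)}$ does not follow from the hypothesis by the argument you sketch. Writing $X_n=\sum_{\mathbf{k}}a_\mathbf{k}\mathbf{b}^\mathbf{k}$ with $\abs{X_n}_r=\sup_\mathbf{k}\abs{a_\mathbf{k}}r^{\abs{\mathbf{k}}}=C(r)$ and no constant term, the naive bound gives only
\[
\norm{Z_n}_W\ \le\ \sup_{\abs{\mathbf{k}}\ge 1}\abs{a_\mathbf{k}}\,\varepsilon^{\abs{\mathbf{k}}}\ \le\ C(r)\,\frac{\varepsilon}{r}\ =\ \frac{\abs{p}^{1/(p-1)}}{r},
\]
which exceeds $\abs{p}^{1/(p-1)}$ because $r<1$. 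So the perturbation is not visibly smaller than $\abs{z_\rho}$ when $\rho$ has order $p$. The paper avoids this by \emph{not} bounding $\norm{Z_n}_W$ at all: after first passing to a subgroup $\Gamma_m$ on which $\rho$ takes values in $\mu_p$, it estimates $\norm{Z_{\mathrm{diag}}-(1\otimes Z)}_{W(\rho)}$ directly via an inductive computation on $\mathbf{b}^\mathbf{k}(a\otimes b)-a\otimes\mathbf{b}^\mathbf{k}b$. The point is that each step of the induction uses $\norm{\gamma-1}_{W(\rho)}\le\abs{p}^{1/(p-1)}$ (coming from both tensor factors), contributing the factor $\abs{p}^{\abs{\mathbf{k}}/(p-1)}<r^{\abs{\mathbf{k}}}$ rather than $\varepsilon^{\abs{\mathbf{k}}}$; only one factor of $\gamma-1$ is peeled off onto $W$ alone to produce the single $\varepsilon$. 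This yields $\sup_\mathbf{k}\abs{a_\mathbf{k}}\,\varepsilon\,r^{\abs{\mathbf{k}}}=C(r)\varepsilon=\abs{p}^{1/(p-1)}$, with strict inequality from $\norm{\gamma-1}_W<\varepsilon$. Your formal-group-law identity is compatible with this computation, but it does not replace it: you still need the mixed estimate on $W(\rho)$, not just a bound on $W$.
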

\begin{proof} 
	It suffices to show that the action of $Z_m$ is invertible for some $m\geq n$ since $Z_{n+1}=\varphi(Z_n) = Q(Z_n)Z_n.$ If we fix a $\Z_p$-basis $\gamma_1,\dots,\gamma_d$ of $\Gamma_n$ the images $\rho(\gamma_i)$ have to be $p$-power roots of unity and by replacing $\Gamma_n$ with a small enough subgroup $\Gamma_m \subset \Gamma_n$ we may and do assume that $\rho(\gamma_i) \neq 1$ for at least one $i$ and $\rho(\gamma_i)$ is a $p$-th root of unity for every $i,$ i.e. $\rho$ is a non-trivial finite-order character whose values lie in the group of $p$-th roots of unity. By  \cite[Lemma 2.3.3]{RustamFiniteness} we can replace $Z_n$ by $Z_m$ and replace the lift $X_n$ by a lift $X_m$ of $Z_m$ whose $r$-norm is the same.
	Since $K$ is one-dimensional the $K$-linear action of $Z$ on $K(\rho)$ is either zero or invertible. Since $\rho$ is assumed to be non-trivial it has to be invertible. Let $X = \sum_{\mathbf{k} \in \N_0^d} a_\mathbf{k} \mathbf{b}^\mathbf{k}$ be a preimage of $Z$ in $D_{\Q_p}(\Gamma_m,K)$ of norm $\norm{X}_{r} \leq C(r).$ Our assumptions guarantee that $\gamma_i-1$ acts on $W \otimes K(\rho)$ with operator norm bounded above by $\abs{p}^{\frac{1}{p-1}} =\abs{\zeta_p-1}$ and Lemma \ref{lem:ZlowerBound} asserts that the invertible operator $\id \otimes Z$ has operator norm bounded below by $\abs{p}^{\frac{1}{p-1}}.$
	We use the notation $Z_{\text{diag}}$ to emphasize that $Z$ acts diagonally and compute
	\begin{align}
		&\notag\norm{Z_{\text{diag}} (a\otimes b) - (\id \otimes Z)(a \otimes b)}_{W(\rho)}\\
		&\notag\leq \sup_\mathbf{k}\abs{a_\mathbf{k}}\norm{\mathbf{b}^{\mathbf{k}}(a \otimes b) - a \otimes \mathbf{b}^{\mathbf{k}}(b)}_{W(\rho)}\\
		\label{eq:Zzdiagbasch}
		&<\sup_\mathbf{k}\abs{a_\mathbf{k}}\varepsilon  {\abs{p}^{\frac{1}{p-1}}}^{\abs{\mathbf{k}}}\norm{a \otimes b}_{W(\rho)}\\
		&\notag< \sup_\mathbf{k}\abs{a_\mathbf{k}}\varepsilon  r^{\abs{\mathbf{k}}}\norm{a \otimes b}_{W(\rho)}\\
		&\notag ={C(r)}^{-1} C(r){\abs{p}}^{\frac{1}{p-1}}\norm{a \otimes b}_{W(\rho)},
	\end{align}

where we use in \eqref{eq:Zzdiagbasch} the estimate $$\norm{(\gamma-1)(a \otimes b)-a \otimes (\gamma-1)b} = \norm{(\gamma-1)a \otimes \gamma b}< \varepsilon \norm{a \otimes b}$$ and the same inductive argument that we used in the proof of \cite[Lemma 2.3.4]{RustamFiniteness} to treat general multi-indices. 
We conclude $$\norm{Z_{\text{diag}}-(id \otimes Z)}< \norm{id \otimes Z}_{W(\rho)} = (\norm{\id \otimes Z}^{-1})_{W(\rho)}^{-1}$$ and by a standard geometric series argument we conclude that the diagonal action of $Z$ is invertible on $W(\rho).$ Note that $K(\rho)$ is one-dimensional hence the action of $1 \otimes Z$ is given by multiplication by a constant in $K^{\times}$ and hence satisfies the last equality $\norm{id \otimes Z}_{W(\rho)} = (\norm{\id \otimes Z}^{-1})_{W(\rho)}^{-1}.$
\end{proof}
\begin{lem} \label{lem:FlatAlgebraProjectiveResolution}
	Let $G$ be a compact $L$-analytic group, let $H\subset G$ be a (finite index) normal open subgroup, let $A$ be $K$-affinoid,
	let $S:=D(G,A)$\footnote{We take $D(G,A):=A\hat{\otimes}_KD(G,K)$ as a convention.}, $R:= D(H,A)$ and let $M$ be an $S$-module that admits (as an $R$-module) a $d+1$-term ($d \geq 0$) projective resolution by finitely generated projective $R$-modules. Then $M$ admits (as an $S$-module) a $d+1$-term projective resolution by finitely generated $S$-modules.
\end{lem}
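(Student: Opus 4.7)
The plan is to induct on $d$, leveraging two structural facts: first, that $S$ is a finitely generated free $R$-module of rank $n := [G:H]$ (using that $H$ is open normal of finite index, so choosing coset representatives $\tilde\sigma$ for $\sigma \in \Sigma := G/H$ gives $S = \bigoplus_{\sigma \in \Sigma} \delta_{\tilde\sigma} R$); and second, that $n$ is invertible in $A$ (since $\mathrm{char}\,K = 0$), which will enable a Maschke-style averaging argument. Normality of $H$ in $G$ guarantees that conjugation $r \mapsto \tilde\sigma^{-1}r\tilde\sigma$ is an automorphism of $R$.

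\textbf{Base case $d=0$.} Here $M$ is finitely generated projective over $R$. I would exhibit $M$ as an $S$-direct summand of $S \otimes_R M$ via the $S$-linear action map $\pi\colon S \otimes_R M \to M$ and the averaged section
$$\sigma\colon M \to S \otimes_R M,\qquad m \mapsto \frac{1}{n}\sum_{\sigma \in \Sigma} \delta_{\tilde\sigma} \otimes \tilde\sigma^{-1} m.$$
A direct check — using normality of $H$ in $G$ to move scalars $r \in R$ past the $\delta_{\tilde\sigma}$ (at the cost of a conjugation, which preserves $R$) — shows that $\sigma$ is $S$-linear with $\pi \circ \sigma = \id_M$. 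Since $S$ is $R$-flat and $M$ is finitely generated projective over $R$, the module $S \otimes_R M$ is finitely generated projective over $S$, and $M$ inherits both finite generation and projectivity as an $S$-direct summand.

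\textbf{Inductive step} ($d \geq 1$). Assume the statement for $d-1$, and let $0 \to P_d \to \cdots \to P_0 \to M \to 0$ be the given finitely generated projective $R$-resolution. Since $M$ is finitely generated over $R$ hence over $S$, pick an $S$-linear surjection $S^m \twoheadrightarrow M$ with kernel $K$, and set $K' := \ker(P_0 \to M)$. Schanuel's lemma applied to the two $R$-short exact sequences $0 \to K \to S^m \to M \to 0$ and $0 \to K' \to P_0 \to M \to 0$ (both with projective middle term over $R$) produces an $R$-isomorphism $K \oplus P_0 \cong K' \oplus S^m$. Now $K'$ has the explicit $d$-term finitely generated projective $R$-resolution $0 \to P_d \to \cdots \to P_1 \to K' \to 0$, so the same is true of $K' \oplus S^m$ after direct-summing $S^m$ in degree zero; consequently $K \oplus P_0$ inherits such a resolution, and a mapping-cone construction applied to the split injection $P_0 \hookrightarrow K \oplus P_0$ (using projectivity of $P_0$) extracts from it a $d$-term finitely generated projective $R$-resolution of $K$. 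The inductive hypothesis, applied to $K$ as an $S$-module, then supplies a $d$-term finitely generated projective $S$-resolution of $K$, which splices with $0 \to K \to S^m \to M \to 0$ to yield the desired $(d+1)$-term $S$-resolution of $M$.

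The essential obstacle is the base case: one must descend projectivity across the ring extension $R \subset S$. This is precisely where the invertibility of $n = [G:H]$ in $A$ becomes indispensable — without it, averaging fails and the conclusion can break already at the level of finite group rings (e.g.\ $\mathbb{F}_p$ as a trivial $\mathbb{F}_p[\mathbb{Z}/p]$-module). Everything else in the argument is formal bookkeeping with Schanuel and mapping cones.
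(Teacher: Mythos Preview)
Your proof is correct and takes a genuinely different route from the paper. The paper argues non-inductively: it first shows $\operatorname{Ext}^i_S(M,T)\cong\operatorname{Ext}^i_R(M,T)^{G/H}$ (using exactness of $G/H$-invariants in characteristic zero) to bound the projective dimension of $M$ over $S$ by $d$, then invokes the pseudo-coherence formalism from the Stacks Project---$M$ is pseudo-coherent over $R$ by hypothesis, $S$ is finite free over $R$, hence $M$ is pseudo-coherent over $S$---to produce an infinite finite-free $S$-resolution, and finally truncates it at degree $d$ using the projective-dimension bound. Your approach, by contrast, is a direct induction: the Maschke-type averaging section exhibits $M$ as an $S$-summand of $S\otimes_R M$ in the base case, and the Schanuel/mapping-cone bookkeeping handles the inductive step. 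Both arguments ultimately hinge on the invertibility of $[G:H]$---the paper uses it for exactness of $(-)^{G/H}$, you use it to build the splitting---but your route is more elementary (no pseudo-coherence machinery) and more explicitly constructive, while the paper's cleanly separates the finite-generation aspect (pseudo-coherence) from the homological-dimension aspect (Ext vanishing).
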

\begin{proof}
	Let $T$ be an $S$-module. Using that the Dirac distributions $\delta_g,$ where $g$ runs through a system of representatives of $G/H,$ form a basis of $S$ as an $R$-module, we have $\operatorname{Hom}_{S}(M,T) = \operatorname{Hom}_R(M,T)^{G/H},$ where $G/H$ acts on a homomorphism $f$ via $(gf)(-) = gf(g^{-1}-).$ Because $A$ contains a field of characteristic $0$ the functor $(-)^{G/H}$ is exact on $A[G/H]$-modules and we obtain corresponding isomorphisms \begin{align*} \label{eq:extcomp} \operatorname{Ext}^i_{S}(M,T) \cong \operatorname{Ext}^i_R(M,T)^{G/H}.\end{align*}
	In particular the assumption on the length of the resolution asserts \begin{equation}\label{eq:extvanish}\operatorname{Ext}^i_{S}(M,T) =0 \tag{$*$}\end{equation}
	for every $i>d.$ This implies that the projective dimension of $M$ is bounded by $d.$
	By assumption we have a  resolution of $M$ with finitely generated projective $R$-modules.
	From  \cite[\href{https://stacks.math.columbia.edu/tag/064U}{Tag 064U}]{stacks-project} we obtain that $M$ is in particular pseudo-coherent as an $R$-module.
	The ring $S$ is finite free as an $R$ module and hence pseudo-coherent.
	Applying \cite[\href{https://stacks.math.columbia.edu/tag/064Z}{Tag 064Z}]{stacks-project} we can conclude that $M$ is pseudo-coherent as an $S$-module.	By  \cite[\href{https://stacks.math.columbia.edu/tag/064T}{Tag 064T}]{stacks-project}  admits a (potentially infinite) resolution
	$$\dots F_1 \to F_0 \to M\to0$$ by finite free $S$-modules.
	
	Using \eqref{eq:extvanish} the kernel of $F_{d} \to F_{d-1}$ is projective and we may truncate the sequence by replacing $F_d$ with $\operatorname{ker}(F_d \to F_{d-1}).$ The latter remains finitely generated because it is equal to the image of $F_{d+1} \to F_d$ by construction.
\end{proof}
\subsection{$\mod t_{\mathrm{LT}}$-cohomology}
We have seen in Remark \ref{cor:gammaspur} that $\cR_L^{[r,s]}/Q_n$ can be identified with the field $L_n$ whenever the zeroes of $Q_n$ (i.e. the $\pi_L^n$-torsion points of the LT group that are not already $\pi_L^{n-1}$-torsion) lie in the annulus $[r,s].$ This is $\Gamma_L$-equivariant for the Galois action on $L_n$ and $\varphi: \cR_L^{[r,s]}/Q_n\to \cR_L^{[r^q,s^q]}/Q_{n+1}$ corresponds to the inclusion $L_n \hookrightarrow L_{n+1}.$ When we extend coefficients to $K$ we let $\Gamma_L$ act trivially on the coefficients on $\cR_K.$ One has to be careful since $\cR_K^{[r,s]}/Q_n$ is not necessarily a field extension of $K.$ It is in general only some finite étale $K$-algebra, which we denote by $E_n.$ It carries a $\Gamma_L$-action induced from the action on $\cR_K.$ Using the decomposition $Q_n = G_nU_n$ the algebra $E_n$ can be explicitly described as $E_n = \cR_K^{[r,s]}/Q_n \cong K[T]/G_n \cong K \otimes_L L[T]/G_n \cong K \otimes_L L_n.$
Here $\cR_K^{[r,s]}$ carries its usual $\Gamma_L$-action while $\Gamma_L$ acts on the right-hand side via the right factor. For $n \geq n_0$ we define $$\mathfrak{L}_n:= \log_{\mathrm{LT}}(Z_n) \in D(\Gamma_n,K).$$ 

\begin{lem}
	\label{lem:modtprojlim}
	The natural map $\cR_A^{[r,1)}/(t_{\mathrm{LT}}) \to \varprojlim_{n \geq 0} \cR_A^{[r,1)}/(\varphi_L^n(T))$ is an isomorphism. 
\end{lem}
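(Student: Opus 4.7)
The plan is to reduce the claim to finite quotients on closed subintervals and then pass to the Fréchet-Stein limit $\cR_A^{[r,1)} = \varprojlim_s \cR_A^{[r,s]}$.

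Combining $t_{\mathrm{LT}} = T\prod_{k \ge 1}(Q_k/\pi_L)$ with $\varphi_L^n(T) = T \cdot Q_1 \cdots Q_n$ yields the factorization $t_{\mathrm{LT}} = \varphi_L^n(T)\cdot h_n$ in $\cR_A^+$, where $h_n := \pi_L^{-n}\prod_{k>n}(Q_k/\pi_L)$. In particular $t_{\mathrm{LT}} \in (\varphi_L^n(T))$ for every $n$, so the map is well-defined. The key observation is that on any closed subinterval $[r,s] \subset [r,1)$, the factor $h_n$ is a unit in the affinoid ring $\cR_A^{[r,s]}$ for $n$ sufficiently large: each $Q_k/\pi_L$ has constant term $1$ (since $Q_k(0) = \pi_L$), and its zeros have absolute value $r^{(k)}$ tending to $1$, so once $r^{(n+1)} > s$ no factor $Q_k/\pi_L$ with $k > n$ has a zero in $[r,s]$. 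Consequently $(\varphi_L^n(T)) = (t_{\mathrm{LT}})$ in $\cR_A^{[r,s]}$ for $n \gg 0$, and the inner system $\varprojlim_n \cR_A^{[r,s]}/(\varphi_L^n(T))$ is eventually constant with limit $\cR_A^{[r,s]}/(t_{\mathrm{LT}})$.

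For each fixed $n$, $\varphi_L^n(T)$ has only finitely many zeros in $[r,1)$; a Chinese remainder analysis based on Remark \ref{cor:gammaspur} (applied after base change to $A$) identifies $\cR_A^{[r,1)}/(\varphi_L^n(T))$ as a finite product of $A$-finite rings of the form $A \hat{\otimes}_L L_k$, and for $s$ large enough the restriction $\cR_A^{[r,1)}/(\varphi_L^n(T)) \to \cR_A^{[r,s]}/(\varphi_L^n(T))$ is already an isomorphism. Hence $\cR_A^{[r,1)}/(\varphi_L^n(T)) = \varprojlim_s \cR_A^{[r,s]}/(\varphi_L^n(T))$, and interchanging the two projective limits yields
\begin{equation*}
\varprojlim_n \cR_A^{[r,1)}/(\varphi_L^n(T)) \cong \varprojlim_s \varprojlim_n \cR_A^{[r,s]}/(\varphi_L^n(T)) \cong \varprojlim_s \cR_A^{[r,s]}/(t_{\mathrm{LT}}).
\end{equation*}

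The remaining obstacle, and what I expect to be the hardest step, is the identification $\cR_A^{[r,1)}/(t_{\mathrm{LT}}) \cong \varprojlim_s \cR_A^{[r,s]}/(t_{\mathrm{LT}})$, which is less formal than the analogous statement for $\varphi_L^n(T)$ because $t_{\mathrm{LT}}$ has infinitely many zeros in $[r,1)$ and the quotients are no longer $A$-finite. I would handle it by a Mittag-Leffler argument: the transition maps $\cR_A^{[r,s']}/(t_{\mathrm{LT}}) \to \cR_A^{[r,s]}/(t_{\mathrm{LT}})$ for $s' > s$ are surjective (on the level of the product decomposition they amount to projecting onto the torsion points lying in the smaller annulus), so $\varprojlim_s^{1}$ vanishes and the short exact sequences $0 \to \cR_A^{[r,s]} \xrightarrow{\cdot t_{\mathrm{LT}}} \cR_A^{[r,s]} \to \cR_A^{[r,s]}/(t_{\mathrm{LT}}) \to 0$ assemble to the desired description of $\cR_A^{[r,1)}/(t_{\mathrm{LT}})$, once one knows that multiplication by $t_{\mathrm{LT}}$ on $\cR_A^{[r,1)}$ is injective with closed image. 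The injectivity and closedness reduce via Lemma \ref{lem:commutetensorlimit} to the case $A = K$, where $\cR_K^{[r,1)}$ is an integral domain and the claim is immediate.
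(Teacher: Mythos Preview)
Your approach is essentially the same as the paper's: both reduce to closed subintervals $[r,s]$ where $(t_{\mathrm{LT}}) = (\varphi_L^n(T))$ for $n$ large, and then use the Fréchet--Stein structure of $\cR_A^{[r,1)}$. The paper is more efficient in two places. First, instead of interchanging a double limit, it chooses a single cofinal sequence $(s_n)$ so that $\cR_A^{[r,s_n]}/(t_{\mathrm{LT}}) = \cR_A^{[r,s_n]}/(\varphi_L^n(T))$ and both limits run over the same index set. Second, and more importantly, the step you flag as ``the hardest'' --- the identification $\cR_A^{[r,1)}/(t_{\mathrm{LT}}) \cong \varprojlim_s \cR_A^{[r,s]}/(t_{\mathrm{LT}})$ --- is in fact a one-line consequence of coadmissibility: $\cR_A^{[r,1)}$ is coadmissible over itself, hence so is the cokernel of any $\cR_A^{[r,1)}$-linear endomorphism, which is exactly this statement. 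Your Mittag--Leffler argument works too (the $\varprojlim^1$-vanishing you need is Theorem B of \cite{schneider2003algebras} rather than surjectivity of transition maps), but the reference to Lemma~\ref{lem:commutetensorlimit} for reducing injectivity of $t_{\mathrm{LT}}$ to the case $A=K$ is not the right tool; that lemma concerns commuting tensor products with limits, not base change of injectivity. In any case, once you invoke coadmissibility, neither injectivity nor closedness of the image needs to be checked separately.
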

\begin{proof} 
	Recall the product decomposition $t_{\mathrm{LT}} = T\prod_{n\geq1}\frac{\varphi_L^n(T)}{\pi_L^{-1}\varphi_L^{n-1}(T)}.$ The partial products are equal to $P_n=\pi_L^{-(n-1)}\varphi^n(T).$ In particular $t_{\mathrm{LT}}$ is divisible by every $\varphi^n(T).$ We can describe $\cR_A^{[r,1)}/(t_{\mathrm{LT}})$ as $$\cR_A^{[r,1)}/(t_{\mathrm{LT}})=\operatorname{cok}(\cR_A^{[r,1)} \xrightarrow{t_{\mathrm{LT}}} \cR_A^{[r,1)})$$ and by coadmissibility we have $$\operatorname{cok}(\cR_A^{[r,1)} \xrightarrow{t_{\mathrm{LT}}} \cR_A^{[r,1)}) = \varprojlim_s \operatorname{cok}(\cR_A^{[r,s]} \xrightarrow{t_{\mathrm{LT}}} \cR_A^{[r,s]})$$ Now take a sequence of radii $s_n$ such that $[0,s_n]$ contains the $\pi_L^n$-torsion points of the $LT$-group (i.e. the zeroes of $\varphi^n(T))$ but no $\pi_L^{n+1}$-torsion points, that are not already $\pi_L^{n}$-torsion, i.e., none of the zeroes of $\varphi^{n+1}(T)/\varphi^n(T).$  In $\cR_K^{[0,s_n]}$ and hence in $\cR_K^{[r,s_n]}$ $\varphi^n(T)$ and $t_{\mathrm{LT}}$ differ by $t_{\mathrm{LT}}/\varphi^{n}(T)$, which has no zeroes in the annulus $[r,s_n]$ and is therefore a unit (since it is not contained in any maximal ideal of $\cR_K^{[r,s_n]}$ by \cite[3.3 Lemma 10]{bosch2014lectures} and the Weierstraß Preparation Theorem), hence they differ by a unit in $\cR_A^{[r,s_n]},$ in particular, $\cR_A^{[r,s_n]}/\varphi^n(T) = \cR_A^{[r,s_n]}/t_{\mathrm{LT}}.$ The statement now follows from $\cR_K^{[r,1)}/\varphi^n(T) = \cR_K^{[r,s_n]}/\varphi^n(T).$
\end{proof}
\begin{lem} 
	\label{lem:torspsi}Let $M^{r_0}$ be a $(\varphi,\Gamma)$-module over $\cR_A^{r_0}.$ Let $n_1=n_1(r_0) \in \N_0$ be minimal among $n \in \N$  such that ${\abs{\pi_L}}^{\frac{1}{q^{n-1}(q-1)}} \geq r_0.$ then: 
	\begin{itemize}
		\item $M^{r_0}/t_{\mathrm{LT}}M^{r_0} \cong \prod_{n\geq n_1}M^{r_0}/Q_nM^{r_0}.$
		\item  $\varphi^{m}$ induces an $A[\Gamma]$-linear isomoprhism $\varphi^m \otimes 1: M^{r_0}/Q_n \otimes_{E_n} E_{n+m} \to M^{r_0}/Q_{n+m}$ 
	\end{itemize}
\end{lem}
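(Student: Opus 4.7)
The plan is to first prove the decomposition at the level of $\cR_A^{r_0}$ and then transfer it to $M^{r_0}$ using its projectivity; the second assertion then follows by linearizing the $\varphi$-module structure.

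For the first claim, I would start from the product formula $t_{\mathrm{LT}}=T\prod_{n\geq 1}Q_n/\pi_L$. The zeros of $Q_n$ are precisely the level-$n$ torsion of the Lubin--Tate group, all of absolute value $\abs{\pi_L}^{1/(q^{n-1}(q-1))}$, so the defining property of $n_1$ amounts to: $Q_n$ has a zero in $[r_0,1)$ if and only if $n\geq n_1$. For $n<n_1$, $Q_n$ has no zero in the annulus and is thus a unit in $\cR_A^{r_0}$ by the Weierstraß Preparation argument invoked in the proof of Lemma \ref{lem:modtprojlim}; $T$ is a unit for the same reason since $r_0>r_L>0$. Hence $\varphi^n(T)=T\cdot Q_1\cdots Q_n$ and $Q_{n_1}\cdots Q_n$ generate the same ideal in $\cR_A^{r_0}$ for every $n\geq n_1$. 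The $Q_k$ with distinct $k\geq n_1$ are coprime in $L[T]$ (disjoint zero sets), and the resulting Bézout identity passes to $\cR_A^{r_0}$, so the Chinese remainder theorem gives $\cR_A^{r_0}/\varphi^n(T)\cong\prod_{k=n_1}^n\cR_A^{r_0}/Q_k$. Since $M^{r_0}$ is a direct summand of a finite free $\cR_A^{r_0}$-module, tensoring with it commutes with arbitrary inverse limits, so combining with Lemma \ref{lem:modtprojlim} one obtains
$$M^{r_0}/t_{\mathrm{LT}}M^{r_0}\cong\varprojlim_n M^{r_0}/\varphi^n(T)M^{r_0}\cong\varprojlim_n\prod_{k=n_1}^n M^{r_0}/Q_kM^{r_0}\cong\prod_{k\geq n_1}M^{r_0}/Q_kM^{r_0},$$
the last identification holding because the transition maps in the inverse system merely drop the top factor.

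For the second assertion, I would iterate the defining isomorphism of the $\varphi$-module structure to obtain $\varphi^{m\ast}M^{r_0}:=\cR_A^{r_0^{1/q^m}}\otimes_{\varphi^m,\cR_A^{r_0}}M^{r_0}\xrightarrow{\sim}M^{r_0^{1/q^m}}$, $a\otimes x\mapsto a\varphi^m(x)$. Since $\varphi(Q_n)=Q_{n+1}$ iterates to $\varphi^m(Q_n)=Q_{n+m}$, reducing modulo $Q_{n+m}$ identifies the left side with $E_{n+m}\otimes_{E_n}(M^{r_0}/Q_nM^{r_0})$, where $E_{n+m}$ is regarded as an $E_n$-algebra via $\varphi^m$. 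By Remark \ref{cor:gammaspur}, the quotient $\cR_A^r/Q_{n+m}$ is the same finite étale algebra $A\otimes_LL_{n+m}$ for any $r$ small enough that $[r,1)$ contains $LT[\pi_L^{n+m}]$, so the shrinkage of annulus effected by $\varphi^m$ is invisible after reduction modulo $Q_{n+m}$. The $\Gamma_L$-equivariance is automatic because $\varphi^m$ commutes with $\Gamma_L$, and $A$-linearity is clear.

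The main point of care is the bookkeeping: the various annuli on which the restrictions $M^{r_0^{1/q^m}}$ naturally live must be reconciled with the intrinsic $E_{n+m}$-structure of the quotients, which one resolves by the observation just made that all quotients by $Q_{n+m}$ collapse onto the same finite étale $K$-algebra as soon as the annulus contains the level-$(n+m)$ torsion. The only genuinely non-formal input is the coprimality of the $Q_k$'s, which is most cleanly handled by invoking Bézout in $L[T]$ rather than arguing directly in the non-Noetherian ring $\cR_A^{r_0}$.
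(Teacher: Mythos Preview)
Your argument is correct and follows essentially the same route as the paper: both establish the decomposition of $\cR_A^{r_0}/t_{\mathrm{LT}}$ via the Chinese remainder theorem and then pass to $M^{r_0}$ by projectivity, and both treat the second assertion by linearising the $\varphi$-structure and reducing modulo $Q_{n+m}$. The only organisational difference is that you feed through Lemma~\ref{lem:modtprojlim} and the system $\cR_A^{r_0}/\varphi^n(T)$, whereas the paper works over closed intervals $[r_0,s]$ and lets $s\to 1$; these are interchangeable.

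One small slip to repair: you assert that the $Q_k$ are coprime in $L[T]$ and invoke B\'ezout there, but the paper explicitly notes that the $Q_n$ need not be polynomials in the Lubin--Tate setting. The fix is immediate---replace $Q_n$ by the distinguished polynomial $G_n$ (since $Q_n/G_n$ is a unit, they generate the same ideal), or argue coprimality directly in $\cR_L^{[r_0,s]}$ from the disjointness of the zero sets---but as written the sentence is not literally correct.
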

\begin{proof}
	The zeros of $Q_n$ are precisely the $\pi_L^n$-torsion points of the LT-group, which are not already $\pi_L^{n-1}$-torsion. Hence $Q_n$ is a unit in $\cR_K^{[r,s]}$ if and only if $$ v(n):={\abs{\pi_L}}^{\frac{1}{q^{n-1}(q-1)}} \notin [r,s].$$ Let $w(s)$ be the largest integer $n$ satisfying $v(n)\leq s.$ For a closed interval we obtain from the Chinese remainder theorem $$\cR_K^{[r_0,s]}/t_{\mathrm{LT}}\cR_K^{[r_0,s]} = \bigoplus_{n_1 \leq n \leq w(s)} \cR_K^{[r_0,s]}/Q_n\cR_K^{[r_0,s]}$$ and  $$\cR_A^{[r_0,s]}/t_{\mathrm{LT}}\cR_A^{[r_0,s]} = \bigoplus_{n_1 \leq n \leq w(s)} \cR_A^{[r_0,s]}/Q_n\cR_A^{[r_0,s]}.$$ 
	
	The first statement follows by passing to the limit $s \to 1.$
	The second statement follows inductively from the case $m=1.$
	Because the linearised map is an isomorphism we have 
	\begin{align}M^{r_0}/Q_{n+1} &= M^{r_0^{1/q}}/Q_{n+1}\\
		&\cong \varphi_L^*(M^{r_0})/\varphi_L(Q_{n}) \\
		&\cong M^{r_0}/Q_{n}\otimes_{E_n}E_{n+1},
	\end{align}
	where for the last isomorphism we use $\cR_K^{r_0^{1/q}}/Q_{n+1} = E_{n+1}$ since the zeros of $Q_{n+1}$ are the preimages of the zeros of $Q_{n}$ under $\varphi$ and we assumed that the latter are contained in $[r_0,1).$ We further used that $\varphi$ induces the canonical inclusion $E_n \to E_{n+1}$ and that the identification $\cR_K^{[r,s]}/Q_{n}\cong E_n$ is $\Gamma$-equivariant as described in the beginning of the chapter.
\end{proof}
To keep notation light we define $M_n := M_n^r:= M^{r}/Q_nM^r$ and suppress the dependence on $r.$ This poses no problem as long as $M$ admits a model over $[r,1)$ and $n \geq n_0(r)$ satisfying the conditions of Lemma \ref{lem:torspsi}. 
\begin{cor}
	\label{kor:psimodt}
	With respect to the decomposition $$M^{r_0}/t_{\mathrm{LT}}M^{r_0} \cong \prod_{n\geq n_1}M^{r_0}/Q_nM^{r_0} = \prod_{n \geq n_1} M_n$$ the map $\varphi_L: M^{r_0}/t_{\mathrm{LT}}M^{r_0} \to M^{r_0^{1/q}}/t_{\mathrm{LT}}M^{r_0^{1/q}}$ takes $(x_n)_n$ to $(x_{n-1})_n.$ The map $\psi_{\mathrm{LT}}:  M^{r_0^{1/q}}/t_{\mathrm{LT}}M^{r_0^{1/q}} \to M^{r_0}/t_{\mathrm{LT}}M^{r_0}$ takes $(x_n)_n$ to \linebreak $(\pi^{-1}\operatorname{tr}_{E_{n}/E_{n-1}}(x_n))_n,$ where 
	$$\operatorname{tr}_{E_{n}/E_{n-1}}: M^{r_0}_n = M^{r_0}_{n-1} \otimes_{E_{n-1}} E_n  \to M_{n-1}^{r_0}$$ 
	is given by the trace induced from the trace $L_n \to L_{n-1}$ on the second factor of $E_{n} \cong K \otimes_L L_n$. 
\end{cor}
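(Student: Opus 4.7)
The decomposition $M^{r_0}/t_{\mathrm{LT}}M^{r_0} \cong \prod_{n \geq n_1} M_n$ is already given by the first bullet of Lemma \ref{lem:torspsi}, so the task is to describe the operators $\varphi_L$ and $\psi_{\mathrm{LT}}$ under this identification. My plan is to treat $\varphi_L$ essentially by book-keeping of the isomorphism of the second bullet of Lemma \ref{lem:torspsi}, and then obtain the trace formula for $\psi_{\mathrm{LT}}$ by combining the projection formula $\psi_{\mathrm{LT}}(a\,\varphi_L(m)) = \psi_{\mathrm{LT}}(a)\,m$ with the scalar computation from Remark \ref{cor:gammaspur}, suitably base-changed from $\cR_L$ to $\cR_K$ and thence to $M$.

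For $\varphi_L$, the second bullet of Lemma \ref{lem:torspsi} (with $m=1$) says that $\varphi_L \otimes 1 \colon M^{r_0}/Q_{n-1} \otimes_{E_{n-1}} E_n \to M^{r_0^{1/q}}/Q_n$ is an $A[\Gamma_L]$-linear isomorphism. Hence $\varphi_L \colon M^{r_0}/Q_{n-1} \to M^{r_0^{1/q}}/Q_n$ lands in the $n$-th factor of the target decomposition, and has the effect of sending an element of the $(n-1)$-th component of the source to its image under $\varphi_L \otimes 1$ in the $n$-th component of the target. After transporting this along the product decomposition of both sides, a tuple $(x_n)_{n \geq n_1}$ is mapped to the tuple whose $n$-th entry is $(\varphi_L \otimes 1)(x_{n-1} \otimes 1)$; with the implicit identifications this is the asserted $(x_n)_n \mapsto (x_{n-1})_n$.

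For $\psi_{\mathrm{LT}}$, I would first note that on $\cR_A^{[r,s]}/Q_n$, extending coefficients from $L$ to $K$ in the right-hand square of Remark \ref{cor:gammaspur} (and using that $\Gamma_L$ acts trivially on $K$ while $\psi_{\mathrm{LT}}$ is $K$-linear) yields the identity $\psi_{\mathrm{LT}} = \pi_L^{-1}\operatorname{tr}_{E_n/E_{n-1}}$ on $E_n \to E_{n-1}$, where $\operatorname{tr}_{E_n/E_{n-1}}$ is induced from $\operatorname{tr}_{L_n/L_{n-1}}$ via the second factor of $E_n \cong K \otimes_L L_n$. Then, using the isomorphism $\varphi_L \otimes 1$ from Lemma \ref{lem:torspsi}, I would write any element of $M^{r_0^{1/q}}/Q_n$ as a finite sum $\sum_i \varphi_L(m_i) \cdot a_i$ with $m_i \in M^{r_0}/Q_{n-1}$ and $a_i \in E_n$, apply the projection formula to get $\psi_{\mathrm{LT}}\bigl(\sum_i \varphi_L(m_i)\, a_i\bigr) = \sum_i m_i \cdot \psi_{\mathrm{LT}}(a_i)$, and then substitute the scalar formula for $\psi_{\mathrm{LT}}$ on $E_n$. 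This gives exactly $\operatorname{id}_{M_{n-1}} \otimes (\pi_L^{-1}\operatorname{tr}_{E_n/E_{n-1}})$ on the $n$-th component, which is the claim.

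The main obstacle, such as it is, is bookkeeping: one must ensure that the identification of $M^{r_0^{1/q}}/Q_n$ with $M^{r_0}/Q_{n-1}\otimes_{E_{n-1}}E_n$ supplied by Lemma \ref{lem:torspsi} is compatible, as an $E_n$-module, with the action of $\cR_K^{r_0^{1/q}}/Q_n \cong E_n$ on $M^{r_0^{1/q}}/Q_n$, and that the projection formula for $\psi_{\mathrm{LT}}$ continues to hold modulo $Q_n$. Both points are essentially formal given that the second bullet of Lemma \ref{lem:torspsi} is already $\Gamma_L$- and $E_n$-linear, and that the projection formula holds on $M$ itself and descends to the quotient $M^{r_0^{1/q}}/Q_n M^{r_0^{1/q}}$ because $Q_n \mid t_{\mathrm{LT}}$ and $\psi_{\mathrm{LT}}(t_{\mathrm{LT}} M) \subseteq t_{\mathrm{LT}} M$ (or, equivalently, by the explicit description above).
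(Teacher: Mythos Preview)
Your proposal is correct and follows essentially the same approach as the paper: the paper's proof simply says that the $\varphi_L$-statement follows by combining both points in Lemma~\ref{lem:torspsi} and that the $\psi_{\mathrm{LT}}$-statement follows from Remark~\ref{cor:gammaspur}. Your write-up expands exactly these references, supplying the projection-formula step that the paper leaves implicit; the content is the same.
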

\begin{proof}
	The statement for $\varphi_L$ follows by combining both points in Lemma \ref{lem:torspsi}. The statement for $\psi_{\mathrm{LT}}$ follows from Remark \ref{cor:gammaspur}.
\end{proof}
\begin{lem}
	\label{lem:phimodt} Let $M$ be an $L$-analytic $(\varphi_L,\Gamma_L)$-module over $\cR_A.$ Then
	the cohomology groups $H^i_{\varphi,Z}(M/t_{\mathrm{LT}}M)$ vanish outisde of degrees $0$ and $1$ and are finitely generated $A$-modules.
\end{lem}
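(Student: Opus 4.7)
I would first identify $M/t_{\mathrm{LT}}M$ concretely: passing to the filtered colimit in the decomposition of Lemma \ref{lem:torspsi} yields a canonical identification $M/t_{\mathrm{LT}}M \cong \prod_{n\geq 1} M_n/\bigoplus_{n \geq 1} M_n$, and Corollary \ref{kor:psimodt} realises $\varphi$ on this as the shift by one index via the natural inclusions $\iota\colon M_{n-1}\hookrightarrow M_n$.

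The first step is to show that $\varphi - 1$ is surjective on $M/t_{\mathrm{LT}}M$ by solving the recurrence $x_n = \iota(x_{n-1}) - y_n$ from any sufficiently large starting index $N$, which produces a preimage $(x_n)_n$ matching the target outside finitely many components. Hence $H^1_\varphi(M/t_{\mathrm{LT}}M) = 0$, and the spectral sequence associated to the double complex $C_{\varphi,Z}(M/t_{\mathrm{LT}}M)$ degenerates to yield $H^i_{\varphi,Z}(M/t_{\mathrm{LT}}M) \cong H^i_Z(K)$ where $K := \ker(\varphi - 1, M/t_{\mathrm{LT}}M)$. Vanishing in degrees $\geq 2$ is then immediate since the $Z$-complex is two-term.

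Next I would identify $K$ with the filtered colimit $\varinjlim_n M_n$ along the transitions $\iota$, via the ``stable value'' of an eventually-consistent sequence. Since filtered colimits commute with $Z$-cohomology of a two-term complex, one has $H^i_Z(K) = \varinjlim_n H^i_Z(M_n)$.

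The crux is to show that this colimit stabilizes. From the short exact sequence $0 \to M_n \to M_{n+1} \to M^{r_0}\otimes_{E_n}(E_{n+1}/E_n) \to 0$, I would decompose the quotient (after a finite base change containing the $q$-th roots of unity) into non-trivial characters of the finite group $\Gamma_n/\Gamma_{n+1}$ of order $q$. By Lemma \ref{lem:operatornormestimates} the operator norm of $\gamma - 1$ on $M^{r_0}$ becomes arbitrarily small as $\gamma \to 1$, so for $n$ sufficiently large (shrinking to a smaller open subgroup $\Gamma_m$ and using the compatibility \eqref{eq:Znbeziehung} to pass between $Z$ and $Z_m$) Lemma \ref{lem:twistinvertible} applies and shows that $Z$ acts invertibly on each such twist. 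Faithfully flat descent along the finite base change then gives $H^i_Z(M_{n+1}/M_n) = 0$, and the long exact sequence of $Z$-cohomology forces $H^i_Z(M_n) \cong H^i_Z(M_{n+1})$. Consequently $H^i_Z(K) \cong H^i_Z(M_{n_0})$ for some fixed large $n_0$. As $M_{n_0} = M^{r_0}/Q_{n_0}M^{r_0}$ is a finitely generated $A$-module (since $E_{n_0}$ is finite over $A$) and $A$ is Noetherian, $H^i_Z(M_{n_0})$ is finitely generated over $A$, concluding the proof.

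The main obstacle is the stabilization step: one must verify that the hypothesis of Lemma \ref{lem:twistinvertible} can be arranged for the fixed variable $Z$ by a careful combination of the quantitative estimates of Lemma \ref{lem:operatornormestimates} with the multiplicative relation \eqref{eq:Znbeziehung}, reconciling the level at which $Z$ is defined with the level at which the characters arise.
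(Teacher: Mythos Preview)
Your argument is essentially the paper's proof. Both show $\varphi-1$ is surjective on $M/t_{\mathrm{LT}}M$ by the same telescoping recurrence, identify the kernel with $\varinjlim_n M_n$, reduce to the two-term $Z$-complex of this colimit, and then invoke Lemma~\ref{lem:twistinvertible} to show that all but finitely many character pieces have invertible $Z$-action, so the colimit stabilises at some $M_m$, which is $A$-finite.

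The one place where your presentation differs in a way that matters: you filter by successive quotients $M_{n+1}/M_n$ and (implicitly) apply Lemma~\ref{lem:twistinvertible} with $W=M_n$. Since $M_n$ varies with $n$ (and sits inside $M^{[r_0,s_n]}$ with $s_n\to 1$), the required bound $\lVert\gamma-1\rVert_W<\varepsilon$ is not obviously uniform in $n$ from Lemma~\ref{lem:operatornormestimates}, which is stated for a fixed closed interval. The paper avoids this by decomposing $M_n\cong\bigoplus_\rho M_{n_1}(\rho)$ globally via the normal basis theorem and Maschke, so that the base module $W=M_{n_1}$ is \emph{fixed} once and for all and the norm estimate need only be checked there. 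Your step-by-step filtration can be repaired the same way (each $M_{n+1}/M_n$ is, after all, a sum of twists of $M_{n_1}$ by characters of $\Gamma_{n_1}/\Gamma_{n+1}$ nontrivial on $\Gamma_n$), but as written this is where your ``main obstacle'' lives. Your faithfully-flat base change to adjoin the relevant roots of unity is a legitimate and clean way to justify the character decomposition; the paper does not make this step explicit. (Minor correction: the characters of $\Gamma_n/\Gamma_{n+1}\cong\mathbb{F}_q$ take values in $p$-th roots of unity, not $q$-th.)
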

\begin{proof}
	Assume $M$ is defined over $[r_0,1)$ and let $n_1\in \N$ such that $M^{r_0}/t_{\mathrm{LT}}M^{r_0} \cong \prod_{n\geq n_1} M_n.$ We claim that $\varphi_L-1$ is surjective which implies the vanishing of $H^2.$ Let $(x_n) \in \prod_{n\geq n_1+1} M_n.$ Set $y_n:= -x_n +x_{n-1} -x_{n-2}+ \dots + (-1)^{n-n_1}x_{n_1+1}.$ Then $(\varphi_L-1)(y_{n-1}) = x_n.$ On the other hand $$\operatorname{ker}((\varphi_L-1)_{[r,1)}) =(M^{r}/t_{\mathrm{LT}}M^r)^{\varphi_L=1} \cong M_{n_1(r)}$$ and thus $$\operatorname{ker}(\varphi_L-1)=\varinjlim_{r>0}(M^{r}/t_{\mathrm{LT}}M^r)^{\varphi=1}  \cong \varinjlim_n M_n.$$
	Next consider the complex
	$$(\varinjlim_n M_n) \xrightarrow{Z} (\varinjlim_n M_n).$$ Using Lemma \ref{lem:torspsi} we can explicitly describe each module appearing in the direct limit as $M_{n_1} \otimes_{L_{n_1}} L_m,$ where $W:=M_{n_1}$ is an $A$-module of finite type with a continuous $L$-analytic $A$-linear $\Gamma_L$-action and $\Gamma_L$ acts on the right factor via its natural action. We claim that for $m\gg0$ the natural map $$[M_m \xrightarrow{Z} M_m] \to [(\varinjlim_n M_n) \xrightarrow{Z} (\varinjlim_n M_n)]$$ is a quasi-isomorphism. By the normal basis theorem and Maschke's theorem there is for any pair $m \geq m'$ an isomorphism of representations $L_m \cong L_{m'}[\Gamma_{m'}/\Gamma_m]\cong \prod_\eta L_{m'}(\eta),$ where the product runs over all characters of $\operatorname{Gal}(L_m/L_{m'}).$ 
	Hence for $n\geq m$ as representations $$M_{n_1} \otimes_{L_{n_1}} L_n \cong \bigoplus_{\rho} M_{n_1}(\rho) = \bigoplus_{\rho(\Gamma_m)=1} M_{n_1}(\rho) \oplus \bigoplus_{\rho(\Gamma_m)\neq 1} M_{n_1}(\rho),$$ where $\rho$ runs through the characters of $\Gamma_{n_1}/\Gamma_n.$
	
	It suffices to show that there exists a $m$ such that for any $\rho$ with $\rho(\Gamma_m)\neq1$ the action of $Z$ is invertible meaning that the only contribution to the cohomology comes from the components corresponding to characters vanishing on $\Gamma_m$ hence the claim. 
	It suffices to show that the action of $Z_m$ is invertible for some $m \gg 0$ hence we may assume that $M_{n_1}$ satisfies the estimates of Lemma \ref{lem:twistinvertible} with respect to the action of $\Gamma_m.$ But then Lemma \ref{lem:twistinvertible} asserts that the action of $Z_m$ on $M_{n_1}(\rho)$ is invertible for any $\rho$ that is not trivial on $\Gamma_m.$ Since $M_{m}$ is finitely generated over $A$ we conclude that the complex computing $H^i_{\varphi,Z}(M/t_{\mathrm{LT}}M)$ is quasi-isomorphic to a complex of finitely generated $A$-modules and because $A$ is Noetherian we conclude that the cohomology groups are finitely generated.
	
\end{proof}
\begin{lem}
	\label{lem:psimodt}
	Let $M$ be an $L$-analytic $(\varphi_L,\Gamma_L)$-module over $\cR_A.$ Then $$\frac{\pi_L}{q}\psi_{\mathrm{LT}}-1:M/t_{\mathrm{LT}}M \to M/t_{\mathrm{LT}}M$$ is surjective and its kernel viewed as a $D(\Gamma_n,A)$-module admits a $2$-term finite projective resolution for any $n\geq n_0$.
\end{lem}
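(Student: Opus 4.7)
My plan is to split the statement into the surjectivity claim and the finite projective resolution claim, and use Corollary~\ref{kor:psimodt} as the main combinatorial/structural input, paralleling the strategy of Lemma~\ref{lem:phimodt} but now on the $\Psi$-side.

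For the surjectivity of $\Psi - 1 = \tfrac{\pi_L}{q}\psi_{\mathrm{LT}} - 1$, I would use the product decomposition $M/t_{\mathrm{LT}}M \cong \prod_{n \geq n_1} M_n$ from Lemma~\ref{lem:torspsi} together with the explicit description from Corollary~\ref{kor:psimodt}, which shows that $\Psi$ acts on $(y_n)_n$ by the shifted normalized trace $(y_n) \mapsto (\tfrac{1}{q}\operatorname{tr}_{E_{n+1}/E_n}(y_{n+1}))_n$. Given a target $(x_n)_{n \geq n_1}$, I would set $y_{n_1}$ arbitrary and build the $y_{n+1}$ recursively as a preimage of $x_n + y_n$ under $\tfrac{1}{q}\operatorname{tr}_{E_{n+1}/E_n}$. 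Such a preimage exists because $L_{n+1}/L_n$ is finite separable of degree $q$ for $n$ large enough so that the trace is surjective, and this surjectivity is preserved by the flat extension of scalars $E_n = K \otimes_L L_n$ used in the identification from the preamble to this subsection.

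For the analysis of the kernel $\mathcal{K} := \ker(\Psi - 1)$, I would first identify $\mathcal{K}$ with the projective limit of $(M_n)_n$ along the normalized trace transition maps. Using the $\Gamma_L$-equivariant isomorphism $M_n \cong M_{n_1} \otimes_{E_{n_1}} E_n$ provided by the second part of Lemma~\ref{lem:torspsi} and the flatness and Mittag--Leffler property of $(E_n)_n$ over $E_{n_1}$, Lemma~\ref{lem:commutetensorlimit} allows me to pull the tensor product out of the limit and write $\mathcal{K} \cong M_{n_1} \otimes_{E_{n_1}} \widetilde{E}_\infty$, where $\widetilde{E}_\infty = \varprojlim_n E_n$ under normalized trace is a Sen-theoretic completion of $K \otimes_L L_\infty$ on which $\Gamma_L$ acts diagonally with respect to its Galois action on $L_\infty$.

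To produce the 2-term projective resolution over $D(\Gamma_n, A)$, I would use Lemma~\ref{lem:FlatAlgebraProjectiveResolution} to reduce to exhibiting such a resolution over $D(H, A)$ for some sufficiently small open subgroup $H = \Gamma_m \subset \Gamma_n$ with $H \cong o_L$ as an $L$-analytic group; for such an $H$ the algebra $D(H, A)$ is (via the Schneider--Teitelbaum isomorphism) a power series ring in the single variable $Z_m$ over $A$, so finite generation plus projective dimension at most one suffices. Following the strategy of Lemma~\ref{lem:phimodt}, I would use Maschke to decompose each $M_n$ (as $\Gamma_m$-representation) into character isotypic pieces $M_{n_1}(\rho)$ compatibly in $n$. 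On each piece indexed by a character $\rho$ that is non-trivial on $\Gamma_m$, Lemma~\ref{lem:twistinvertible} gives that $Z_m$ acts invertibly, so the corresponding contribution to $\mathcal{K}$ is annihilated by $Z_m$-torsion considerations and yields a finite projective piece; the trivial-on-$\Gamma_m$ part contributes a finite free module of rank equal to $\operatorname{rank}_{\cR_A}(M)$ times a controlled factor.

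The main obstacle will be the bookkeeping needed to make the character decomposition of $M_n$ compatible with the transition maps as $n$ varies, so that Lemma~\ref{lem:twistinvertible} can be applied uniformly in $n$ and hence survives the passage to the projective limit; a secondary technical point is to verify that in the resulting decomposition of $\mathcal{K}$, the non-trivial-character summands genuinely contribute to projective dimension at most one over $D(H, A)$ rather than a higher projective dimension that would only give a longer resolution after applying Lemma~\ref{lem:FlatAlgebraProjectiveResolution}.
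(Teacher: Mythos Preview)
Your surjectivity argument is fine (the paper gives an explicit formula $y_n=\sum_{j=n_1}^{n-1}x_j$ rather than a recursive construction, but both work). Your identification of the kernel as $\varprojlim_n M_n$ along normalised trace is also correct, and pulling the tensor out via Lemma~\ref{lem:commutetensorlimit} is the right move.

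The gap is in the projective-resolution step. Your plan is essentially to re-run the character-decomposition argument from Lemma~\ref{lem:phimodt}, but the two situations are not parallel. In the $\varphi$-case the kernel was a \emph{direct} limit $\varinjlim_n M_n$, and the point of Lemma~\ref{lem:twistinvertible} was that the $Z$-cohomology \emph{stabilises} at a finite stage, reducing everything to a finite $A$-module. In the $\Psi$-case the kernel is a \emph{projective} limit, which does not stabilise: unravelling the transition maps one finds $\mathcal{K}\cong\prod_{\rho} M_{n_1}(\rho)$ over \emph{all} finite-order characters of $\Gamma_{n_1}$. Knowing that $Z_m$ is invertible on each individual factor with $\rho|_{\Gamma_m}\neq 1$ does not give you control over the infinite product as a $D(\Gamma_m,A)$-module; in particular your sentence ``the corresponding contribution to $\mathcal{K}$ is annihilated by $Z_m$-torsion considerations and yields a finite projective piece'' does not correspond to any actual argument.

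What the paper does instead is the key step you are missing: it identifies your $\widetilde{E}_\infty$ \emph{explicitly} with $D(\Gamma_{n_1},A)/\mathfrak{L}_{n_1}$, using that $D(\Gamma_{n_1},A)/Z_n\cong A[\Gamma_{n_1}/\Gamma_n]$ and then applying Lemma~\ref{lem:modtprojlim} transported along the Fourier isomorphism. This immediately exhibits $\mathcal{K}\cong M_{n_1}\otimes_A D(\Gamma_{n_1},A)/\mathfrak{L}_{n_1}$ as the cokernel of multiplication by $\mathfrak{L}_{n_1}$ on $M_{n_1}\otimes_A D(\Gamma_{n_1},A)$, which is your 2-term resolution \emph{provided} the latter module is finite projective for the \emph{diagonal} $D(\Gamma_{n_1},A)$-action. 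That last point is the genuinely hard part of the proof and is done by a geometric-series/operator-norm argument (after passing to small enough $\Gamma_m$ via Lemma~\ref{lem:FlatAlgebraProjectiveResolution}) comparing the diagonal action to the action through the second tensor factor alone. None of this appears in your proposal.
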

\begin{proof}
	Let $M^{r_0}$ be a model of $M$ with $r_0>\left\lvert \pi_L^{\frac{1}{q^{n_0-1}(q-1)}}\right\rvert$ such that $n_1(r_0)\geq n_0.$ Consider the decomposition $M^{r_0}/t_{\mathrm{LT}}M^{r_0}\cong \prod_{n\geq n_1}M_n$ from Lemma \ref{lem:torspsi} and let $x=(x_n)_{n\geq n_1} \in \prod_{n\geq n_1}M_n.$ If $z \in M_{n+1}$ belongs to the image of $M_n$ then its trace is $qz$ since due to our assumptions on $r_0$ we have $[L_{n+1}:L_{n}]=q$ for every $n\geq n_1.$
	In particular $\frac{\pi_L}{q}\pi_L^{-1}\operatorname{Tr}(z)=z$ which via the explicit description of the $\psi_{\mathrm{LT}}$-action in Corollary \ref{kor:psimodt} should be read as ``$\frac{\pi_L}{q}\psi_{\mathrm{LT}}(z)=z$''. Using the explicit description of $M^{r_0}/t_{\mathrm{LT}}M^{r_0}$ from Corollary \ref{kor:psimodt} we shall in the following define a tuple $y=(y_n)_{n\geq n_0(r_0^{1/q})} \in M^{r_0^{1/q}}/t_{\mathrm{LT}}M^{r_0^{1/q}}$ with $y_{n_0(r^{1/q})}=0$ such that $(\frac{\pi_L}{q}\psi_{\mathrm{LT}}-1)y=x.$ 
	This notation is abusive since in order to make sense of $\frac{\pi_L}{q}\psi_{\mathrm{LT}}-1$ we need to view $\psi_{\mathrm{LT}}$ as a map $$M^{r_0^{1/q}}/t_{\mathrm{LT}}M^{r_0^{1/q}} \xrightarrow{\frac{\pi_L}{q}\psi_{\mathrm{LT}}} M^{r_0}/t_{\mathrm{LT}}M^{r_0} \xrightarrow{\res} M^{r_0^{1/q}}/t_{\mathrm{LT}}M^{r_0^{1/q}}$$ i.e. via the description from Corollary \ref{kor:psimodt} a map $$ \prod_{n \geq n_0(r_0^{1/q})} M_n\xrightarrow{(\frac{\pi_L}{q}\pi_L^{-1}\operatorname{Tr})_n}  \prod_{n \geq n_1(r_0)=n_0(r_0^{1/q})-1} M_n \xrightarrow{\res}0 \times \prod_{n \geq n_1(r_0^{1/q})} M_n.$$ The map $\res$ is given by mapping $(x_n \mod Q_n)_n$ to the restriction in every component, i.e., $(\res(x_n) \mod Q_n)_n$ and one can see that by the choice of $n_1(r_0)$ the element $Q_{n_1}$ becomes invertible when restricted to $[r_0^{1/q},1).$ In particular the $n_1(r_0)$-th component is mapped to zero and the last map is given by ommiting this component.  We define $y_{n_1(r_0)}=0$ and $y_{n} =\sum_{j=n_1}^{n-1} x_{j}.$ One can see inductively that $(\frac{\pi_L}{q}\psi_{\mathrm{LT}}-1)(y)=x.$ Indeed the map $\frac{\pi}{q}\psi_{\mathrm{LT}}$ corresponds to shifting indices and applying $\frac{1}{q}$-times the trace map by Corollary \ref{kor:psimodt}. Hence we obtain $(\frac{\pi_L}{q}\psi_{\mathrm{LT}}-1)(y) = (\frac{1}{q}\operatorname{Tr}(y_{n+1})-y_{n})_n,$ which turns out to be $(x_n)_n$ since in every component $M_{n+1}$ we apply $1/q\operatorname{Tr}$ to elements in the image of $M_n$ such that all terms but $x_{n+1}$ cancel out. This proves the surjectivity of $\frac{\pi_L}{q}\psi_{\mathrm{LT}}-1.$\\
	 Using the description of $M^{r_0}/t_{\mathrm{LT}}M^{r_0}$ and the $\psi_{\mathrm{LT}}$-action we obtain $$\ker\left(\frac{\pi_L}{q}\psi_{\mathrm{LT}}-1\right) = \left\lbrace(m_n)_n \mid \frac{1}{q}\operatorname{Tr}(x_{n+1})=x_n\right\rbrace = \varprojlim_{\frac{1}{q}\operatorname{Tr}(L_{n+1}/L_n)} M_n \otimes_{L_n}L_{n+1}.$$
	By \cite[Corollary 1.2.9]{RustamFiniteness} (together with base change from $K$ to $A$) we have an isomorphism $D(\Gamma_{n_1},A)/Z_{n} \cong A[\Gamma_{n_1}/\Gamma_n]$ and we obtain \begin{align}\varprojlim_{\frac{1}{q}\operatorname{Tr}(L_{n+1}/L_n)} M_n \otimes_{L_n}L_{n+1}&\cong \varprojlim_n M_{n_1}\otimes_{A}D(\Gamma_{n_1},A)/Z_{n} \\&=M_{n_1}\otimes_A \varprojlim_n D(\Gamma_{n_1},A)/Z_n \\ &= M_{n_1} \otimes_A D(\Gamma_{n_1},A)/\mathfrak{L}_{n_1}.
	\end{align}
	Using that $M_{n_1}$ is finitely presented over $A$ due to being finitely generated over a Noetherian ring and that $(A[\Gamma_{n_1}/\Gamma_n])_{n\geq n_1}$ is Mittag-Leffler due to having surjective transition maps consisting of free and hence flat $A$-modules to apply Lemma \ref{lem:commutetensorlimit}. In the last equality we use the relationship \eqref{eq:Znbeziehung} and use Lemma \ref{lem:modtprojlim} via transport of structure along $\cR_A^+ \cong D(\Gamma_{n_1},A)$ since under the map $T \mapsto Z_{n_1}$ the element $t_{\mathrm{LT}}$ is mapped precisely to $\mathfrak{L}_{n_1}.$ This isomorphism is $\Gamma_{n_1}$-equivariant with respect to the diagonal action. We have a naive resolution $$M_{n_1} \otimes_A D(\Gamma_{n_1},A)/\mathfrak{L}_{n_1} = \operatorname{cok}(M_{n_1} \otimes_A D(\Gamma_{n_1},A)\xrightarrow{\operatorname{id}\otimes \mathfrak{L}_{n_1}} M_{n_1} \otimes_A D(\Gamma_{n_1},A)).$$ We shall prove that each factor of this resolution is finite projective over $D(\Gamma_{n_1},A)$ with respect to the diagonal action,  which will complete the proof.  Using Lemma \ref{lem:FlatAlgebraProjectiveResolution} applied to the algebras $D(\Gamma_m,A)\subset D(\Gamma_{n_1},A)  \subset D(\Gamma_{n_0},A)$ we are reduced to proving that each factor is finite projective over $D(\Gamma_m,A)$ for some $m\geq n_0.$ We remark at this point that $M_{n_1}$ is projective over $\cR_A^{r_0}/Q_{n}$ due to the projectivity of $M^{r_0}$ and hence projective over $A$ since $\cR_A^{r_0}/Q_{n}$ is free over $A.$ Because $M_{n_1}$ is finitely generated projective over $A,$ we can choose some finitely generated projective complement $N_{n_1}$ which we view with the trivial $\Gamma_L$-action, such that $M_{n_1}\oplus N_{n_1} \cong A^d$ and we endow the left-hand side with the norm corresponding to the $\sup$-norm of the Banach norm on the right side with respect to some basis $e_1,\dots,e_d$. 
As $M_{n_1}\otimes_A D(\Gamma_{m},A)$ is a direct summand of $(M_{n_1}\oplus N_{n_1}) \otimes_A D(\Gamma_{m},A)$ we are reduced to showing that the latter is projective as a $D(\Gamma_{m},A)$-module.
	Note that the isomorphism $M_{n_1}\oplus N_{n_1} \cong A^d$ is not $\Gamma_L$-equivariant for the trivial action on $A.$ Hence we cannot conclude that $(M_{n_1}\oplus N_{n_1}) \otimes_A D(\Gamma_{m},A) \cong D(\Gamma_{m},A)^d.$  
    After some technical reductions, we will show that the elements $e_i \otimes 1, $ which are obviously a basis of $(M_{n_1}\oplus N_{n_1}) \otimes_A D(\Gamma_{m},A)$ for the trivial action on $M_{n_1},$ also form a basis with respect to the diagonal action, following the strategy of the proof of \cite[Proposition 3.2.5]{KPX}.\footnote{In \cite{KPX} the fact that in the cyclotomic case $\Gamma_m$ admits a topological generator $\gamma_m$ makes estimating the operator norm of $Z_m = \gamma_m-1$ easier. Additionally, in the case $L=\QQ_p,$ there is no need to differentiate between $\mathbb{Q}_p$-analytic and $L$-analytic distributions.}
	 By an analogue of Lemma \ref{lem:operatornormestimates} for finitely generated $A$-modules with $A$-linear $L$-analytic $\Gamma_L$-action we may assume that $\lvert\lvert Z_m\rvert\rvert_{M_{n_1}\oplus N_{n_1}}<\varepsilon<1$ after eventually enlarging $m$ and the $D(\Gamma_m,K)$-action extends to an action of $D_{\mathfrak{r}_l}(\Gamma_m,K)$ for any $l\geq l_0$ with a suitable $l_0\in \N.$
	By the same reasoning as in the proof of \cite[Lemma 2.1.2]{RustamFiniteness} the action extends to a continuous action of $D_{\mathfrak{r}_l}(\Gamma_m,A) := A \hat{\otimes}_K D_{\mathfrak{r}_l}(\Gamma_m,K).$\footnote{The precise value $\mathfrak{r}_l = p^{-\frac{1}{p^l}}$ is not relevant in the following. One could replace $\mathfrak{r}_l$ by any sequence converging to $1,$ that is bounded below by $\mathfrak{r}_{l_0}.$}   
	Consider the maps 
	
	\begin{align}
		\bigoplus_{i=1}^d D(\Gamma_m,A)e_i &\to (M_{n_1}\oplus N_{n_1})\otimes_A D(\Gamma_m,A) \\
		\Phi : f(Z_m)e_i &\mapsto f(Z_m)\cdot(e_i\otimes 1) \\
		\Psi: f(Z_m)e_i &\mapsto e_i \otimes f(Z_m)
	\end{align}
	
	By construction $\Phi$ is equivariant for the diagonal action while $\Psi$ is a topological isomorphism. It remains to conclude that $\Phi$ is an isomorphism and for that purpose it suffices to show that $$\Phi \circ \Psi^{-1}: D(\Gamma_m,A)^d \to D(\Gamma_m,A)^d$$ is an isomorphism. By passing to the limit it suffices to show that for any $\mathfrak{r}_l>\varepsilon$ the induced map $$\Phi \circ \Psi^{-1}: D_{\mathfrak{r}_l}(\Gamma_m,A)^d \to D_{\mathfrak{r}_l}(\Gamma_m,A)^d$$ is an isomorphism. We henceforth assume $\mathfrak{r}_l>\varepsilon$ in particular we may find $\delta <1$ such that  $\varepsilon= \mathfrak{r}_l\delta.$\footnote{This choice of $\delta$ is a technicality in order to obtain a strict bound with respect to the quotient topology in \eqref{eq:quotientnormstrict}.} 
	Let $\overline{\lambda}= (\overline{\lambda_1},\dots,\overline{\lambda_d}) \in  D_{\mathfrak{r}_l}(\Gamma_m,A)^d$ and let $\lambda_i \in D_{\Q_p,\mathfrak{r}_l}(\Gamma_m,A)$ be lifts. We wish to show $\norm{\Phi\circ \Psi^{-1}-\id}_{D_{\mathfrak{r}_l}(\Gamma_m,A)^d}<1.$ For that purpose we view the $D(\Gamma_m,A)$-action as a $D_{\Q_p}(\Gamma_m,A)$ action that factors over the natural projection $D_{\Q_p}(\Gamma_m,A) \to D(\Gamma_m,A).$ This allows us to define analogously $$\Phi_{\Q_p},\Psi_{\Q_p}: \bigoplus_{i=1}^d D_{\Q_p}(\Gamma_m,A)e_i \to A^d \otimes_A D_{\Q_p}(\Gamma_m,A),$$ given on Dirac distributions by $\Phi_{\Q_p}(\gamma e_i) = \gamma( e_i \otimes 1)$ and $\Psi_{\Q_p}(\gamma e_i) = e_i \otimes \gamma.$ Evidently $\Psi_{\Q_p}$ is an isomorphism. We denote by $\norm{\cdot}_{\mathfrak{r}_l}$ the norm introduced in \cite[Definition 1.3.6]{RustamFiniteness}. 
	Let $\gamma_1,\dots,\gamma_h$ be a $\Z_p$-Basis of $o_L$ and let $\mathbf{b} = (\gamma_j-1)_j.$ Recall that we have $\norm{\gamma_j-1}_{\mathfrak{r}_l} = \mathfrak{r}_l<1$ by definition and hence also $\norm{\gamma_j}_{\mathfrak{r}_l} =1.$ 
	We first show $$\norm{\Phi_{\Q_p}\circ\Psi_{\Q_p}^{-1}(e_i\otimes\mathbf{b}^\mathbf{k})-e_i \otimes \mathbf{b}^\mathbf{k}}\leq \varepsilon \mathfrak{r}_l^{\abs{\mathbf{k}}-1}\leq \delta \mathfrak{r}_l^{\abs{\mathbf{k}}}<\mathfrak{r}_l^{\abs{\mathbf{k}}}$$ for any $\mathbf{k}\in \N_0^h.$
	The assumption on $\varepsilon$ guarantees $$(\gamma_i-1)(x\otimes y) = (\gamma_i-1)x \otimes \gamma_i y + x \otimes (\gamma_i-1)y$$ has operator norm $\leq \mathfrak{r}_l,$ which allows us to reduce the computation by induction on $\abs{\mathbf{k}}$ (the case $k=0$ being trivial) .
	Assume $\mathbf{b}^{\mathbf{k}}= (\gamma_j-1)\mathbf{b}^{\mathbf{k}'}$ with $\abs{\mathbf{k}'}+1=\abs{\mathbf{k}}.$ We compute 
	\begin{align}
		&\Phi_{\Q_p}\circ\Psi_{\Q_p}^{-1}(e_i \otimes\mathbf{b}^\mathbf{k})-e_i\otimes\mathbf{b}^\mathbf{k}\\
		&= \mathbf{b}^\mathbf{k}(e_i\otimes 1)-e_i\otimes\mathbf{b}^\mathbf{k} \\
		&= (\gamma_j-1)\mathbf{b}^{\mathbf{k}'}(e_i\otimes 1) - e_i\otimes(\gamma_j-1)\mathbf{b}^{\mathbf{k}'}\\
		&= (\gamma_j-1)(\Phi_{\Q_p}\circ\Psi_{\Q_p})(e_i\otimes\mathbf{b}^{\mathbf{k}'})- (\gamma_j-1)(e_i\otimes(\mathbf{b}^{\mathbf{k}'}))- (\gamma_j-1)e_i \otimes \gamma_j\mathbf{b}^{\mathbf{k}'}\\
		&= (\gamma_j-1)([\Phi_{\Q_p}\circ\Psi_{\Q_p}^{-1}-\id](e_i \otimes\mathbf{b}^{\mathbf{k}'})) - ((\gamma_j-1)e_i)\otimes\gamma_j\mathbf{b}^{\mathbf{k}'}. 
	\end{align} Assuming that the corresponding estimate holds for $\mathbf{b}^{\mathbf{k}'}$ we obtain
	$$\norm{(\Phi_{\Q_p}\circ\Psi_{\Q_p}^{-1}-\id)(e_i \otimes\mathbf{b}^\mathbf{k})} \leq \sup(\mathfrak{r}_l \norm{(\Phi_{\Q_p}\circ\Psi_{\Q_p}^{-1}-\id)(e_i \otimes\mathbf{b}^{\mathbf{k}'})}, \varepsilon \mathfrak{r}_l^{\abs{\mathbf{k}'}})\leq \delta \mathfrak{r}_l^{\abs{\mathbf{k}}}.$$
	In conclusion for each $\overline{\lambda_i}$ and any lift $\lambda_i$ thereof we have $$\norm{(\Phi_{\Q_p}\circ{\Psi_{\Q_p}}^{-1}-\id)(e_i \otimes \lambda_i)}<\norm{\lambda_i}_{\mathfrak{r}_l}.$$ More precisely our proof shows $$\norm{(\Phi_{\Q_p}\circ{\Psi_{\Q_p}}^{-1}-\id)(e_i \otimes \lambda_i)}\leq \delta \norm{\lambda_i}_{\mathfrak{r}_l}.$$ Hence the corresponding estimate with respect to the quotient norm $\norm{\cdot}_{\overline{\mathfrak{r}_l}}$ of $\norm{\cdot}_{\mathfrak{r}_l}$ on $D(\Gamma_m,A)$ namely 
	\begin{equation}
		\label{eq:quotientnormstrict}\norm{(\Phi\circ{\Psi}^{-1}-\id)(e_i \otimes \lambda_i)}\leq\delta\norm{\lambda_i}_{\overline{\mathfrak{r}_l}} <\norm{\lambda_i}_{\overline{\mathfrak{r}_l}}
	\end{equation} remains valid. By a geometric series argument $\Phi\circ{\Psi}^{-1}$ is an isomorphism forcing $\Psi$ to also be an isomorphism.
	
\end{proof}

The two following results are analogues of \cite[Proposition 3.3.2 (1) and (2)]{KPX}. Since their proofs are direct adaptations, we do not repeat them here. For details we refer to \cite{rusti}. The case $A=K$ was treated earlier in  \cite{SchneiderVenjakobRegulator}.
\begin{thm}
	\label{thm:finitepsi}
	Let $M$ be a $(\varphi,\Gamma)$-module over $\cR_A.$ Then $M/(c\psi-1)$ is a finitely generated $A$-module for any $c\in A^\times.$
\end{thm}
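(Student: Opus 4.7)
Following the strategy of \cite[Proposition~3.3.2(1)]{KPX} (which generalizes the case $A=K$ from \cite{SchneiderVenjakobRegulator}), the argument would proceed in three steps, with Lemma~\ref{lem:psimodt} providing the key input.

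\emph{Step 1: Reduction via twist.} I would introduce the rank-one $L$-analytic character $\delta\colon L^\times\to A^\times$ with $\delta|_{o_L^\times}=1$ and $\delta(\pi_L)=\pi_L/(cq)$. The twisted module $M(\delta)$ is again an $L$-analytic $(\varphi_L,\Gamma_L)$-module with the same underlying $\cR_A$-module as $M$, and since $\psi_{\mathrm{LT},M(\delta)}=\delta(\pi_L)^{-1}\psi_{\mathrm{LT},M}=(cq/\pi_L)\psi_{\mathrm{LT},M}$, the operator $c\psi_{\mathrm{LT}}-1$ on $M$ corresponds to $\Psi-1=(\pi_L/q)\psi_{\mathrm{LT},M(\delta)}-1$ on $M(\delta)$. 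It thus suffices to prove that $M/(\Psi-1)M$ is finitely generated over $A$ for every $L$-analytic $(\varphi_L,\Gamma_L)$-module $M$ over $\cR_A$.

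\emph{Step 2: Devissage along $t_{\mathrm{LT}}$.} The commutation relation $\Psi\circ t_{\mathrm{LT}}=\pi_L^{-1}t_{\mathrm{LT}}\circ\Psi$ allows one to apply the snake lemma to the commutative diagram with rows $0\to M\xrightarrow{t_{\mathrm{LT}}^n}M\to M/t_{\mathrm{LT}}^nM\to 0$ and vertical operators $\pi_L^{-n}\Psi-1$, $\Psi-1$, $\Psi-1$. Combined with an iterated application of Lemma~\ref{lem:psimodt} (generalized to an arbitrary constant $c'\in A^\times$ in place of $\pi_L/q$, which follows by the same proof since the trace $L_{n+1}\to L_n$ remains surjective) to the graded pieces $t_{\mathrm{LT}}^kM/t_{\mathrm{LT}}^{k+1}M$ equipped with the $\pi_L^{-k}$-twisted action, this yields the surjectivity of $\Psi-1$ on $M/t_{\mathrm{LT}}^nM$ and hence a surjection $M/(\pi_L^{-n}\Psi-1)M\twoheadrightarrow M/(\Psi-1)M$ for every $n\geq 0$.

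\emph{Step 3: Finite generation.} For $n$ sufficiently large, the Picard iteration $y_{k+1}=\pi_L^n\varphi_L(x+y_k)$, exploiting that $\varphi_L$ is a section of $\Psi$ (i.e., $\Psi\varphi_L=\mathrm{id}$), converges within each Banach slice $M^{[r,s]}$ of a model $M^r$ to an element satisfying $(\pi_L^{-n}\Psi-1)y=x$, since $\pi_L^n\varphi_L$ is a contraction for $n\gg 0$. Combining this slicewise invertibility with the coadmissibility of $M^r$ as an $\cR_A^r$-module and the finite projective resolution of $(M/t_{\mathrm{LT}}M)^{\Psi=1}$ over $D(\Gamma_n,A)$ from Lemma~\ref{lem:psimodt}, one extracts a finitely generated $A$-submodule $N\subseteq M$ with $M=N+(\Psi-1)M$, establishing the theorem.

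\textbf{Main obstacle.} The principal technical difficulty lies in Step 3: while on each Banach slice the Picard iteration converges once $n$ is large, the required $n$ a priori depends on the slice, so assembling these pointwise convergences into a uniform finite-generation statement over $A$ requires delicate control. This is managed by the interplay between the finite-projective-resolution structure provided by Lemma~\ref{lem:psimodt} and the operator-norm estimates on $\pi_L^n\varphi_L$ arising from the Fr\'echet--Stein structure of $\cR_A^r$, mirroring the analogous analysis in \cite[\S3.3]{KPX}.
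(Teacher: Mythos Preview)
Your Steps~1 and~2 are set up correctly: the twist reduces to $c=1$, and the snake lemma applied to $0\to M\xrightarrow{t_{\mathrm{LT}}^n}M\to M/t_{\mathrm{LT}}^nM\to 0$ does produce a surjection $M/(\pi_L^{-n}\Psi-1)M\twoheadrightarrow M/(\Psi-1)M$ once one knows $\Psi-1$ is surjective on $M/t_{\mathrm{LT}}^nM$.

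Step~3, however, contains a genuine error. The claim that $\pi_L^n\varphi_L$ is a contraction on a Banach slice $M^{[r,s]}$ is false, because $\varphi_L$ does not even map $M^{[r,s]}$ to itself: one has $\varphi_L\colon M^{[r,s]}\to M^{[r^{1/q},s^{1/q}]}$, and since $r^{1/q}>r$ for $r<1$, the image lies in a different annulus. Concretely, for $r>r_L$ one has $\lvert\varphi_L(T)\rvert_r\approx r^q$, so $\lvert\varphi_L^k(T^{-1})\rvert$ on the appropriate annulus grows like $r^{-q^k}$; hence $\lvert\pi_L^{nk}\varphi_L^k(T^{-1})\rvert\to\infty$ for every fixed $n$, and the Picard iteration diverges on any element with nontrivial principal part. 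The same obstruction persists on the Fr\'echet slice $M^r$: here $\varphi_L$ lands in the strictly larger space $M^{r^{1/q}}\supset M^r$, so one cannot iterate within a single Fr\'echet space either. Your identification of the ``main obstacle'' as a uniformity issue in $n$ therefore misdiagnoses the problem: the iteration fails pointwise, not merely uniformly.

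The direction of the d\'evissage in Step~2 is also unhelpful for Step~3: you are reducing to $\pi_L^{-n}\Psi-1$ with $\lvert\pi_L^{-n}\rvert\to\infty$, which is the hard regime, whereas the operator that becomes tractable on $M^r$ for large $n$ is $\pi_L^{n}\psi_{\mathrm{LT}}-1$ (equivalently, $\psi_{\mathrm{LT}}-1$ on $t_{\mathrm{LT}}^{-n}M$ as in Lemma~\ref{lem:psisurjektiv}), precisely because $\psi$ \emph{decreases} the inner radius and hence does preserve each $M^r$. The KPX argument for Proposition~3.3.2(1) exploits this asymmetry: it works with $\psi$ rather than $\varphi_L$ and combines the surjectivity of $\psi-1$ on $M/t_{\mathrm{LT}}M$ with operator-norm estimates for $\psi$ on the Banach modules $M^{[r,s]}$. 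Your sketch does not engage with this mechanism, and the closing sentence of Step~3 (invoking coadmissibility and Lemma~\ref{lem:psimodt}) is too vague to bridge the gap left by the failed iteration.
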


\begin{lem}
	\label{lem:psisurjektiv}
	There exists $n\gg 0$ such that $\psi-1: t_{\mathrm{LT}}^{-n}M \to t_{\mathrm{LT}}^{-n}M$ is surjective.
\end{lem}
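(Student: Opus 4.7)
The starting point is the interaction of $\psi$ with multiplication by $t_{\mathrm{LT}}$. Since $\varphi_L(t_{\mathrm{LT}}) = \pi_L t_{\mathrm{LT}}$, we have $t_{\mathrm{LT}}^{-n} = \pi_L^n \varphi_L(t_{\mathrm{LT}}^{-n})$, and the projection formula $\psi(\varphi_L(a)x) = a \psi(x)$ gives
\[
\psi(t_{\mathrm{LT}}^{-n} m) = \pi_L^n \, t_{\mathrm{LT}}^{-n} \psi(m).
\]
Consequently, multiplication by $t_{\mathrm{LT}}^n$ is an $\cR_A$-module isomorphism $t_{\mathrm{LT}}^{-n} M \xrightarrow{\sim} M$ which conjugates the operator $\psi-1$ into $\pi_L^n \psi - 1$. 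The lemma is thus equivalent to asserting that $\pi_L^n \psi - 1$ is surjective on $M$ for $n$ large enough.

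I would attack this through the finite filtration $M \subset t_{\mathrm{LT}}^{-1} M \subset \cdots \subset t_{\mathrm{LT}}^{-n} M$, whose $k$-th graded quotient is isomorphic to $M/t_{\mathrm{LT}} M$ but carries the twisted action $\pi_L^k \psi$ (by the identity above). The first step is to show that for $k$ sufficiently large, $\pi_L^k \psi - 1$ is bijective on $M/t_{\mathrm{LT}} M$. Using the decomposition $M/t_{\mathrm{LT}} M \cong \prod_{j \geq n_1} M_j$ from Lemma~\ref{lem:torspsi} and the formula $\psi_{\mathrm{LT}}((x_j)_j) = (\pi_L^{-1}\operatorname{tr}_{L_{j+1}/L_j}(x_{j+1}))_j$ from Corollary~\ref{kor:psimodt}, the equation $(\pi_L^k \psi - 1)(y) = x$ becomes a recurrence which I would invert by the geometric series
\[
y_j = -\sum_{\ell \geq 0} \left(\tfrac{\pi_L^k}{q}\right)^{\ell} \operatorname{tr}^{(\ell)}(x_{j+\ell}),
\]
where $\operatorname{tr}^{(\ell)} \colon M_{j+\ell} \to M_j$ is the iterated trace. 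For $k$ large, the factor $|\pi_L|^{k\ell}$ dominates the growth of $\operatorname{tr}^{(\ell)}$, and the series converges termwise in each finite projective $A$-module $M_j$; uniqueness of the solution gives bijectivity.

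With bijectivity of $\psi - 1$ on all graded pieces of the filtration above level $N_0$, the long exact sequence in $\psi-1$ cohomology applied to
\[
0 \to t_{\mathrm{LT}}^{-(n-1)} M \to t_{\mathrm{LT}}^{-n} M \to M/t_{\mathrm{LT}} M \to 0
\]
forces the transition $\operatorname{coker}(\psi-1 \mid t_{\mathrm{LT}}^{-(n-1)}M) \to \operatorname{coker}(\psi-1 \mid t_{\mathrm{LT}}^{-n} M)$ to be an isomorphism for $n \geq N_0$, so the cokernel stabilizes. To conclude that the stable value vanishes, I would pass to $M[1/t_{\mathrm{LT}}] = \varinjlim_n t_{\mathrm{LT}}^{-n} M$: by exactness of filtered colimits the stable cokernel equals $\operatorname{coker}(\psi-1 \mid M[1/t_{\mathrm{LT}}])$, and combining the bijectivity on the high-level graded pieces with the surjectivity provided by Lemma~\ref{lem:psimodt} on the lowest piece one constructs a preimage in $M[1/t_{\mathrm{LT}}]$ for any element, so this colimit cokernel is zero.

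The main obstacle is the convergence analysis at the heart of Step~1: to make the geometric series well-defined in the product topology on $\prod_j M_j$, one must uniformly bound the iterated trace operators $\operatorname{tr}^{(\ell)}$ and verify that the decay $|\pi_L|^{k\ell}$ eventually dominates, all while carrying the finite projective $A$-module structure of each $M_j$ along. The remaining bookkeeping — stabilization of cokernels and identifying the stable value with the cokernel over $M[1/t_{\mathrm{LT}}]$ — is comparatively routine given the preceding finiteness results (Theorem~\ref{thm:finitepsi}) and Lemma~\ref{lem:psimodt}.
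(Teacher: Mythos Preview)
Your reduction via conjugation and the filtration analysis are correct, and the stabilisation of the cokernels $(t_{\mathrm{LT}}^{-n}M)/(\psi-1)$ for $n\geq N_0$ does follow from bijectivity of $\pi_L^k\psi-1$ on the graded pieces for large $k$. The genuine gap is in your final step. You correctly identify the stable cokernel with $\operatorname{coker}(\psi-1\mid M[1/t_{\mathrm{LT}}])$, but the construction you sketch to show this vanishes is circular. ``Surjectivity on the lowest piece'' from Lemma~\ref{lem:psimodt} only gives surjectivity of $c\psi-1$ on $M/t_{\mathrm{LT}}M$, not on $M$ itself: given $x\in M$, solving modulo $t_{\mathrm{LT}}M$ and lifting reduces you to an element of $t_{\mathrm{LT}}M$, and iterating produces a formal series $\sum_k t_{\mathrm{LT}}^k z_k$ whose convergence you have not controlled (indeed, at step $k$ you need $\pi_L^{-k}\psi-1$ surjective on $M/t_{\mathrm{LT}}M$, and no geometric-series inverse is available since $|\pi_L^{-k}|\to\infty$). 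Going in the other direction --- upward through $t_{\mathrm{LT}}^{-k}M/t_{\mathrm{LT}}^{-(k-1)}M$ --- makes no progress either, since the image of $x\in M$ in each such graded piece is already zero. So neither direction of your d\'evissage actually produces a preimage of a given $x\in M$ in $M[1/t_{\mathrm{LT}}]$, and that is exactly what is needed.

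The argument in the paper (adapted from \cite[Proposition 3.3.2(2)]{KPX}) supplies the missing input differently. One first invokes Theorem~\ref{thm:finitepsi} in full: $M/(\psi-1)M$ is a \emph{finitely generated} $A$-module, so it suffices to show that a fixed finite list of elements $d_1,\dots,d_r\in M$ each lie in $(\psi-1)(t_{\mathrm{LT}}^{-n}M)$ for some $n$; combined with the surjections of cokernels this forces $(t_{\mathrm{LT}}^{-n}M)/(\psi-1)=0$. For a single $d\in M^{r_0}$ one then produces the preimage by an explicit construction in the model, controlling convergence on a fixed $M^{r_0}$ rather than on the uncompleted colimit $M[1/t_{\mathrm{LT}}]$. (An alternative route, available via Proposition~\ref{prop:pairing}, is to observe that $(t_{\mathrm{LT}}^{-n}M)/(\psi-1)$ is dual to $(t_{\mathrm{LT}}^n\check M)^{\varphi=1}\cong\check M^{\varphi=\pi_L^{-n}}$, and the latter is zero for $n$ large.) Your outline treats Theorem~\ref{thm:finitepsi} as merely ``routine bookkeeping'' for the stabilisation, but it is in fact the lever that turns stabilisation into vanishing; the obstacle you flagged in Step~1 is comparatively minor.
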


\section{Lubin--Tate deformations and Iwasawa cohomology}
In this section we study deformations of $(\varphi_L,\Gamma_L)$-modules with the distribution algebra and their relationship to Iwasawa cohomology. 
We shall establish a comparison between the $(\Psi,Z)$-cohomology of $\Dfm(M)$ and the Iwasawa cohomology of $M$ which is defined as the cohomology of the complex $$C_{\Psi}(M): M \xrightarrow{\Psi-1}M$$ concentrated in degrees $[1,2].$ This is motivated by the fact that for an étale module (coming from a representation $V$) this cohomology is closely related to the Iwasawa cohomology of $V.$
Roughly speaking the deformation is a family of $(\varphi_L,\Gamma_L)$-modules parametrised by the Fréchet-Stein algebra $D(\Gamma_L,K)$ and specialising to a point $\mathfrak{m}_x\in \operatorname{Sp}(D(\Gamma_L,K))$ corresponds to twisting the module at $x=0$ by an analytic character. As before we run into the problem that, contrary to the cyclotomic case, the inclusion $\Gamma_{n_0} \subset \Gamma_L$ does not split and hence we restrict a priori to this subgroup. 

For now fix $U:=\Gamma_m$ for some $m \geq{n_0}.$ We shorten our notation and write $D:=D(U,K)$ and pick an affinoid cover $D= \varprojlim_{n} D_n$ that arises as a base change of an affinoid cover of $D(U,L)$ with $U$-stable terms (e.g. $D_n:=D_{r_n}(U,K)$ for a non trivial sequence $r_n$ converging to $1$ from below). We further abbreviate $D(\Gamma_L) := D(\Gamma_L,K) = \Z[\Gamma_L]\otimes_{\Z[U]}D(U,K)$ and $D_n(\Gamma_L):=\Z[\Gamma_L]\otimes_{\Z[U]}D_n.$ In this chapter let $M$ be an $L$-analytic $(\varphi_L,\Gamma_L)$-module over $\cR_A$ with a model over $[r_0,1).$ As before we denote by $Z$ the preimage of a coordinate $T$ under the Fourier isomorphism.
We denote by $\Psi$ the left-inverse operator to $\varphi_L$ i.e. $\Psi= \frac{\pi_L}{q} \psi_{\mathrm{LT}}.$ We henceforth omit the subscript $\psi_{\mathrm{LT}}$ and write $\psi:= \psi_{\mathrm{LT}}.$ 
\begin{defn} 
	\label{def:dfm}We define for $r \geq r_0$ $$\Dfm_n(M^r):= D_n \hat{\otimes}_{K}M^r$$ and $$\Dfm(M^r):= \varprojlim_n \Dfm_n = D \hat{\otimes}_{K} M^r.$$
	We endow $\mathbf{Dfm}_n(M^r)$ (resp. $\mathbf{Dfm}(M^r)$) with an action of $\varphi_L$ and $U$ via $\gamma(a \otimes m) = \delta_{\gamma^{-1}}a \otimes \gamma m$ and  $\varphi_L(a \otimes m):=a \otimes \varphi_L(m).$
	We further define $\Dfm_n^{\Gamma_L}(M^r)$ (resp.\ $\Dfm^{\Gamma_L}(M^r)$) as $D_n(\Gamma_L) \hat{\otimes}_KM^r$ (resp. $D(\Gamma_L) \hat{\otimes}_KM^r$) with analogously defined actions. As before, all completions are taken with respect to the projective tensor product topology. 
\end{defn}

In particular $\Dfm(M^r)$ can be viewed as a sheaf on (the rigid analytic space associated to) $D.$ We could also define $$\Dfm_n(M):= D_n \hat{\otimes}_{K,i}M,$$ where $\hat{\otimes}_{K,i}$ denotes the completion with respect to the inductive tensor product topology, but passing to $\Dfm$ poses a problem, since inductive tensor product topologies do not necessarily commute with projective limits. However the inductive tensor product topology is the most reasonable choice for a tensor product of an LF-space and a Fréchet space. We shall avoid this problem by working on the level of models. 
\begin{prop} $\Dfm_n^{\Gamma_L}(M^r)$ is an $L$-analytic family of $(\varphi_L,\Gamma_L)$-modules over $\operatorname{Sp}(D_n({\Gamma_L})) \times_K \operatorname{Sp}(A)$ i.e. an $L$-analytic $(\varphi_L,\Gamma_L)$-module over the relative Robba ring $\cR_{D_n({\Gamma_L})\hat{\otimes}A}^{[r,1)}.$ 
\end{prop}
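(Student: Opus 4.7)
\medskip

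The plan is to verify each clause of Definition \ref{def:phigammamodule} for $\mathbf{Dfm}_n^{\Gamma_L}(M^r)$ viewed as a candidate model over $\cR_{B}^{[r,1)}$ with $B := D_n(\Gamma_L)\hat{\otimes}_K A$. Since $D_n(\Gamma_L)$ is a finite free $D_n$-module (indexed by $\Gamma_L/U$) and $D_n$ is $K$-affinoid, $B$ is again $K$-affinoid, so it makes sense to talk about $(\varphi_L,\Gamma_L)$-modules over $\cR_B$. The whole statement is essentially formal base change, but four things need checking: (i) identification of the ambient ring; (ii) finite projectivity of the module; (iii) semi-linear, continuous $\varphi_L$- and $\Gamma_L$-actions commuting with each other and satisfying the model isomorphism; (iv) $L$-analyticity of the diagonal $\Gamma_L$-action.

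\textbf{Ring identification and projectivity.} First I would argue, using associativity of the projective-tensor completion for affinoid/Fréchet $K$-algebras, that
\[
D_n(\Gamma_L)\hat{\otimes}_K \cR_A^{[r,1)} \;=\; D_n(\Gamma_L)\hat{\otimes}_K A\hat{\otimes}_K\cR_K^{[r,1)} \;=\; \cR_{B}^{[r,1)}.
\]
Hence $\mathbf{Dfm}_n^{\Gamma_L}(M^r) = D_n(\Gamma_L)\hat{\otimes}_K M^r$ is naturally a $\cR_B^{[r,1)}$-module. Finite projectivity is deduced by choosing a complement $N^r$ with $M^r\oplus N^r\cong (\cR_A^{[r,1)})^d$; applying the exact functor $D_n(\Gamma_L)\hat{\otimes}_K(-)$ (which preserves finite direct sums and is compatible with the ring identification above) presents $\mathbf{Dfm}_n^{\Gamma_L}(M^r)$ as a direct summand of a finite free $\cR_B^{[r,1)}$-module.

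\textbf{Frobenius structure and commuting $\Gamma_L$-action.} The operator $\varphi_L:=\id\otimes \varphi_L$ makes sense because both factors are complete; linearising gives the isomorphism $\varphi_L^*\mathbf{Dfm}_n^{\Gamma_L}(M^r)\cong \mathbf{Dfm}_n^{\Gamma_L}(M^{r^{1/q}})$ by applying $D_n(\Gamma_L)\hat{\otimes}_K(-)$ to the corresponding isomorphism for $M^r$. For the $\Gamma_L$-action $\gamma(a\otimes m)=\delta_{\gamma^{-1}}a\otimes\gamma m$, note that the ring $\cR_B^{[r,1)}$ carries the $\Gamma_L$-action which is trivial on $B$ and the usual action on $\cR_K^{[r,1)}$. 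Semi-linearity then follows from a direct computation: for $f\in D_n(\Gamma_L)$, $t\in\cR_A^{[r,1)}$ and $a\otimes m\in\mathbf{Dfm}_n^{\Gamma_L}(M^r)$, commutativity of $D_n(\Gamma_L)$ gives $\gamma((f\otimes t)(a\otimes m))=\delta_{\gamma^{-1}}fa\otimes \gamma(t)\gamma(m)=(f\otimes\gamma(t))(\delta_{\gamma^{-1}}a\otimes\gamma m)$. Commutation of $\varphi_L$ with the $\Gamma_L$-action is immediate from the analogous statement on $M^r$ combined with the tautological commutation on the $D_n(\Gamma_L)$-factor. Continuity for a fixed Banach slice $[r,s]$ reduces, via a dense argument on elementary tensors, to continuity of the two factorwise actions: the action of $\Gamma_L$ on $D_n(\Gamma_L)$ by left multiplication by $\delta_{\gamma^{-1}}$ is continuous because $\Gamma_L\to D_n(\Gamma_L)^\times$ is continuous, and continuity of the action on $M^{[r,s]}$ is part of the data of $M$.

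\textbf{$L$-analyticity.} The key point is that the diagonal $\Gamma_L$-action is $L$-analytic. Differentiating $\gamma(a\otimes m)=\delta_{\gamma^{-1}}a\otimes \gamma m$ along a one-parameter subgroup $\exp(tx)$ with $x\in \operatorname{Lie}(\Gamma_L)$ gives, on the dense subspace of elementary tensors, the derivation $x(a\otimes m)=(-x\cdot a)\otimes m+a\otimes(x\cdot m)$. The first summand is $L$-linear in $x$ because $D_n(\Gamma_L)$ is an algebra of $L$-analytic distributions (so $\operatorname{Lie}(\Gamma_L)\hookrightarrow D(\Gamma_L,K)$ is $L$-linear), and the second is $L$-linear by the assumed $L$-analyticity of $M$. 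Hence the total infinitesimal action is $L$-bilinear, which is exactly the $L$-analyticity condition. The main subtleties I foresee are purely topological: checking that the associativity of completed tensor products identifying the ambient ring is legitimate (which uses that $D_n(\Gamma_L)$ is $K$-affinoid, so its projective tensor product with Fréchet $K$-algebras is well-behaved), and verifying that the differentiation step can be performed on the whole completed tensor product and not just on elementary tensors, for which I would invoke Lemma \ref{lem:operatornormestimates} together with the fact that the two partial actions are each jointly continuous and $L$-analytic.
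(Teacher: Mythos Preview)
Your proof is correct and follows essentially the same route as the paper: ring identification, projectivity by direct-summand argument, semi-linearity via commutativity of $D_n(\Gamma_L)$, and commutation of $\varphi_L$ with $\Gamma_L$ all match. The only noteworthy difference is in the $L$-analyticity step: the paper invokes \cite[Lemma~1.3.5]{RustamFiniteness} (which handles tensor products of $L$-analytic actions) and then verifies directly that the $U$-action on $D_n$ by inverted multiplication is locally $L$-analytic via the Fourier expansion $\delta_u=\eta(a(u),Z)$, whereas you differentiate the orbit map and check $L$-linearity of the resulting Lie-algebra action term by term; both arguments are valid, though the paper's is slightly cleaner because the cited lemma absorbs the topological subtleties you flag at the end.
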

\begin{proof}
	By construction $\Dfm_n^{\Gamma_L}(M^r)$ is finite projective over $\cR_{D_n({\Gamma_L})\hat{\otimes}A}^{[r,1)}$  of the same rank as $M.$ The actions are semi-linear (in particular $D_n \hat{\otimes}A$-linear) because $$\gamma(\lambda\mu \otimes am) = (\lambda \otimes a) (\delta_{\gamma}^{-1}\mu \otimes m)$$ for any $\lambda \in D, \mu \in D_n, a\in A$ and $m \in M^r.$ Note that semi-linearity refers to the action of $\lambda \otimes a \otimes f \in \cR_{D_n\hat{\otimes}A}$ via multiplication
	$(\lambda \otimes a \otimes f)(\mu \otimes m) = \lambda \mu \otimes afm,$ where $\lambda \in D, \mu \in D_n, a\in A, f\in \cR_K,m \in M^r.$ $L$-analyticity (and hence continuity) follows from \cite[Lemma 1.3.5]{RustamFiniteness} once we establish that the $U$-action on $D_n$ (via inverted multiplication) is locally $L$-analytic. By our assumptions the Dirac distribution corresponding to $u \in U$ admits an expansion $\delta_u = \eta(a(u),Z)$ with a suitable $a(u) \in o_L.$ Fixing $\lambda \in D_n$ its orbit map is given by $u \mapsto \eta(-a(u),Z)\lambda.$ Expanding out the terms shows that the orbit map is locally $L$-analytic. The $\varphi_L$ and $\Gamma_L$-actions clearly commute and the linearised $\varphi_L$-map is invertible, since it is the linear extension of $\varphi_{M^r}.$ 
\end{proof}
\begin{rem}
	$\Dfm(M^r), \Dfm_n(M^r)$ carry two different $D$-actions. One induced by the scalar action in the left tensor component, that we shall call scalar action. The other one induced by the $L$-analytic action defined in Definition \ref{def:dfm}, which we shall call diagonal action.
	Note that $\Dfm(M^r)$ is only a $(\varphi_L,U)$-module but \textbf{not} a $(\varphi_L,\Gamma_L)$-module. (Because there is no action of the full group $\Gamma_L.$) Instead we can restrict the action to the subgroup $U\subset \Gamma_L$ which is still enough to make sense of the operator $Z \in D(U,K)$ and the complex $C_{\varphi,Z}(\Dfm(M^r))$ (resp. $C_{\varphi,Z}(\Dfm_n(M^r))$). We use a subscript $(-)_{\text{diag}}$ to emphasize that the action is given diagonally when ambiguity can arise.
\end{rem}
Another subtlety is the fact that the induced diagonal action of $D$ on $\Dfm(M^r),$ in particular, the diagonal action of $Z$ is harder to understand. It is neither given by $Z \otimes Z$ nor by $Z^i \otimes Z,$ where $Z^i$ denotes the action induced by the inversion on $U$. This problem already occurs in the cyclotomic case where $Z=\gamma-1$ and $$(\gamma-1)_{\text{diag}}(a \otimes m ) = \delta_{\gamma^{-1}}a \otimes \gamma m - a \otimes m$$
while 
$$(\gamma-1)a \otimes (\gamma-1)m = \delta_{\gamma^{-1}}a \otimes \gamma m - a \otimes \gamma m - \delta_{\gamma^{-1}}a \otimes m + a \otimes m.$$
\subsection{Coadmissibility of Iwasawa cohomology $C_{\Psi}(M)$ over a field.}	
For technical reasons that will become clear in the proof of Theorem \ref{thm:iwasawadfm} we require that the complex of $D$-modules $C_{\Psi}(M)$ has coadmissible cohomology groups to obtain a comparison between the Iwasawa cohomology and the cohomology of $\mathbf{Dfm}(M).$ Our Theorem \ref{thm:perfectTrianguline} asserts that this perfectness holds for so-called trianguline modules. We further explore conjecturally how the étale case can be incorporated into the picture. More precisely we show that it suffices to proof the statement in the étale case to conclude that it holds for every $(\varphi_L,\Gamma_L)$-module coming from $\cR_L.$ 
\begin{rem}\label{rem:Prüfer}
	The rings $\cR_K^+,\cR_K^{[r,1)},\cR_K,D(o_L,K),D(o_L,L)$ are Prüfer domains (i.e.\ every finitely generated ideal is invertible). In particular a module over the above rings is flat if and only if it is torsion-free and any finitely generated torsion-free module is projective.
\end{rem}
\begin{proof}
	Using the Fourier isomorphism \cite[Theorem 2.3]{schneider2001p} this is  \cite[Corollary 1.1.8]{Berger}.
\end{proof}
\begin{defn}
	We define the \textbf{heart} of $M$ as $$\mathcal{C}(M):=(\varphi_M-1)M^{\Psi=1}.$$ If there is no possibility of confusion we omit $M$ and simply write $\mathcal{C}:= \mathcal{C}(M).$
	For each $c \in K^{\times}$ we define a variant of the heart as $$C_c(M):=(\varphi_M-c)M^{c\Psi=1}.$$
\end{defn}
\begin{rem}
	\label{rem:exactheartsequence}
	$\mathcal{C}$ is a $D(\Gamma_L,K)$-submodule of $M^{\psi_{\mathrm{LT}}=0},$ in particular $\mathcal{C}$ is $D(\Gamma_L,K)$-torsion-free. Furthermore we have for every $c \in K^{\times}$ an exact sequence $$0 \to M^{\varphi=c} \xrightarrow{\iota} M^{c\Psi=1}\xrightarrow{\varphi-c} \mathcal{C}_c(M)\to 0,$$ where $\iota$ is the inclusion. 
\end{rem}
\begin{proof}
	$\mathcal{C}_c(M)$ is a $K[\Gamma_L]$-submodule since the actions of $\Gamma_L$ and $\varphi$ (resp. $\Psi$) commute and by continuity considerations it is also a $D(\Gamma_L,K)$-submodule of $M.$ Using $\Psi\circ \varphi = \id$ one concludes that $\mathcal{C}_c(M)$ is contained in $M^{\Psi=0}$ which is projective over $\cR_K(\Gamma_L)$ by \cite[Theorem 2.4.5]{RustamFiniteness} and, in particular, $D(\Gamma_L,K)$-torsion-free. The exactness of the sequence on the right is given by definition. It remains to see $M^{\varphi=c}\cap M^{c\Psi=1} = M^{\varphi=c}$ i.e. $M^{\varphi=c}\subset M^{c\Psi=1}$ for this purpose let $m \in M^{\varphi=c}$ then $cm = \varphi(m)$ implies $\Psi(\varphi(m)) = c\Psi (m) $ but then $c\Psi(m)=m.$
\end{proof}
The following lemma is a strengthening of \cite[Lemma 4.1.6]{KPX}. We elaborate on the proof for the convenience of the reader.
\begin{lem}
	\label{lem:finiteperfect} Let $r \in [0,1).$
	Any $\cR_K^{[r,1)}$-module $V$ of finite $K$-dimension admits a resolution of the form $$0 \to F_1 \to F_2 \to V \to 0,$$ where $F_i$ are finite free $\cR_K^{[r,1)}$-modules.
\end{lem}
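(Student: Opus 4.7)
The plan is to construct the resolution by hand, starting from an arbitrary finite free cover and showing its kernel is itself finite free. Since $V$ has finite $K$-dimension while $\cR_K^{[r,1)}$ is infinite-dimensional over $K$, the natural map $\cR_K^{[r,1)}\to\mathrm{End}_K(V)$ has nontrivial kernel, so the annihilator $\mathrm{Ann}(V)$ contains some nonzero $g\in\cR_K^{[r,1)}$. Picking any generating set (for instance, a $K$-basis), I would choose a surjection $\pi\colon F_2:=(\cR_K^{[r,1)})^m\twoheadrightarrow V$ and set $F_1:=\ker(\pi)$, obtaining
$$0\to F_1\to F_2\to V\to 0.$$
What remains is to prove that $F_1$ is finitely generated and free of rank $m$.

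For finite generation: since $g\cdot V=0$, we have $gF_2\subseteq F_1$, so $F_1/gF_2$ embeds into $F_2/gF_2\cong(\cR_K^{[r,1)}/(g))^m$. By the Weierstrass-preparation-type factorisation for $\cR_K^{[r,1)}$ (i.e.\ any nonzero element is a unit times an element with finitely many zeroes, counted with multiplicity, in the annulus $[r,1)$), the quotient $\cR_K^{[r,1)}/(g)$ is finite-dimensional over $K$. Hence $F_1/gF_2$ is a $K$-finite-dimensional $\cR_K^{[r,1)}$-module; combined with the fact that $gF_2$ is itself finite free, this yields that $F_1$ is finitely generated. Finally, localising at the fraction field kills the torsion module $V$ and forces the rank of $F_1$ to coincide with that of $F_2$.

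For the freeness of $F_1$: since $F_1$ is a submodule of the free module $F_2$ it is torsion-free, and Remark \ref{rem:Prüfer} asserts that $\cR_K^{[r,1)}$ is a Prüfer domain, so every finitely generated torsion-free module is projective. To upgrade projective to free, I would invoke the Bezout property of $\cR_K^{[r,1)}$: every finitely generated ideal is principal, and consequently (via the standard argument with elementary divisors/Smith normal form for Bezout domains) every finitely generated projective module is free. This delivers $F_1\cong(\cR_K^{[r,1)})^m$.

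The main obstacle is the last step, namely ensuring the Prüfer-to-free upgrade rather than merely Prüfer-to-projective. In the references \cite{Berger} and \cite{schneider2001p} cited via Remark \ref{rem:Prüfer}, stronger structural results (Bezout/elementary divisors in the sense of Lazard) are actually available, but one has to verify that they transport to the coefficient field $K$, which is only assumed to be a complete subfield of $\C_p$ and need not be discretely valued. If this transport is problematic, an alternative plan is to induct on $\dim_K V$: handle $\dim_K V=1$ directly by identifying the annihilator as a maximal ideal $(T-t_0)$ for a $K$-rational point $t_0$ of the annulus and writing down the Koszul-type resolution, then extend to general $V$ by the horseshoe lemma, using that a concatenation of two two-term free resolutions remains a two-term free resolution because each step only requires the appearance of a single new kernel which is again free by the same argument.
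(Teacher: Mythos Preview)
Your suspicion about the last step is exactly right, and the paper's proof takes a different route precisely to avoid it. The paper itself remarks (in the introduction) that passing to the large extension $K/L$ means one is \emph{no longer working with B\'ezout rings}; Remark~\ref{rem:Prüfer} only gives the Pr\"ufer property, and the upgrade from finitely generated projective to free over $\cR_K^{[r,1)}$ is genuinely unavailable. There is also a smaller gap earlier: your Weierstrass claim that every nonzero $g\in\cR_K^{[r,1)}$ has only finitely many zeroes in the half-open annulus is false (e.g.\ $t_{\mathrm{LT}}$), so $\cR_K^{[r,1)}/(g)$ need not be $K$-finite for a generic annihilating element.

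The paper's argument handles both issues at once. Since $1,T,T^2,\dots$ become $K$-linearly dependent in $\mathrm{End}_K(V)$, the annihilator contains a nonzero \emph{polynomial}; after stripping off the unit factor coming from roots outside $[r,1)$, one is left with a polynomial $g$ for which $\cR_K^{[r,1)}/(g)\cong K\langle T\rangle/(g)$. Now $K\langle T\rangle$ is a PID for any complete field $K$ (no discreteness required), so the elementary divisor theorem produces a two-term free resolution of $V$ over $K\langle T\rangle$. Base-changing along the flat map $K\langle T\rangle\to\cR_K^{[r,1)}$, together with $\cR_K^{[r,1)}\otimes_{K\langle T\rangle}V\cong V$ (both rings act through the common quotient $K\langle T\rangle/(g)$), yields the desired free resolution over $\cR_K^{[r,1)}$. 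Your fallback induction on $\dim_K V$ would still need the additional check that every $K$-algebra homomorphism $\cR_K^{[r,1)}\to K$ is evaluation at a point of the annulus; the reduction to the PID $K\langle T\rangle$ sidesteps this entirely.
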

\begin{proof} Let $v_1,\dots,v_d$ be a $K$-basis of $V.$ Consider the surjection $(\cR_K^{[r,1)})^d \to V$ mapping $e_i$ to $v_i.$ Since for each $i$ the elements $v_i,Tv_i,T^2v_i,\dots$ have to be linearly dependent we observe that there exists a polynomial $f \in K[T]$ such that the structure map $\cR_K^{[r,1)} \to \operatorname{End}(V)$ factors over $\cR_K^{[r,1)}/(f).$ Now take a factorisation of $f$ viewed as an element in $K\langle T\rangle$ of the form $ug,$ where $u$ is a unit and $g$ is a Weierstraß polynomial and rewrite this decomposition as $u g_1g_2,$ such that the zeroes of $g_1$ lie outside the annulus $[r,1)$ (i.e. $g_1$ becomes a unit in $\cR_K^{[r,1)}$) and the zeroes of $g_2$ are contained inside the annulus. Without loss of generality we assume $g_2=g.$ By the coadmissibility of $\cR_K^{[r,1)}$ and $g\cR_K^{[r,1)}\cong \cR_K^{[r,1)}$ we have $\cR_K^{[r,1)}/(g) = \operatorname{cok}(\cR_K^{[r,1)} \xrightarrow{g} \cR_K^{[r,1)}) = \varprojlim_s \operatorname{cok}(\cR_K^{[r,s]} \xrightarrow{g} \cR_K^{[r,s]}).$ For $s$ large enough (such that the zeroes of $g$ are contained in the annulus $[r,s]$) we have by \cite[3.3 Lemma 10]{bosch2014lectures} and the chinese remainder theorem $\cR_K^{[r,s]}/(g) \cong K \langle T\rangle/(g).$ In particular the limit stabilises for $s$ large enough and we obtain $\cR_K^{[r,1)}/(g) \cong K\langle T\rangle/(g).$ Recall that $K\langle T\rangle$ is a principal ideal domain by \cite[2.2 Corollary 10]{bosch2014lectures}.
	By the elementary divisor theorem we may find a free resolution of $V$ as a $K\langle T\rangle$-module of the form $$0 \to K\langle T\rangle^{d_1} \to K\langle T\rangle^{d_2} \to V\to 0$$ with some $d_i \in \N.$ Since $K\langle T\rangle$ is a principal ideal domain and $\cR_K^{[r,1)}$ is torsion free we get via base change along the flat map $K\langle T\rangle \to \cR_K^{[r,1)}$ a resolution
	$$0 \to (\cR_K^{[r,1)})^{d_1}  \to(\cR_K^{[r,1)})^{d_2} \to  \cR_K^{[r,1)}\otimes_{K\langle T\rangle} V\to 0.$$
	Because $V$ is a $\cR_K^{[r,1)}/(g)$-module we obtain $ \cR_K^{[r,1)}\otimes_{K\langle T\rangle} V = \cR_K^{[r,1)}/(g) \otimes_{K\langle T\rangle}V = K\langle T\rangle/(g)\otimes_{K\langle T\rangle}V  = V.$  We have therefore constructed the desired resolution of $V.$
\end{proof}
\begin{rem}
	\label{rem:finiteperfectapplied}
	Let $N$ be a not necessarily $L$-analytic $(\varphi_L,\Gamma_L)$-module over $\cR_K$ and let $n \geq n_0$ such that $\Gamma_n \cong o_L.$ Then \begin{enumerate}
		\item $N^{c\varphi=1}$ has finite $K$-dimension for any $c \in K^\times$.
		\item If $N$ is $L$-analytic and $c \in K^\times$ then $N^{c\varphi=1}[0]\in \mathbf{D}^b_{\text{perf}}(D(\Gamma_n,K)).$ 
		\item If $N$ is $L$-analytic and $c \in K^\times$ then $N/(c\psi_{\mathrm{LT}}-1)[0]$ belongs to $\mathbf{D}^b_{\text{perf}}(D(\Gamma_n,K)).$ 
	\end{enumerate} 
\end{rem}
\begin{proof}
	Restricting the residue pairing from Proposition \ref{prop:pairing}  to $N^{c\varphi=1}$ we see that the pairing factors over $\check{N}/(c\psi_{\mathrm{LT}}-1)$ which is finite-dimensional by Theorem \ref{thm:finitepsi}. If $N$ is $L$-analytic, then $N^{\varphi=1}$ and $N/(c\psi_{\mathrm{LT}}-1)$ carry natural $D(\Gamma_n,K)$-module structures and are finite dimensional over $K$ by the above (resp. Theorem \ref{thm:finitepsi}). Hence 2. and 3. follow from Lemma \ref{lem:finiteperfect} by transport of structure along $\cR_K^+ \cong D(\Gamma_n,K).$
\end{proof}

\begin{prop} Let $c \in K^{\times}.$
	\label{prop:psiperfchar} The following are equivalent. 
	\begin{enumerate}[(i.)]
		\item $C_{c\Psi}(M) \in \mathbf{D}^b_{\text{perf}}(D(U,K)).$
		\item $M^{c\Psi=1}$ is coadmissible and finitely generated as a $D(U,K)$-module.
		\item $M^{c\Psi=1}$ is finitely generated as a $D(U,K)$-module.
		\item $\mathcal{C}_c(M)$ is finitely generated as a $D(U,K)$-module.
	\end{enumerate}
\end{prop}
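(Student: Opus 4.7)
The plan is to reduce everything to a statement about the single module $M^{c\Psi=1}$ by exploiting the two-term structure of the complex and disposing of the cokernel piece up front. Since the base ring is $K$, Theorem \ref{thm:finitepsi} gives that $M/(c\Psi-1)$ is finite-dimensional over $K$, and Remark \ref{rem:finiteperfectapplied}(3) yields $(M/(c\Psi-1))[-2]\in \mathbf{D}^b_{\text{perf}}(D(U,K))$. In the truncation triangle
$$M^{c\Psi=1}[-1] \to C_{c\Psi}(M) \to (M/(c\Psi-1))[-2] \xrightarrow{+1}$$
the middle term is perfect if and only if $M^{c\Psi=1}[-1]$ is, i.e., if and only if $M^{c\Psi=1}$ admits a finite resolution by finitely generated projective $D(U,K)$-modules.

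For the chain (i)$\Rightarrow$(ii)$\Rightarrow$(iii), the first implication uses that $M^{c\Psi=1}[-1]$ being perfect forces the module to be pseudo-coherent, hence finitely generated and finitely presented over $D(U,K)$, and coadmissibility is automatic for finitely presented modules over the Fréchet-Stein algebra $D(U,K)$; the second is trivial. The equivalence (iii)$\Leftrightarrow$(iv) follows directly from the short exact sequence $0\to M^{\varphi=c}\to M^{c\Psi=1}\xrightarrow{\varphi-c}\mathcal{C}_c(M)\to 0$ of Remark \ref{rem:exactheartsequence}: finite generation passes to the quotient, and back via the extension, because $M^{\varphi=c}$ is finite-dimensional over $K$ by Remark \ref{rem:finiteperfectapplied}(1) and hence finitely generated over $D(U,K)$.

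The step I expect to be the technical heart is (iii)$\Rightarrow$(i), where the mere finite generation of $M^{c\Psi=1}$ has to be upgraded to a genuine finite projective resolution. Here the strategy is as follows: by Remark \ref{rem:finiteperfectapplied}(2), $M^{\varphi=c}[0]$ is already perfect, so via the exact sequence above it suffices to show $\mathcal{C}_c(M)$ is finitely generated projective over $D(U,K)$. By Remark \ref{rem:exactheartsequence}, $\mathcal{C}_c(M)$ is contained in $M^{\psi_{\mathrm{LT}}=0}$, which by \cite[Theorem 2.4.5]{RustamFiniteness} is projective over the ambient distribution ring of $\Gamma_L$, so in particular $\mathcal{C}_c(M)$ is $D(U,K)$-torsion-free. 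Since $D(U,K)\cong \cR_K^+$ is a Prüfer domain by Remark \ref{rem:Prüfer}, any finitely generated torsion-free module over it is projective, so $\mathcal{C}_c(M)[0]$ is perfect. Assembling the pieces in the exact sequence yields perfectness of $M^{c\Psi=1}[0]$, and the truncation triangle then delivers perfectness of $C_{c\Psi}(M)$ itself.
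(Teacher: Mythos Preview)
Your proof is correct and follows essentially the same route as the paper's own argument: both identify the cokernel $M/(c\Psi-1)$ as perfect via Remark~\ref{rem:finiteperfectapplied}, reduce the implication $(\mathrm{iv})\Rightarrow(\mathrm{i})$ to the projectivity of $\mathcal{C}_c(M)$ via the Pr\"ufer property of $D(U,K)$ and the exact sequence of Remark~\ref{rem:exactheartsequence}, and handle $(\mathrm{i})\Rightarrow(\mathrm{ii})$ by observing that the relevant cohomology module inherits finiteness and coadmissibility from a perfect resolution. The only cosmetic differences are that the paper packages the final step via \cite[\href{https://stacks.math.columbia.edu/tag/066U}{Tag 066U}]{stacks-project} on cohomology groups rather than your truncation triangle, and for $(\mathrm{i})\Rightarrow(\mathrm{ii})$ it argues coadmissibility directly from the abelian-category property and then invokes \cite[Lemma~1.1.9]{Berger} for finite generation, whereas you go through pseudo-coherence and finite presentation; neither difference is substantive.
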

\begin{proof}
	(i.) $\implies$ (ii.) follows from the fact that finite projective modules are automatically coadmissible and the latter form an abelian category. Hence $M^{\Psi=1}$ is coadmissible as a cohomology group of a complex of coadmissible modules. Finite generation follows from \cite[Lemma 1.1.9]{Berger}. The implication (ii.) $\implies $(iii.) is trivial.  (iv.) follows immediately from (iii.) via the exact sequence from Remark \ref{rem:exactheartsequence}. Lastly assume (iv.) then $\mathcal{C}_c(M)$ is a torsion-free module which is finitely generated. Since $D(U,K)$ is a Prüfer-domain we conclude that $\mathcal{C}_c(M)$ has to be finitely generated projective by Remark \ref{rem:Prüfer}. From the exact sequence in Remark \ref{rem:exactheartsequence} and by Remark \ref{rem:finiteperfectapplied} 2.) we conclude that the bounded complex $C_{c\Psi}(M)$ has cohomology groups belonging to $\mathbf{D}^b_{\text{perf}}(D)$. Then \cite[\href{https://stacks.math.columbia.edu/tag/066U}{Tag 066U}]{stacks-project} implies that $C_{c\Psi}(M)$ itself belongs to $\mathbf{D}^b_{\text{perf}}(D)$.
	
\end{proof}

The following Lemma provides some flexibility concerning the constant $c.$

\begin{lem}
	\label{lem:constantirrelevantpsi}
	Let $c \in K^\times$ and define a character $\rho: L^\times \to K^\times$  by $\rho(\pi_L) = c$ and $\rho_{\mid o_L^{\times}}=1.$
	Then the identity induces a $\Gamma_L$-equivariant isomorphism. $$C_{\Psi}(M) \cong C_{c\Psi}(M(\rho)).$$
	Furthermore if $M$ is $L$-analytic then so is $M(\rho)$ and the isomorphism above is $D(\Gamma_L,K)$-equivariant.
\end{lem}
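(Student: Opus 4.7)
The plan is to unwind what the twist by $\rho$ does to the underlying $\Psi$-complex. Because $\rho|_{o_L^\times} \equiv 1$, the element $e_\rho$ generating $\cR_K(\rho)$ is $\Gamma_L$-fixed; hence the $\cR_K$-linear and $\Gamma_L$-equivariant identification $M \xrightarrow{\sim} M(\rho)$, $m \mapsto m \otimes e_\rho$, already gives a $\Gamma_L$-equivariant identification of the underlying topological modules. Under this identification, only the Frobenius changes: since $\varphi_L(e_\rho) = c \, e_\rho$, the operator $\varphi_L^{M(\rho)}$ corresponds to $c \cdot \varphi_L^M$. I would state and verify this first as a separate observation, noting in passing that $\cR_K(\rho)$, having the same $\Gamma_L$-action as $\cR_K$ (because $\rho(\chi_{\mathrm{LT}}(\gamma))=1$ for every $\gamma \in \Gamma_L$), is $L$-analytic, so $M(\rho)$ is again an $L$-analytic $(\varphi_L,\Gamma_L)$-module whenever $M$ is.

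Next I would pin down the corresponding statement for $\psi_{\mathrm{LT}}$. The operator $\psi_{\mathrm{LT}}^{M(\rho)}$ is uniquely characterized by the Frobenius-semilinearity relation $\psi_{\mathrm{LT}}^{M(\rho)}(\varphi_L(f)\cdot m) = f \cdot \psi_{\mathrm{LT}}^{M(\rho)}(m)$ for $f \in \cR_K$, together with the normalization $\psi_{\mathrm{LT}}^{M(\rho)} \circ \varphi_L^{M(\rho)} = \tfrac{q}{\pi_L}\,\mathrm{id}$. I would check directly that under the identification above the operator $c^{-1}\psi_{\mathrm{LT}}^M$ satisfies both conditions: the semilinearity is immediate since multiplication by $c^{-1}$ is $\cR_K$-linear, and the normalization reads $c^{-1}\psi_{\mathrm{LT}}^M(c\,\varphi_L^M(m)) = \tfrac{q}{\pi_L} m$. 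By uniqueness, $\psi_{\mathrm{LT}}^{M(\rho)} = c^{-1}\psi_{\mathrm{LT}}^M$, hence $\Psi^{M(\rho)} = c^{-1}\Psi^M$ and therefore $c\,\Psi^{M(\rho)} - 1 = \Psi^M - 1$ on the nose. This gives the desired isomorphism of complexes $C_{\Psi}(M) \cong C_{c\Psi}(M(\rho))$, and the identification is $\Gamma_L$-equivariant by the first paragraph.

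For the final sentence, when $M$ is $L$-analytic, so is $M(\rho)$ by the first paragraph, and both complexes are complexes of $D(\Gamma_L,K)$-modules with the two $D(\Gamma_L,K)$-actions agreeing on $\Gamma_L \subset D(\Gamma_L,K)$ and being continuous; the unique continuous extension then guarantees that the identity map is $D(\Gamma_L,K)$-equivariant. I do not expect any serious obstacle: the only point that deserves care is to make sure that $\psi_{\mathrm{LT}}$ on $M(\rho)$ is really obtained from $\psi_{\mathrm{LT}}$ on $M$ by the scalar factor $c^{-1}$ rather than from some other convention, which is why I phrase this through the universal characterization of $\psi_{\mathrm{LT}}$ as the $\cR_K$-semilinear left-inverse of $\varphi_L$ with the correct normalization.
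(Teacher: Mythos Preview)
Your proof is correct and follows essentially the same approach as the paper: identify $M$ with $M(\rho)$ via $m\mapsto m\otimes e_\rho$, observe that $\rho|_{o_L^\times}=1$ makes this $\Gamma_L$-equivariant (and preserves $L$-analyticity), deduce $\varphi_{M(\rho)}=c\varphi_M$ hence $\Psi_{M(\rho)}=c^{-1}\Psi_M$, and conclude $D(\Gamma_L,K)$-equivariance by continuity. The only difference is that you spell out the verification of $\Psi_{M(\rho)}=c^{-1}\Psi_M$ via the universal characterization, whereas the paper simply asserts it in one line.
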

\begin{proof}
	Since the character $\rho$ is trivial on $o_L^{\times}$ the identity is $\Gamma_L$-equivariant. Furthermore we have $\varphi_{M(\rho)}(m) = c\varphi_M(m)$ and hence $\Psi_{M(\rho)} = c^{-1}\Psi_{M}$ which shows that the identity induces a morphism of complexes. The second part of the statement follows from the fact that $\rho$ does not change the $\Gamma_L$-action and hence it remains $L$-analytic on $M(\rho).$ The $D(\Gamma_L,K)$-equivariance follows from continuity.
\end{proof}
\begin{cor}
	\label{kor:modtperfect}
	Let $M$ be an $L$-analytic $(\varphi_L,\Gamma_L)$-module over $\cR_K$. Then the complex $C_{c\Psi}(M/t_{\mathrm{LT}}) = [M/t_{\mathrm{LT}}\xrightarrow{c\Psi-1} M/t_{\mathrm{LT}}]$ is perfect.
\end{cor}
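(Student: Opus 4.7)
The plan is to reduce to the case $c=1$ via the twisting device of Lemma~\ref{lem:constantirrelevantpsi} and then to apply Lemma~\ref{lem:psimodt} directly, which already contains all of the substantial work.

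First I would dispose of the case $c=1$. Since $\Psi=\tfrac{\pi_L}{q}\psi_{\mathrm{LT}}$ by definition, Lemma~\ref{lem:psimodt} applied to $M$ (with $A=K$ and with the $n$ there chosen to be $m$, so that $\Gamma_n=U$) tells us two things: that $\Psi-1$ is surjective on $M/t_{\mathrm{LT}}M$, so that $H^2(C_\Psi(M/t_{\mathrm{LT}}))=0$; and that the kernel $(M/t_{\mathrm{LT}}M)^{\Psi=1}$ admits a $2$-term finite projective resolution as a $D(U,K)$-module. Consequently $C_\Psi(M/t_{\mathrm{LT}})$ is quasi-isomorphic to the kernel in degree $1$, which is itself a perfect complex of $D(U,K)$-modules, so $C_\Psi(M/t_{\mathrm{LT}})$ is perfect.

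For a general $c\in K^\times$, I would invoke Lemma~\ref{lem:constantirrelevantpsi}. Let $\rho\colon L^\times\to K^\times$ be the character with $\rho(\pi_L)=c$ and $\rho|_{o_L^\times}=1$; since $\rho$ is trivial on $o_L^\times$, the twist $M(\rho^{-1})$ is again an $L$-analytic $(\varphi_L,\Gamma_L)$-module over $\cR_K$. Twisting alters only the $\varphi_L$- and $\Gamma_L$-actions and not the underlying $\cR_K$-module, so $t_{\mathrm{LT}}M(\rho^{-1})$ coincides set-theoretically with $t_{\mathrm{LT}}M$ and we obtain a canonical identification
\begin{equation*}
M(\rho^{-1})/t_{\mathrm{LT}}M(\rho^{-1}) = (M/t_{\mathrm{LT}}M)(\rho^{-1}).
\end{equation*}
Applying Lemma~\ref{lem:constantirrelevantpsi} to the right-hand side gives an isomorphism of complexes
\begin{equation*}
C_\Psi\bigl(M(\rho^{-1})/t_{\mathrm{LT}}M(\rho^{-1})\bigr) \;\simeq\; C_{c\Psi}(M/t_{\mathrm{LT}}M).
\end{equation*}
The left-hand side is perfect by the case $c=1$ applied to $M(\rho^{-1})$, hence so is the right-hand side.

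I do not expect any real obstacle: the substantive work has been absorbed into Lemma~\ref{lem:psimodt}, and the role of the corollary is simply to package the conclusion together with the formal twisting trick. The only points requiring verification are that $\rho|_{o_L^\times}=1$ preserves $L$-analyticity of $M(\rho^{-1})$ and that twisting is compatible with the quotient by $t_{\mathrm{LT}}$; both are immediate.
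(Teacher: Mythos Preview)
Your proof is correct and matches the paper's argument essentially verbatim: reduce to $c=1$ via the twisting trick of Lemma~\ref{lem:constantirrelevantpsi} (noting that $\rho|_{o_L^\times}=1$ preserves $L$-analyticity and commutes with the quotient by $t_{\mathrm{LT}}$), then invoke Lemma~\ref{lem:psimodt} to see that the cokernel of $\Psi-1$ on $M/t_{\mathrm{LT}}M$ vanishes and the kernel has a two-term finite projective $D(U,K)$-resolution. The paper phrases the last step as ``the cohomology groups are perfect hence the bounded complex is perfect,'' while you phrase it as ``the complex is quasi-isomorphic to its kernel in degree $1$''; these are equivalent here.
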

\begin{proof} Since twisting by a character that is trivial on $o_L^\times$ preserves the property that $M$ is $L$-analytic
	by Lemma \ref{lem:constantirrelevantpsi} we may without loss of generality assume $c=1.$ Because the complex is bounded it suffices to show that the cohomology groups are perfect. The cohomology groups are precisely kernel and cokernel of $\Psi-1$ for which the statement has been shown in Lemma \ref{lem:psimodt}.
\end{proof}

	\subsection{Consistent complexes}
	When studying the Iwasawa cohomology complex $C_{\Psi}(M)$, the conceptual approach is to view the cohomology groups $H^i_{\mathrm{Iw}}(M)$ as coherent sheaves on $D(U,K).$ We do however not know if $M$ itself can be viewed as a sheaf on $D(U,K)$ in a suitable sense and we instead study the ``sheaf'' $D_n \mapsto C_{\Psi}(D_n \otimes_D M).$ We describe a framework for studying complexes of $D$ modules whose cohomology groups are coadmissible $D$ modules. 
	A similar situation was studied by Berthelot and Ogus (cf. Appendix B in  \cite{berthelot1978notes}) in the case where $A$ is a noetherian ring that is $I$-adically complete and $A_n := A/I^n.$ We adapt their setup for our purpose following \cite{pottharst2013analytic}. 
	One can view a projective system of $D_n$ modules as a sheaf on the ringed site $\mathbb{N}$ (where $\mathcal{O}(n) = D_n$) with the indiscrete topology (such that only isomorphism are coverings and thus every presheaf is a sheaf). In overblown terms given a projective system $(A_n)_n$ of rings with $A:= \varprojlim_n A_n$ we have a canonical morphism of topoi $f:\operatorname{Sh}(\mathbb{N},A_\bullet) \to \operatorname{Sh}(\operatorname{pt},A).$
	Where $f_*$ is just $\varprojlim_n$ and $f^*$ will be described below. 
	In order to describe the cohomology groups of complexes of sheaves on both sides we need to understand the respective derived functors. Fix a countable projective system $(A_n)_{n \in \N}$ of rings and denote by $A$ their limit.
	We denote by $\operatorname{mod}(\N,A)$ the abelian category of inverse systems $(M_n)_n$ of abelian groups indexed by $\N$ such that each $M_n$ is a $A_n$-module and the transition maps $M_{n+1} \to M_n$ are $A_{n+1}$-linear. Denote by $\mathbf{Rlim}$ the right-derived functor of the functor $(M_n) \mapsto \varprojlim_n M_n$ taking values in $\mathbf{D}(A).$ Observe that a morphism of complexes in $\operatorname{mod}(\N,A)$ is a quasi-isomorphism if and only if this is the case on every level and hence the projection to the $n$-th degree of the projective system induces a functor $\mathbf{D}(\operatorname{mod}(\N,A)) \to \mathbf{D}(A_n).$ We denote the image of $C \in \mathbf{D}(\operatorname{mod}(\N,A))$ by $C_n.$ 
	\begin{lem}\label{lem:rlimabstract}
		Given $C = (C_n^\bullet)_n \in \mathbf{D}(\operatorname{mod}(\N,A_n))$ we have a canonical distinguished triangle 
		$$\mathbf{Rlim} C \to \prod_n C_n^\bullet \to \prod_n C_n^\bullet \to \mathbf{Rlim}C[1]$$ in $\mathbf{D}(A).$ The map in the middle is given by $(c_n)_n \mapsto (c_n-f_{n+1}(c_{n+1}))_n,$ where $f_{n+1}\colon C_{n+1}\to C_n$. Its long exact sequence splits into short exact sequences $$0 \to \mathbf{R}^1\varprojlim_n H^{i-1}(C^\bullet_n) \to H^i(\mathbf{Rlim}C) \to \varprojlim_nH^{i}(C^\bullet_n) \to 0.$$
	\end{lem}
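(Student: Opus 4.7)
The plan is to compute $\mathbf{Rlim}\,C$ by an explicit functorial two-term acyclic resolution of $\varprojlim$ and then read off both the distinguished triangle and the short exact sequences from the resulting complex.

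First I would recall the classical Roos exact sequence attached to a single inverse system $(M_n) \in \operatorname{mod}(\N,A)$:
$$0 \to \varprojlim_n M_n \to \prod_n M_n \xrightarrow{1-\sigma} \prod_n M_n \to \mathbf{R}^1\varprojlim_n M_n \to 0,$$
where $\sigma((m_n)_n) := (f_{n+1}(m_{n+1}))_n$. This shows that $\varprojlim$ has cohomological dimension at most one on $\operatorname{mod}(\N,A)$, and that the two-term complex $[\prod_n M_n \xrightarrow{1-\sigma} \prod_n M_n]$ concentrated in degrees $0,1$ is a functorial model for $\mathbf{Rlim}\,M$ when $M$ is viewed as a complex in degree $0$.

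Next I would pass to complexes. Given $C^\bullet$, I form the bicomplex $T(C)^{p,\bullet} := [\prod_n C_n^p \xrightarrow{1-\sigma} \prod_n C_n^p]$ (horizontal in degrees $0,1$, vertical differential inherited from $C^\bullet$) and take its total complex. Since $\prod_n$ is exact on abelian groups, the functor $C \mapsto \operatorname{Tot}\,T(C)$ is exact on complexes and descends to a triangulated functor $\mathbf{D}(\operatorname{mod}(\N,A)) \to \mathbf{D}(A)$. To identify it with $\mathbf{Rlim}$, one observes that for any $\varprojlim$-acyclic system $I$ placed in degree $0$ the Roos sequence exhibits $\operatorname{Tot}\,T(I) \simeq \varprojlim I$; since $\operatorname{mod}(\N,A)$ has enough $\varprojlim$-acyclics (e.g.\ systems of the form $(\prod_{k\geq n} M_k)_n$ with the obvious projections as transition maps), the identification follows from the universal property of the derived functor.

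Once this identification is in hand, the distinguished triangle is tautological: $\operatorname{Tot}\,T(C)$ is by construction the shifted cone of $1-\sigma\colon \prod_n C_n^\bullet \to \prod_n C_n^\bullet$, so that
$$\mathbf{Rlim}\,C \to \prod_n C_n^\bullet \xrightarrow{1-\sigma} \prod_n C_n^\bullet \to \mathbf{Rlim}\,C[1]$$
is the associated triangle in $\mathbf{D}(A)$. Taking its long exact sequence and using that $\prod_n$ commutes with $H^i$, so that $H^i(\prod_n C_n^\bullet) = \prod_n H^i(C_n^\bullet)$, the connecting maps are precisely $1-\sigma$ applied to the inverse system $(H^i(C_n^\bullet))_n$, and applying the Roos sequence to each of these systems splits the long exact sequence into the short exact sequences of the statement. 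I expect the only genuine point of care to lie in the step identifying $\operatorname{Tot}\,T(-)$ with $\mathbf{Rlim}$ on the full derived category; for bounded complexes (which is the case relevant to the intended application to $C_{\Psi,Z}(\Dfm_n(M))$) this can be done either by a Cartan--Eilenberg resolution by Roos-acyclics or by invoking K-injective replacements, but the general unbounded case requires slightly more bookkeeping.
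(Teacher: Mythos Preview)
Your proposal is correct and reconstructs the standard argument; the paper itself does not give a proof but simply cites the Stacks Project (Tags 0CQD and 0CQE), whose treatment is precisely the Roos-resolution / total-complex approach you outline. Your caveat about the unbounded case is appropriate but not needed here, since the paper only invokes the lemma for bounded complexes.
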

	\begin{proof}
		See \cite[\href{https://stacks.math.columbia.edu/tag/0CQD}{Tag 0CQD}]{stacks-project} together with \cite[\href{https://stacks.math.columbia.edu/tag/0CQE}{Tag 0CQE}]{stacks-project}.
	\end{proof}
	Now let $M$ be an $A$ module and consider $S(M):= (A_n \otimes_A M)_n.$ By taking the component wise base change of a morphism along $A \to A_n$ this defines a right-exact functor to $\operatorname{mod}(\mathbb{N},A)$ and we denote by $\mathbf{L}S:\mathbf{D}^-(A) \to \mathbf{D}^-(\operatorname{mod}(\mathbb{N},A))$ its left-derived functor. Here we remark that a complex in $\operatorname{mod}(\mathbb{N},A)$ is bounded above if for some $i_0$ the cohomology groups vanish in degree $i \leq i_0$ in every step of the projective system. By construction $(\mathbf{L}S C)_n\simeq A_n\otimes_A^{\mathbb{L}}C_n .$
	We have constructed functors $$\mathbf{D}^-(A)\substack{\xrightarrow{\mathbf{L}S}\\ \xleftarrow[]{\mathbf{Rlim}}}\mathbf{D}^-(\operatorname{mod}(\mathbb{N},A)).$$
	One can check that $\varprojlim$ and $S(-)$ are adjoint and hence by \cite[\href{https://stacks.math.columbia.edu/tag/0DVC}{Tag 0DVC}]{stacks-project} so are their derived functors (restricted to the respective $\mathbf{D}^-$). 
	Assume henceforth that each $A_n$ (but not necessarily $A$) is Noetherian. In this case it makes sense to speak of the full triangulated subcategory $\mathbf{D}_{ft}(A_n)$ of objects in  $\mathbf{D}(A_n)$ whose cohomology groups are $A_n$-finitely generated (cf. \cite[\href{https://stacks.math.columbia.edu/tag/06UQ}{Tag 06UQ}]{stacks-project}).
	\begin{defn}
		Let $C \in \mathbf{D}^-(\operatorname{mod}(\mathbb{N},A)).$ \begin{enumerate}
			\item We call $C$ \textbf{quasi-consistent} if $A_{n}  \otimes^\mathbb{L}_{A_{n+1}}C_{n+1}\to C_{n}$ is an isomorphism (in $\mathbf{D}({A_{n}})$) for every $n \in \N.$
			\item 	$C$ is called \textbf{consistent} if it is quasi-consistent and $C_n \in \mathbf{D}_{ft}(A_n).$
		\end{enumerate} 
		We denote by $\mathbf{D}^{-}_{con}(\operatorname{mod}(\mathbb{N},A))$ the full subcategory of $\mathbf{D}^{-}(\operatorname{mod}(\mathbb{N},A))$ of consistent objects.
		
	\end{defn}
	The following result is \cite[Corollary B.9]{berthelot1978notes}.
	\begin{rem}
		If $A$ is Noetherian and $I$-adically complete for some ideal $I,$ then $\mathbf{L}S$ and $\mathbf{Rlim}$ induce an equivalence of categories $$\mathbf{D}^{-}_{ft}(A) \cong \mathbf{D}^-_{con}(\operatorname{mod}(\mathbb{N},A)).$$
	\end{rem}
	We now specialise to the situation where $A$ is a Fréchet-Stein algebra.
	We denote by $\mathbf{D}^{-}_{\mathcal{C}}(A)$ the full triangulated subcategory of objects in the bounded above derived category whose cohomology groups are coadmissible $A$-modules. This makes sense by  \cite[\href{https://stacks.math.columbia.edu/tag/06UQ}{Tag 06UQ}]{stacks-project} because an extension of two coadmissible modules is again coadmissible and hence the category of coadmissible modules is a weak Serre-subcategory of the category of all $A$-modules.
	\begin{prop} \label{prop:equivalenceRlim}
		Let $A = \varprojlim A_n$ be a Fréchet-Stein algebra then the functor $S$ is exact and the adjoint pair $S \dashv \mathbf{Rlim}$ restricts to an equivalence of categories

		$$\mathbf{D}^-_{\mathcal{C}}(A) \cong \mathbf{D}^{-}_{con}(\operatorname{mod}(\mathbb{N},A))$$
	\end{prop}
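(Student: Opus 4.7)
The plan is to reduce everything to the foundational Schneider--Teitelbaum theory of coadmissible modules. Recall that the Fréchet--Stein hypothesis provides flat transition maps $A_{n+1}\to A_n$ and makes each $A\to A_n$ flat; moreover a coadmissible $M$ is reconstructed as $\varprojlim M_n$ with $M_n := A_n\otimes_A M$ finitely generated over $A_n,$ $A_n\otimes_{A_{n+1}} M_{n+1} \xrightarrow{\sim} M_n,$ and $\mathbf{R}^1\varprojlim M_n = 0.$ Because each $A\to A_n$ is flat, $S\colon M \mapsto (A_n\otimes_A M)_n$ is already exact at the level of modules, and hence descends unambiguously to a functor $S\colon \mathbf{D}^-(A)\to \mathbf{D}^-(\operatorname{mod}(\mathbb{N},A))$ which trivially agrees with $\mathbf{L}S.$ The adjunction $S\dashv \mathbf{Rlim}$ on bounded-above derived categories then follows formally from the module-level adjunction $S\dashv \varprojlim$ via \cite[\href{https://stacks.math.columbia.edu/tag/0DVC}{Tag 0DVC}]{stacks-project}.

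To see $S$ preserves the relevant subcategories, let $M^\bullet\in \mathbf{D}^-_{\mathcal{C}}(A).$ By exactness of $S,$ $H^i(S(M^\bullet)) = (A_n\otimes_A H^i(M^\bullet))_n$, which is consistent: each component is finitely generated by coadmissibility, and consistency at the cohomology level follows from $A_n\otimes_{A_{n+1}}(A_{n+1}\otimes_A H^i(M^\bullet))= A_n\otimes_A H^i(M^\bullet).$ Consistency at the chain level is automatic since $A_n\otimes^{\mathbb{L}}_{A_{n+1}} S(M^\bullet)_{n+1} = A_n\otimes_{A_{n+1}} S(M^\bullet)_{n+1} = S(M^\bullet)_n$ by flatness.

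For the converse, let $C\in \mathbf{D}^-_{con}(\operatorname{mod}(\mathbb{N},A)).$ Flatness of $A_n$ over $A_{n+1}$ turns the derived consistency isomorphism into $A_n\otimes_{A_{n+1}} H^i(C_{n+1})\simeq H^i(C_n),$ so $(H^i(C_n))_n$ is a coadmissible system in the sense of Schneider--Teitelbaum. Therefore $\varprojlim_n H^i(C_n)$ is coadmissible with vanishing $\mathbf{R}^1\varprojlim.$ The distinguished triangle of Lemma \ref{lem:rlimabstract} then yields
\begin{equation*}
H^i(\mathbf{Rlim}\, C) \cong \varprojlim_n H^i(C_n),
\end{equation*}
so $\mathbf{Rlim}\, C \in \mathbf{D}^-_{\mathcal{C}}(A)$ as required.

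It remains to check that the unit and counit of the adjunction are quasi-isomorphisms on the respective subcategories. For $M^\bullet\in \mathbf{D}^-_{\mathcal{C}}(A),$ combining the previous paragraph with coadmissibility gives $H^i(\mathbf{Rlim}(S(M^\bullet))) = \varprojlim_n(A_n\otimes_A H^i(M^\bullet)) = H^i(M^\bullet),$ so the unit is a quasi-iso. For the counit applied to consistent $C,$ compute levelwise: since $A_n$ is flat over $A,$ the $n$-th component of $\mathbf{L}S(\mathbf{Rlim}\, C)$ is $A_n\otimes_A \mathbf{Rlim}\, C,$ whose cohomology is $A_n\otimes_A \varprojlim_m H^i(C_m) = H^i(C_n)$ by coadmissibility of the system $(H^i(C_m))_m.$ I expect the principal obstacle to be the careful handling of the $\mathbf{R}^1\varprojlim$-vanishing for coadmissible systems (the density/Mittag-Leffler argument of Schneider--Teitelbaum), together with verifying that tensor with $A_n$ commutes with the relevant projective limits; once these are in place the argument is purely formal.
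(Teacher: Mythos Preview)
Your proof is correct and follows essentially the same route as the paper's: both arguments hinge on the Schneider--Teitelbaum inputs that $A\to A_n$ and $A_{n+1}\to A_n$ are flat, that $(H^i(C_n))_n$ is a coherent sheaf for consistent $C$ with $\mathbf{R}^1\varprojlim H^i(C_n)=0$, and that $A_n\otimes_A \varprojlim_m H^i(C_m)=H^i(C_n)$ for such systems. The only cosmetic difference is that the paper explicitly cites \cite[Theorem B and Corollary 3.1]{schneider2003algebras} for the two facts you flag in your final sentence as the ``principal obstacle'', whereas you leave them as known properties of coadmissible modules.
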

	\begin{proof}
		By \cite[Remark 3.2]{schneider2003algebras} $A \to A_n$ is flat and hence $S$ is exact. Flatness also implies that the functors are well-defined since for a complex $C$ of modules with coadmissible cohomology we have $A_n \otimes_A H^i(C) \cong H^i (A_n \otimes_A C)$ and the left-hand side is finitely generated by assumption. On the other hand if we have $(C_n)_n$ representing an object in $\mathbf{D}^-_{con}(\operatorname{mod}(\mathbb{N},A))$ then due to flatness of $A_n$ over $A_{n+1}$ and quasi-consistency the natural morphism $A_n \otimes_{A_{n+1}}H^i(C_n) \to H^i(C_{n+1})$ is an isomorphism for every $n.$ The assumption that $(C_n)_n$ is consistent asserts that $(H^i(C_n))_n$ is a coherent sheaf in the sense of Schneider and Teitelbaum and thus $\varprojlim_{n}(H^i(C_n))$ is a coadmissible $A$-module. 
		The key observation of the proof is that $\lim^1(H^i(C_n))=0$ for any quasi-consistent $C$ and any $i$ by \cite[Theorem B]{schneider2003algebras}. 
		This applied to the exact sequence in Lemma \ref{lem:rlimabstract} shows that $H^i\mathbf{Rlim}(C_n)$ is coadmissible and hence the functor is well-defined. The same observation allows us to conclude that the natural maps (obtained from the adjunction) $S(\mathbf{Rlim}C_n) \to (C_n)_n$ and $M \to \mathbf{Rlim}(S(M))$ are quasi-isomorphism. 
		We have $$H^i(S(\mathbf{Rlim}C_m)) = (A_n \otimes_A H^i\mathbf{Rlim}C_m)_{n} =(A_n \otimes_A \varprojlim_{m}H^i(C_m))_n=(H^i(C_n))_n$$ using flatness in the first, Lemma \ref{lem:rlimabstract} in the second and \cite[Corollary 3.1]{schneider2003algebras} in the last equation.
		For the second quasi-isomorphism we have $$H^i(\mathbf{Rlim}(S(C))) = \varprojlim_n H^i(A_n \otimes_AC) = \varprojlim_n A_n \otimes_A H^i(C) = H^i(C)$$ using similar arguments and coadmissibility in the last equation.
		
	\end{proof}
	\subsection{Comparison to Iwasawa cohomology.}
	Proposition \ref{prop:equivalenceRlim} gives us the correct framework to describe a comparison between Iwasawa cohomology and analytic cohomology of the Lubin--Tate deformation. 
	\begin{rem} \label{rem:coadmissiblerlim}
		The projective system $(C_{\Psi,Z}(\mathbf{Dfm}_n(M)))_n$ defines a consistent object in $\mathbf{D}(\operatorname{mod}(\N,D_n)).$ In particular the cohomology groups \linebreak
		 $H^i(\mathbf{Rlim}(C_{\Psi,Z}(\Dfm_n(M))))$ are coadmissible $D$-modules for every $i$ and $$H^i(\mathbf{Rlim}(C_{\Psi,Z}(\Dfm_n(M)))) \cong \varprojlim_n H^i_{\Psi,Z}(\Dfm_n(M)).$$
	\end{rem}
	\begin{proof}
		Consistency follows from Theorem \ref{thm:perfect} together with the fact that $D$ is a Fréchet-Stein algebra. The latter cohomology groups are coadmissible by Proposition \ref{prop:equivalenceRlim}. The isomorphism follows from Lemma \ref{lem:rlimabstract} using again Theorem \ref{thm:perfect}.
	\end{proof}

	\begin{lem}\label{lem:finitedimtrivialU}
		Let $V$ be a finite dimensional $K$-linear $U$-representation. Then for $W = D_n \otimes_K V$ we have $H^0(U,W)=0$ with respect to the $U$-action via $\gamma(1 \otimes m) = \delta_{\gamma^{-1}} \otimes \gamma m.$ 
	\end{lem}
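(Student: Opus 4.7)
The plan is to reduce the question to a point-by-point condition on $\operatorname{Sp}(D_n)$ and exploit the finite dimension of $V$. Fix a $K$-basis $v_1, \dots, v_d$ of $V$ and write an arbitrary element $w \in W$ uniquely as $w = \sum_{i=1}^d a_i \otimes v_i$ with $a_i \in D_n$; put $\mathbf{a} = (a_1, \dots, a_d)^T \in D_n^d$. Let $M(\gamma) \in \operatorname{GL}_d(K)$ denote the matrix of $\gamma$ in this basis. Unwinding the definition of the action, the condition $\gamma w = w$ reads $M(\gamma) \delta_{\gamma^{-1}} \mathbf{a} = \mathbf{a}$, or equivalently (setting $g = \gamma^{-1}$)
\begin{equation*}
\delta_g \, \mathbf{a} \;=\; M(g)\, \mathbf{a} \qquad \text{for every } g \in U.
\end{equation*}

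Next I would specialise at $\overline{K}$-points. By the Schneider--Teitelbaum isomorphism the $\overline{K}$-points of $\operatorname{Sp}(D_n)$ correspond to $L$-analytic characters $\chi_x\colon U \to \overline{K}^\times$ via $\delta_\gamma \mapsto \chi_x(\gamma)$. Evaluating the displayed equation at such an $x$ yields
\begin{equation*}
M(g)\, \mathbf{a}(x) \;=\; \chi_x(g)\, \mathbf{a}(x) \qquad \text{for every } g \in U,
\end{equation*}
so that $\mathbf{a}(x) \in V \otimes_K \overline{K}$ is a simultaneous $U$-eigenvector with eigencharacter $\chi_x$. Since eigenvectors for distinct characters of $U$ are linearly independent in a fixed finite-dimensional space, the set of characters $\chi$ for which $V \otimes_K \overline{K}$ admits a nonzero $\chi$-eigenvector is finite (of cardinality at most $\dim_K V$). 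Combined with the bijection $x \leftrightarrow \chi_x$, this forces $\mathbf{a}(x) = 0$ for all but finitely many $x \in \operatorname{Sp}(D_n)(\overline{K})$.

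To conclude, I would use that $D_n$, via the Fourier isomorphism, is a Tate algebra in one variable over $K$ (the power series converging on a closed disk of positive radius). Such an algebra is an integral domain and, by Weierstraß preparation, each of its nonzero elements has only finitely many zeros in the infinite set of $\overline{K}$-points of the disk. Since each coordinate $a_i$ of $\mathbf{a}$ vanishes at a cofinite set of $\overline{K}$-points of $\operatorname{Sp}(D_n)$, we must have $a_i = 0$, hence $w = 0$. There is no real obstacle here; the only point requiring some care is the clean bookkeeping translating the $U$-invariance of $w$ into the eigenvalue condition $\delta_g \mathbf{a} = M(g)\mathbf{a}$, after which the finite-dimensionality of $V$ together with the one-variable Tate structure of $D_n$ finishes things off.
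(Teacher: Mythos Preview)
Your proof is correct and takes a genuinely different route from the paper's. The paper also starts from the relation $\delta_g\mathbf a = M(g)\mathbf a$ (in your notation) but then argues purely inside $D_n$: this relation says $\delta_g a_j \in \operatorname{span}_K(a_1,\dots,a_d)$ for every $g\in U$, so the coefficients span a finite-dimensional subspace of $D_n$ stable under multiplication by all Dirac distributions; since the Diracs span a dense subalgebra of the infinite-dimensional integral domain $D_n$, any such subspace is zero. You instead specialise at $\overline K$-points, reinterpret $\mathbf a(x)$ as a $\chi_x$-eigenvector in $V\otimes_K\overline K$, bound the number of possible eigencharacters by $\dim_K V$ via linear independence of characters, and then invoke Weierstra{\ss} preparation on the one-variable affinoid algebra $D_n$ to kill each $a_i$. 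Both arguments ultimately lean on the one-variable structure of $D_n$; the paper's is a little quicker and does not need the explicit character dictionary for $\operatorname{Sp}(D_n)$, while yours is more geometric and makes the finite-dimensionality of $V$ do its work in a very transparent pointwise way.
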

	\begin{proof} Fix a basis $w_1,\dots w_d$ of $W$ let $w \in W^U$ and write $w = \sum_{i = 1}^d \lambda_i \otimes w_i.$ Let $g \in U$ and define $G \in M_{\dim_KV}(K)$ via $gw_j = \sum_{i}G_{ij}w_i.$
		We compute $$gw = \sum_{j=1}^d (g^{-1}\lambda_j \otimes gw_j) = \sum_{j=1}^d \sum_{i=1}^d G_{ij}(g^{-1}\lambda_j \otimes w_i).$$ Since we assumed that $w$ is fixed by $g$ and the decomposition with respect to the basis $w_i$ is unique we conclude $$\lambda_j  = \sum_{i=1}^dG_{ji}g^{-1}\lambda_i.$$ Multiplying both sides by $g$ we see $g \lambda_j \in \operatorname{span}_K(\lambda_i).$ Since this works for any choice of $g$ we conclude that the $\lambda_i$ span a finite-dimensional $U$-stable subspace of $D_n.$ This is only possible if $\lambda_1  = \dots =\lambda_d =0.$ 
	\end{proof}
	\begin{lem}\label{lem:ZinjectiveDfm}
		Let $m \in \N$ then for the diagonal action of $U$ we have $H^0(U,D_m\hat{\otimes}_K M^r)=0.$ In particular the kernel of $Z$ acting diagonally is trivial.
	\end{lem}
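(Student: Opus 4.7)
The strategy is to use Sen theory to embed $M^r$ into a countable inverse limit of finite-dimensional $K$-linear $U$-representations, reducing the problem to Lemma~\ref{lem:finitedimtrivialU} applied in each finite-dimensional slice.

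After enlarging $r$ to $r^{(n)}$ for some $n \in \N$ with $r^{(n)} \geq r$---which is harmless since $M^r \hookrightarrow M^{r^{(n)}}$ is a continuous injection of Fr\'echet spaces (as $M^r$ is finite projective over the Pr\"ufer domain $\cR_K^{[r,1)}$ and the base-change map is built from the injection $\cR_K^{[r,1)} \hookrightarrow \cR_K^{[r^{(n)},1)}$)---we may assume that Remark~\ref{rem:sentheoryembedding} applies and furnishes a $\Gamma_L$-equivariant continuous injection $M^r \hookrightarrow \mathbb{D}^+_{\mathrm{dif},n}(M)$. Setting $V_k := \mathbb{D}^+_{\mathrm{dif},n}(M)/(t_{\mathrm{LT}}^k)$, each $V_k$ is a finite-dimensional $K$-vector space with a continuous $\Gamma_L$-action, and $\mathbb{D}^+_{\mathrm{dif},n}(M) = \varprojlim_k V_k$ topologically.

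The Banach space $D_m$ is of countable type (under the Fourier isomorphism it is a Tate algebra in one variable with Schauder basis $\{T^i\}_{i \geq 0}$). Arguing as in the proof of Lemma~\ref{lem:onablebasechange}---identifying $D_m \cong c_0(K)$ as $K$-Banach spaces and using $c_0(K)\,\hat{\otimes}_K\,V \cong c_0(V)$ for Fr\'echet $V$---one sees that $D_m\,\hat{\otimes}_K\,(-)$ preserves continuous injections of Fr\'echet spaces and commutes with countable projective limits along dense transition maps. This yields a $U$-equivariant injection
$$D_m \,\hat{\otimes}_K\, M^r \;\hookrightarrow\; D_m \,\hat{\otimes}_K\, \mathbb{D}^+_{\mathrm{dif},n}(M) \;\cong\; \varprojlim_k\bigl(D_m\,\hat{\otimes}_K\,V_k\bigr).$$

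Given $w \in (D_m \,\hat{\otimes}_K\, M^r)^U$, its image in each $D_m\,\hat{\otimes}_K\,V_k$ is $U$-invariant for the diagonal action and therefore vanishes by Lemma~\ref{lem:finitedimtrivialU}; the image in the inverse limit thus vanishes, forcing $w=0$ by injectivity. For the ``in particular'' assertion, under the Fourier identification $D(U,K) \cong \cR_K^+$ with $Z \mapsto T$, each Dirac distribution $\delta_\gamma$ for $\gamma \in U$ expands as a power series in $Z$ with constant term $1$, so $\delta_\gamma - 1 \in Z\cdot D(U,K)$. Because the $L$-analytic $U$-action on $D_m\,\hat{\otimes}_K\, M^r$ extends by continuity to a $D(U,K)$-action, $Zw=0$ implies $(\delta_\gamma - 1)w = 0$, i.e.\ $\gamma w = w$ for every $\gamma \in U$; hence $\ker Z \subseteq H^0(U, D_m\,\hat{\otimes}_K\, M^r) = 0$. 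The main obstacle is the functional-analytic bookkeeping---verifying that $D_m\,\hat{\otimes}_K\,(-)$ preserves the Sen-theoretic injection and commutes with the relevant countable projective limit---which are mild adaptations of the $c_0$-techniques already employed in Lemma~\ref{lem:onablebasechange}.
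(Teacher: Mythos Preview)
Your proof is correct and follows the same overall strategy as the paper: embed $M^r$ via Sen theory into $\varprojlim_k V_k$ with each $V_k$ finite-dimensional, push the embedding through $D_m\hat{\otimes}_K(-)$, and apply Lemma~\ref{lem:finitedimtrivialU} at each finite level.

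The one genuine difference lies in the functional-analytic justification for the injection $D_m\hat{\otimes}_K M^r \hookrightarrow D_m\hat{\otimes}_K \mathbb{D}^+_{\mathrm{dif},n}(M)$. The paper explicitly flags that $\iota_n$ is not known to be strict, and therefore cannot invoke Lemma~\ref{lem:onablebasechange} directly; instead it rewrites $D_m\hat{\otimes}_K(-) = D_m^{(L)}\hat{\otimes}_L K\hat{\otimes}_K(-)$ and appeals to \cite[1.1.26]{emerton2017locally}, exploiting the spherical completeness of $L$. Your route sidesteps this entirely: because $D_m$ is of countable type you identify $D_m\hat{\otimes}_K V \cong c_0(V)$, and a continuous (not necessarily strict) injection $V\hookrightarrow W$ of Fr\'echet spaces visibly induces an injection $c_0(V)\hookrightarrow c_0(W)$. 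This is more elementary and avoids the detour through $L$. A second minor streamlining: you apply Lemma~\ref{lem:finitedimtrivialU} directly to each $V_k$, whereas the paper performs a d\'evissage along $0\to \mathbb{D}/t_{\mathrm{LT}}\to \mathbb{D}/t_{\mathrm{LT}}^k\to \mathbb{D}/t_{\mathrm{LT}}^{k-1}\to 0$ to reduce to $k=1$; since every $V_k$ is already finite-dimensional, your shortcut is valid.
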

	\begin{proof} It suffices to show that there are no non-trivial $U$-invariant elements.
		By Remark \ref{rem:sentheoryembedding} there exists $n \gg 0 $ such that $M^r$ can be embedded into $D_{\text{dif},n}^+(M^r)$ which is a projective finitely generated $(K \otimes_L L_n)\llbracket t_{\mathrm{LT}} \rrbracket$-module.
		We claim that we have an injection $$D_m\hat{\otimes}_KM^r \to \mathbb{D}:=D_m\hat{\otimes}_KD_{\text{dif},n}^+(M^r).$$ Since we do not know whether $\iota_n$ is strict we instead make use of \cite[1.1.26]{emerton2017locally} by rewriting $D_m \hat{\otimes}_K - = D_m^{(L)} \hat{\otimes}_L K\hat{\otimes}_K-$ with a suitable Banach algebra  $D_m^{(L)}$ over $L$ and using the associativity of projective tensor products from \cite[2.1.7 Proposition 7]{BGR} after reducing to the Banach case via \cite[Lemma 2.1.4]{Berger}.
		By applying again \cite[Lemma 2.1.4]{Berger} it suffices to show that $\mathbb{D}/t_{\mathrm{LT}}^k\mathbb{D}$ has no non-trivial $U$-invariants for each $k\geq0.$ Dévissage using the exact sequence $$0 \to \mathbb{D}/t_{\mathrm{LT}}\mathbb{D}\to\mathbb{D}/t^k_{\mathrm{LT}}\mathbb{D}\to\mathbb{D}/t^{k-1}_{\mathrm{LT}}\mathbb{D}\to0 ,$$ induction on $k$ and passing to the limit $\mathbb{D} = \varprojlim_k \mathbb{D}/t_{\mathrm{LT}}^k\mathbb{D}$ shows that it suffices to prove the statement for $\mathbb{D}/t_{\mathrm{LT}}\mathbb{D} = D_m \otimes_K (D_{\text{dif},n}^+(M^r)/t_{\mathrm{LT}}D_{\mathrm{dif},n}^+(M^r)),$  where we use Lemma \ref{lem:onablebasechange} and can omit the completion since the right-hand side is finite-dimensional Hausdorff. The statement now follows from Lemma \ref{lem:finitedimtrivialU}.
		
	\end{proof}
	\begin{lem}
		\label{lem:nakamuraseq}
		The natural map 
		\begin{align*}
			\mathfrak{I}:\Dfm(M^r) & \to M^r\\
			\lambda \otimes m &\mapsto \lambda m
		\end{align*} is surjective and its kernel is the image of $Z \in D_n$ (acting diagonally).
	\end{lem}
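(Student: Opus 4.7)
The first two assertions are direct. Surjectivity is immediate since $m=\mathfrak{I}(1\otimes m)$, and for the inclusion $Z_{\mathrm{diag}}\Dfm(M^r)\subseteq \ker\mathfrak{I}$, the commutativity of $D:=D(U,K)$ (since $U$ is abelian) yields for all $\gamma\in U$, $\lambda\in D$ and $m\in M^r$
$$\mathfrak{I}\bigl((\gamma-1)_{\mathrm{diag}}(\lambda\otimes m)\bigr) = (\delta_{\gamma^{-1}}\lambda)\cdot(\gamma m)-\lambda m = \lambda\cdot(\delta_{\gamma^{-1}}\gamma m)-\lambda m = 0.$$
By continuity this extends to $\mathfrak{I}(\nu_{\mathrm{diag}}\xi)=\epsilon(\nu)\mathfrak{I}(\xi)$ for every $\nu\in D$ and $\xi\in\Dfm(M^r)$, where $\epsilon\colon D\to K$ is the augmentation. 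Under the Fourier isomorphism $D\cong\cR_K^+$ sending $Z$ to $T$, $\epsilon$ corresponds to the evaluation $f\mapsto f(0)$, so $\epsilon(Z)=0$ and therefore $\mathfrak{I}\circ Z_{\mathrm{diag}}=0$.

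For the reverse inclusion my plan is to construct a continuous $K$-linear automorphism $\Phi$ of $\Dfm(M^r)$ that intertwines the diagonal $U$-action with the ``scalar'' action $\gamma\cdot(\mu\otimes m):=\delta_{\gamma^{-1}}\mu\otimes m$ on the first tensor factor. Concretely, on Dirac distributions $\Phi$ is given by
$$\Phi(\delta_\mu\otimes m) = \delta_\mu\otimes \mu^{-1}m,$$
extended to all of $\Dfm(M^r)$ via the Hopf-algebra coproduct $\Delta\colon D\to D\hat{\otimes}D$, $\delta_\mu\mapsto\delta_\mu\otimes\delta_\mu$, and antipode $S$ by the Sweedler-style formula $\Phi(\lambda\otimes m)=\sum \lambda_{(1)}\otimes S(\lambda_{(2)})m$. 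A direct computation on Diracs, using $U$-abelianness, verifies the intertwining property and the key identity $\mathfrak{I}\circ\Phi = \epsilon\hat{\otimes}\mathrm{id}_{M^r}$. The kernel of the right-hand side is $(\ker\epsilon)\hat{\otimes}M^r = (Z\cdot D)\hat{\otimes}M^r$, i.e., the image of the scalar action of $Z$, and is closed as the kernel of a continuous map. Transporting back via $\Phi$ then yields $\ker\mathfrak{I}=Z_{\mathrm{diag}}\Dfm(M^r)$, as claimed.

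The principal technical obstacle will be to verify that $\Phi$ extends to a continuous (and hence topological, its inverse being obtained from the same formula by $S\leftrightarrow S^{-1}=S$) automorphism of the completed tensor product $D\hat{\otimes}_K M^r$. On the algebraic tensor product the formula is unambiguous once one invokes the $D$-module structure on $M^r$ granted by $L$-analyticity. For the extension to the completion, continuity should follow from the continuity of the coproduct $\Delta$ (which is dual to the multiplication on $C^{la}(U,K)$) combined with the operator-norm estimates of Lemma \ref{lem:operatornormestimates} for the $D$-action on $M^r$. Alternatively one can argue level by level on the affinoid quotients $D_n$ via the type of explicit norm-estimate analysis carried out in the proof of Lemma \ref{lem:psimodt}, and then pass to the inverse limit to handle the full distribution algebra $D$.
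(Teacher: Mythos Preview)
Your handling of surjectivity and of the inclusion $Z_{\mathrm{diag}}\Dfm(M^r)\subseteq\ker\mathfrak{I}$ matches the paper. For the hard inclusion, however, your route is genuinely different: the paper proceeds by an explicit and rather lengthy computation, writing each element $\lambda\otimes m-1\otimes\lambda m$ (and then a general kernel element as a series of such) as $Z_{\mathrm{diag}}x$ and establishing norm bounds on $x$ term by term in a $\mathbb{Z}_p$-basis expansion of $\lambda$, in order to secure convergence of the series of preimages. Your Hopf-algebraic shearing $\Phi$ bypasses all of this: once $\Phi$ is known to be a topological automorphism intertwining the scalar and diagonal $U$-actions (more precisely $\Phi\circ\gamma_{\mathrm{scalar}}=\gamma_{\mathrm{diag}}\circ\Phi$; your formulation of the intertwining direction is a little loose), the identity $\mathfrak{I}\circ\Phi=\epsilon\hat{\otimes}\mathrm{id}$ together with the topological splitting $D=K\oplus ZD$ gives $\ker\mathfrak{I}=\Phi(Z_{\mathrm{scalar}}\Dfm(M^r))=Z_{\mathrm{diag}}\Dfm(M^r)$ in one stroke. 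This is cleaner and more conceptual than the paper's argument, and it makes transparent why the result holds.

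That said, your proposal is presently a sketch at the crucial point. The continuity of $\Phi$ is exactly where the content lies, and you do not carry it out. The natural route is to factor $\Phi$ as
\[
D\hat{\otimes}_K M^r \xrightarrow{\Delta\hat{\otimes}\mathrm{id}} D\hat{\otimes}_K D\hat{\otimes}_K M^r \xrightarrow{\mathrm{id}\hat{\otimes} S\hat{\otimes}\mathrm{id}} D\hat{\otimes}_K D\hat{\otimes}_K M^r \xrightarrow{\mathrm{id}\hat{\otimes}\alpha} D\hat{\otimes}_K M^r,
\]
where $\alpha\colon D\hat{\otimes}_K M^r\to M^r$ is the action map. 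For this you need: (a) continuity of the coproduct $\Delta$ on the Fr\'echet algebra $D$ (dual to pointwise multiplication on $C^{\mathrm{la}}(U,K)$), and (b) that the $D$-action on $M^r$ is \emph{jointly} continuous, so that it factors through the projective tensor product. Point (b) follows from separate continuity since both $D$ and $M^r$ are Fr\'echet, but you should say so and verify both separate continuities (the orbit maps are locally analytic by hypothesis; that each $\lambda\in D$ acts by a continuous operator requires a word). Your alternative suggestion of working level by level on $D_n$ is less promising: it is not clear that the coproduct $\Delta$ descends to the Banach quotients $D_n$, so the analogue of $\Phi$ on $D_n\hat{\otimes}_K M^r$ may not be available without essentially redoing the paper's explicit estimates.
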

	\begin{proof}
		Surjectivity is clear by definition. Observe that for any $y \in \Dfm(M^r)$ and any $\gamma \in U$ the element $(\gamma-1)y$ lies in the kernel of $\mathfrak{I}.$ Since $Z$ lies in the closure of the augmentation ideal in $D$ we conclude that $\operatorname{Im}(Z)\subseteq \ker(\mathfrak{I}).$ In order to show $\ker(\mathfrak{I}) \subseteq \operatorname{Im}(Z)$ we will reduce to the case of elementary tensors via a series of technical arguments. We will show that an element of the form $\lambda \otimes m -1\otimes \lambda m$ belongs to the image of $Z$ and its preimage can be bounded for each norm defining the Fréchet topology of $\Dfm(M^r)$ which, in particular, implies strictness with respect to the Fréchet topology.   Note that the map $\mathfrak{I}$ admits a section $\mathfrak{S}:m \mapsto 1 \otimes m$ and any element $y$ of the kernel can be written as $z-\mathfrak{S}\mathfrak{I}(z)$ for some $z.$ As an intermediate step we consider an element of the form $\lambda \otimes m \in \ker(\mathfrak{I})$ with $\lambda \in D.$ Fix a $\cR^{[r,s]}$-module norm on $M^{[r,s]}$ and consider the tensor product norm induced by the norm on $D_m$ and said norm on $D_m \hat{\otimes}_KM^{[r,s]}$ for $m \in \N_0$ We will show that there exists a constant $C$ depending only on $m$ and $\norm{-}_{M^{[r,s]}}$ such that $\lambda \otimes m -1 \otimes \lambda m= Z_{diag}x$ and $\norm{x}\leq C\norm{\tilde{\lambda}\otimes m}_{D_{m,\Q_p}\hat{\otimes}_KM^{[r,s]}},$ where $\tilde{\lambda}$ is any lift of $\lambda$ in $D_{\Q_p,m}(U,K).$ Since $Z$ is injective by Lemma \ref{lem:ZinjectiveDfm} we obtain that $x$ is uniquely determined and hence satisfies this bound with respect to the quotient norm on $D_m.$ Let $\varepsilon = \sup_{\gamma \in U}\norm{\gamma-1}_{D_m}.$  Choose $n\gg0$ such that $\norm{\gamma-1}_{M^{[r,s]}}<\varepsilon$ for $\gamma \in \Gamma_n.$ 
		Before treating the general case assume that $\lambda$ belongs to $D(\Gamma_n,K).$ Fix a $\Z_p$-Basis $\gamma_i$ of $\Gamma_n$ and set $\mathbf{b}:= (\delta_{\gamma_i}-1)_i.$ By taking a preimage in $D_{\Q_p}(\Gamma_n,K)$ we can express $\lambda$ as a convergent series $$\lambda = \sum_{\mathbf{k} \in \N_0^d}a_k\mathbf{b}^\mathbf{k}$$
		We compute
		\begin{align*}\lambda \otimes m -1 \otimes \lambda m &=\sum_{\mathbf{k} \in \N_0^d}a_k(\mathbf{b}^\mathbf{k} \otimes m -1 \otimes \mathbf{b}^\mathbf{k}m)
		\end{align*}
		The terms in degree $\mathbf{k}=0$ cancel out and we shall estimate $\mathbf{b}^\mathbf{k} \otimes m -1 \otimes \mathbf{b}^\mathbf{k}m.$ Without loss of generality we can assume $\mathbf{k} \neq 0$ i.e.  at least one term $\delta_{\gamma_i}-1$ appears in $\mathbf{b}^{\mathbf{k}}.$ We will show that each summand is in the image of the diagonal $Z$-map and estimate the norm of its $Z$-preimage. We first explain a dévissage procedure to arrive at a situation where we estimate terms of the form 
		\begin{align}\label{eq:simpleform}
			(\gamma-1)a \otimes b- a \otimes (\gamma-1)b &= \gamma a\otimes b - a \otimes \gamma b\\
			& = (\gamma^{-1}-1)(a \otimes \gamma b)\\
			& = ZG(Z)(a \otimes \gamma b)
		\end{align}
		Where $\gamma \in \{\gamma_1,\dots\gamma_d\}$ and the assumption on the operator norm of $\gamma-1$ acting on $M^{[r,s]}$ asserts that $\norm{\gamma b}_{[r,s]} = \norm{b}_{[r,s]}$ and hence $$\norm{G(Z)(a \otimes \gamma b)}\leq C(\gamma) \norm{(\gamma-1)a \otimes b}$$ with $C(\gamma)$ depending on $\gamma$ and $[r,s].$
		Without loss of generality assume $\mathbf{k} = (k_1,\dots,k_d)$ with $k_1 \neq 0$ and let $\mathbf{k'} = (k_1-1,\dots,k_d).$ We rewrite
		\begin{align}\mathbf{b}^{\mathbf{k} } \otimes m - 1 \otimes \mathbf{b}^{\mathbf{k} }m
			&= (\gamma_1-1)\mathbf{b}^{\mathbf{k'} }\otimes m - 1 \otimes (\gamma_1-1)\mathbf{b}^{\mathbf{k'} }m\nonumber \\
			&= (\gamma_1-1)\mathbf{b}^{\mathbf{k'} }\otimes m- \mathbf{b}^{\mathbf{k'} }\otimes (\gamma_1-1)m \nonumber \\
			&+\mathbf{b}^{\mathbf{k'} }\otimes (\gamma_1-1)m - 1 \otimes(\gamma_1-1)\mathbf{b}^{\mathbf{k'} }m
		\end{align} We see that $ (\gamma_1-1)\mathbf{b}^{\mathbf{k'} }\otimes m- \mathbf{b}^{\mathbf{k'} }\otimes (\gamma_1-1)m$ is an expression of the form \eqref{eq:simpleform}. While the remainder i.e. $\mathbf{b}^{\mathbf{k'} }\otimes (\gamma_1-1)m - 1 \otimes(\gamma_1-1)\mathbf{b}^{\mathbf{k'} }m$ can be rewritten as $$\mathbf{b}^{\mathbf{k'} }\otimes m' - 1 \otimes\mathbf{b}^{\mathbf{k'} }m',$$ where $m' = (\gamma_i-1)m$ and by the assumption on the operator norm we have $$\norm{\mathbf{b}^{\mathbf{k'} }\otimes m'}\leq \norm{\mathbf{b}^{\mathbf{k} }\otimes m}.$$ The remainder vanishes as soon as $\mathbf{k'}=0$ and if $\mathbf{k'} \neq 0$ we may again find an index, that is not zero and apply the same procedure to, in the end, express $\mathbf{b}^\mathbf{k}\otimes m-1 \otimes \mathbf{b}^\mathbf{k} m$ as a finite sum of elements of the form from \eqref{eq:simpleform} more explicitly we can group them as $$\mathbf{b}^\mathbf{k}\otimes m-1 \otimes \mathbf{b}^\mathbf{k} m = \sum_{i=1}^d \sum_{j=1}^{e_i}(\gamma_i-1)a_{ij} \otimes b_{ij} -a_{ij} \otimes(\gamma_i-1)b_{ij} ,$$
		Where the elements $a_{ij},b_{ij}$ are not canonical and depend on the order in which we reduce the components of $\mathbf{k}$ in the inductive procedure. Nonetheless our construction asserts that each $(\gamma_i-1)a_{ij} \otimes b_{ij}$ is bounded above by $\mathbf{b}^k\otimes m.$ Using \eqref{eq:simpleform} we can write $$\mathbf{b}^\mathbf{k}\otimes m-1 \otimes \mathbf{b}^\mathbf{k} m = Zx_{\mathbf{k}},$$ where $$ \norm{x_{\mathbf{k}}}_{D_n \hat{\otimes} M^{[r,s]}} \leq C\norm{\mathbf{b}^\mathbf{k} \otimes m}_{D_n(\Q_p) \hat{\otimes} M^{[r,s]}}$$ with a suitable constant $C$ depending only on $[r,s]$. By Lemma \ref{lem:ZinjectiveDfm} $Z$ is injective and hence the element $x_k$ is uniquely determined. Note that a priori \eqref{eq:simpleform} produces constants for each $\gamma_i$ but we can take the supremum over all those constants. In particular $$x:= \sum_{\mathbf{k} \in \N_0^d} a_k x_{\mathbf{k}}$$ converges to an element in $D_n \hat{\otimes}_KM^r$ satisfying $Zx =  \lambda \otimes m - 1 \otimes \lambda m$ and $\norm{x}_{D_n \hat{\otimes} M^{[r,s]}} \leq C\norm{\lambda\otimes m}_{D_n(\Q_p) \hat{\otimes} M^{[r,s]}}$ and because this estimate holds for any preimage of $\lambda$ and $x$ is uniquely determined by the injectivity of $Z$ we also obtain $\norm{x}_{D_n \hat{\otimes} M^{[r,s]}} \leq C\norm{\lambda\otimes m}_{D_n \hat{\otimes} M^{[r,s]}}.$ Now assume $\lambda \in D(U,K)$ and
		decompose $\lambda= \sum_{g \in U/\Gamma_n} g\lambda_g$ with $\lambda_g \in D(\Gamma_n,K)$ we obtain 
		\begin{align}
			\label{eq:reductiontoknowncase}
			g\lambda_g \otimes m-1 \otimes g\lambda_gm 
			&= g\lambda_g\otimes m - \lambda_g \otimes gm + \lambda_g \otimes gm - 1 \otimes \lambda_g gm \nonumber\\
			& = (g^{-1}-1)_{diag}(\lambda_g \otimes gm ) + \lambda_g \otimes gm - 1 \otimes \lambda_g gm 
		\end{align}
		The case $\lambda_g \otimes gm - 1 \otimes \lambda_g gm$ has been treated above and 
		recall that $g^{-1}-1$ is divisible by $Z$ in $D$ (and hence also in every $D_m$). Combining the case treated above with an argument similar to the one after $\eqref{eq:simpleform}$ we conclude that there exists a unique element $x$ such that $\lambda \otimes m-1 \otimes \lambda m = Zx$ and a constant $C$ depending on $[r,s]$ such that $\norm{x}_{D_n \hat{\otimes}M^{[r,s]}}\leq C \norm{\lambda \otimes m}_{D_n \hat{\otimes}M^{[r,s]}}.$
		If we pass from $M^{[r,s]}$ to $M^{[r,s']}$ the $n$ that we chose before might no longer satisfy the desired bound on the operator norm and we might be required to pass to a subgroup satisfying the corresponding bound.
		Similarly if we pass from $D_m$ to $D_{m+1}$ we have $\norm{\gamma-1}_{D_{m+1}}\leq\norm{\gamma-1}_{D_{m}}$ and given $[r,s]$ we might need to pass to a smaller subgroup $\Gamma_{{\tilde{n}}}\subset U$ in order to achieve the estimate $\norm{\gamma-1}_{M^{[r,s]}}<\norm{\gamma-1}_{D_{m+1}}$ that we used in the preceding computations. 
		In both those cases a consideration analogous to \eqref{eq:reductiontoknowncase} leads to the existence of a constant $C(m,[r,s'])$ such that $$ \norm{x_{\mathbf{k}}}_{D_m \hat{\otimes} M^{[r,s']}} \leq C(m,[r,s'])\norm{\mathbf{b}^\mathbf{k} \otimes m}_{D_m(\Q_p) \hat{\otimes} M^{[r,s']}}$$ implying the convergence of $x$ with respect to the Fréchet topology and the estimate
		\begin{equation}\label{eq:frechetbound}\norm{x}_{D_m \hat{\otimes}M^{[r,s]}} \leq C(m,[r,s'])\norm{\lambda\otimes m}_{D_m \hat{\otimes}M^{[r,s]}}.
		\end{equation}
		Now consider a general element $y$ of the kernel of $\mathfrak{I}$ and write it as a convergent series $$y=\sum_{i=0}^\infty \lambda_i \otimes m_i.$$ Since it belongs to the kernel we have $$y = y-\mathfrak{S}(\mathfrak{I}(y)) = \sum_{i=0}^\infty (\lambda_i \otimes m_i-1 \otimes \lambda_im_i)$$ and the preceding discussion shows that each $\lambda_i\otimes m_i-1\otimes \lambda_im_i$ belongs to the image of $Z_{diag}$ and can be written as $$\lambda_i\otimes m_i-1\otimes \lambda_im_i = Zx_i.$$ Clearly if $x=\sum_{i=0}^\infty x_i$ converges then it satisfies $Zx =y.$  The convergence of $x$ with respect to the Fréchet topology defined by the tensor product norms on $D_m \hat{\otimes}_KM^{[r,s]}$ follows from the convergence of the series defining $y$ and the estimates \eqref{eq:frechetbound}.
	\end{proof}
	\begin{defn}
		Let $F$ be a topological $D$-module whose underlying $K$-vector space is Fréchet.
		We define $$D_n \hat{\otimes}_D F$$ as the completion of $D_n \otimes_DF$ with respect to the quotient topology of $D_n \otimes_{K}F.$
		For a $D$-module whose underlying $K$-vector space is an LF-space $E =\varinjlim E_n$ we define $$ D_n \tilde{\otimes}_D E := \varinjlim_r D_n \hat{\otimes}_D E^r.$$
	\end{defn}
	We do not know whether $D_n \tilde{\otimes}_D M$ is complete for a $(\varphi_L,\Gamma_L)$-module $M.$ Even if we knew that it was complete, we would run into subtleties concerning commutation of completion and cohomology since these spaces are in general not metrizable. 
	\begin{lem}
		We have for each $r \in [r_0,1)$ a strict exact sequence of $D$-modules 
		\begin{equation}
			\label{eq:seq1}
			0 \to D\hat{\otimes}_KM^r \xrightarrow{Z}D\hat{\otimes}_KM^r \xrightarrow{\mu} M^r\to 0,
		\end{equation}
		where the $D \hat{\otimes}_KM^r$ is viewed as $D$-module via the left tensor component and $\mu$ is given by $\mu(\lambda \otimes m)=\lambda m.$ Which induces for every $m$ compatible exact sequences 
		
		\begin{equation}
			\label{eq:seq3}
			0 \to D_m\otimes_D( D\hat{\otimes}_KM^r) \xrightarrow{\id \otimes Z}D_m\otimes_D( D\hat{\otimes}_KM^r) \xrightarrow{\id \otimes \mu} D_m \otimes_D M^r\to 0,
		\end{equation}
		\begin{equation}
			\label{eq:seq4}
			0 \to D_m\hat{\otimes}_KM^r \xrightarrow{Z}D_m\hat{\otimes}_KM^r \xrightarrow{ \mu} D_m \hat{\otimes}_D M^r\to 0
		\end{equation}
		and \begin{equation} \label{eq:seq5} 0 \to D_m\hat{\otimes}_KM \xrightarrow{ Z}D_m\hat{\otimes}_KM \xrightarrow{ \mu} D_m \tilde{\otimes}_D M\to 0.
		\end{equation}
		The sequence \ref{eq:seq4} is strict for every $m.$ 
	\end{lem}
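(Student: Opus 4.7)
The algebraic exactness of \eqref{eq:seq1} is already available. Surjectivity of $\mu$ and the identification $\ker \mu = \operatorname{Im}(Z)$ are exactly the content of Lemma \ref{lem:nakamuraseq}. For injectivity of $Z$ on $D \hat{\otimes}_K M^r = \varprojlim_m (D_m \hat{\otimes}_K M^r)$ — where the identification uses \cite[Lemma 2.1.4]{Berger} together with $D = \varprojlim_m D_m$ — I would apply Lemma \ref{lem:ZinjectiveDfm} level by level and pass to the limit. Strictness then follows from general topology: both middle and right-hand terms are Fréchet spaces, so the continuous surjection $\mu$ is open by Lemma \ref{lem:strictcheck}(1); and the continuous injection $Z$ onto its closed image $\ker \mu$ is automatically a topological isomorphism by a second application of the open mapping theorem.

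The variant \eqref{eq:seq4} is proved by the same strategy with $D$ replaced by $D_m$. Injectivity is Lemma \ref{lem:ZinjectiveDfm} directly, surjectivity of $\mu$ is the construction of $D_m \hat{\otimes}_D M^r$ as a (completed) quotient, and the inclusion $\ker \mu \subseteq \operatorname{Im}(Z)$ follows from the norm estimate \eqref{eq:frechetbound} in the proof of Lemma \ref{lem:nakamuraseq}, which was already formulated at the level of $D_m$ rather than $D$. Strictness follows from the open mapping theorem exactly as in the previous paragraph, using that both $D_m \hat{\otimes}_K M^r$ and the completion $D_m \hat{\otimes}_D M^r$ are Fréchet.

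The sequence \eqref{eq:seq3} is then the image of \eqref{eq:seq1} under the purely algebraic functor $D_m \otimes_D (-)$, which preserves exactness because $D \to D_m$ is flat by \cite[Remark 3.2]{schneider2003algebras}. Finally, \eqref{eq:seq5} will be obtained by taking the filtered colimit over $r \in [r_0,1)$ of \eqref{eq:seq4}; filtered colimits are exact on the algebraic level, and the outer identifications hold either by definition of $D_m \tilde{\otimes}_D M$ or, for $D_m \hat{\otimes}_K M$, by the compatibility of the completed projective tensor product with the inductive limit defining the LF-space $M$.

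The only subtle points are the injectivity of $Z$ on the Fréchet space $D \hat{\otimes}_K M^r$ and the bicontinuity of $Z$ onto its image; both rest on the explicit norm estimates inside Lemma \ref{lem:nakamuraseq}. Once these are in hand, the rest of the proof reduces to formal manipulations with flat base change, the open mapping theorem, and commutations between completed tensor products and (co)limits.
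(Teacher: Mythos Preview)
Your approach is essentially the same as the paper's, and the treatment of \eqref{eq:seq1}, \eqref{eq:seq3}, and \eqref{eq:seq5} matches the paper's reasoning almost verbatim. The one place where you elide a nontrivial step is in your direct argument for \eqref{eq:seq4}.

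You write that ``the inclusion $\ker \mu \subseteq \operatorname{Im}(Z)$ follows from the norm estimate \eqref{eq:frechetbound}, which was already formulated at the level of $D_m$.'' The norms in \eqref{eq:frechetbound} are indeed $D_m$-norms, but the elements treated there are of the form $\lambda \otimes m - 1 \otimes \lambda m$ with $\lambda \in D$, not $\lambda \in D_m$. A general element of $\ker\mu \subset D_m \hat\otimes_K M^r$ is only a \emph{limit} of such elements, and you must still argue (i) that $\operatorname{Im}(Z)$ is closed in $D_m \hat\otimes_K M^r$, and (ii) that $\ker\mu$ equals the closure of the algebraic kernel of $D_m\otimes_K M^r \to D_m\otimes_D M^r$. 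For (i) the estimate shows $Z$ is strict on the dense subspace $D\hat\otimes_K M^r$, and one then needs Lemma~\ref{lem:strictdensesubset} to conclude strictness on the completion; for (ii) one needs Lemma~\ref{lem:strictcompletion}, which in turn requires metrizability of the intermediate (Hausdorff) quotient. The paper makes exactly this detour: it observes that the terms of \eqref{eq:seq3} with the quotient topology need not be Hausdorff, passes to maximal Hausdorff quotients, and then applies Lemmas~\ref{lem:strictdensesubset} and~\ref{lem:strictcompletion} to obtain \eqref{eq:seq4}. Your sketch is not wrong, but these two lemmas are doing real work that you have absorbed into the phrase ``follows from the norm estimate.'' Once this is made explicit, your proof and the paper's coincide.
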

	\begin{proof} One first checks that all maps are $D$-linear.
		The exactness of the first sequence was proved in Lemma \ref{lem:nakamuraseq}. On the one hand the operator $\mu$ is a continuous surjection of Fréchet spaces and hence strict. On the other hand, since the image of $Z$ is the kernel of a continuous map of Fréchet spaces, it is closed and hence itself a Fréchet space. By the same argument $Z$ is a continuous surjection onto $\operatorname{Im}(Z)$ and hence a homeomorphism on its image. The exactness of the second sequence is clear because $D_n$ is flat over $D.$
		The modules appearing in \eqref{eq:seq3} endowed with the quotient topology from a surjection from $D_m \otimes_{K}(D\hat{\otimes}_KM^r)$ are not necessarily Hausdorff (hence in particular not necessarily metrizable). Nonetheless the Hausdorff completion of $D_m \otimes_D(D\hat{\otimes}_KM^r)$ can naturally be identified with $D_m\hat{\otimes}_KM^r$ and we can argue using the maximal Hausdorff quotients \footnote{For a topological group $G$ the maximal Hausdorff quotient is defined as $G/\overline{\{1_G\}}.$} as follows.
		The Hausdorff quotient $\mathcal{X}$ of $D_m\otimes_D (D \otimes M^r)$ can be embedded into $D_m \hat{\otimes}_KM^r$ and the map induced by $\id \otimes Z$ is continuous and strict on the dense subset $D \hat{\otimes}_K M^r$ by the preceding \eqref{eq:seq1} and hence strict by Lemma \ref{lem:strictdensesubset}. As a quotient of a metrizable space by a closed space $\mathcal{X}$ is again metrizable and we conclude using Lemma \ref{lem:strictcompletion} that $$D_m\hat{\otimes}_KM^r \xrightarrow{ Z}D_m\hat{\otimes}_KM^r$$ is strict, injective and its cokernel is the Hausdorff completion of $D_m \otimes_DM^r$ since a strict map of Fréchet spaces has closed image. This gives the desired \eqref{eq:seq4}.
		Finally passing to direct limits produces the sequence \ref{eq:seq5} using \cite[Remark 1.1.13]{RustamFiniteness} to see that $\varinjlim_r D_m \hat{\otimes}_KM^r$ is complete. 
		
		\begin{defn} We define the complex $$C_\Psi(M): M \xrightarrow{\Psi-1} M$$ concentrated in degrees $1,2$ and we call its cohomology groups the \textbf{Iwasawa \linebreak cohomology of $M.$ } Analogously we define $C_{c\Psi}(M)$ and $C_{c\Psi }(M^r)$ for $r \in [0,1),$ $c \in K^{\times}.$ 
		\end{defn}

	\end{proof}
	\begin{lem} 	\label{lem:DDnquasiiso}
		Assume $C_{\Psi}(M)$ has coadmissible cohomology groups. Then
		
		the natural map $$D_n \otimes_D M \to D_n \tilde{\otimes}_DM$$ induces a quasi-isomorphism $$D_n \otimes_D C_{\Psi}(M)   = C_{\Psi}(D_n \otimes_D M)  \to C_{\Psi} (D_n \tilde{\otimes}_DM)$$
	\end{lem}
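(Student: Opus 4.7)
My plan is to reduce the claim to a comparison at the level of the intermediate modules $D\hat{\otimes}_K M$ (with scalar $D$-action) and $D_n\hat{\otimes}_K M$, via the strict exact sequences \eqref{eq:seq3} and \eqref{eq:seq5} of the preceding lemma, and then to exploit coadmissibility together with the five-lemma.

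First I would apply $C_\Psi(-)$ to the strict exact sequence \eqref{eq:seq5},
$$0 \to \mathbf{Dfm}_n(M) \xrightarrow{Z_{\mathrm{diag}}} \mathbf{Dfm}_n(M) \xrightarrow{\mu} D_n \tilde{\otimes}_D M \to 0,$$
noting that $\Psi-1$ acts only on the $M$-factor and therefore commutes with $Z_{\mathrm{diag}}$. This yields a short exact sequence of complexes and a long exact cohomology sequence expressing $H^i_\Psi(D_n\tilde{\otimes}_D M)$ in terms of $H^i_\Psi(\mathbf{Dfm}_n(M))$ and the connecting map $Z_{\mathrm{diag}}$. Performing the analogous step on the filtered colimit (over $r$) of \eqref{eq:seq3}, which remains exact by flatness of $D_n$ over $D$ (Schneider--Teitelbaum, cf.\ Proposition \ref{prop:equivalenceRlim}), produces a parallel long exact sequence identifying $C_\Psi(D_n\otimes_D M)$ via $D_n\otimes_D(D\hat{\otimes}_K M)$.

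Second, the canonical completion morphism
$$D_n \otimes_D (D \hat{\otimes}_K M) \longrightarrow D_n \hat{\otimes}_K M = \mathbf{Dfm}_n(M)$$
is compatible with both $Z_{\mathrm{diag}}$ and $\Psi$ and hence induces a morphism between the two long exact sequences. A degree-wise application of the five-lemma reduces the proof to showing that this completion map is a quasi-isomorphism on $C_\Psi(-)$.

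For this final step the coadmissibility hypothesis enters: flatness of $D_n$ over $D$ gives $H^i(C_\Psi(D_n\otimes_D M)) = D_n\otimes_D H^i(C_\Psi(M))$, which by assumption is a finitely generated (hence Banach) $D_n$-module. Combined with the strictness tools of Section 1.2 --- most notably Lemma \ref{lem:onablebasechange} and Lemma \ref{lem:strictdensesubset} applied to the dense inclusion $D_n\otimes_D(D\hat{\otimes}_K M)\hookrightarrow D_n\hat{\otimes}_K M$ --- this forces the topological completion to become invisible on passage to $\Psi$-cohomology. The main obstacle is precisely this last reduction: without coadmissibility the cokernel of $\Psi-1$ on $M$ need not even be Hausdorff and the projective tensor-product completion could genuinely alter it; with coadmissibility the output is automatically a Banach $D_n$-module and the extra topological content vanishes, but making this precise requires the careful strictness and open-mapping bookkeeping developed in Section 1.2.
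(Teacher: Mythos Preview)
Your five-lemma setup via the sequences \eqref{eq:seq3} and \eqref{eq:seq5} is formally correct, but it reduces the lemma to a statement that is \emph{not easier}: you now need the completion map
\[
D_n \otimes_D (D \hat{\otimes}_K M)\;\longrightarrow\;D_n \hat{\otimes}_K M=\mathbf{Dfm}_n(M)
\]
to induce a quasi-isomorphism on $C_\Psi$. In your final paragraph you pivot to discussing $H^i(C_\Psi(D_n\otimes_D M))=D_n\otimes_D H^i(C_\Psi(M))$ and invoke coadmissibility --- but that is the \emph{original} cohomology, not the cohomology of the objects appearing in your reduced statement. The modules $D_n \otimes_D (D \hat{\otimes}_K M)$ and $D_n \hat{\otimes}_K M$ are of infinite rank over $D_n$, their $\Psi$-cohomology is not governed by the coadmissibility of $C_\Psi(M)$ in any evident way, and Lemma~\ref{lem:onablebasechange} only applies with a Banach space in the role of $F$, so it does not help you compute $H^i_\Psi(D\hat{\otimes}_K M^r)$. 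In short, the detour through the deformation does not bring you closer to the conclusion; establishing the reduced statement would essentially force you to redo the direct argument at the $\mathbf{Dfm}$-level, which is harder, not easier.

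The paper's proof avoids this detour entirely. It passes to a model $M^r$ (cohomology commutes with the filtered colimit over $r$), observes that $D_n \hat{\otimes}_D M^r$ is by definition the Hausdorff completion of $D_n \otimes_D M^r$, and applies Lemma~\ref{lem:strictcompletion} together with strictness of $\Psi-1$ to identify $H^i_\Psi(D_n \hat{\otimes}_D M^r)$ with the completion of $H^i_\Psi(D_n \otimes_D M^r)=D_n \otimes_D H^i_\Psi(M^r)$ (flatness). It then remains only to check that these groups are already complete: for the kernel, $(M^r)^{\Psi=1}$ sits inside the coadmissible $M^{\Psi=1}$, so $D_n \otimes_D (M^r)^{\Psi=1}$ is a submodule of a finite $D_n$-module and hence itself finite and complete because $D_n$ is Noetherian; for the cokernel, the proof of Theorem~\ref{thm:finitepsi} gives that $M^r/(\Psi-1)$ is finite-dimensional over $K$ and Hausdorff for all sufficiently large $r$, so $D_n\otimes_D\!\big(M^r/(\Psi-1)\big)$ is already complete. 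No appeal to the $\mathbf{Dfm}$-resolution is needed.
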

	\begin{proof}
		Taking cohomology commutes with colimits and thus it suffices to show that $$D_n \otimes_D M^r \to D_n \hat{\otimes}_DM^r$$ is a quasi-isomorphism for sufficiently large $r.$ The groups $D_n\hat{\otimes}_DM^r$ are metrizable and by Lemma \ref{lem:strictcompletion} and strictness of $\Psi-1$ taking kernels and cokernels commutes with completion i.e. $H^i_{\Psi}(D_n \otimes_DM^r)\cong H^i_{\Psi}(D_n \hat{\otimes}_DM^r)$ . By assumption the cohomology groups are coadmissible and thus $D_n\otimes_DH^i_{\Psi}(M)$ is $D_n$-finite, complete and any submodule is itself $D_n$ finite and complete because $D_n$ is Noetherian. This implies $D_n \otimes_DH^0_{\Psi}(M^r)\cong D_n \hat{\otimes}_DH^0_{\Psi}(M^r).$ Regarding $H^1$ the proof of Theorem \ref{thm:finitepsi} shows that for all sufficiently large $r$ we have that $M^r/(\Psi-1)$ is $K$-finite therefore $D$-finitely generated and is in addition Hausdorff by the strictness of $\Psi-1.$ We conclude $D_n \otimes_DH^1_{\Psi}(M^r)\cong D_n \hat{\otimes}_DH^1_{\Psi}(M^r)$ (for $r\gg 0$ as in the proof of Theorem \ref{thm:finitepsi}). 
	\end{proof}
	\begin{thm}[Comparison between Herr- and Iwasawa-cohomology]
		\label{thm:iwasawadfm}
		Consider the complexes
		$$(C_{\Psi,Z}(\Dfm_n(M)))_{n \in \N}$$
		and $$(C_{\Psi}(D_n \tilde{\otimes}_DM))_{n \in \N}$$
		in $\operatorname{mod}(\N,D).$ There is a canonical compatible family of morphisms
		$$\operatorname{Comp}_{\mathrm{Iw}}(C_{\Psi,Z}(\Dfm_n(M)))_{n \in \N} \to (C_{\Psi}(D_n \tilde{\otimes}_DM))_{n \in \N}$$ induced by the exact sequences \eqref{eq:seq3}. If the cohomology groups of $C_{\Psi}(M)$ are coadmissible as  $D$-modules we further obtain canonical compatible quasi-isomorphisms $$C_{\Psi,Z}(\Dfm_n(M))\simeq C_{\Psi}(D_n \tilde{\otimes}_DM),$$ which together with the maps induced by the natural maps $$C_{\Psi}(M)\to \mathbf{Rlim}(C_{\Psi}(D_n\otimes_D M))$$ and $$(C_{\Psi}(D_n\otimes_D M))_n \to (C_{\Psi}(D_n\tilde{\otimes}_D M))_n$$ induce an isomorphism in $\mathbf{D}(D)$  $$\mathbf{Rlim}C_{\Psi,Z}(\Dfm_n(M)) \simeq C_{\Psi}(M)$$
		and, in particular, isomorphisms $$\varprojlim_{n} H_{\Psi,Z}^i(\Dfm_n(M))\cong H^i_{\mathrm{Iw}}(M).$$
		
	\end{thm}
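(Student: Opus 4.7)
The plan is to compare the two projective systems of complexes in $\mathbf{D}(\operatorname{mod}(\mathbb{N}, D))$ via a chain of levelwise quasi-isomorphisms and then descend to $\mathbf{D}(D)$ using the equivalence of Proposition \ref{prop:equivalenceRlim}. First I work levelwise for fixed $n$: the generalised Herr complex $C_{\Psi,Z}(\mathbf{Dfm}_n(M))$ is by definition the total complex of a double complex whose vertical differential is the diagonal $Z$-action on $\mathbf{Dfm}_n(M)$. The exact sequence \eqref{eq:seq5} exhibits these columns as two-term resolutions of $D_n \tilde{\otimes}_D M$, with the projection $\mu$ giving the quotient map. Since $\Psi$ acts only on the $M$-factor it commutes with the diagonal $Z$ and descends through $\mu$, so a standard spectral-sequence (or cone) argument produces the canonical morphism $\operatorname{Comp}_{\mathrm{Iw}}$ and identifies it with a quasi-isomorphism $C_{\Psi,Z}(\mathbf{Dfm}_n(M)) \xrightarrow{\sim} C_\Psi(D_n \tilde{\otimes}_D M)$, natural in $n$.

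Second, under the coadmissibility hypothesis, Lemma \ref{lem:DDnquasiiso} provides a further natural quasi-isomorphism
\[ C_\Psi(D_n \otimes_D M) \xrightarrow{\sim} C_\Psi(D_n \tilde{\otimes}_D M). \]
Using flatness of $D_n$ over $D$ from the Fréchet--Stein presentation together with the $D$-linearity of $\Psi-1$, the left-hand side equals $D_n \otimes_D C_\Psi(M) = S(C_\Psi(M))_n$. Composing with the levelwise quasi-isomorphisms from the first step yields an isomorphism
\[ (C_{\Psi,Z}(\mathbf{Dfm}_n(M)))_n \simeq S(C_\Psi(M)) \]
in $\mathbf{D}^-(\operatorname{mod}(\mathbb{N}, D))$. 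Both sides lie in $\mathbf{D}^-_{con}(\operatorname{mod}(\mathbb{N},D))$: the left by Remark \ref{rem:coadmissiblerlim} and the right by the coadmissibility assumption combined with exactness of $S$.

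Finally, I apply $\mathbf{Rlim}$ to the last isomorphism. By Proposition \ref{prop:equivalenceRlim} the unit $N \to \mathbf{Rlim}\, S(N)$ of the adjunction $S \dashv \mathbf{Rlim}$ is a quasi-isomorphism for any $N \in \mathbf{D}^-_{\mathcal{C}}(D)$, so
\[ \mathbf{Rlim}\, C_{\Psi,Z}(\mathbf{Dfm}_n(M)) \simeq \mathbf{Rlim}\, S(C_\Psi(M)) \simeq C_\Psi(M) \]
in $\mathbf{D}(D)$. The statement on individual cohomology then follows from the Milnor-type exact sequence of Lemma \ref{lem:rlimabstract}: the $\mathbf{R}^1\varprojlim$-terms vanish because $(H^i_{\Psi,Z}(\mathbf{Dfm}_n(M)))_n$ is a coadmissible system by Remark \ref{rem:coadmissiblerlim} and hence $\mathbf{R}^1\varprojlim$-acyclic by \cite[Theorem B]{schneider2003algebras}.

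The main technical subtlety is the first step: one must check that the sequence \eqref{eq:seq5}, which is only asserted to be exact and not necessarily strict, nevertheless produces a genuine chain-level quasi-isomorphism. This is ultimately formal: the argument only requires algebraic exactness of $0 \to \mathbf{Dfm}_n(M) \xrightarrow{Z} \mathbf{Dfm}_n(M) \to D_n \tilde{\otimes}_D M \to 0$ as a sequence of $D[\Psi]$-modules, which holds because $\Psi$ commutes with the diagonal $U$-action and hence with $Z$.
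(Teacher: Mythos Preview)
Your proposal is correct and follows essentially the same route as the paper: the paper also constructs $\operatorname{Comp}_{\mathrm{Iw}}$ by mapping the double complex for $C_{\Psi,Z}(\Dfm_n(M))$ onto the degenerate one for $C_\Psi(D_n\tilde{\otimes}_D M)$ via $\mu_n$ in the bottom row, observes that the kernel double complex has exact columns (equivalently, your cone/spectral-sequence argument), then invokes Lemma~\ref{lem:DDnquasiiso} and Proposition~\ref{prop:equivalenceRlim} exactly as you do. Your closing remark that the first quasi-isomorphism needs only algebraic exactness of \eqref{eq:seq5} together with the commutation of $\Psi$ with the diagonal $U$-action is precisely what the paper uses (implicitly), and is in fact independent of the coadmissibility hypothesis.
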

	\begin{proof} We have seen in \refeq{eq:seq4} that there exists a compatible family of surjections $\mu_n:\Dfm_n(M) \to D_n \tilde{\otimes}_DM$ whose kernel is the image of the diagonal $Z$-map. Rewrite $C_{\Psi,Z}(\Dfm_n(M))$ as a total complex of the double complex 
		\begin{equation}\label{eq:diagram}\begin{tikzcd}
				\Dfm_n(M) \arrow[r, "\Psi-1"] \arrow[d, "Z"] & \Dfm_n(M) \arrow[d, "-Z"] \\
				\Dfm_n(M) \arrow[r, "\Psi-1"]                & \Dfm_n(M)               
			\end{tikzcd}
		\end{equation}	
		and consider $C_{\Psi}(D_n \hat{\otimes}_DM)$ as the total complex of the ``double complex''
		$$\begin{tikzcd}
			0 \arrow[r, "\Psi-1"] \arrow[d, "Z"] & 0 \arrow[d, "-Z"] \\
			D_n \tilde{\otimes}_DM \arrow[r, "\Psi-1"]                & D_n \tilde{\otimes}_DM               
		\end{tikzcd}$$
		Applying $\mu_n$ in the lower row and zero in the upper row of \eqref{eq:diagram} induces a surjective morphism of double complexes	with kernel 
		$$\begin{tikzcd}
			\Dfm_n(M) \arrow[r, "\Psi-1"] \arrow[d, "Z"] & \Dfm_n(M) \arrow[d, "-Z"] \\
			\operatorname{Im}(Z) \arrow[r, "\Psi-1"]                & \operatorname{Im}(Z)               
		\end{tikzcd}.$$ The kernel double-complex has exact columns and hence acyclic total complex by \cite[Lemma 2.7.3]{Weibel}. Since passing to total complexes is exact we obtain the desired compatible family of quasi-isomorphisms.
		By Lemma \ref{lem:DDnquasiiso}
		the natural map $$D_n \otimes_D C_{\Psi}(M)\to C_{\Psi}(D_n \tilde{\otimes}_DM)$$ is a quasi-isomorphism.
		
		Composing the first quasi-isomorphism with the inverse of the latter gives isomorphism in $\mathbf{D}(D)$ $$\mathbf{Rlim}C_{\Psi,Z}(\Dfm_n(M)) \simeq \mathbf{Rlim}C_{\Psi}(D_n\otimes_DM).$$
		Using the coadmissibility assumption on $C_{\Psi}(M)$ by Proposition \ref{prop:equivalenceRlim} the natural map $$C_{\Psi}(M) \to \mathbf{Rlim}((C_{\Psi}(D_n\otimes_DM))_n)$$ is an isomorphism in $\mathbf{D}(D).$ 
		
		By Remark  \ref{rem:coadmissiblerlim} we have  $H^i(\mathbf{Rlim}C_{\Psi,Z}(\Dfm_n(M))) \cong \varprojlim_n H^i_{\Psi,Z}(\Dfm_n(M))$ and putting everything together we obtain $$H^i(\mathbf{Rlim}C_{\Psi,Z}(\Dfm_n(M)))\cong \varprojlim_n H^i_{\Psi,Z}(\Dfm_n(M)) \cong \varprojlim_n D_n\otimes_D H^i_{\Psi}(M)$$ and by the coadmissibility assumption $$ \varprojlim_n D_n\otimes_D H^i_{\Psi}(M)\cong  H^i_{\Psi}(M).$$ 
	\end{proof}
		\section{Explicit results in the rank one and trianguline case}
		In this section we study $M^{\Psi=1}$ for rank one modules of the form $\cR_K(\delta).$ The ideas are based on \cite{colmez2008representations}, \cite{colmez2016representations}, \cite{FX12} and \cite{chenevier2013densite}. 
		Some small adjustments are required in order to incorporate \cite[Theorem 2.4.5]{RustamFiniteness}. We are mostly interested in showing that $M^{\Psi=1}$ is finitely generated and coadmissible.

		\begin{defn}
			An $L$-analytic $(\varphi_L,\Gamma_L)$-module over $\cR_K$ is called trianguline if it is a successive extension of modules of the form $\cR_K(\delta)$ with locally $L$-analytic characters $\delta\colon L^\times \to K^\times.$ 
		\end{defn}
		Fix as usual a subgroup $U\subset \Gamma_L$ isomorphic to $o_L.$ Recall that the space  $C^{an}(U,K)$ of locally $L$-analytic functions $U \to K$ is reflexive and its strong dual is by definition $D(U,K).$ On the other hand $\cR_K^+$ is reflexive and its strong dual is $\cR_K/\cR_K^+$ via the residue pairing from Proposition \ref{prop:pairing}. 
		\begin{lem} \label{lem:Colmezsequence}
			By transport of structure along $\cR_K^+ \cong D(U,K)$ we obtain a $(\varphi_L,U)$-semi-linear strict exact sequence $$0 \to D(U,K) \to \cR_K \to C^{an}(U,K) \otimes \chi^{-1} \to 0,$$
			where $\chi (\pi) = \frac{\pi}{q}$ and $\chi(a)=a$ for $a \in o_L^{\times}.$
			
		\end{lem}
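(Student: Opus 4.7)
The plan is to obtain the desired sequence by transport of structure from the obvious short exact sequence
\[
0 \to \cR_K^+ \to \cR_K \to \cR_K/\cR_K^+ \to 0
\]
of topological $K$-vector spaces. The first map is a continuous injection of $\cR_K^+$ as the closed subspace of $\cR_K$ cut out by the vanishing of the functionals $f\mapsto\operatorname{res}(T^nf)$ for $n\ge 0,$ and the quotient carries the quotient topology; this makes the sequence strict exact, and both $\varphi_L$ and $U$ preserve $\cR_K^+$ (since $\varphi_L(T),\gamma(T)\in\cR_K^+$), so the sequence is $(\varphi_L,U)$-equivariant over $\cR_K$ before any identifications.

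For the middle term one uses the Schneider--Teitelbaum Fourier isomorphism $\cR_K^+\cong D(U,K)$ associated with our choice of variable $Z.$ For the right-hand term, I would combine this with Proposition \ref{prop:pairingoverA} (applied with $A=K$): the residue pairing gives a topological isomorphism $\cR_K/\cR_K^+\cong \operatorname{Hom}_{K,\mathrm{cts}}(\cR_K^+,K),$ which under Schneider--Teitelbaum translates to $\operatorname{Hom}_{K,\mathrm{cts}}(D(U,K),K)=C^{an}(U,K),$ using that $C^{an}(U,K)$ is by definition the strong dual of $D(U,K).$

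The remaining and substantive task is to pin down the correct semi-linearity, i.e.\ to identify the twist. The Fourier isomorphism intertwines the $U$-action on $\cR_K^+$ (via $\chi_{\mathrm{LT}}$ on $T$) with the translation $U$-action on $D(U,K),$ and the dualised identification is $U$-equivariant for the usual contragredient action on $C^{an}(U,K);$ since $\chi$ is trivial on $o_L^\times,$ no twist on the $U$-component appears, consistently with the statement. For $\varphi_L,$ the $\cR_K^+$-action transports to the natural $\varphi_L$-action on $D(U,K)$ (pushforward along $[\pi_L]$). On the quotient the induced $\varphi_L$ must be computed through the residue pairing, and here Proposition \ref{prop:pairing} (1) intervenes: the identity $\langle \varphi_L\check m,\varphi_L m\rangle = \tfrac{q}{\pi_L}\langle \check m,m\rangle$ shows that the functional $g\mapsto\operatorname{res}(\varphi_L(h)\cdot g)$ differs from the pure contragredient image of $h$ by the scalar $q/\pi_L.$ Since $\chi^{-1}(\pi_L)=q/\pi_L$ and $\chi^{-1}$ is trivial on $o_L^\times,$ this is precisely the twist by $\chi^{-1}$ appearing in the statement.

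The main obstacle I expect is this last bookkeeping: one must carefully distinguish the $(\varphi_L,U)$-action on $D(U,K)$ coming from $\cR_K^+$ via Fourier from the contragredient action induced on $C^{an}(U,K),$ and verify that property (1) of the residue pairing is indeed the \emph{only} discrepancy (property (2) ensures no $U$-twist arises). Strictness of the resulting sequence is then inherited from strictness of the topological sequence and the fact that all identifications used are topological isomorphisms.
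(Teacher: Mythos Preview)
The paper does not prove this lemma at all; it simply cites \cite[Th\'eor\`eme 2.3]{colmez2016representations}. Your outline is essentially a sketch of Colmez's argument, so the route is not different, but there is a genuine error in your bookkeeping.

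You write ``since $\chi$ is trivial on $o_L^\times$, no twist on the $U$-component appears.'' This is false: by definition $\chi(a)=a$ for $a\in o_L^\times$, which is the \emph{identity} character, not the trivial one. Hence $\chi^{-1}$ restricted to $o_L^\times$ is $a\mapsto a^{-1}$, and the sequence in the statement \emph{does} carry a nontrivial $U$-twist on the right-hand term. Concretely, for the plain residue pairing one has $\operatorname{res}(\gamma f)=\chi_{\mathrm{LT}}(\gamma)^{-1}\operatorname{res}(f)$ (check this on $f=T^{-1}$, using $[\chi_{\mathrm{LT}}(\gamma)](T)=\chi_{\mathrm{LT}}(\gamma)T+O(T^2)$), so the identification $\cR_K/\cR_K^+\cong (\cR_K^+)'$ is \emph{not} $U$-equivariant for the naive contragredient action; the discrepancy is exactly the factor $\chi_{\mathrm{LT}}(\gamma)^{-1}$, which gives the $\chi^{-1}$-twist on $o_L^\times$. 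Your appeal to Proposition~\ref{prop:pairing}(2) is misleading here: that proposition concerns the pairing into $\cR_A(\chi_{\mathrm{LT}})$, which already absorbs precisely this twist, so its invariance statement does not translate to ``no twist'' for the untwisted residue pairing you are using.

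The same caution applies to your $\varphi_L$-computation: Proposition~\ref{prop:pairing}(1) is stated for the $\chi_{\mathrm{LT}}$-twisted pairing, so extracting the scalar $q/\pi_L$ for the plain residue requires separating out the contribution of $\varphi_L(e_{\chi_{\mathrm{LT}}})=\pi_L e_{\chi_{\mathrm{LT}}}$. Your overall strategy is correct and salvageable, but the twist has to be derived simultaneously on both $\varphi_L$ and $U$ from the transformation law of $\operatorname{res}$ (equivalently, of the Lubin--Tate invariant differential), not read off piecemeal from Proposition~\ref{prop:pairing}.
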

		\begin{proof}
			See \cite[Théorème 2.3]{colmez2016representations}.
		\end{proof} 
		\begin{rem} Let $\delta: L^{\times} \to K^{\times}$ be a locally $L$-analytic character and $\cR_K(\delta) = \cR_Ke_{\delta}$ the corresponding free rank $1$ module. Then $\cR_K^+(\delta):= \cR_K^+e_{\delta}$ is a $(\varphi_L,\Gamma_L)$-stable submodule and fits into a short exact sequence  
			$$0 \to \cR_K^+(\delta) \to \cR_K(\delta) \to C^{an}(U,K) \otimes \chi^{-1}\delta \to 0,$$
			where $\chi (\pi) = \frac{\pi}{q}$ and $\chi(a)=a$ for $a \in o_L^{\times}.$
		\end{rem}
		\begin{proof}
			The stability follows because the image of $\delta$ is contained in $(\cR_K^+)^{\times}.$ The sequence is obtained by twisting the sequence from Lemma \ref{lem:Colmezsequence}.
		\end{proof}
		\begin{lem}
			\label{lem:mahler}
			Any element $\zeta \in C^{an}(o_L,K)$ admits a unique expansion of the form $$\zeta = \sum_{k \geq 0} a_k \begin{bmatrix}x\\k \end{bmatrix},$$ where $\begin{bmatrix}x\\k \end{bmatrix}\colon o_L \to K$ is the polynomial function in $x$ defined via $$\eta(x,T) = \sum_{k\geq0} \begin{bmatrix}x\\k \end{bmatrix}T^k$$ and the coefficients satisfy $\lim_{k \to \infty} \abs{a_k}r^k $ for some $r>1.$
		\end{lem}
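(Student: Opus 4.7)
My plan is to combine the Schneider--Teitelbaum Fourier isomorphism with the residue duality of Proposition \ref{prop:pairingoverA}. First I would record that the Fourier isomorphism $\iota\colon D(o_L,K)\xrightarrow{\sim}\cR_K^+$ is characterised by
$$
\iota(\mu)\;=\;\mu\!\left(\eta(\cdot,T)\right)\;=\;\sum_{k\geq 0}\mu\!\left(\begin{bmatrix}x\\k\end{bmatrix}\right)T^k,
$$
directly from the definition of $\eta(x,T)$ given in the statement; evaluating a distribution on the polynomials $\begin{bmatrix}x\\k\end{bmatrix}$ extracts its Fourier coefficients.

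Since $C^{an}(o_L,K)$ is reflexive with strong dual $D(o_L,K)$, any $\zeta\in C^{an}(o_L,K)$ may be viewed as a continuous linear form on $D(o_L,K)$, and thus via $\iota$ as a continuous linear form on $\cR_K^+$. By Proposition \ref{prop:pairingoverA}, such forms are in topological bijection with $\cR_K/\cR_K^+$ via the residue pairing $\langle f,g\rangle = \res(fg)$. We therefore obtain a unique class $f\in \cR_K/\cR_K^+$ with $\zeta(\mu)=\res(f\cdot\iota(\mu))$ for all $\mu\in D(o_L,K)$; uniqueness is precisely the perfectness of the residue pairing.

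Next I would unwind the pairing. Picking the principal part representative $f=\sum_{k\geq 0}a_kT^{-k-1}$ and computing the residue against $\iota(\mu)=\sum_k \mu(\begin{bmatrix}x\\k\end{bmatrix})T^k$ gives $\zeta(\mu)=\sum_{k\geq 0}a_k\,\mu(\begin{bmatrix}x\\k\end{bmatrix})$. Specialising to Dirac distributions $\mu=\delta_y$ for $y\in o_L$ yields the pointwise identity $\zeta(y)=\sum_{k\geq 0}a_k\begin{bmatrix}y\\k\end{bmatrix}$, and the coefficients $a_k$ are uniquely determined by the uniqueness of $f$.

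Finally, I would translate the condition on $f$: the principal part $\sum_k a_kT^{-k-1}$ represents a class in $\cR_K/\cR_K^+$ exactly when it converges on some annulus $s\leq |T|<1$, i.e.\ $|a_k|\,s^{-k}\to 0$ for some $s<1$; setting $r=s^{-1}>1$ this reads $|a_k|\,r^k\to 0$. Convergence of the resulting series $\sum_k a_k\begin{bmatrix}x\\k\end{bmatrix}$ in $C^{an}(o_L,K)$ then follows from the standard norm bounds on the Lubin--Tate binomials $\begin{bmatrix}x\\k\end{bmatrix}$. The only genuinely delicate point is that each of the identifications used above is topological and not merely algebraic, so that evaluation at Dirac distributions is enough to recover $\zeta$; this is built into the reflexivity of $D(o_L,K)$ and into the topological part of Proposition \ref{prop:pairingoverA}.
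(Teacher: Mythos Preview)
The paper does not give its own proof here; it simply cites \cite[Theorem 4.7]{schneider2001p}. Your argument is correct and, in fact, is very close in spirit to how Schneider--Teitelbaum themselves deduce the Mahler-type expansion from their Fourier isomorphism: transporting the reflexive pairing $C^{an}(o_L,K)\times D(o_L,K)\to K$ along $\iota\colon D(o_L,K)\xrightarrow{\sim}\cR_K^+$ and invoking the residue duality identifies $C^{an}(o_L,K)$ topologically with $\cR_K/\cR_K^+$, and under this identification the monomial $T^{-k-1}$ corresponds exactly to the polynomial function $\begin{bmatrix}x\\k\end{bmatrix}$. The principal-part expansion thus becomes the claimed Mahler expansion, and the decay condition $|a_k|r^k\to 0$ for some $r>1$ is precisely the convergence of the principal part on some annulus $[s,1)$ with $s=r^{-1}<1$.

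Two small points. First, your final sentence about ``standard norm bounds on the Lubin--Tate binomials'' is not needed and is slightly misleading: convergence of $\sum_k a_k\begin{bmatrix}x\\k\end{bmatrix}$ in $C^{an}(o_L,K)$ is already forced by the fact that the identification $C^{an}(o_L,K)\cong\cR_K/\cR_K^+$ is a topological isomorphism (this is the content of Proposition \ref{prop:pairingoverA}), so the convergence of the partial principal parts in $\cR_K/\cR_K^+$ transports directly. Second, note that within \cite{schneider2001p} the Fourier isomorphism is established independently of Theorem 4.7, so your deduction is not circular; it simply makes explicit, inside the present paper's framework, the argument the citation is pointing to.
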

		\begin{proof}
			See \cite[Theorem 4.7]{schneider2001p}.
		\end{proof}
		The following lemmas are essentially an $L$-analytic version of \cite[Lemme 2.9]{chenevier2013densite}. We let $\Psi$ act on $C^{an}(o_L,K)$ as $\Psi(f)(x) = f(\pi_L x).$
		\begin{lem} 
			\label{lem:CherbonnierLemma}
			Let $\alpha \in K^\times.$ Denote by $x \in C^{an}(o_L,K)$ the function $x \mapsto x.$ 
			\begin{enumerate}
				\item If $N$ is such that $\abs{\alpha\pi_L^N}<1$ then $1-\alpha\Psi$ is bijective on $x^{N}C^{an}(o_L,K).$
				\item The cokernel of $1-\alpha \Psi$ acting on $\cR_K^+$ is at most one-dimensional over $K.$
				\item The cokernel of the inclusion $(\cR_K^+)^{\alpha\Psi=1} \to \cR_K^{\alpha\Psi=1}$ is finite dimensional over $K.$
			\end{enumerate}
		\end{lem}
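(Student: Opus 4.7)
The plan is as follows. Part (1) is the analytic core, proved via a Neumann series argument; Parts (2) and (3) then follow by combining it with the $\Psi$-equivariant short exact sequence from Lemma \ref{lem:Colmezsequence}.

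For Part (1), I would realise $C^{an}(o_L, K)$ as the LF-space $\varinjlim_{r > 1} C_r$, where $C_r$ is the Banach space of Mahler series $\sum_k a_k \begin{bmatrix} x \\ k \end{bmatrix}$ with $|a_k| r^k \to 0$ (Lemma \ref{lem:mahler}). Each $C_r$ is preserved by $\Psi\colon f \mapsto f(\pi_L \cdot)$ with operator norm at most one, since this operator merely restricts the argument. On the closed subspace $x^N C^{an}(o_L, K)$, writing $f = x^N g$, one computes
$$(\alpha \Psi)^n f = (\alpha \pi_L^N)^n x^N \Psi^n g,$$
which tends to zero in the LF-topology when $|\alpha \pi_L^N| < 1$. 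The Neumann series $\sum_n (\alpha\Psi)^n$ then converges to a continuous inverse of $1 - \alpha\Psi$ on $x^N C^{an}(o_L, K)$.

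For Parts (2) and (3), I would apply the snake lemma to the sequence from Lemma \ref{lem:Colmezsequence},
$$0 \to \mathcal{R}_K^+ \to \mathcal{R}_K \to C^{an}(o_L, K) \otimes \chi^{-1} \to 0,$$
observing that the $\Psi$-action on the right-hand quotient is the base $\Psi$ of Part (1) scaled by $\chi^{-1}(\pi_L) = q/\pi_L$. For Part (3), the resulting left-exact sequence
$$0 \to (\mathcal{R}_K^+)^{\alpha\Psi=1} \to \mathcal{R}_K^{\alpha\Psi=1} \to (C^{an}(o_L, K) \otimes \chi^{-1})^{\alpha\Psi=1}$$
exhibits the cokernel of the inclusion as a subspace of the rightmost term. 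Choosing $N$ so that $|\alpha q \pi_L^{N-1}| < 1$, Part (1) forces that kernel into the $N$-dimensional complement of $x^N C^{an}(o_L, K)$, hence finite-dimensional. For Part (2), the cokernel-half of the snake sequence presents $\mathrm{coker}(1-\alpha\Psi\vert_{\mathcal{R}_K^+})$ as an extension of a subspace of $\mathrm{coker}(1-\alpha\Psi\vert_{\mathcal{R}_K})$ by a quotient of $\ker(1-\alpha\Psi\vert_{\mathrm{quot}})$. A direct analysis of the equation $f = \beta\, f(\pi_L \cdot)$ on $C^{an}(o_L, K)$---iterating to $f(x) = \beta^n f(\pi_L^n x)$ and matching orders of vanishing at $0$---shows that this kernel is at most one-dimensional, generated (when nonzero) by a single monomial $x^k$ satisfying $\alpha q \pi_L^{k-1} = 1$. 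Combined with the vanishing of $\mathrm{coker}(1-\alpha\Psi\vert_{\mathcal{R}_K})$, obtained via the decomposition $\mathcal{R}_K = \mathcal{R}_K^{\Psi=0} \oplus \varphi_L(\mathcal{R}_K)$ and iteration of the induced $\varphi_L - \alpha$ equation on a suitable Fréchet piece, this pins the cokernel dimension to at most one.

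The main obstacle will be establishing surjectivity of $1 - \alpha\Psi$ on $\mathcal{R}_K$ for all $\alpha \in K^\times$, including those outside the range where a direct Neumann series converges on $\mathcal{R}_K^+$. This likely requires a case analysis across the Fréchet pieces $\mathcal{R}_K^{[r,1)}$ together with a Weierstrass-type splitting to separate the bounded part from the overconvergent part; pinning down the exact dimension of the cokernel rather than settling for a finite bound is the technically delicate step.
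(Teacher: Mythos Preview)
Your arguments for Parts (1) and (3) match the paper's approach: a Neumann series on $x^N C^{an}(o_L,K)$ for (1), and the snake lemma applied to the Colmez sequence for (3). One minor point on (1): the paper does not assert that $\Psi$ has operator norm $\leq 1$ on the Mahler--Banach pieces $C_r$; instead it filters by $C^{an}_h$ (functions analytic on cosets of $\pi_L^h o_L$), observes $\Psi(x^N C^{an}_h)\subset x^N C^{an}_{h-1}$, and only invokes the sup-norm estimate $|\Psi f|_{o_L}\le |\pi_L^N|\,|f|_{o_L}$ once $f$ lands in $x^N C^{an}_0$. Your factorisation $(\alpha\Psi)^n(x^Ng)=(\alpha\pi_L^N)^n x^N\Psi^n g$ is correct, but ``merely restricts the argument'' does not by itself control $\Psi^n g$ in the $C_r$-norms; you should supply either the $C^{an}_h$ argument or a direct Mahler-coefficient estimate.

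For Part (2) you take a genuinely different route, and here the obstacle you flag is real and unnecessary. The paper never needs surjectivity of $1-\alpha\Psi$ on all of $\cR_K$. It works entirely inside $\cR_K^+$: for $N\gg 0$ the series $F=\sum_{i\ge 0}\alpha^{-i}\varphi^i$ converges on $T^N\cR_K^+$ (because $\varphi$ is contractive for the $|\cdot|_{[0,r]}$-norms), and one checks $(1-\alpha\Psi)F(h)=\alpha\Psi(h)$, which combined with surjectivity of $\Psi$ on $\cR_K^+$ shows $T^N\cR_K^+\subset\mathrm{im}(1-\alpha\Psi)$. On the finite complement $\bigoplus_{k=0}^{N-1}K\,t_{\mathrm{LT}}^k$ one uses the eigen-relation $\Psi(t_{\mathrm{LT}}^k)=\pi_L^{-k}t_{\mathrm{LT}}^k$, so $(1-\alpha\Psi)t_{\mathrm{LT}}^k=(1-\alpha\pi_L^{-k})t_{\mathrm{LT}}^k$ fails to hit $t_{\mathrm{LT}}^k$ for at most one value of $k$. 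This gives the bound $\dim\mathrm{coker}\le 1$ directly, without any snake-lemma detour through $\cR_K$ and without the case analysis over Fr\'echet pieces you anticipate.
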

		\begin{proof} For $h \in \N_0$ denote by $C^{an}_h$ the subspace of functions that are globally analytic on $a+\pi_L^ho_L$ for each $a \in o_L/\pi_L^ho_L.$ Every element $f \in C^{an}(o_L,K)$ belongs to some $C^{an}_h$ and from the definitions one has $\Psi(x^NC^{an}_h) \subset x^NC^{an}_{h-1}$ for $h \geq 1.$ If $f \in x^NC^{an}_0$ then expanding $f$ as a power series yields
			\begin{equation} \label{eq:estimateCan}\abs{\Psi(f)}_{o_L} \leq \abs{\pi_L^N}\abs{f}_{o_L}\end{equation} 
			with respect to the $\sup$-norm on $o_L.$
			Let $f \in x^NC^{an}_h$ then $\Psi^h(f) \in C^{an}_0$ and the estimate \eqref{eq:estimateCan} together with the assumption on $N$ shows that the series $$\sum_{k=0}^\infty \alpha^m\Psi^m$$ converges to an inverse of $1-\alpha\Psi$ on $x^NC^{an}_h.$ The claim follows because $C^{an}(o_L,K) = \varinjlim_h C^{an}_h.$\\	
			For the second statement choose $N\gg 0 $ such that $$F:=\sum_{i=0}^{\infty} \alpha^{-i}\varphi^i$$ converges to a continuous operator on $T^N\cR_K^+.$ The existence of such $N$ can be seen using that $T^N$ tends to zero as $N \to \infty$ and  that $\varphi_L$ is contractive with respect to the norms $\abs{-}_{[0,r]}$ for any $0<r<1$ as a consequence of \cite[Remark 1.5.2]{RustamFiniteness}.  Given $h \in T^n\cR_K^+$ we observe that $(1-\alpha\Psi)(F(h)) = \alpha \Psi(h).$ Using that $\Psi$ is surjective on $\cR_K^+,$ we can deduce that $h$ belongs to the image of $1-\alpha\Psi.$ By writing $\cR_K^+ = \bigoplus_{k=0}^{N-1}Kt_{\mathrm{LT}}^k \oplus T^K\cR_K^+$ we conclude that it remains to show that at most one $t_{\mathrm{LT}}^k$ is not contained in the image of $1-\alpha\Psi.$ Because $\Psi(t_{\mathrm{LT}}^i) = \pi_L^{-i}t_{\mathrm{LT}}^i$ there can be at most one $i_0$ such that $(1-\alpha\Psi)(t_{\mathrm{LT}}^{i_0}) =0.$ For every $i\neq i_0$ we have $(1-\alpha\Psi)(t^i_{\mathrm{LT}}) = (1-\alpha \pi_L^{-i})t^i_{\mathrm{LT}}$ with $(1-\alpha \pi_L^{-i}) \in K^\times.$ \\
			For the third statement observe that Lemma \ref{lem:Colmezsequence} and the Snake Lemma gives a short exact sequence 
			$$0 \to (\cR_K^+)^{\alpha\Psi=1} \to \cR_K^{\alpha\Psi=1} \to C^{an}(U,K)^{\alpha\frac{q}\pi\Psi=1}.$$
			The image of the last map is finite dimensional by 1.) because any element in $C^{an}$ can be written as a sum of a polynomial of degree $\leq N-1$ and an element of $x^NC^{an}(o_L,K)$ by Lemma \ref{lem:mahler}. 
		\end{proof}
		\begin{lem} \label{lem:pluspart} Let $\delta: L^{\times} \to K^{\times}$ be a locally $L$-analytic character and $M:=\cR_K(\delta)$ the corresponding free rank $1$ module. Let $M^+:= \cR_K^+(\delta).$
			\begin{enumerate}
				
				\item We have $(M^+)^{\psi=0} = D(\Gamma_L,K)\eta(1,T)\varphi(e_{\delta}).$ In particular $(M^{+})^{\Psi=0}$ is free of rank $1$ over $D(\Gamma_L,K).$
				\item The map $1-\varphi\colon (M^+)^{\Psi=1} \to (M^+)^{\Psi=0}$ has finite dimensional kernel and co-kernel.
			\end{enumerate}
		\end{lem}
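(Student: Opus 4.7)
For part (1), the plan is as follows. First I would verify directly that $\eta(1,T)\varphi(e_\delta)$ lies in $(M^+)^{\psi_{\mathrm{LT}}=0}$: the Lubin--Tate decomposition $\cR_K^+ = \bigoplus_{a \in o_L/\pi_L o_L} \eta(a,T)\varphi(\cR_K^+)$ implies that $\psi_{\mathrm{LT}}$ annihilates every summand with $a\neq 0$, so in particular $\psi_{\mathrm{LT}}(\eta(1,T))=0$ in $\cR_K^+$; the twisted analogue then yields $\Psi(\eta(1,T)\varphi(e_\delta))=0$ in $M^+$. Next, I would show that $\eta(1,T)\varphi(e_\delta)$ generates $(M^+)^{\Psi=0}$ over $D(\Gamma_L,K)$: the $\Gamma_L$-translates of $\eta(1,T)$ equal $\eta(\chi_{\mathrm{LT}}(\gamma),T)$ and exhaust a set of representatives of $(o_L/\pi_L o_L)^\times$, while the $D(U,K)$-action (via the Fourier isomorphism $D(U,K)\cong \cR_K^+$) supplies the full $\varphi(\cR_K^+)$-coefficient in front of each $\eta(a,T)$, so their combined orbit sweeps out $\bigoplus_{a\neq 0}\eta(a,T)\varphi(\cR_K^+)$. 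Freeness of rank one then follows from \cite[Theorem 2.4.5]{RustamFiniteness}, which provides projectivity of $M^{\Psi=0}$ over $\cR_K(\Gamma_L)$ and pins down the rank after descent to the subring $D(\Gamma_L,K)$.

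For the kernel in part (2), the argument is direct. If $m\in (M^+)^{\Psi=1}$ satisfies $\varphi(m)=m$ then automatically $\Psi(m)=\Psi\varphi(m)=m$, so $\ker(1-\varphi)\cap (M^+)^{\Psi=1}$ equals $(M^+)^{\varphi=1}$. This embeds in $M^{\varphi=1}$, which is finite-dimensional over $K$ by Remark \ref{rem:finiteperfectapplied}(1).

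The finite-dimensionality of the cokernel is the main obstacle. My plan is to exploit the exact sequence
$$0 \to M^+ \to M \to C^{an}(U,K)\otimes \chi^{-1}\delta \to 0$$
from the remark following Lemma \ref{lem:Colmezsequence}, applying $(\cdot)^{\Psi=1}$ and $(\cdot)^{\Psi=0}$ componentwise to obtain two vertical exact sequences (the second only left-exact) joined by horizontal $1-\varphi$ maps. A snake-lemma chase reduces the cokernel in question to analogous cokernels on $M$ and on $C^{an}(U,K)\otimes \chi^{-1}\delta$. The $C^{an}$-piece is finite-dimensional by Lemma \ref{lem:CherbonnierLemma}(1), using the Mahler expansion from Lemma \ref{lem:mahler} to split off the finite-dimensional polynomial part of degree $<N$ on which $1-\alpha\Psi$ is bijective. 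The contribution from $M$ is handled by Lemma \ref{lem:CherbonnierLemma}(3), applied after identifying $M\cong \cR_K(\delta)$ and $M^+\cong \cR_K^+(\delta)$, which bounds the $K$-codimension of $(M^+)^{\Psi=1}$ inside $M^{\Psi=1}$ by a finite quantity; combining this with part (1) controls the cokernel of the corresponding map on $M$. The most delicate technical point is to verify that the failure of right-exactness of $(\cdot)^{\Psi=0}$ on the short exact sequence contributes only a finite-dimensional error term, which in turn reduces, via the connecting map, to the same Mahler-truncation estimate.
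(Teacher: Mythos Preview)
Your treatment of part (1) and of the kernel in part (2) is fine and matches the paper: part (1) is cited from \cite[Corollary 2.4.7]{RustamFiniteness}, and the kernel equals $(M^+)^{\varphi=1}\subset M^{\varphi=1}$, which is finite-dimensional by Remark \ref{rem:finiteperfectapplied}.

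For the cokernel in part (2), however, your snake-lemma strategy has a genuine gap. The diagram chase you describe bounds $\operatorname{coker}\bigl(1-\varphi\colon (M^+)^{\Psi=1}\to (M^+)^{\Psi=0}\bigr)$ in terms of $\ker$ on the $C^{an}$-piece \emph{and} $\operatorname{coker}\bigl(1-\varphi\colon M^{\Psi=1}\to M^{\Psi=0}\bigr)$. You then claim this second cokernel is controlled by Lemma \ref{lem:CherbonnierLemma}(3) together with part (1). But Lemma \ref{lem:CherbonnierLemma}(3) only says that $(M^+)^{\Psi=1}$ has finite codimension in $M^{\Psi=1}$, and part (1) only describes $(M^+)^{\Psi=0}$; neither of these says anything about the cokernel of $1-\varphi$ on $M$ itself. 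You are effectively reducing the problem for $M^+$ to the same problem for $M$, which you have not solved independently. Indeed, in the paper the logical order is the reverse: the explicit computation for $M^+$ in this lemma is the input to Proposition \ref{prop:rank1perf}, which then handles $M$.

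The paper's argument for the cokernel is direct and avoids this circularity entirely. Given $\eta(1,T)\varphi(fe_\delta)\in (M^+)^{\Psi=0}$, one decomposes $f=f_0+T^Ng$ with $f_0$ in the finite-dimensional span of $t_{\mathrm{LT}}^0,\dots,t_{\mathrm{LT}}^{N-1}$, chooses $N$ large enough that the geometric series $h:=\sum_{k\ge 0}\delta(\pi_L)^k\varphi^k\bigl(\eta(1,T)\varphi(T^Ng)\bigr)$ converges in $\cR_K^+$, and checks that $m':=he_\delta$ satisfies $(\varphi-1)m'=\eta(1,T)\varphi(T^Ng)e_\delta$ and automatically $\Psi(m')=m'$. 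Thus everything in $\eta(1,T)\varphi(T^NM^+)$ lies in the image, and by the decomposition of $(M^+)^{\Psi=0}$ over $\Gamma_L/\Gamma_1$ the cokernel has dimension at most $N(q-1)$. The missing idea in your proposal is precisely this explicit construction of $\Psi$-fixed preimages via a convergent $\varphi$-series; the exact sequence involving $C^{an}$ is used elsewhere (for Lemma \ref{lem:CherbonnierLemma}(3) and Proposition \ref{prop:rank1perf}) but is not the tool for this step.
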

		\begin{proof}1. follows from the explicit description in \cite[Corollary 2.4.7]{RustamFiniteness}.
			For 2. let $\eta(1,T)\varphi(m) \in (M^{+})^{\Psi=0}$ with some $m = fe_{\delta} \in M^+.$ Let $N \in \N$ and write $f = f_0 + T^Ng$ according to the decomposition $\cR_K^+  = \bigoplus_{k=0}^{N-1} t_{\mathrm{LT}}^k+T^N\cR_K^+.$ Choosing $N$ large enough (like in the proof of Lemma \ref{lem:CherbonnierLemma}) we can ensure that $$h:=\sum_{k=0}^\infty  \delta(\pi_L)^{k}\varphi^k(\eta(1,T)\varphi((T^Ng))$$ converges independently of $g \in \cR_K^+.$  The element $m':= he_{\delta}\in M^+$ satisfies $(\varphi-1)(m') = \eta(1,T)(\varphi(T^Ng)e_{\delta}).$ In particular $(\varphi_L-1)(m') \in M^{\Psi=0}$ and hence necessarily $\Psi(m') =m'.$ Finite dimensionality of the kernel is immediate from Remark \ref{rem:finiteperfectapplied}. For the codimension of the image $(M^{+})^{\Psi=1}$ in $(M^+)^{\Psi=0}$ our proof thus far shows that any element in $\eta(1,T)(\varphi(T^NM^+))$ lies in the image of $\varphi_L-1.$ We use the analogue of the decomposition from \cite[Proposition 2.2.6]{RustamFiniteness} for $M^+$ to conclude that the codimension of the image is at most $N[\Gamma_L:\Gamma_1] = N(q-1).$
		\end{proof}
		
		\begin{prop} \label{prop:rank1perf}
			Let $\delta \colon L^{\times} \to K^\times$ be a locally $L$-analytic character and let $M=\cR_K(\delta)$ then $M^{\Psi=1}$ is a finitely generated coadmissible $D(U,K)$-module of rank \mbox{$[\Gamma_L:U].$} In particular, $C_{\Psi}(M)$ is perfect.
		\end{prop}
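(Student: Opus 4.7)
The strategy is to reduce perfectness of $C_{\Psi}(M)$ to finite generation of $M^{\Psi=1}$ over $D(U,K)$ via Proposition \ref{prop:psiperfchar}, and to obtain the latter from Lemma \ref{lem:pluspart} combined with Lemma \ref{lem:CherbonnierLemma}. Concretely, Lemma \ref{lem:pluspart} furnishes the four-term exact sequence
$$ 0 \to \ker(1-\varphi) \to (M^+)^{\Psi=1} \xrightarrow{1-\varphi} (M^+)^{\Psi=0} \to \operatorname{cok}(1-\varphi) \to 0 $$
whose outer terms are finite-dimensional over $K$ and whose right-hand interior term is free of rank one over $D(\Gamma_L,K)$, hence free of rank $[\Gamma_L:U]$ over $D(U,K)$. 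A short computation shows that $\Psi_M(fe_\delta)=\delta(\pi_L)^{-1}\Psi(f)e_\delta$, so $M^{\Psi=1}\cong \cR_K^{\delta(\pi_L)^{-1}\Psi=1}$ (and similarly on the $+$-part), and Lemma \ref{lem:CherbonnierLemma}(3) with $\alpha=\delta(\pi_L)^{-1}$ tells us that $M^{\Psi=1}/(M^+)^{\Psi=1}$ is finite-dimensional over $K$.

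The core sub-step is to show that a $D(\Gamma_L,K)$-submodule $N'$ of finite $K$-codimension in the rank-one free module $N:=(M^+)^{\Psi=0}$ is finitely generated over $D(U,K)$. For this I would use the Fourier isomorphism $D(U,K)\cong \cR_K^+$ to view $V:=N/N'$ as a finite-dimensional module on which $Z$ acts with some minimal polynomial $p(Z)\in K[Z]$. Then $p(Z)\cdot N\subset N'$; since $\cR_K^+/(p(Z))$ is finite-dimensional over $K$ (by Weierstrass preparation, after absorbing the unit factor of $p$), the quotient $N'/p(Z)N$ is a finite-dimensional $K$-subspace of $N/p(Z)N$, so $N'$ is generated by $p(Z)\cdot N$ together with finitely many lifts of a basis of $N'/p(Z)N$. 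Applied to the image of $1-\varphi$, this yields finite generation of $(M^+)^{\Psi=1}$ over $D(U,K)$, and the finite-dimensional extension from Lemma \ref{lem:CherbonnierLemma} propagates this to $M^{\Psi=1}$.

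For the rank, since $D(\Gamma_L,K)$ is an integral domain and finite-dimensional $K$-vector spaces are torsion over it, the finite-dimensional kernel and cokernel contribute nothing generically, so $M^{\Psi=1}$ inherits rank one from $(M^+)^{\Psi=0}$ over $D(\Gamma_L,K)$, which translates to rank $[\Gamma_L:U]$ over $D(U,K)$. Coadmissibility of $M^{\Psi=1}$ and perfectness of $C_{\Psi}(M)$ then follow formally from the implication (iii) $\Rightarrow$ (i),(ii) of Proposition \ref{prop:psiperfchar}. The main obstacle I anticipate is the sub-step on finite generation, because $D(U,K)$ is not Noetherian and so a finite-codimensional submodule of a free module is not automatically finitely generated; the polynomial-annihilator trick above is the natural way around it, but it requires carefully using that polynomials in $Z$ cut out finite-dimensional quotients of $\cR_K^+$.
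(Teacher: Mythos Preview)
Your proof is correct and follows the same overall architecture as the paper's: reduce to finite generation of $(M^+)^{\Psi=1}$ via Lemma~\ref{lem:CherbonnierLemma}(3), and attack the latter through the four-term sequence of Lemma~\ref{lem:pluspart}, then invoke Proposition~\ref{prop:psiperfchar}. The one substantive difference is in how you handle the core sub-step.

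For the finite generation of a finite-$K$-codimensional $D(U,K)$-submodule $N'\subset N$ with $N$ free, the paper argues via the coadmissibility machinery: $N/N'$ is coadmissible by Lemma~\ref{lem:finiteperfect}, so $N'$ is a kernel of a map between coadmissibles and hence coadmissible; it is then closed in the canonical topology by \cite[Lemma~3.6]{schneider2003algebras}, and a closed coadmissible submodule of a finite projective is finitely generated by \cite[Lemma~1.1.9]{Berger}. Your polynomial-annihilator trick is a more elementary substitute: if $p(Z)$ is the minimal polynomial of $Z$ on $N/N'$, then $p(Z)N\subset N'$ and $N/p(Z)N\cong (\cR_K^+/p(T))^{[\Gamma_L:U]}$ is finite-dimensional (the non-unit factors of $p$ are distinguished polynomials by Weierstra\ss), so $N'$ is spanned by $p(Z)N$ and finitely many lifts. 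This avoids the external citations and is self-contained; the price is that you obtain coadmissibility only indirectly through Proposition~\ref{prop:psiperfchar}, whereas the paper builds it in along the way.

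One small slip: $D(\Gamma_L,K)$ is \emph{not} an integral domain in general, since $\Gamma_L\cong o_L^\times$ has torsion and $D(\Gamma_L,K)$ contains the idempotents of $K[\Gamma_L/U]$. Your rank computation should therefore be phrased over $D(U,K)\cong\cR_K^+$, which \emph{is} a domain; the finite-dimensional pieces are torsion over $D(U,K)$ and the free module $(M^+)^{\Psi=0}$ has $D(U,K)$-rank $[\Gamma_L:U]$, so the conclusion is unchanged.
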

		\begin{proof}
			Applying Lemma \ref{lem:CherbonnierLemma} and Lemma \ref{lem:pluspart} we conclude that $M^{\Psi=1}$ fits into an exact sequence $$0 \to (M^+)^{\Psi=1} \to M^{\Psi=1} \to V \to 0,$$ with a $D(U,K)$-module $V$ whose underlying $K$-vector space is finite dimensional. By Lemma \ref{lem:finiteperfect} $V$ is coadmissible and evidently torsion as a $D(U,K)$-module. Because the category of coadmissible module is abelian we conclude that it suffices to show that $(M^+)^{\Psi=1}$ is coadmissible of the desired rank. For $M^+$ we have by Lemma \ref{lem:pluspart} an exact sequence of the form $$0 \to V_1 \to (M^+)^{\Psi=1} \xrightarrow{\varphi-1}(M^+)^{\Psi=0} \to V_2 \to 0$$ with two $D(U,K)$-modules whose underlying $K$-vector-spaces are finite-dimensional. From this exact sequence and Lemma \ref{lem:pluspart} it is clear that the rank is precisely $[\Gamma_L:U].$  
			Again $V_2$ is coadmissible by Lemma \ref{lem:finiteperfect} and the image of $(M^+)^{\Psi=1}$ is the kernel of a map between coadmissible modules and hence coadmissible. By \cite[Lemma 3.6]{schneider2003algebras} the image is closed in the canonical topology. Because $(M^{+})^{\Psi=0}$ is finitely generated projective we obtain that $(1-\varphi)((M^+)^{\Psi=1}) $ is finitely generated by  \cite[Lemma 1.1.9]{Berger}.
			Now the short exact sequence $$0 \to V_1 \to (M^+)^{\Psi=1} \to (1-\varphi)((M^+)^{\Psi=1})  \to 0$$ proves that $(M^+)^{\Psi=1}$ is finitely generated and coadmissible. Perfectness follows from Proposition \ref{prop:psiperfchar}.
		\end{proof}
		
		\begin{thm}
			\label{thm:perfectTrianguline}
			Let $M$ be a trianguline $L$-analytic $(\varphi_L,\Gamma_L)$-module over $\cR_K.$ Then $C_{c\Psi}(M)$ is a perfect complex of $D(\Gamma_L,K)$-modules for any constant $c \in K^{\times}.$
		\end{thm}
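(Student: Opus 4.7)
The plan is to combine the rank-one case (Proposition \ref{prop:rank1perf}) with a dévissage along the trianguline filtration, after first reducing to the case $c=1$. Concretely, fix a character $\rho\colon L^{\times}\to K^{\times}$ with $\rho(\pi_L)=c$ and $\rho|_{o_L^{\times}}=1$; by Lemma \ref{lem:constantirrelevantpsi} we have $C_{c\Psi}(M)\cong C_{\Psi}(M(\rho))$ as complexes of $D(\Gamma_L,K)$-modules. Since $\rho$ is trivial on $o_L^{\times}$, the twist preserves $L$-analyticity, and since twisting a trianguline filtration $0=M_0\subset M_1\subset\cdots\subset M_d=M$ with graded pieces $\cR_K(\delta_i)$ yields again a trianguline filtration with graded pieces $\cR_K(\delta_i\rho)$, the module $M(\rho)$ is still trianguline. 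It therefore suffices to prove perfectness of $C_{\Psi}(M)$ over $D(\Gamma_L,K)$ for every trianguline $M$.

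Next, I would prove the rank-one base case $M=\cR_K(\delta)$ as follows. Fixing an open subgroup $U\subset \Gamma_L$ with $U\cong o_L$, Proposition \ref{prop:rank1perf} together with Proposition \ref{prop:psiperfchar} tells us that $C_{\Psi}(M)$ is a perfect complex of $D(U,K)$-modules; in particular its cohomology groups $H^i_{\Psi}(M)$ admit finite $2$-term projective resolutions over $D(U,K)$. To upgrade this to $D(\Gamma_L,K)$, I would apply Lemma \ref{lem:FlatAlgebraProjectiveResolution} with $G=\Gamma_L$, $H=U$, $A=K$ (the quotient $\Gamma_L/U$ is finite since $U$ is open in the compact group $\Gamma_L$): each $H^i_{\Psi}(M)$ inherits a $2$-term finite projective resolution as a $D(\Gamma_L,K)$-module, and hence belongs to $\mathbf{D}^{b}_{\mathrm{perf}}(D(\Gamma_L,K))$. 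The Stacks argument invoked in the proof of Proposition \ref{prop:psiperfchar} (\href{https://stacks.math.columbia.edu/tag/066U}{Tag 066U}) then lifts this from the cohomology to the bounded complex $C_{\Psi}(M)$ itself, so $C_{\Psi}(M)$ is perfect over $D(\Gamma_L,K)$.

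For the inductive step, given a trianguline $M$ of filtration length $d\geq 2$, pick the short exact sequence
\begin{equation*}
0\to M_{d-1}\to M\to \cR_K(\delta)\to 0
\end{equation*}
of $L$-analytic $(\varphi_L,\Gamma_L)$-modules. Since $\Psi-1$ acts componentwise and the sequence is split as $K$-vector spaces, the induced sequence of Iwasawa complexes is short exact, yielding a distinguished triangle
\begin{equation*}
C_{\Psi}(M_{d-1})\to C_{\Psi}(M)\to C_{\Psi}(\cR_K(\delta))\xrightarrow{+1}
\end{equation*}
in $\mathbf{D}(D(\Gamma_L,K))$. By the inductive hypothesis applied to $M_{d-1}$ and the base case applied to $\cR_K(\delta)$, the outer two complexes are perfect. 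Since the subcategory $\mathbf{D}^{b}_{\mathrm{perf}}(D(\Gamma_L,K))$ is thick (hence stable under extensions in the triangulated category), the middle complex $C_{\Psi}(M)$ is perfect as well. The main obstacle is the step from $D(U,K)$-perfectness to $D(\Gamma_L,K)$-perfectness in the rank-one case; this is where Lemma \ref{lem:FlatAlgebraProjectiveResolution} (relying on characteristic-zero averaging over $\Gamma_L/U$ and pseudo-coherence) does the essential work, while the rank-one analysis itself is the substantive ingredient provided by Proposition \ref{prop:rank1perf}.
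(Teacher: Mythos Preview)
Your proposal is correct and follows essentially the same approach as the paper: reduce to $c=1$ via Lemma~\ref{lem:constantirrelevantpsi} (using that the twist preserves trianguline), invoke Proposition~\ref{prop:rank1perf} for the rank-one base case, upgrade from $D(U,K)$ to $D(\Gamma_L,K)$ using Lemma~\ref{lem:FlatAlgebraProjectiveResolution}, and conclude by d\'evissage along the trianguline filtration. The only organizational difference is that the paper applies Lemma~\ref{lem:FlatAlgebraProjectiveResolution} once at the outset (reducing the whole question to $D(U,K)$-perfectness) rather than inside the rank-one step as you do, and the paper leaves the d\'evissage implicit; your version simply spells these points out.
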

		\begin{proof} By applying Lemma \ref{lem:FlatAlgebraProjectiveResolution} to the cohomology groups of $C_{\Psi}(M)$ we conclude that perfectness as a complex of $D(\Gamma_L,K)$-modules follows from perfectness as a complex of $D(U,K)$-modules.
			For $c=1$ this is a corollary of Proposition \ref{prop:rank1perf}. Because twisting by a character preserves the property of $M$ being trianguline the general statement follows from Lemma \ref{lem:constantirrelevantpsi}.
		\end{proof}

\section{Perfectness of $C_{\Psi}(M).$}
In the cyclotomic case the perfectness of $C_{\Psi}(M)$ as a complex of $D(U,K)$-modules is obtained by an inductive procedure from the étale case. In the cyclotomic étale case the heart $C=(1-\varphi)(M)$ is free over $D_{\Q_p}(\Gamma_{\Q_p},\Q_p)$ of the same rank as $M$ (cf. \cite[Proposition V.1.18]{colmez2010representations}) and an induction over the Harder--Narasimhan slopes implies the general result. 
In our situation we run into several problems. Most notably the corresponding result concerning the heart is not known. Furthermore the theory of slope filtrations makes heavy use of the Bézout property of the coefficient rings. By passing to the large extension $K/L$ we leave the situation of Kedlaya's slope theory and also do not have the equivalence of categories at our disposal for modules of slope zero. We interpret the passage to $K$ as a technical tool to understand $(\varphi_L,\Gamma_L)$-modules coming from $\cR_L$ employed for example in \cite{Berger} and \cite{FX12}. Hence we put a special emphasis on those coming from $\cR_L.$ We show that the induction over Harder--Narasimhan slopes as in \cite{KPX} works in essentially the same way. To do so we require some technical lemmas concerning the $(\Psi,\nabla)$-cohomology of $M$ that might be interesting in their own right.
\begin{con} \label{con:perfectfromL}
	
	Let $M_0$ be an $L$-analytic $(\varphi_L,\Gamma_L)$-module over $\cR_L.$ Then the complex $C_{\Psi}(K \hat{\otimes}_LM_0)$ is a perfect complex of $D(\Gamma_L,K)$-modules.
\end{con}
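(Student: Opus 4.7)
The plan is to verify condition (iii) of Proposition \ref{prop:psiperfchar}, namely finite generation of $M^{\psi_{\mathrm{LT}}=1}$ as a $D(U,K)$-module for $U \subseteq \Gamma_L$ the open subgroup isomorphic to $o_L$ fixed in the text. This will give perfectness of $C_{\psi_{\mathrm{LT}}}(M)$ over $D(U,K)$; Lemma \ref{lem:constantirrelevantpsi} applied to the character $\rho$ with $\rho(\pi_L) = \pi_L/q$ and $\rho|_{o_L^\times} = 1$ will convert this into perfectness of $C_\Psi(M)$, and Lemma \ref{lem:FlatAlgebraProjectiveResolution} with $H = U$, $G = \Gamma_L$ will upgrade it to perfectness over $D(\Gamma_L,K)$.

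The base case I would handle first is that of étale $M_0 = \mathbb{D}^{\dagger}_{\mathrm{rig}}(V)$ for an $L$-analytic representation $V$ of $G_L$. The results of Schneider--Venjakob recalled in the introduction give that $\mathbb{D}(V)^{\psi_{\mathrm{LT}}=1}$ is finitely generated over the Noetherian ring $\Lambda$, and in the $L$-analytic case it is already contained in $\mathbb{D}^{\dagger}(V)$, so $\mathbb{D}^{\dagger}(V)^{\psi_{\mathrm{LT}}=1}$ is itself $\Lambda$-finite. Invoking Conjecture \ref{con:comparison}, the surjection $D(\Gamma_L,K) \otimes_\Lambda \mathbb{D}^{\dagger}(V)^{\psi_{\mathrm{LT}}=1} \twoheadrightarrow M^{\psi_{\mathrm{LT}}=1}$ exhibits $M^{\psi_{\mathrm{LT}}=1}$ as a quotient of a $D(\Gamma_L,K)$-finitely generated module, settling the étale base case.

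The inductive step propagates from the étale case to general $L$-analytic $M_0$ over $\cR_L$ by dévissage along the Harder--Narasimhan filtration $0 = N_0 \subsetneq N_1 \subsetneq \dots \subsetneq N_d = M_0$. When $d \geq 2$, the short exact sequence $0 \to N_1 \to M_0 \to M_0/N_1 \to 0$ remains short exact after applying $K\hat{\otimes}_L -$ and yields a distinguished triangle of $C_{\psi_{\mathrm{LT}}}$-complexes; by induction on $d$, the two outer complexes are perfect over $D(U,K)$, so the 2-out-of-3 property for perfect complexes gives perfectness of the middle term. This reduces the problem to the isoclinic case $d = 1$, where $M_0$ has a single HN slope $s \in \mathbb{Q}$; for $s = 0$ one is in the étale setting handled above.

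The main obstacle is the isoclinic case with $s \neq 0$: the character twist of Lemma \ref{lem:constantirrelevantpsi} shifts slopes only by integers, so rational slopes with nontrivial denominator cannot be brought to slope zero by a single twist of that kind. Following the KPX template, I would exploit the Bézout structure of $\cR_L$ (still available before base change to $K$) to put an isoclinic $M_0$ of slope $s = a/h$ in lowest terms into a standard form relating it, after a suitable integer twist combined with a Frobenius-power reindexing of the isoclinic decomposition, to an étale module to which the base case applies. The control over the $t_{\mathrm{LT}}$-adic behaviour supplied by Lemmas \ref{lem:phimodt}, \ref{lem:psimodt} and \ref{lem:psisurjektiv}, together with Corollary \ref{kor:modtperfect}, furnishes the machinery required to transfer finite generation of $M^{\psi_{\mathrm{LT}}=1}$ across these auxiliary constructions. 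The delicate point is that after base change from $\cR_L$ to $\cR_K$ one leaves the Bézout setting, so all uses of Kedlaya's slope theory must be performed at the $\cR_L$ level, with the $L$-analyticity and $D(U,K)$-module structure only coming into play after the structural reductions have been made.
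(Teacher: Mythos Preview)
Your overall architecture is correct and matches the paper: prove perfectness of $C_{\psi_{\mathrm{LT}}}(M)$ via Proposition \ref{prop:psiperfchar}(iii), handle the étale case by invoking Conjecture \ref{con:comparison}, reduce the general case to the isoclinic case by Harder--Narasimhan dévissage, and at the end convert to $C_\Psi$ and to $D(\Gamma_L,K)$ via Lemma \ref{lem:constantirrelevantpsi} and Lemma \ref{lem:FlatAlgebraProjectiveResolution}. All of this is how Theorem \ref{thm:psiperfect} and Corollary \ref{cor:constanttheorem} proceed.

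The gap is your treatment of isoclinic $M_0$ of non-integral slope. Your phrase ``Frobenius-power reindexing of the isoclinic decomposition'' suggests passing to a $\varphi_L^h$-module structure to make slopes integral, but that is not what the paper does and it is not clear such a maneuver interacts well with the $D(\Gamma_L,K)$-module structure or with Proposition \ref{prop:psiperfchar}. The paper's actual mechanism is Lemma \ref{lem:liuanalytic}: one constructs, for each $d$, an $L$-analytic isoclinic module $E_d$ of slope $1/d$ as a tower of nontrivial extensions of $\cR_L$ by $\cR_L(x)$, using the analytic cohomology computations of Lemmas \ref{lem:h2vanish} and \ref{lem:explicitcomputation} to guarantee that $H^1_{\text{an}}(E_d)\neq 0$ and $H^2_{\text{an}}(E_d)=0$ at every stage. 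Tensoring an isoclinic $M_0$ of slope $c/d$ with $E_d$ raises the slope to $(c+1)/d$, and the snake lemma applied to $0\to E_i\otimes M_0\to E_{i+1}\otimes M_0\to M_0\to 0$ gives, under the hypothesis that $M_0/(\psi-1)$ and $M_0(x)/(\psi-1)$ vanish, a surjection $(E_d\otimes M_0)^{\psi=1}\twoheadrightarrow M_0^{\psi=1}$. This yields a \emph{downward} induction on the numerator of the slope, with the base being the integral-slope case (handled via the $t_{\mathrm{LT}}$-sequence and Corollary \ref{kor:modtperfect}); Lemma \ref{lem:psisurjektiv} is used to arrange the vanishing-cokernel hypothesis by passing to $t_{\mathrm{LT}}^{-n}M_0$. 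This $E_d$-construction is the missing idea in your sketch.
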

So far we have not concerned ourselves with the étale picture. For $V \in \operatorname{Rep}_{o_L}(G_L)$ let $H^i_{\mathrm{Iw}}(L_{\infty}/L,V):=\varprojlim_{L\subset F\subset L_{\infty}}H^i(F,V),$ where the limit is taken along the corestriction maps and $F$ runs through the finite intermediate extensions.   Schneider and Venjakob showed the following 
\begin{thm}
	\label{thm:etalecase}
	Let $V \in \operatorname{Rep}_{o_L}(G_L)$  and let $M_{\mathrm{LT}}$ be the étale Lubin--Tate-$(\varphi_L,\Gamma_L)$-module over $\mathbf{A}_L$ attached to $V(\chi_{\mathrm{LT}}\chi_{cyc}^{-1})$ endowed with its integral $\psi$-operator (i.e. $\psi\circ \varphi =\frac{q}{\pi}$).
	\begin{enumerate}
		\item $H^i(L_{\infty}/L,V)$ vanishes for $i \neq 1,2.$
		\item $H^2(L_{\infty}/L,V)$ is $o_L$-finitely generated. 
		\item $H^1(L_{\infty}/L,V)$ is $\Lambda:=o_L\llbracket \Gamma_L \rrbracket$-finitely generated.
		\item $H^i_{\mathrm{Iw}}(L_{\infty}/L,V)$ is computed by the complex $$M_{\mathrm{LT}} \xrightarrow{\psi-1}M_{\mathrm{LT}}$$ concentrated in degrees $1$ and $2$. 
	\end{enumerate}
\end{thm}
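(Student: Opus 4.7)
The plan is to attack Theorem \ref{thm:etalecase} via a combination of Shapiro's lemma, a Lubin--Tate variant of Herr's theorem, and a collapse argument that converts the full Herr complex with $\Lambda$-coefficients into a two-term $\psi$-complex. First I would establish Shapiro's lemma in this setting: since the corestriction from $H^i(F,V)$ to $H^i(F',V)$ matches the natural projection $o_L[\![\Gamma_L/\Gamma_{F'}]\!] \to o_L[\![\Gamma_L/\Gamma_F]\!]$, passing to the limit gives
\[
H^i_{\mathrm{Iw}}(L_\infty/L,V) \;\cong\; H^i(G_L,\Lambda \otimes_{o_L} V),
\]
where $\Lambda$ carries the $G_L$-action coming from its $\Gamma_L$-action via $G_L \twoheadrightarrow \Gamma_L$. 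Vanishing outside degrees $1,2$ then follows from $\operatorname{cd}_p(G_L) = 2$ together with a direct argument that $H^0_{\mathrm{Iw}}$ vanishes, since corestriction on invariants is multiplication by the index $[F':F]$, which goes to infinity along the tower and kills any finitely generated $o_L$-module.

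The next step is to apply the Lubin--Tate Herr theorem (Theorem \ref{thm:fontaineequiv} supplemented by its cohomological refinement) to the coefficient module $\Lambda \otimes_{o_L} V$. Using that $\mathbb{D}$ is exact and tensorial together with a Mittag--Leffler passage to the limit over quotients $\Lambda/\mathfrak{m}^n$, one identifies the étale $(\varphi_L,\Gamma_L)$-module attached to $\Lambda \otimes V$ as $\Lambda \otimes_{o_L} \mathbb{D}(V)$ equipped with the diagonal $\Gamma_L$-action where $\gamma$ acts on the $\Lambda$-factor by $\lambda \mapsto \delta_{\gamma^{-1}} \lambda$. The key observation is that this diagonal module is $\Gamma_L$-induced from $\mathbb{D}(V)$ in a Shapiro-compatible way, so the Herr complex for $\Lambda \otimes V$ collapses: the columns computing $H^*(\Gamma_L,-)$ become acyclic in positive degree, leaving a complex quasi-isomorphic to
\[
\mathbb{D}(V) \xrightarrow{\varphi_L - 1} \mathbb{D}(V),
\]
now with the original $\mathbb{D}(V)$ viewed as a $\Lambda$-module through the reinstated diagonal action. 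Replacing $\varphi_L - 1$ with $\psi_{\mathrm{LT}} - 1$ is then a standard exact-sequence manipulation (multiplication by $\varphi_L$ gives an inverse up to the known scalar), using surjectivity of $\psi_{\mathrm{LT}}$ on étale modules.

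The twist by $\chi_{\mathrm{LT}} \chi_{\mathrm{cyc}}^{-1}$ is the only genuinely arithmetic point and would require the most care. It arises because the canonical identification between $\Lambda$ and the Pontryagin/Tate dual of Iwasawa cochains introduces the cyclotomic character through Tate's local duality, while the normalization of $\psi_{\mathrm{LT}}$ (satisfying $\psi_{\mathrm{LT}}\circ \varphi_L = q/\pi_L$) is tied to the Lubin--Tate character $\chi_{\mathrm{LT}}$ rather than $\chi_{\mathrm{cyc}}$; the difference between the two normalizations is precisely encoded by $\chi_{\mathrm{LT}}\chi_{\mathrm{cyc}}^{-1}$. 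I would pin this down by tracing the duality diagram on the level of differentials, using the residue pairing of Proposition \ref{prop:pairing} and the Kummer-theoretic description of the Tate twist.

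Finally, the finite generation claims (parts 2 and 3) follow once perfectness of the resulting $\psi_{\mathrm{LT}} - 1$ complex over $\Lambda$ is established. Surjectivity modulo $t_{\mathrm{LT}}$-powers (cf.\ Lemma \ref{lem:psimodt} in the analytic setting, with an integral analogue) combined with a Nakayama argument shows that $\operatorname{coker}(\psi_{\mathrm{LT}}-1)$ is $o_L$-finitely generated, handling $H^2_{\mathrm{Iw}}$. For $H^1_{\mathrm{Iw}} = \ker(\psi_{\mathrm{LT}}-1)$ one argues via topological Nakayama over $\Lambda$: the kernel is a closed submodule of a compact $\Lambda$-module, and reduction modulo the maximal ideal lands in a finite-dimensional space by Theorem \ref{thm:finitepsi}, which lifts to $\Lambda$-finite generation. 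The main obstacle I expect to encounter is the bookkeeping around the twist in Step~3: both the Shapiro-style collapse of the Herr complex and the precise matching of integral normalizations must be done compatibly, and in the Lubin--Tate (rather than cyclotomic) setting the absence of a canonical topological generator of $\Gamma_L$ forces one to work with the full Koszul/distribution-algebra machinery rather than a single operator $\gamma - 1$.
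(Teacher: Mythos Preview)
The paper does not actually prove this theorem: its entire proof reads ``See \cite[Lemma 5.12 and Theorem 5.13]{SV15}.'' So there is nothing to compare your argument against on the level of this paper; you have written a genuine proof sketch where the paper only quotes a reference.

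That said, your outline is broadly in line with how Schneider--Venjakob obtain the result, so it is worth a brief remark on where your sketch is solid and where it is vague. The Shapiro identification $H^i_{\mathrm{Iw}}(L_\infty/L,V)\cong H^i(G_L,\Lambda\otimes_{o_L}V)$, the vanishing outside degrees $1,2$ via $\operatorname{cd}_p(G_L)=2$ and the index argument for $H^0$, and the Nakayama-type finite generation arguments are all correct and standard. The collapse of the Herr complex with $\Lambda$-coefficients to a two-term $\psi$-complex is also the right idea. The one place where your sketch is noticeably soft is the origin of the twist $\chi_{\mathrm{LT}}\chi_{\mathrm{cyc}}^{-1}$: in \cite{SV15} this does not really come from Tate local duality on the Galois side as you suggest, but rather from the explicit computation of the Herr complex via the residue pairing on the $(\varphi_L,\Gamma_L)$-module side, where the relevant differential is $d\log_{\mathrm{LT}}$ rather than $d\log_{\mathrm{cyc}}$; the discrepancy between the two produces exactly $\tau=\chi_{\mathrm{LT}}\chi_{\mathrm{cyc}}^{-1}$. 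Your intuition that the mismatch between the $\psi_{\mathrm{LT}}$-normalisation and the cyclotomic Tate twist is responsible is morally right, but if you were to fill in the details you would want to track the twist through the module-theoretic duality (as in Proposition~\ref{prop:pairing}) rather than through Galois-cohomological local duality.
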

\begin{proof}
	See \cite[Lemma 5.12 and Theorem 5.13]{SV15}.
\end{proof}
In order to connect this result to the present situation assume first that $M_{\mathrm{LT}}$ is $L$-analytic and thus a fortiori overconvergent. An analogue of \cite[Proposition III.3.2]{cherbonnier1999theorie} (cf. \cite[Appendix]{SchneiderVenjakobRegulator}) allows us to view $M_{\mathrm{LT}}^{\psi=1}[1/p]$ as a $\Lambda$-submodule of $\mathbb{D}^{\dagger}(V)$\footnote{$\mathbb{D}^{\dagger}(V)$ was defined in \ref{def:dagger}.} and thus as a $\Lambda$-submodule of $\mathbb{D}^{\dagger}_{\text{rig}}(V)^{\psi=1}.$ Let $M_{\text{rig}}:=\mathbb{D}^{\dagger}_{\text{rig}}(V).$ Since $M_{\text{rig}}$ is $L$-analytic we obtain that $M_{\text{rig}}^{\psi=1}$ is even a $D(\Gamma_L,L)$-module and thus a natural map $$D(\Gamma_L,L) \otimes_\Lambda M_{\mathrm{LT}}^{\psi=1}\xrightarrow{\text{comp}} {M_{\text{rig}}}^{\psi=1}.$$
This leads us to the following natural conjecture.
\begin{con} \label{con:comparison}
	Let $M_{\mathrm{LT}}=\mathbb{D}(V)$ be an étale $L$-analytic $(\varphi_L,\Gamma_L)$-module over $\mathbf{A}_L$ and let $M_{\text{rig}} := \mathbb{D}^{\dagger}_{\text{rig}}(V)$ then the natural map $$D(\Gamma_L,L) \otimes_\Lambda M_{\mathrm{LT}}^{\psi=1}\xrightarrow{\text{comp}} {M_{\text{rig}}}^{\psi=1}$$ is surjective.
\end{con}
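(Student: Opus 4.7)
The plan is to factor the comparison map through the natural chain
\[ D(\Gamma_L,L) \otimes_{\Lambda} M_{\mathrm{LT}}^{\psi=1} \to D(\Gamma_L,L) \otimes_{\Lambda} M^{\dagger,\psi=1} \to M_{\text{rig}}^{\psi=1}, \]
where $M^{\dagger} := \mathbb{D}^{\dagger}(V)$ and $M^{\dagger,\psi=1} := M^{\dagger} \cap M_{\text{rig}}^{\psi=1}$. This reduces the problem to two essentially independent steps: first, show that the inclusion $M_{\mathrm{LT}}^{\psi=1}[1/p] \subset M^{\dagger,\psi=1}$ is an equality; second, show that the $D(\Gamma_L,L)$-span of $M^{\dagger,\psi=1}$ exhausts $M_{\text{rig}}^{\psi=1}$.

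For the first step I would adapt the classical argument of Cherbonnier, whose Lubin--Tate analogue is already in use in the appendix to \cite{SchneiderVenjakobRegulator}: any $m \in M^{\dagger,\psi=1}$ satisfies $m = \psi^n(m)$ for all $n \geq 0$, so the contractivity of $\psi$ with respect to the Gauss-type norms on an overconvergent model of $M^{\dagger}$ forces the coefficients of $m$ to remain uniformly bounded, confining $m$ to $\mathbb{D}(V)[1/p]$. The necessary contraction estimates should follow by the same type of explicit computation that underlies Lemma \ref{lem:CherbonnierLemma}, applied componentwise on a basis of $M^{\dagger}$.

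For the second step I propose a strategy modelled on the rank-one analysis of Proposition \ref{prop:rank1perf}. One would use a decomposition of the form
\[ 0 \to M_{\text{rig}}^+ \to M_{\text{rig}} \to M_{\text{rig}}/M_{\text{rig}}^+ \to 0, \]
with $M_{\text{rig}}^+ := \cR_L^+ \cdot M^{\dagger} \subset M_{\text{rig}}$, together with a suitable twisted version of Lemma \ref{lem:Colmezsequence} identifying the quotient piece with a $V$-valued locally $L$-analytic function space. A Mahler-type expansion (Lemma \ref{lem:mahler}) would then reduce the $\psi=1$ part of the quotient to an explicit $D(\Gamma_L,L)$-module, while an analogue of Lemma \ref{lem:pluspart} should describe $(M_{\text{rig}}^+)^{\psi=0}$ as a finitely generated $D(\Gamma_L,L)$-module, from which the image of $\varphi_L - 1$ acting on $(M_{\text{rig}}^+)^{\psi=1}$ can be related back to $M^{\dagger,\psi=1}$ via the exact sequence of Remark \ref{rem:exactheartsequence}. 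A diagram chase would then yield the desired surjectivity.

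The principal obstruction, and the reason the statement is formulated as a conjecture, is that for $L \neq \mathbb{Q}_p$ the algebra $D(\Gamma_L,L)$ is genuinely ``smaller'' than its counterpart over the period field $K$: the Lubin--Tate isomorphism of \cite{schneider2001p} realises $D(\Gamma_L,L)$ as a subring of $\cR_L^+$ only after base change to $K$, whereas the arguments underlying Lemma \ref{lem:pluspart} implicitly make use of all of $\cR_K^+$. Bridging this gap requires producing $\psi$-fixed elements with rationality properties over $L$ that are not visible from Schneider--Venjakob's integral picture, and I expect this to demand genuinely new input---for instance a Tate--Sen style descent along the Lubin--Tate tower, or a direct density argument comparing overconvergent and rigid-analytic Iwasawa cohomology---beyond what is accessible from the formal manipulations used so far.
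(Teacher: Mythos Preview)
The statement you are attempting to prove is a \emph{conjecture} in the paper, not a theorem; there is no proof in the paper to compare against. The paper explicitly leaves this open and uses it only as a hypothesis for Theorem~\ref{thm:psiperfect} and Corollary~\ref{cor:constanttheorem}. You yourself recognise this in your final paragraph, so what you have written is a strategy sketch together with an identification of the obstruction, not a proof.

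That said, a few comments on your outline. Your first step is essentially correct and already used in the paper: the appendix to \cite{SchneiderVenjakobRegulator} gives the Lubin--Tate analogue of Cherbonnier's argument showing $M_{\mathrm{LT}}^{\psi=1}[1/p] \subset \mathbb{D}^{\dagger}(V)$, which is precisely how the comparison map is set up in the discussion preceding the conjecture. Your second step, however, runs into a more concrete difficulty than the one you name. The paper's Section~\ref{sec:etalecomp} points out a \emph{rank} obstruction: for a character $\rho$, the $\Lambda$-rank of $H^1_{\mathrm{Iw}}(L_\infty/L,o_L(\rho))$ is $[\Gamma_L:U][L:\QQ_p]$, while the $D(\Gamma_L,K)$-rank of $M^{\psi=1}$ is only $[\Gamma_L:U]$. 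So after base change to $D(\Gamma_L,L)$ the source of the comparison map has generic rank $[L:\QQ_p]$ times that of the target, meaning the map has a large kernel but could still be surjective. Your proposed rank-one analysis via Lemma~\ref{lem:pluspart} would need to account for this collapse of rank, and it is not clear that the Mahler expansion alone sees enough of the integral structure to do so. The obstruction is thus not merely that $D(\Gamma_L,L)$ fails to be all of $\cR_L^+$, but that one must identify which $D(\Gamma_L,L)$-linear combinations of the $\Lambda$-generators of $M_{\mathrm{LT}}^{\psi=1}$ survive into $M_{\text{rig}}^{\psi=1}$ and span it.
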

In the classical case the map above is bijective by \cite[Proposition V.1.18]{colmez2010representations}. We will show that the surjectivity is sufficient for Conjecture \ref{con:perfectfromL}. We expect the map to not be injective as soon as $L\neq \QQ_p$ (cf. section \ref{sec:etalecomp}). 
Observe that Theorem \ref{thm:finitepsi} works over any base field and hence $M_{\text{rig}}/(\psi-1)$ is finite $L$-dimensional. This implies that $\psi-1$ is strict by Lemma \ref{lem:strictcheck} and because $K$ is an $L$-Banach space of countable type over $L$ we conclude using Lemma \ref{lem:onablebasechange} $$(M_{\text{rig}}\hat{\otimes}_{L}K)^{\psi=1} =(M_{\text{rig}}^{\psi=1})\hat{\otimes}_{L}K.$$
The results of Schneider and Venjakob in the étale case suggest that $\psi-1$ is the ``correct'' operator to study Iwasawa cohomology. 
This leads us to believe that the complex defined using $\psi$ is well-behaved while in order to obtain a quasi-isomorphism to the $(\varphi_L,Z)$-complex we need to work with the $\Psi$-complex with the left-inverse operator. Our philosophy is that the $c\Psi$-complex for some constant $c$ can be reinterpreted as the $\Psi$-complex of a module twisted by a (non-étale) character. In particular, if the analogue of Conjecture \ref{con:perfectfromL} concerning the $\psi$-complex is true for all analytic (not necessarily étale) $(\varphi_L,\Gamma_L)$-modules coming from $\cR_L$, then Lemma \ref{lem:constantirrelevantpsi} asserts that Conjecture \ref{con:perfectfromL} itself is true.

Let $\cR_L(x)$ be the rank one module attached to the character $\id\colon L^\times \to L^\times.$
We require an analogue of the fact that there exists an isoclinic module $E$ of slope $1/d$ that is a successive extension of $\cR_L(x)\cong\cR_Lt_{\mathrm{LT}}$ by $d-1$ copies of $\cR_L$ requiring in addition that the latter module is $L$-analytic. This allows us to argue inductively over the slopes of a module. While the $(\varphi_L,Z)$-cohomology enjoys a number of nice properties, its biggest downfall is the fact that it can only be defined over the large field $K$ that is not discretely valued and hence $\cR_K$ does not fit into the framework of Kedlayas slope theory.  In order to translate known results from the étale case to more general modules we use the Lie-algebra cohomology of $M$ that can be defined over $L$ using either the operators $(\nabla,\varphi-1)$ or $(\nabla,\Psi-1).$ We do not know whether they give the same cohomology groups in degree $2$ and since the cokernel of $\Psi-1$ is better behaved than that of $\varphi_L-1,$ we shall work with the $\Psi$-version.

\begin{defn} Let $M$ be an $L$-analytic $(\varphi_L,\Gamma_L)$-module over $\cR_L.$ Let $\nabla$ be the operator corresponding to $1 \in L \cong \operatorname{Lie}(\Gamma_L)$. We define the complex 
	$$C_{\text{Lie}}(M):= [M \xrightarrow{(\nabla,\Psi-1)}  M\oplus M \xrightarrow{(\Psi-1)(\operatorname{pr}_1)-\nabla(\operatorname{pr}_2)}M]$$
	concentrated in degrees $[0,2].$
	Denote by $H^i_{\text{Lie}}(-)$ the cohomology groups of the complex $C_\text{Lie}(-)$ and define $H^i_\text{an}(M):=H^i_{\text{Lie}}(M)^{\Gamma_L}.$
	Analogously we define $H^i_{\text{an}}(M/t_{\mathrm{LT}}M).$
\end{defn}
\begin{rem}
	One can show that the residual $\Gamma_L$-action on $H^i_{\text{Lie}}(M)$ is discrete (i.e.\ every element has open stabiliser). Since the cohomology groups are $L$-vector spaces we can deduce that $H^i_{\text{an}}(M)$ takes short exact sequences to long exact sequences in cohomology. 
\end{rem}
\begin{rem}
	Analogously one can define a version with $\varphi-1$ instead of $\Psi-1.$ Copying the proof of \cite[Proposition 4.1]{FX12} together with the comparison isomorphism in \cite[Theorem 2.2.2]{BFFanalytic} shows that the $H^0$ and $H^1$ agree for the $\varphi_L$ and $\Psi$-versions and agree with the corresponding cohomology groups for $L$-analytic cocycles studied by \cite{BFFanalytic}.
\end{rem}
The main reason to use the $\Psi$-version is the following Lemma.
\begin{lem} Let $M$ be an $L$-analytic $(\varphi_L,\Gamma_L)$-module then
	$H^2_{Lie}(M)$ is finite dimensional and Hausdorff.
\end{lem}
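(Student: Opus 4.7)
The plan is to show that $H^2_{\mathrm{Lie}}(M)=M/((\Psi-1)M+\nabla M)$ is a finite-dimensional Hausdorff $L$-vector space by cutting the problem in two: first control $M/(\Psi-1)M$ using Theorem \ref{thm:finitepsi}, then exploit the finite-dimensionality to handle the image of $\nabla$.

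First I would apply Theorem \ref{thm:finitepsi} with $A=L$ and $c=\pi_L/q \in L^\times$, which yields that $M/(\Psi-1)M = M/(c\psi_{\mathrm{LT}}-1)M$ is finitely generated over $L$, i.e., finite-dimensional. Next, $M$ is an LF-space (being the base change of the LF-space $\cR_L$ to a finitely generated projective module) and $\Psi-1\colon M\to M$ is continuous with finite-dimensional cokernel, so Lemma \ref{lem:strictcheck}(2) applies: $\Psi-1$ is strict, its image is closed in $M$, and $M/(\Psi-1)M$ is Hausdorff. Consequently $M/(\Psi-1)M$ is a finite-dimensional Hausdorff $L$-vector space, hence carries its unique canonical topology and every $L$-linear subspace is closed.

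Now denote by $\pi\colon M\to M/(\Psi-1)M$ the quotient map. The image $\pi(\nabla M)$ is an $L$-subspace of the finite-dimensional Hausdorff space $M/(\Psi-1)M$, hence closed. By continuity of $\pi$ the preimage $\pi^{-1}(\pi(\nabla M))=(\Psi-1)M+\nabla M$ is closed in $M$. Therefore the quotient $H^2_{\mathrm{Lie}}(M)=M/((\Psi-1)M+\nabla M)$ is Hausdorff in the quotient topology, and it is finite-dimensional since it is a further quotient of the finite-dimensional space $M/(\Psi-1)M$.

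There is no real obstacle here: the statement reduces, via Theorem \ref{thm:finitepsi} and the strictness criterion Lemma \ref{lem:strictcheck}, to the elementary fact that subspaces of a finite-dimensional Hausdorff topological $L$-vector space are closed. The only point requiring mild care is the verification that Theorem \ref{thm:finitepsi} is applicable with $A=L$ (which is just the observation that $L$ is affinoid over itself), and the identification $\Psi-1 = c\psi_{\mathrm{LT}}-1$ with $c\in L^\times$.
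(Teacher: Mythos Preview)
Your proof is correct and follows the same approach as the paper: invoke Theorem~\ref{thm:finitepsi} to get that $M/(\Psi-1)M$ is finite-dimensional, then use Lemma~\ref{lem:strictcheck}(2) to conclude. The paper's two-line argument leaves implicit exactly the steps you spell out.

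One minor streamlining: rather than applying Lemma~\ref{lem:strictcheck} to $\Psi-1$ and then doing the separate closed-subspace argument for $\pi(\nabla M)$, you can apply Lemma~\ref{lem:strictcheck}(2) directly to the last differential $d^1=(\Psi-1)\mathrm{pr}_1-\nabla\,\mathrm{pr}_2\colon M\oplus M\to M$. Its cokernel is $H^2_{\mathrm{Lie}}(M)$, which is a quotient of $M/(\Psi-1)M$ and hence already finite-dimensional; Lemma~\ref{lem:strictcheck}(2) then gives Hausdorffness in one stroke. This is likely the reading the paper intends, and it saves you the detour through closedness of subspaces in finite-dimensional spaces. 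Also note that, strictly speaking, Theorem~\ref{thm:finitepsi} in the paper is formulated for $A$ affinoid over $K$; the paper itself later remarks that the result ``works over any base field,'' which is what justifies the case $A=L$ here.
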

\begin{proof}
	The continuous map $\Psi-1\colon M \to M$ has finite dimensional cokernel by Theorem \ref{thm:finitepsi}. The statement follows from Lemma \ref{lem:strictcheck}.
\end{proof}
\begin{lem}
	\label{lem:h2vanish}
	For the $L$-analytic rank one $(\varphi_L,\Gamma_L)$-modules $\cR_L(x^i)$ we have
	$$H^2_{\text{an}}(\cR_L(x^i)) = 0$$ for every $i \in \Z.$
\end{lem}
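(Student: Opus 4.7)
The plan proceeds in two stages. First, I would exploit the $\Gamma_L$- and $\varphi_L$-equivariant short exact sequence
\begin{equation*}
0 \to \cR_L(x^{i+1}) \xrightarrow{t_{\mathrm{LT}}} \cR_L(x^i) \to \cR_L(x^i)/t_{\mathrm{LT}}\cR_L(x^i) \to 0,
\end{equation*}
in which the left-hand map is equivariant precisely because the $\chi_{\mathrm{LT}}$-weight of $t_{\mathrm{LT}}$ compensates the shift of character between $x^{i+1}$ and $x^i$. Since the residual $\Gamma_L$-action on $H^i_{\text{Lie}}$ is discrete, $H^i_{\text{an}}$ is an exact functor on short exact sequences of $L$-analytic $(\varphi_L,\Gamma_L)$-modules, yielding the long exact tail
\begin{equation*}
H^2_{\text{an}}(\cR_L(x^{i+1})) \to H^2_{\text{an}}(\cR_L(x^i)) \to H^2_{\text{an}}(\cR_L(x^i)/t_{\mathrm{LT}}) \to 0.
\end{equation*}

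By Lemma~\ref{lem:psimodt} the operator $\Psi-1$ is surjective on any quotient $M/t_{\mathrm{LT}}M$; since $\Psi-1$ appears as one component of the second differential $(a,b)\mapsto(\Psi-1)(a)-\nabla(b)$ of $C_{\text{Lie}}(M/t_{\mathrm{LT}}M)$, its surjectivity forces surjectivity of the full differential, so $H^2_{\text{Lie}}(\cR_L(x^i)/t_{\mathrm{LT}}) = 0$ and \emph{a fortiori} $H^2_{\text{an}}(\cR_L(x^i)/t_{\mathrm{LT}}) = 0$. The long exact sequence therefore collapses to a surjection $H^2_{\text{an}}(\cR_L(x^{i+1})) \twoheadrightarrow H^2_{\text{an}}(\cR_L(x^i))$, reducing the claim for all $i \in \Z$ to the vanishing for a single arbitrarily large value of $i$.

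For this base case I would use the perfect residue pairing of Proposition~\ref{prop:pairing}, upgraded to a Tate-style pairing at the level of Lie complexes, to identify $H^2_{\text{an}}(\cR_L(x^i))$ with the $L$-linear dual of $H^0_{\text{an}}(\cR_L(\chi_{\mathrm{LT}} x^{-i}))$; the latter is computed via the Sen-theoretic embedding of Remark~\ref{rem:sentheoryembedding} into $\mathbb{D}^+_{\mathrm{dif},n} = K_n\llbracket t_{\mathrm{LT}}\rrbracket$, where the kernel of $\nabla$ is at most the one-dimensional span of $t_{\mathrm{LT}}^{i-1}$, and the $\Psi=1$ condition then imposes a constraint on the twist satisfied only on a trivial subspace. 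The main obstacle is upgrading Proposition~\ref{prop:pairing} from a pairing of modules to a Tate-style duality on Lie complex cohomology, which requires verifying compatibility of the pairing with $\nabla$ and the Koszul differentials as well as non-degeneracy after passage to cohomology; alternatively one could try to establish the base case directly by showing $\Psi-1$ is surjective on $\cR_L(x^i)$ for $i\gg 0$ via a fixed-point argument, which however requires controlling the convergence of the iterates $\varphi_L^k(f)$ in the Fréchet topology of $\cR_L$ against the shifting radii of convergence under $\varphi_L$.
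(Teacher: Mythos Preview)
Your d\'evissage step is sound, but the proposal leaves the base case genuinely open: you yourself flag that upgrading Proposition~\ref{prop:pairing} to a full Tate duality on Lie complexes is the ``main obstacle,'' and your alternative route via surjectivity of $\Psi-1$ on $\cR_L(x^i)$ for $i\gg 0$ points in the wrong direction---Lemma~\ref{lem:psisurjektiv} gives surjectivity of $\psi-1$ on $t_{\mathrm{LT}}^{-n}M\cong\cR_L(x^{-n})$, i.e.\ for $i\ll 0$, whereas your surjection $H^2_{\text{an}}(\cR_L(x^{i+1}))\twoheadrightarrow H^2_{\text{an}}(\cR_L(x^i))$ propagates vanishing \emph{downward} in $i$ and thus needs a base case at $i\gg 0$.

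The paper's argument shows that the full Tate duality you worry about is unnecessary, and in fact dispenses with the d\'evissage altogether. One only needs the trivial half of duality in top degree: since $H^2_{\text{Lie}}(\cR_L(x^i))$ is finite-dimensional Hausdorff, any linear form on it is continuous; pulling back along the quotient map $\cR_L(x^i)\to H^2_{\text{Lie}}$ gives a continuous functional on $\cR_L(x^i)$ which, by Proposition~\ref{prop:pairing}, is exactly an element of $\check{M}=\cR_L(x^{-i})(\chi_{\mathrm{LT}})$. The condition that the functional factor through $H^2_{\text{Lie}}$ says it vanishes on the images of $\Psi-1$ and $\nabla$, which by the adjoint formulas of Proposition~\ref{prop:pairing} (the adjoint of $\Psi$ is $\varphi_L$ up to the pairing constants, and that of $\nabla$ is $-\nabla$) forces the corresponding element of $\check{M}$ to lie in $\cR_L(x^{-i})(\chi_{\mathrm{LT}})^{\varphi_L=1,\,\nabla=0}$. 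This space is zero for every $i$ by \cite[Proposition~5.6]{FX12}, so $H^2_{\text{Lie}}=0$ and hence $H^2_{\text{an}}=0$, all at once. Your approach~(a) was therefore the right instinct, but the point is that ``$(H^2_{\text{Lie}}(M))^*\hookrightarrow H^0_{\text{Lie}}(\check{M})$'' is immediate from the residue pairing and requires no further machinery.
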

\begin{proof}
	Since $H^2_{\text{Lie}}(\cR_L(x^i))$ is finite-dimensional Hausdorff, every linear form is continuous. 
	Suppose $$H^2_{\text{Lie}}(\cR_L(x^i)) =(\cR_L(x^i)/(\nabla,(\Psi-1)))$$ admits a (continuous) functional $H^2_{\text{Lie}}(\cR_L(x^i)) \to L,$ then pre-composing it with the canonical map $\cR_L(x^i) \to H^2_{\text{Lie}}(\cR_L(x^i))$ gives a continuous functional $\cR(x^i) \to L$  which by the duality described in  Proposition \ref{prop:pairing} corresponds to a unique element in $m \in \cR_L(x^{-i})(\chi_{\mathrm{LT}})$ killed by $\varphi_L-1$ and $\nabla_{\iota},$ where $\nabla_{\iota}$ denotes the adjoint of $\nabla.$ A computation using that the adjoint of $\gamma \in \Gamma_L$ is $\gamma^{-1}$ shows $\nabla_{\iota} = -\nabla$ and that $m$ is also killed by $\nabla$. Hence $m$ is an element in $\cR_L(x^{-i})(\chi_{\mathrm{LT}})^{\nabla=0,\varphi=1}$, but $\cR_L(x^{-i})(\chi_{\mathrm{LT}})^{\nabla=0,\varphi=1}=0$ by  \cite[Proposition 5.6]{FX12} and thus $m=0$. We conclude that $H^2_{\text{Lie}}(\cR_L(x^i))$ admits no non-zero functionals and therefore has to be zero.
\end{proof}
\begin{lem}
	\label{lem:explicitcomputation}
	Let $M$ be an $L$-analytic $(\varphi_L,\Gamma_L)$-module over $\cR_L.$ Then:
	\begin{enumerate}
		\item There is a canonical bijection between $H^1_{\text{an}}(M)$ and isomorphism classes of extensions $$0 \to M \to E\to \cR_L \to 0$$ of $L$-analytic $(\varphi_L,\Gamma_L)$-modules.
		\item For $i \in \N_0$ the $L$-dimension of $H^j_{\text{an}}(\cR_L(x^{-i}))$ in degrees $j=0,1,2$ is $1,2,0$ respectively.
		\item For $i \in \N$ the $L$-dimension of $H^j_{\text{an}}(\cR_L(x^{i}))$ in degrees $j=0,1,2$ is $0,1,0$ respectively.
	\end{enumerate}
\end{lem}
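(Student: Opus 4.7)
My plan is as follows.

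For part (1), I would set up the standard Yoneda/cocycle correspondence adapted to the Lie-algebra complex. Given an $L$-analytic extension $0\to M\to E\to \cR_L\to 0$, choose any $\cR_L$-linear splitting $s\colon \cR_L\to E$ (this is possible since $\cR_L$ is Bézout and hence $E$ splits as an $\cR_L$-module) and set $e=s(1)$. Because $\nabla(1)=0$ and $(\Psi-1)(1)=0$ in $\cR_L$, the pair $(\nabla(e),(\Psi-1)(e))$ lies in $M\oplus M$. It is a Lie $1$-cocycle since $\Psi$ commutes with $\nabla$ (which is the content of $L$-analyticity and the commutation $\gamma\circ\varphi_L=\varphi_L\circ\gamma$). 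Modifying the lift by $m\in M$ changes the cocycle by the coboundary $d^0(m)=(\nabla m,(\Psi-1)m)$, so the class in $H^1_{\mathrm{Lie}}(M)$ is well-defined. To see $\Gamma_L$-invariance, note that for $\gamma\in\Gamma_L$ the element $\gamma(e)$ is another lift of $1$, so $(\nabla(\gamma e),(\Psi-1)(\gamma e))=\gamma\cdot(\nabla e,(\Psi-1)e)$ differs from the original cocycle by a coboundary, and thus the class lies in $H^1_{\mathrm{an}}(M)$. The inverse map sends a cocycle $(a,b)$ to the extension $M\oplus \cR_L\cdot e$ with the unique semilinear actions determined by $\nabla(e)=a$, $\Psi(e)=e+b$ and compatibility with $\Gamma_L$ and $\varphi_L$; one checks the two constructions are inverse to each other and compatible with isomorphisms.

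For (2) and (3), I would start with the vanishing $H^2_{\mathrm{an}}(\cR_L(x^{\pm i}))=0$, which is Lemma~\ref{lem:h2vanish}. Next I compute $H^0$ directly. Since $\nabla(t_{\mathrm{LT}})=t_{\mathrm{LT}}$ and $\nabla(e_{x^{j}})=j\,e_{x^j}$, we have $\nabla(f\,e_{x^{j}})=(\nabla(f)+jf)\,e_{x^{j}}$, so an element $fe_{x^{j}}$ is killed by $\nabla$ iff $\nabla(f)=-jf$. The $\nabla$-eigenspaces on $\cR_L$ for eigenvalue $k$ are $L\cdot t_{\mathrm{LT}}^{k}$ for $k\ge 0$ and vanish for $k<0$; indeed, if $\nabla f=kf$ then $f\cdot t_{\mathrm{LT}}^{-k}\in(\cR_L[1/t_{\mathrm{LT}}])^{\nabla=0}=L$, and $t_{\mathrm{LT}}^{k}\in\cR_L$ exactly when $k\ge 0$. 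Applying this to $j=-i$ with $i\in\N_0$ yields $H^0_{\mathrm{Lie}}=L\cdot t_{\mathrm{LT}}^{i}e_{x^{-i}}$; this generator is killed by $\Psi-1$ since $\varphi_L(t_{\mathrm{LT}}^{i}e_{x^{-i}})=\pi_L^{i}t_{\mathrm{LT}}^{i}\cdot\pi_L^{-i}e_{x^{-i}}=t_{\mathrm{LT}}^{i}e_{x^{-i}}$, and is $\Gamma_L$-invariant by an analogous computation, giving $\dim H^0_{\mathrm{an}}=1$. For $j=i>0$, no solution exists in $\cR_L$, so $H^0_{\mathrm{an}}=0$.

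For $H^1$, the cleanest route is the Euler--Poincaré formula $\chi(\cR_L(\delta))=-1$ for $L$-analytic rank-one $(\varphi_L,\Gamma_L)$-modules over $\cR_L$, which is established in \cite{FX12} and \cite{BFFanalytic}. Combining the computations of $H^0$ and $H^2$ above, this forces $\dim H^1_{\mathrm{an}}(\cR_L(x^{-i}))=2$ in part (2) and $\dim H^1_{\mathrm{an}}(\cR_L(x^{i}))=1$ in part (3). The main obstacle is therefore the Euler--Poincaré formula itself, since it is non-trivial in our Lubin--Tate setting; an alternative is to use part (1) to realize $H^1$ as the space of $L$-analytic extension classes and invoke the classification of rank-two $L$-analytic $(\varphi_L,\Gamma_L)$-modules in \cite[Proposition~5.6]{FX12}, which exhibits precisely the expected number of linearly independent extensions (for $\cR_L(x^{-i})$ both a ``trivial'' cohomology class and a ``logarithm-type'' class involving $\log_{\mathrm{LT}}$; for $\cR_L(x^{i})$ only the logarithm-type class remains).
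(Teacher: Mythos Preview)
Your approach differs from the paper's, which proceeds almost entirely by citation: part~(1) is \cite[Theorem~4.2]{FX12} (stated for the $\varphi_L$-complex), parts~(2) and~(3) in degrees $0,1$ are \cite[Théorème~5.6]{colmez2016representations} together with the computations of Fourquaux--Xie, all transported to the $\Psi$-version via the comparison \cite[Theorem~2.2.2]{BFFanalytic}; degree $2$ is Lemma~\ref{lem:h2vanish} in both approaches. Your more explicit route is welcome in spirit, and your direct computation of $H^0$ is correct and cleaner than a bare citation, but there are two genuine gaps.

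In part~(1), the inverse construction is not well-defined as written. From a cocycle $(a,b)$ you impose $\nabla(e)=a$ and $\Psi(e)=e+b$, but this does \emph{not} determine a semilinear $\varphi_L$-action on the extension: $\Psi$ has a large kernel, so the condition $\Psi(e)=e+b$ leaves $\varphi_L(e)$ ambiguous up to an arbitrary element of $M^{\Psi=0}$. Similarly, $\nabla(e)=a$ only fixes the infinitesimal action; integrating it to a genuine $\Gamma_L$-action requires a convergence argument (this is where $L$-analyticity and $\Gamma_L$-invariance of the class enter, and is essentially the content of \cite[Proposition~4.1]{FX12}). The standard fix, and what the paper implicitly does, is to run the inverse construction in the $\varphi_L$-version---where a cocycle genuinely prescribes $\varphi_L(e)$---and then invoke the $\varphi_L/\Psi$ comparison for $H^1$ recorded in the remark preceding Lemma~\ref{lem:h2vanish}.

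For $H^1$ in parts~(2) and~(3), be careful that your route via the Euler--Poincaré formula is not circular: in the Lubin--Tate setting the identity $\chi(\cR_L(\delta))=-1$ is typically \emph{deduced} from the explicit dimension computations for rank-one modules in \cite{colmez2016representations} and \cite{FX12}, rather than proved independently. Your alternative---identifying $H^1$ with extension classes via part~(1) and reading off the dimension from the description of extensions---is closer to what those references actually do, but then you should point to the relevant dimension results (e.g.\ \cite[Théorème~5.6]{colmez2016representations}) rather than invoke an Euler--Poincaré formula whose proof rests on them.
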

\begin{proof} For the corresponding results using the $\varphi_L$-version see \cite[Theorem 0.1 (resp. Theorem 4.2)]{FX12} for the first statement and
	\cite[Théorème 5.6]{colmez2016representations} for 2 and 3 under the assumption that the field contains $\Omega_L$. The dimensions were computed without this assumption by Fourquaux and Xie in degrees $0,1$ and agree with the results of Colmez. By \cite[Theorem 2.2.2]{BFFanalytic} they can be translated to the $\Psi$-version. The computation of $H^2$ over $L$ was done in Lemma \ref{lem:h2vanish}.
\end{proof}
The proof of the following Lemma is based on Liu's proof in \cite[Lemma 4.2]{liu2007cohomology}, but we need to make some adjustments since we do not know in general whether the Euler--Poincaré characteristic formula holds for analytic cohomology.
\begin{lem}
	\label{lem:liuanalytic}
	There exist an $L$-analytic $(\varphi_L,\Gamma_L)$-module $E$ of rank $d$ that is isoclinic of slope $1/d$ and a successive extension of $\cR(x^{i}),$ where $i=0,1.$ It can be chosen such that $H^1_{\text{an}}(E) \neq 0$ and $H^2_{\text{an}}(E)=0.$
\end{lem}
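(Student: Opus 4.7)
The approach is modeled on Liu's construction \cite[Lemma 4.2]{liu2007cohomology}, adapted to the $L$-analytic Lubin--Tate setting. Set $E_1 := \cR_L(x)$, which has $H^1_{\text{an}}(E_1) \neq 0$ by Lemma \ref{lem:explicitcomputation}. For $i = 1, \ldots, d-1$, use Lemma \ref{lem:explicitcomputation}(1) to pick a nonzero class in $H^1_{\text{an}}(E_i) \cong \operatorname{Ext}^1_{\text{an}}(\cR_L, E_i)$ and define $E_{i+1}$ via the corresponding non-split extension
\[ 0 \to E_i \to E_{i+1} \to \cR_L \to 0. \]
The nonvanishing of $H^1_{\text{an}}(E_i)$ at each step will be a byproduct of the cohomology computation below. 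Finally set $E := E_d$; by construction this is an $L$-analytic $(\varphi_L,\Gamma_L)$-module that is a successive extension of the $\cR_L(x^j)$ for $j \in \{0,1\}$, of rank $d$ and degree $1$, hence slope $1/d$.

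I would then verify isoclinicity by an inductive argument on $i$, proving that every rank-one sub-$(\varphi_L,\Gamma_L)$-module of $E_{i+1}$ has slope strictly positive. Let $N \subset E_{i+1}$ be rank one. If $N \cap E_i \neq 0$, then $N \cap E_i$ is a nonzero sub of $E_i$, hence of slope $> 0$ by induction, which forces $\mu(N) > 0$. Otherwise $N$ injects into the quotient $\cR_L$; since $\cR_L$ is isoclinic of slope zero, $\mu(N) \geq 0$, and if $\mu(N) = 0$ then the classification of rank-one sub-$(\varphi_L,\Gamma_L)$-modules of $\cR_L$ (from \cite[Theorem 0.1]{FX12} together with the $H^0$-computations of Colmez/Fourquaux--Xie) forces $N \cong \cR_L$ mapping isomorphically onto the outer quotient, which would split the extension, contradicting non-triviality of the chosen class. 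The base case $E_1 = \cR_L(x)$ is immediate, since all rank-one subs of $\cR_L(x)$ are of slope $\geq 1$. Kedlaya's slope theory (applied to the $\varphi_L$-module structure) then implies that the Harder--Narasimhan filtration of $E$ is trivial, so $E$ is isoclinic of slope $1/d$.

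With $E$ constructed, the cohomology computation is a routine application of the long exact sequence attached to $0 \to E_i \to E_{i+1} \to \cR_L \to 0$. Using $h^0_{\text{an}}(\cR_L) = 1$, $h^1_{\text{an}}(\cR_L) = 2$, $h^2_{\text{an}}(\cR_L) = 0$ from Lemmas \ref{lem:explicitcomputation}(2) and \ref{lem:h2vanish}, and the initial values $h^0_{\text{an}}(E_1) = 0$, $h^1_{\text{an}}(E_1) = 1$, $h^2_{\text{an}}(E_1) = 0$ from Lemma \ref{lem:explicitcomputation}(3) and Lemma \ref{lem:h2vanish}, the inductive step observes that the connecting morphism $H^0_{\text{an}}(\cR_L) \to H^1_{\text{an}}(E_i)$ sends the generator to the extension class defining $E_{i+1}$, which is nonzero by construction. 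Hence $h^0_{\text{an}}(E_{i+1}) = 0$ and $h^1_{\text{an}}(E_{i+1}) = h^1_{\text{an}}(E_i) + 1$, while $H^1_{\text{an}}(\cR_L) \to H^2_{\text{an}}(E_i) = 0$ gives $h^2_{\text{an}}(E_{i+1}) = 0$. In particular $h^1_{\text{an}}(E) = d \geq 1$ and $h^2_{\text{an}}(E) = 0$.

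The main obstacle is the isoclinicity verification: placing $\cR_L$ rather than $\cR_L(x)$ at the bottom of the filtration would immediately violate isoclinicity, since $\cR_L$ has slope $0 < 1/d$. Ruling out slope-$\leq 0$ rank-one subs of $E_{i+1}$ with trivial intersection with $E_i$ requires the fact that the only slope-zero rank-one sub-$(\varphi_L,\Gamma_L)$-module of $\cR_L$ is $\cR_L$ itself, combined with the rigidity that a rank-one sub mapping isomorphically onto the quotient would provide a splitting. The remaining steps (cohomology vanishing and the dimension count) are then straightforward diagram chases in the long exact sequences.
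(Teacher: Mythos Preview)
Your overall strategy matches the paper's (and Liu's): build $E_1=\cR_L(x)$ and iterate non-split extensions by $\cR_L$, checking cohomology via the long exact sequence. Your cohomology argument is in fact more precise than the paper's---you track the connecting map $H^0_{\text{an}}(\cR_L)\to H^1_{\text{an}}(E_i)$ explicitly and obtain $h^1_{\text{an}}(E_d)=d$, whereas the paper only observes that $H^1_{\text{an}}(E_{d+1})$ surjects onto $H^1_{\text{an}}(\cR_L)\neq 0$.

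There is, however, a gap in your isoclinicity argument. You prove by induction only that every \emph{rank-one} sub-$(\varphi_L,\Gamma_L)$-module of $E_{i+1}$ has strictly positive slope, and then assert that ``Kedlaya's slope theory \dots\ implies that the Harder--Narasimhan filtration of $E$ is trivial.'' But there is no standard result saying that semistability can be tested on rank-one subobjects: an isoclinic destabilising piece of the HN filtration could have rank $>1$ and possess no rank-one $(\varphi_L,\Gamma_L)$-submodule at all (think of an irreducible étale module of dimension $>1$). The paper (following Liu) avoids this by running the induction with the stronger hypothesis ``$E_i$ is isoclinic of slope $1/i$'' and considering an \emph{arbitrary} proper nonzero subobject $P\subset E_{i+1}$ with $\mu(P)<1/(i+1)$. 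The key numerical observation is that $\operatorname{rank}(P)\le i+1$ forces $\deg(P)<1$, hence $\deg(P)\le 0$ and $\mu(P)\le 0$. Degree additivity in $0\to P\cap E_i\to P\to X\to 0$ with $X\subset\cR_L$ (so $\deg(X)\ge 0$) then gives $\deg(P\cap E_i)\le 0$, contradicting the inductive isoclinicity of $E_i$ unless $P\cap E_i=0$; at that point $P\hookrightarrow\cR_L$ really is rank one and your splitting argument applies. Your proof becomes correct once you strengthen the inductive hypothesis in this way and treat general $P$ rather than only rank-one $N$.
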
 
\begin{proof}
	We shall construct a sequence $(E_d)_{d}$ of $L$-analytic modules of rank $d \in \N$ isoclinic of slope $1/d$ such that $H^1_{\text{an}}(E_d) \neq 0$ and $H^2_{\text{an}}(E_d)=0.$
	Clearly $E_1:=\cR(x)$ is $L$-analytic of rank $1$ and isoclinic of slope $1.$ By Lemma \ref{lem:explicitcomputation} $E_1$ satisfies $H^1_{\text{an}}(E_1)\neq 0 $ and $H^2_{\text{an}}(E_1)=0.$
	Suppose $E_d$ has been constructed. Take a non-trivial extension $E_{d+1}$ corresponding to a non-zero element in $H^1_{\text{an}}(E_d)$ and consider the exact sequence $$0 \to E_{d} \to E_{d+1}\to \cR_L \to 0.$$ Passing to the long exact cohomology sequence we obtain by the second point of Lemma \ref{lem:explicitcomputation} an exact sequence $$\dots \to H^2_{\text{an}}(E_d) \to H^2_{\text{an}}(E_{d+1})\to 0$$ and the vanishing of $H^2_{\text{an}}(E_d)$ implies the vanishing of $H^2_{\text{an}}(E_{d+1}).$
	Due to the exactness of $$\dots \to  H^1_{\text{an}}(E_{d+1})\to H^1_{\text{an}}(\cR_L) \to H^2_{\text{an}}(E_d)=0$$ we conclude that $H^1_{\text{an}}(E_{d+1})$ surjects onto a non-zero space and hence has to be non-zero. The slope of $E_{d+1}$ is $1/(d+1)$ by construction. It remains to see that $E_{d+1}$ is isoclinic. For the convenience of the reader we reproduce Liu's argument. Suppose $P \subset E_{d+1}$ is a non-zero proper subobject of slope $\mu(P)=\frac{\deg(P)}{\operatorname{rank}(P)}<1/(d+1).$ Then its rank is bounded above by ${d+1}$ and hence  $\deg(P) \leq 0$ is necessary which implies $\mu(P)\leq 0.$ Denote by $X$ the image of $P$ in $\cR_L.$ The exact sequence $$0 \to P\cap E_{d} \to P \to X \to 0$$ implies by general Harder--Narasimhan theory (cf. \cite[4.4]{pottharst2020harder}) that the corresponding slopes are given either in  ascending or descending order. Since $\mu(P) \leq 0$ and $\mu(X)\geq 0$ due to $\cR_L$ being isoclinic of slope $0$ we conclude that we have $\mu(P\cap E_{d}) \leq 0$ which together with the fact that $E_d$ is isoclinic of slope $1/d$ implies that $P\cap E_{d}=0$ holds. This in turn means $P$ is a subobject of $\cR_L$ with slope $\leq 0$ and hence isomorphic to $\cR_L.$ This contradicts the assumption that the extension $E_{d+1}$ is not split.
\end{proof}
\begin{thm}
	\label{thm:psiperfect} Assume Conjecture \ref{con:comparison} is true.
	Let $M_0$ be an $L$-analytic $(\varphi_L,\Gamma_L)$-module over $\cR_L$ and let $M:= K\hat{\otimes}_{L}M_0$. Then the complex $C_{\psi_{\mathrm{LT}}}(M)$ of $D(U,K)$-modules is perfect.
\end{thm}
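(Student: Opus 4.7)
By Proposition \ref{prop:psiperfchar}, it suffices to show that $M^{\psi_{\mathrm{LT}}=1}$ is finitely generated over $D(U,K)$. My plan is to proceed by induction on the Harder--Narasimhan slopes of $M_0$, following the strategy of Kedlaya--Pottharst--Xiao and reducing to the étale case, which is handled by Conjecture \ref{con:comparison} together with Theorem \ref{thm:etalecase}. The first reduction is that the property ``$C_{\psi_{\mathrm{LT}}}(M)$ is perfect'' is closed under extensions of $M_0$, since the cone of a morphism of perfect complexes is perfect and $K\hat{\otimes}_L-$ is exact on strict short exact sequences of Fréchet modules by Lemma \ref{lem:onablebasechange}. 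Applying this to the Harder--Narasimhan filtration of $M_0$ reduces us to the case where $M_0$ is isoclinic of some slope $s \in \mathbb{Q}$.

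\textbf{Base case ($s=0$).} Then $M_0 \cong \mathbb{D}^{\dagger}_{\mathrm{rig}}(V)$ for an $L$-analytic $G_L$-representation $V$. The surjection provided by Conjecture \ref{con:comparison}, combined with Theorem \ref{thm:etalecase}(3) which gives that $\mathbb{D}(V)^{\psi_{\mathrm{LT}}=1}$ is $\Lambda$-finitely generated, shows that $M_0^{\psi_{\mathrm{LT}}=1}$ is $D(\Gamma_L,L)$-finitely generated. Since $\psi_{\mathrm{LT}}-1$ has finite-dimensional cokernel by Theorem \ref{thm:finitepsi} and is therefore strict by Lemma \ref{lem:strictcheck}, Lemma \ref{lem:onablebasechange} yields $M^{\psi_{\mathrm{LT}}=1} \cong K\hat{\otimes}_L M_0^{\psi_{\mathrm{LT}}=1}$, which is $D(\Gamma_L,K)$-finitely generated and hence $D(U,K)$-finitely generated by Lemma \ref{lem:FlatAlgebraProjectiveResolution}.

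\textbf{Inductive step ($s\neq 0$).} The crucial input is Lemma \ref{lem:liuanalytic}, which produces an $L$-analytic module $E$ of slope $1/d$ arising as a successive extension of copies of $\cR_L$ and $\cR_L(x)$. By tensoring $M_0$ with a suitable product built from $E$, its dual, and rank-one twists $\cR_L(x^i)$, one forces $M_0\otimes_{\cR_L} N$ to be isoclinic of slope $0$ and hence étale, so that the base case applies to it. The filtration of $N$ by rank-one subquotients $\cR_L(x^{i_j})$ induces a filtration of $M_0\otimes N$ whose successive quotients are twists $M_0(x^{i_j})$. Combining the perfectness of $M_0\otimes N$ with the extension-closure above, and using that perfectness of $C_{\psi_{\mathrm{LT}}}$ behaves well under the character twists occurring in $N$ (via arguments analogous to Lemma \ref{lem:constantirrelevantpsi}, together with the $\mod t_{\mathrm{LT}}$ results Lemmas \ref{lem:phimodt}, \ref{lem:psimodt} and Corollary \ref{kor:modtperfect} to absorb finite-dimensional discrepancies), the dévissage propagates perfectness to each $M_0(x^{i_j})$ and in particular to $M_0$.

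\textbf{Main obstacle.} The delicate part is arranging the dévissage of Step 3 so that perfectness propagates in the correct direction: an exact triangle only lets one conclude perfectness of the third term once two others are known to be perfect. One therefore has to interleave the induction on the slope of $M_0$ with an induction on the rank of the auxiliary module $N$, and organise the filtration of $N$ so that at every stage enough subquotients are controlled by the induction hypothesis. The restriction to modules arising from $\cR_L$ is essential precisely here, since Kedlaya's slope theory operates over the Bézout ring $\cR_L$ but not over $\cR_K$; this is exactly the structural reason the introduction warns that the final result cannot be descended to $M_0$ itself.
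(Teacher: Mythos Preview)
Your outline has the right shape—Harder--Narasimhan reduction to isoclinic, the étale case via Conjecture \ref{con:comparison}, and the use of Liu's module $E$ from Lemma \ref{lem:liuanalytic}—but the inductive step as you describe it does not close. The problem is exactly the one you flag in your ``Main obstacle'' paragraph and then leave unresolved: if $N$ has a filtration with graded pieces $\cR_L(x^{i_j})$, then knowing that $C_{\psi}(M_0\otimes N)$ is perfect gives you a triangle in which only \emph{one} vertex is controlled, so the two--out--of--three principle cannot extract perfectness for any individual $M_0(x^{i_j})$. Your appeal to Lemma \ref{lem:constantirrelevantpsi} does not help here: that lemma treats twists by characters trivial on $o_L^\times$, whereas the characters $x^{i}$ are not.

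The paper's proof avoids this by working not with perfectness of complexes but with \emph{finite generation of $M^{\psi=1}$ directly}, and by engineering surjections at that level. The key extra ingredient you are missing is Lemma \ref{lem:psisurjektiv}: one first replaces $M_0$ by $t_{\mathrm{LT}}^{-n}M_0$ so that $\psi-1$ becomes surjective (this is harmless thanks to Corollary \ref{kor:modtperfect}). Then, tensoring the short exact sequences $0\to E_i\to E_{i+1}\to \cR_L\to 0$ with $M_0$ and applying the snake lemma, the vanishing of the cokernels of $\psi-1$ propagates and one obtains a \emph{surjection} $(E_d\otimes M_0)^{\psi=1}\twoheadrightarrow M_0^{\psi=1}$. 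Since $E_d\otimes M_0$ has slope shifted by $1/d$, one runs a reverse induction from the integral-slope case (handled by repeated $t_{\mathrm{LT}}$-twists and Corollary \ref{kor:modtperfect}) down to the given rational slope. No dual of $E$ and no single auxiliary $N$ making $M_0\otimes N$ étale are needed; the mechanism is the snake-lemma surjection, not a triangle argument.
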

\begin{proof} We abbreviate $\psi= \psi_{\mathrm{LT}}.$ By abuse of language we say $M$ is étale if $M_0$ is étale. Similarily we mean the slopes of $M_0$ when refering to the slopes of $M.$
	Since the complex is bounded it suffices to prove the statement for the cohomology groups by \cite[\href{https://stacks.math.columbia.edu/tag/066U}{Tag 066U}]{stacks-project}. More precisely Proposition \ref{prop:psiperfchar} shows that finite generation of $M^{\psi=1}$ is sufficient.
	Hence if $M$ is étale the statement follows by combining Conjecture \ref{con:comparison} with Proposition \ref{prop:psiperfchar}.
	Suppose $M$ is isoclinic and has integral slopes. Then $Mt^n_{\mathrm{LT}}$ is étale for some power $n \in \Z$ and we argue inductively via the exact sequence of complexes induced by the sequence $$0 \to Mt_{\mathrm{LT}} \to M \to M/t_{\mathrm{LT}} \to 0.$$ For negative integers we set $N = t^{-1}_{\mathrm{LT}}M$ and use the corresponding sequence for $N.$
	We obtain a short exact sequence of complexes
	$$0 \to C_{\psi}(Mt_{\mathrm{LT}}) \to C_{\psi}(M) \to C_{\psi}(M/t_{\mathrm{LT}}) \to 0.$$ Here we make implicit use of the exactness of $K \hat{\otimes}_L-$ for strict sequences of Fréchet spaces given by Lemma \ref{lem:onablebasechange} and the exactness of filtered colimits to reduce to the Fréchet case. 
	The perfectness of the rightmost term holds unconditionally by Corollary \ref{kor:modtperfect}. By induction hypothesis either the middle or the leftmost term are perfect. In both cases we conclude from \cite[\href{https://stacks.math.columbia.edu/tag/066R}{Tag 066R}]{stacks-project} that the third term is also perfect. This concludes the case of integral slopes. 
	We first show that the theorem holds for any isoclinic $M$ such that $M/(\psi-1)$ and $M(x)/(\psi-1)$ vanishes.
	Recall that in the proof of Lemma \ref{lem:liuanalytic} we produced a family of exact sequences $$0 \to E_{i}\to E_{i+1} \to \cR_L \to 0$$ such that $E_{i}$ is isoclinic of slope $1/i$ starting with $E_1 = \cR(x).$ Tensoring this sequence with $M_0$ and applying $\psi-1$ we get a commutative diagram with exact rows
	$$\begin{tikzcd}
		0 \arrow[r] & E_i\otimes_{\cR_L} M_0 \arrow[r] \arrow[d, "\psi-1"] & E_{i+1}\otimes_{\cR_L} M_0 \arrow[r] \arrow[d, "\psi-1"] & M_0 \arrow[r] \arrow[d, "\psi-1"] & 0 \\
		0 \arrow[r] & E_i\otimes_{\cR_L} M_0 \arrow[r]                     & E_{i+1}\otimes_{\cR_L} M_0 \arrow[r]                     & M_0 \arrow[r]                     & 0
	\end{tikzcd}$$
	and induction on $i$ together with the Snake Lemma shows that $(E_i \otimes_{\cR_L} M_0)/(\psi-1)$ vanishes for every $i.$
	For $i = d-1$ we obtain a surjection $(E_{d} \otimes_{\cR_L} M_0)^{\psi=1} \to (M_0)^{\psi=1}$ and $E_d \otimes_{\cR_L} M_0$ is pure of slope $c/d +1/d = (c+1)/d.$ By  Proposition \ref{prop:psiperfchar}  we are done if we can show that $M^{\psi=1}$ is finitely generated. Due to strictness of $\psi-1$ we have $M^{\psi=1} = K\hat{\otimes}_L M_0^{\psi=1}$ and it suffices to show that $M_0^{\psi=1}$ is finitely generated. Our argument thus far shows that given an isoclinic module $M_0$ of some slope $\frac{c}{d}$ with vanishing $(\psi-1)$-cokernel there exists an isoclinic module of slope $\frac{c+i}{d}$ say $X_i$ with vanishing $(\psi-1)$-cokernel such that $X_i^{\psi=1}$ surjects onto $M_0^{\psi=1}.$
	We fix $d$ and argue inductively ``in reverse'', i.e.\ start with the base case $\frac{d}{d}$ and from the above deduce the statement for $\frac{(d-i)}{d}$ for $i \in \{1,\dots,d-1\}.$ \\
	Now let $M_0$ be arbitrary isoclinic then $t_{\mathrm{LT}}^{-n}M_0$ satisfies $t_{\mathrm{LT}}^{-n}M_0/(\psi-1)=0$ by Lemma \ref{lem:psisurjektiv} and we can thus apply the preceding result and Corollary \ref{kor:modtperfect}.
	Finally assume $M_0$ is arbitrary then either $M_0$ is isoclinic or it fits into an exact sequence $0 \to N \to M_0 \to M_0/N\to 0$ with $M_0/N$ isoclinic and $\operatorname{rank}(N)< \operatorname{rank}(M).$ Since every rank $1$ module is automatically isoclinic we deduce the general statement by induction over the rank of $M.$
\end{proof}
\begin{cor} \label{cor:constanttheorem}
	In the situation of Theorem \ref{thm:psiperfect} the complex $C_{c\Psi}(M)$ is perfect as a complex of $D(\Gamma_L,K)$-modules for any $c \in L^{\times}.$ 
\end{cor}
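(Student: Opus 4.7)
The plan is to mimic the strategy used in the proof of Theorem \ref{thm:perfectTrianguline}, with Theorem \ref{thm:psiperfect} replacing Proposition \ref{prop:rank1perf} as the input. In brief: rewrite $C_{c\Psi}(M)$ as the $\psi_{\mathrm{LT}}$-complex of a suitable twist of $M$ that still descends to $\cR_L$, apply Theorem \ref{thm:psiperfect}, and finally upgrade perfectness from $D(U,K)$ to $D(\Gamma_L, K)$.

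Concretely, given $c \in L^\times$ I would define the locally $L$-analytic character $\tau \colon L^\times \to L^\times$ by $\tau(\pi_L) = q/(c\pi_L)$ and $\tau|_{o_L^\times} = 1$. The key computational step, carried out exactly as in the proof of Lemma \ref{lem:constantirrelevantpsi}, is the identity $\Psi_{M(\tau)} = \tau(\pi_L)^{-1}\Psi_M$, which combined with the normalisation $\psi_{\mathrm{LT}} = (q/\pi_L)\Psi$ gives $\psi_{\mathrm{LT},M(\tau)} = \tau(\pi_L)^{-1}\psi_{\mathrm{LT},M}$. Since $\tau|_{o_L^\times}=1$, the $\Gamma_L$-action is unchanged by the twist, so the identity on underlying modules induces an isomorphism of $D(\Gamma_L,K)$-equivariant complexes
\[
C_{\psi_{\mathrm{LT}}}(M(\tau)) \;\cong\; C_{c\Psi}(M),
\]
because the requirement $\tau(\pi_L)^{-1}\psi_{\mathrm{LT},M} = c\Psi_M$ is exactly our choice of $\tau(\pi_L)$. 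Crucially, $\tau$ takes values in $L^\times$, so the twist already descends to $\cR_L$: one has $M(\tau) = K\hat{\otimes}_L M_0(\tau)$ with $M_0(\tau)$ an $L$-analytic $(\varphi_L,\Gamma_L)$-module over $\cR_L$. Applying Theorem \ref{thm:psiperfect} to $M_0(\tau)$ therefore shows that $C_{\psi_{\mathrm{LT}}}(M(\tau))$, and hence $C_{c\Psi}(M)$, is perfect as a complex of $D(U,K)$-modules.

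To upgrade from $D(U,K)$ to $D(\Gamma_L,K)$ I would invoke Lemma \ref{lem:FlatAlgebraProjectiveResolution}. Since $\Gamma_L$ is abelian, $U$ is a finite-index normal open subgroup, and taking $A=K$ the lemma converts a finite projective resolution over $D(U,K)$ of each cohomology group $H^i_{c\Psi}(M)$ (which is finitely generated of finite projective dimension by the previous step) into a finite projective resolution over $D(\Gamma_L,K)$. Reassembling these via \cite[\href{https://stacks.math.columbia.edu/tag/066U}{Tag 066U}]{stacks-project}, as in the last step of the proof of Proposition \ref{prop:psiperfchar}, yields perfectness of $C_{c\Psi}(M)$ as a complex of $D(\Gamma_L,K)$-modules.

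There is no genuine obstacle here: the corollary is a formal bookkeeping consequence of Theorem \ref{thm:psiperfect}, structurally identical to the deduction of Theorem \ref{thm:perfectTrianguline} from Proposition \ref{prop:rank1perf}. The only point to verify carefully is that the twist character $\tau$ can be chosen inside $L^\times$, which is precisely why the hypothesis $c \in L^\times$ (rather than merely $c \in K^\times$) is imposed.
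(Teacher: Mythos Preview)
Your proof is correct and follows the same approach as the paper: twist via Lemma \ref{lem:constantirrelevantpsi} to land in the $\psi_{\mathrm{LT}}$-complex of an $L$-analytic module descending to $\cR_L$, apply Theorem \ref{thm:psiperfect}, then upgrade to $D(\Gamma_L,K)$ via Lemma \ref{lem:FlatAlgebraProjectiveResolution} on the cohomology groups. The paper's proof is terser but structurally identical; you have also correctly identified that the restriction $c \in L^\times$ is needed precisely so that the twist character is $L^\times$-valued and the twisted module still descends to $\cR_L$.
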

\begin{proof}
	Apply Lemma \ref{lem:constantirrelevantpsi} and Theorem \ref{thm:psiperfect} to conclude perfectness as a complex of $D(U,K)$-modules. An application of Lemma \ref{lem:FlatAlgebraProjectiveResolution} to the cohomology groups implies perfectness as a complex of $D(\Gamma_L,K)$-modules. The cohomology groups are coadmissible because finite projective modules are coadmissible and coadmissible modules form an abelian category.
\end{proof}
\begin{rem}
	We do not know whether the corresponding statement is true for the complex of $D(\Gamma_L,L)$-modules $C_{\psi}(M_0)$ since Lemma \ref{lem:psimodt} and Lemma \ref{lem:finiteperfect} made use of the explicit description of $D(U,K)$ as a power-series ring.
\end{rem}
\section{Towards the Euler--Poincaré formula}
\label{sec:etalecomp}
In this section we discuss the Euler--Poincaré characteristic formula for the analytic Herr complex $C_{\Psi,Z}(M)$ for $L$-analytic $(\varphi_L,\Gamma_L)$-modules. From formal arguments one can deduce a variant of the formula involving the heart $\mathcal{C}(M)$  for all $L$-analytic $(\varphi_L,\Gamma_L)$-modules over $\cR_K.$ Our computations in the trianguline case are sufficient to prove the expected formula in this case. 
\begin{defn}
	Let $M$ be an $L$-analytic $(\varphi_L,\Gamma_L)$-module over $\cR_A.$ We define the \textbf{Euler--Poincaré characterstic} of $M$ as $$\chi(M):=\sum_{i \in \mathbb{N}_0} (-1)^i \operatorname{rank}_A(H^i_{\psi,Z}(M)).$$
\end{defn}
Theorem \ref{thm:perfect} asserts that $\chi(M)$ is well-defined.
Note that this formula depends on $Z$ or more precisely on the index of the group $U \subset \Gamma_L$ used to define $Z.$ We expect  $$\chi(M) = - [\Gamma_L:U] \operatorname{rank}_{\cR_A}(M).$$

When considering modules over relative Robba rings $\cR_A,$ the validity of such a formula can be checked on each fibre $z \in \operatorname{Sp}(A)$ and thus there is no harm in assuming $A = K.$ The classical methods of Herr (cf.\cite[Section 4.2]{herr1998cohomologie}) show that the heart $\mathcal{C}(M)$ of $M$ plays an integral role. The following proposition is taken from an older version of \cite{MSVW}.
\begin{prop}
	\label{prop:heartEPC}
	Let $M$  be an $L$-analytic $(\varphi_L,\Gamma_L)$-module over $\cR_K.$ Then $\mathcal{C}(M)/Z$ is finite $K$-dimensional and 
	$\chi(M) = -\operatorname{dim}_K(\mathcal{C}(M)/Z).$ 
\end{prop}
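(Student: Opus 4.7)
My plan is to compute the cohomology groups of $C_{\Psi,Z}(M)$ directly in terms of the operators $\Psi-1$ and $Z$, and then to bridge to $\mathcal{C}(M)$ via the heart sequence of Remark \ref{rem:exactheartsequence}. Unraveling the total complex of the displayed double complex gives the standard Koszul-type identifications
\begin{equation*}
H^0_{\Psi,Z}(M) \cong M^{\Psi=1,\,Z=0}, \qquad H^2_{\Psi,Z}(M) \cong M/((\Psi-1)M + ZM),
\end{equation*}
together with a short exact sequence
\begin{equation*}
0 \to M^{\Psi=1}/ZM^{\Psi=1} \to H^1_{\Psi,Z}(M) \to (M/(\Psi-1)M)^{Z=0} \to 0
\end{equation*}
arising from either of the two spectral-sequence filtrations of the bicomplex. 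By Theorem \ref{thm:finitepsi}, $M/(\Psi-1)M$ is finite-dimensional over $K$, so $Z$ acting on it has kernel and cokernel of equal dimension; these contributions therefore cancel in the alternating sum. Since Theorem \ref{thm:perfect} guarantees that all three cohomology groups are finite-dimensional over $K$, the remaining terms are all finite, and we obtain
\begin{equation*}
\chi(M) = \dim_K M^{\Psi=1,\,Z=0} - \dim_K M^{\Psi=1}/ZM^{\Psi=1}.
\end{equation*}

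To identify the right-hand side with $-\dim_K \mathcal{C}(M)/Z$, I apply the snake lemma to the commutative diagram obtained by multiplying the short exact sequence
\begin{equation*}
0 \to M^{\varphi_L=1} \to M^{\Psi=1} \xrightarrow{\varphi_L-1} \mathcal{C}(M) \to 0
\end{equation*}
of Remark \ref{rem:exactheartsequence} with $Z$. This yields the six-term exact sequence
\begin{equation*}
0 \to (M^{\varphi_L=1})^{Z=0} \to M^{\Psi=1,\,Z=0} \to \mathcal{C}(M)^{Z=0} \to M^{\varphi_L=1}/Z \to M^{\Psi=1}/Z \to \mathcal{C}(M)/Z \to 0.
\end{equation*}
Two observations collapse this. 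First, $M^{\varphi_L=1}$ is finite-dimensional over $K$ by Remark \ref{rem:finiteperfectapplied}, so the alternating contribution of its $Z$-kernel and $Z$-cokernel vanishes. Second, $\mathcal{C}(M)$ sits inside $M^{\Psi=0}$, and the latter is $D(\Gamma_L,K)$-torsion-free by virtue of being projective over $\cR_K(\Gamma_L)$ (\cite[Theorem 2.4.5]{RustamFiniteness}), as already noted in Remark \ref{rem:exactheartsequence}; hence $\mathcal{C}(M)^{Z=0} = 0$. Taking alternating $K$-dimensions in the six-term sequence then gives
\begin{equation*}
\dim_K M^{\Psi=1}/Z - \dim_K M^{\Psi=1,\,Z=0} = \dim_K \mathcal{C}(M)/Z,
\end{equation*}
which, because the left-hand side is finite, simultaneously proves finite-dimensionality of $\mathcal{C}(M)/Z$ and, combined with the Euler characteristic formula above, the desired equality $\chi(M) = -\dim_K \mathcal{C}(M)/Z$.

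The argument is essentially two diagram chases; the only point where something nontrivial is used is the vanishing $\mathcal{C}(M)^{Z=0} = 0$, which relies crucially on the projectivity of $M^{\Psi=0}$ over $D(\Gamma_L,K)$. Note also that the formula gives no information about the actual value of $\chi(M)$ unless one can compute $\dim_K \mathcal{C}(M)/Z$; this reflects the discussion preceding the proposition, where $\mathcal{C}(M)$ is projective of rank $[\Gamma_L:U]\operatorname{rank}_{\cR_K}(M)$ in the cases treated.
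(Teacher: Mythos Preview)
Your proof is correct and uses the same ingredients as the paper's: finiteness of $M/(\Psi-1)M$ and $M^{\varphi_L=1}$, perfectness of $C_{\Psi,Z}(M)$, and torsion-freeness of $\mathcal{C}(M)\subset M^{\Psi=0}$ to get $\mathcal{C}(M)^{Z=0}=0$. The only organizational difference is that the paper packages everything into a single three-step filtration $\mathcal{F}^2\subset\mathcal{F}^1\subset\mathcal{F}^0=C_{\Psi,Z}(M)$ (with $\mathcal{F}^1=[M^{\Psi=1}\xrightarrow{Z}M^{\Psi=1}]$ and $\mathcal{F}^2=[M^{\varphi_L=1}\xrightarrow{Z}M^{\varphi_L=1}]$) and reads off $\chi$ from the $E_1$-page of the associated spectral sequence, whereas you split this into the standard row filtration of the bicomplex followed by the snake lemma for $Z$ on the heart sequence; these two steps correspond exactly to the inclusions $\mathcal{F}^1\subset\mathcal{F}^0$ and $\mathcal{F}^2\subset\mathcal{F}^1$ respectively.
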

\begin{proof}
	By the exact sequence from Remark \ref{rem:exactheartsequence} it suffices to show that $M^{\Psi=1}/Z$ is finite $K$-dimensional. Let $m \in M^{\Psi=1}.$ One checks that $(0,m)$ is a $1$-cocycle for $C_{\Psi,Z}(M)$ and the coboundary condition $(0,m) = ((\Psi-1)(n),Zn)$ for some $n \in M$ implies $n \in M^{\Psi=1}$ and therefore $m \in ZM^{\Psi=1}.$ We thus have an injection $M^{\Psi=1}/Z  \hookrightarrow H^1_{\Psi,Z}(M)$ forcing the left-hand side to be finite $K$-dimensional by Theorem \ref{thm:perfect} as a subspace of a finite-dimensional space. Consider the finite filtration of $\mathcal{F}^0C_{\Psi,Z}(M):=C_{\Psi,Z}(M)$ by the complexes
	$$\mathcal{F}^1C_{\Psi,Z}(M) = [M^{\Psi=1} \xrightarrow{Z} M^{\Psi=1}],$$ 
	$$\mathcal{F}^2C_{\Psi,Z}(M) = [M^{\varphi_L=1} \xrightarrow{Z} M^{\varphi_L=1}]$$ concentrated in degrees $[0,1]$ and $\mathcal{F}^3C_{\Psi,Z}(M)=0.$
	Clearly $$\mathrm{gr}^2C_{\Psi,Z}(M)  = [M^{\varphi_L=1} \xrightarrow{Z} M^{\varphi_L=1}]$$ and the exact sequence from Remark \ref{rem:exactheartsequence} shows $$\mathrm{gr}^1C_{\Psi,Z}(M) =[ \mathcal{C}(M) \xrightarrow{Z} \mathcal{C}(M)].$$ An argument analogous to \cite[Lemme 4.2]{herr1998cohomologie} shows that $\mathrm{gr}^0C_{\Psi,Z}(M)$ is quasi-isomorphic to the complex $$M/(\Psi-1) \xrightarrow{Z}M/(\Psi-1)$$ concentrated in degrees $[1,2].$
	From the associated convergent spectral sequence (cf. \cite[\href{https://stacks.math.columbia.edu/tag/012W}{Tag 012W}]{stacks-project}) $$E_1^{p,q} = H^{p+q}(gr^p(C_{\Psi,Z}(M))) \implies H^{p+q}(C_{\Psi,Z}(M))$$ we conclude, using that all terms on the first page are finite-dimensional, $$\chi(M) = \sum_{p,q} (-1)^{p+q} \operatorname{dim}_KH^{p+q}(gr^p(C_{\Psi,Z}(M))).$$ 
	By remark \ref{rem:finiteperfectapplied} the terms  for $\operatorname{gr}^2C_{\Psi,Z}(M)$ and $\operatorname{gr}^0C_{\Psi,Z}(M)$ cancel out. Because $\mathcal{C}(M)$ is a subspace of  $\operatorname{ker}(\Psi),$ the bijectivity of $Z$ on the kernel of $\Psi$ implies that the only remaining term is $-\operatorname{dim}_K(\mathcal{C}(M)/Z).$
\end{proof}
The expected Euler--Poincaré characteristic formula holds in the trianguline case.
\begin{rem} \label{rem:epctriangulin}
	Let $M$ be a trianguline $L$-analytic $(\varphi_L,\Gamma_L)$-module over $\cR_K,$ then $$\chi(M)= -[\Gamma_L:U]\operatorname{rank}_{\cR_K}(M).$$
\end{rem}
\begin{proof}
	By induction it suffices to treat the case where $M=\cR_K(\delta)$ for some locally $L$-analytic character  $\delta\colon L^{\times} \to K^{\times}.$ In this case by Proposition \ref{prop:rank1perf} $M^{\Psi=1}$ is finitely generated and hence $\mathcal{C}(M)$ is projective by Remark \ref{rem:Prüfer}. The proof of Proposition \ref{prop:rank1perf} shows further that $\operatorname{rank}_{D(U,K)}(\mathcal{C}(M))$ is precisely $[\Gamma_L:U].$ Since $D(U,K)$ is a domain with maximal ideal $(Z),$ the rank of $\mathcal{C}(M)$ is equal to $\operatorname{dim}_K(\mathcal{C}(M)/Z).$ The result now follows from Proposition \ref{prop:heartEPC}.
\end{proof}
Proposition \ref{prop:heartEPC} shows that the Euler--Poincaré formula would follow from $\mathcal{C}(M)$ being projective as a $D(U,K)$-module of rank $[\Gamma_L:U]\operatorname{rank}_{\cR_L}M.$ In the cyclotomic case one uses slope theory to reduce to the étale case where it follows from corresponding results for $(\varphi,\Gamma)$-modules over $\mathbf{A}_{\Q_p}$ (cf. \cite[V.1.13, V.1.18]{colmez2010representations}). We run into various problems because our theory requires the passage to $K$ in order to be able to define the cohomology groups in the first place and some key structural results like \cite[Theorem 2.4.5]{RustamFiniteness} do not have an analogue over the base field $L.$ Furthermore the dimension of $o_L\llbracket U\rrbracket$ poses a problem when working with an étale module $M_{\mathrm{LT}}$ over $\mathbf{B}_L.$ The projectivity of $(1-\frac{\pi_L}{q}\varphi_L)M_{\mathrm{LT}}^{\psi_{\mathrm{LT}}=1}$ is unknown to us and does not follow from a reflexivity argument like in \cite[I.5.2]{colmez2010representations}, because $o_L\llbracket U\rrbracket$ can be of dimension greater than two. As a closing remark we would like to point out that a stronger form of Conjecture \ref{con:comparison} requiring the natural map to be an isomorphism is not expected if $L\neq \Q_p.$ This was pointed out to us by David Loeffler and Muhammad Manji. Consider a character $\rho \colon G_L \to o_L^\times.$ One can show, using techniques from \cite{nekovavr2006selmer}, that $H^i_{\mathrm{Iw}}(L_\infty/L,o_L(\rho))$ is of $o_L\llbracket U\rrbracket$-rank $[\Gamma_L:U][L:\QQ_p].$ In contrast to this, Proposition \ref{prop:rank1perf} tells us that for $M:= K\hat{\otimes}_L\mathbb{D}^{\dagger}_{rig}(o_L(\rho))$ the $D(\Gamma_L,K)$-rank of $M^{\psi_{\mathrm{LT}}=1}$ is $[\Gamma_L:U].$ In particular the natural map can not be an isomorphism unless $L=\QQ_p.$
\bibliographystyle{amsalpha}
\let\stdthebibliography\thebibliography
\let\stdendthebibliography\endthebibliography
\renewenvironment*{thebibliography}[1]{%
	\stdthebibliography{MSVW24}}
{\stdendthebibliography}
\bibliography{Literatur}
\end{document}